\newcommand{\Z}{\ensuremath{\mathbb{Z}}}
\newcommand{\N}{\ensuremath{\mathbb{N}}}
\newcommand{\C}{\ensuremath{\mathbb{C}}}
\newcommand{\A}{\ensuremath{\mathcal{A}}}
\newcommand{\B}{\ensuremath{\mathcal{B}}}
\newcommand{\D}{\ensuremath{\mathcal{D}}}
\newcommand{\E}{\ensuremath{\mathcal{E}}}
\renewcommand{\P}{\ensuremath{\mathbf{P}}}
\newcommand{\QQ}{\ensuremath{\mathbf{Q}}}
\newcommand{\PP}{\ensuremath{\mathcal{P}}}
\newcommand{\U}{\ensuremath{\mathcal{U}}}
\newcommand{\M}{\ensuremath{\mathcal{M}}}
\renewcommand{\O}{\ensuremath{\mathcal{O}}}
\renewcommand{\phi}{\ensuremath{\varphi}}
\newcommand{\G}{\ensuremath{\mathcal{G}}}
\newcommand{\Con}{\ensuremath{\mathrm{Con}}}
\newcommand{\Nuc}{\ensuremath{\mathrm{Nuc}}}
\newcommand{\Eq}{\ensuremath{\mathrm{Eq}}}
\newcommand{\Sub}{\ensuremath{\mathrm{Sub}}}
\newcommand{\CC}{\ensuremath{\mathcal{C}}}
\newcommand{\CCC}{\ensuremath{\mathcal{C}}}
\newcommand{\op}{\ensuremath{\mathrm{op}}}
\newcommand{\Hom}{\ensuremath{\mathrm{Hom}}}
\newcommand{\Sets}{\ensuremath{\mathbf{Sets}}}
\newcommand{\Sh}{\ensuremath{\mathrm{Sh}}}
\newcommand{\up}{\ensuremath{{\uparrow\,}}}
\newcommand{\down}{\ensuremath{{\downarrow\,}}}
\newcommand{\embeds}{\hookrightarrow}
\newcommand{\Idl}{\mathrm{Idl}}
\newcommand{\F}{\ensuremath{\mathcal{F}}}
\newcommand{\y}{\ensuremath{\mathbf{y}}}
\newcommand{\atom}{\ensuremath{\mathrm{atom}}}
\newcommand{\dense}{\ensuremath{\mathrm{dense}}}
\theoremstyle{definition} 
\newtheorem{theorem}{Theorem}[section]
\newtheorem{definition}[theorem]{Definition}
\newtheorem{lemma}[theorem]{Lemma}
\newtheorem{proposition}[theorem]{Proposition}
\newtheorem{corollary}[theorem]{Corollary}
\newtheorem{example}[theorem]{Example}
\newtheorem*{warning}{Warning}
\newtheorem{convention}{Convention}
\newtheorem*{notation*}{Notation}
\title{Grothendieck topologies on posets\footnote{The author would like to thank Mai Gehrke, Sam van Gool, Klaas Landsman, Frank Roumen and Sander Wolters for their comments and suggestions. In particular Sam van Gool made a significant contribution by pointing out that several notions in frame and locale theory such as nuclei are equivalent to the notion of a Grothendieck topology on a poset. This research has been financially supported by the Netherlands Organisation for Scientific Research (NWO) under TOP-GO grant no. 613.001.013 (The logic of composite quantum systems).}}
\author{A.J. Lindenhovius}
\begin{document}

\maketitle
 \tableofcontents

\begin{abstract}
We investigate Grothendieck topologies (in the sense of sheaf theory) on a poset $\P$ that are generated by some subset of $\P$. We show that such Grothendieck topologies exhaust all possibilities if and only if $\P$ is Artinian. If $\P$ is not Artinian, other families of Grothendieck topologies on $\P$ exist that are not generated by some subset of $\P$, but even those are related to the Grothendieck topologies generated by subsets. Furthermore, we investigate several notions of equivalences of Grothendieck topologies, and using a posetal version of the Comparison Lemma, a sheaf-theoretic result known as the Comparison Lemma, going back to Grothendieck et al \cite{SGA4}, we calculate the sheaves with respect to most of the Grothendieck topologies we have found.
\end{abstract}

\section*{Introduction and motivation}

In this paper, we aim to classify Grothendieck topologies on a poset. This is motivated by the research project of describing quantum theory in terms of topos theory (see e.g., \cite{BI}, \cite{DI}, \cite{HLS}). Here the central object of research are the topoi $\Sets^{\CCC(\A)}$ and $\Sets^{\CCC(\A)^\op}$, where $\CCC(\A)$ is the poset of the commutative C*-subalgebras of some C*-algebra $\A$ ordered by inclusion. $\A$ describes some quantum system, whereas the elements of $\CCC(\A)$ are interpreted as ``classical snapshots of reality''. The motivation behind this approach is Niels Bohr's doctrine of classical concepts, which, roughly speaking, states that a measurement provides a classical snapshot of quantum reality, and knowledge of all classical snapshots should provide a picture of quantum reality that is as complete as (humanly) possible. In terms of C*-algebras, this means we should be able to reconstruct the algebra $\A$ in some way from the posetal structure of $\CCC(\A)$. Partial results in this direction can be found in \cite{DH} and \cite{Hamhalter}, where, at least for certain classes of C*-algebras and von Neumann algebras, it is proven that the Jordan structure of $\A$ may be recovered from the poset $\CCC(\A)$ of commutative subalgebras of $\A$. Since there is a C*-algebra $\A$ that is not isomorphic to its opposite C*-algebra $\A^0$, although $\CCC(\A)\cong\CCC(\A^0)$ as posets (see \cite{Connes}), it is known that the C*-algebraic structure of a C*-algebra cannot always be completely recovered from the posetal structure of $\CCC(\A)$. Therefore, extra information is needed, which might be found in sheaf theory, for instance in the form of a Grothendieck topology on $\CCC(\A)$.

The simplest C*-algebras are the finite-dimensional algebras $\B(\C^n)$ of $n\times n$-matrices, where $n$ is a finite positive integer. However, even the posets $\CCC_n=\CCC(\B(\C^n))$ are already far from trivial (see for instance \cite[Example 5.3.5]{Heunen}), hence interesting. Familiar with the posetal structure of $\CCC_n$, one easily sees that these posets are Artinian, i.e., posets for which every non-empty downwards directed subset contains a least element. Moreover, we show in Appendix A that $\CC(\A)$ is Artinian if and only if $\A$ is finite dimensional. The Artinian property is extremely powerful, since it allows one to use a generalization of the principle of induction called \emph{Artinian induction}\index{Artinian induction}.

Two main sources of this paper are \cite{J&T} and \cite{EGP}. In the first, several order-theoretic notions are related to the notion of Grothendieck topologies, whilst in the second, the relation is between these order-theoretic notions and the Artinian property is investigated. This article aims to combine both articles. We define a family of Grothendieck topologies on a poset $\P$ generated by a subset of $\P$ and show that these Grothendieck topologies in fact exhaust the possible Grothendieck topologies on the given poset if and only $\P$ is Artinian, where we use Artinian induction for the 'if' direction.

A poset equipped with a Grothendieck topology is called a \emph{site}. If the extra structure on a $\CCC(\A)$ is indeed given by a Grothendieck topology, we also need a notion of 'morphisms of sites'. Finally, we calculate the sheaves corresponding to all classes of Grothendieck topologies we have found. For this calculation, we make use of a posetal version of a sheaf-theoretic result known as the Comparison Lemma, going back to Grothendieck et al \cite{SGA4} (see also \cite{Elephant2}), which relates sheaves on a given category to sheaves on some subcategory.

In the appendix, we show that the notion of a Grothendieck topology on $\P$ is equivalent with the notion of a frame quotient of the frame $\D(\P)$ of down-sets of $\P$, which in turn is equivalent with the notion of a nuclei on $\D(\P)$, a congruence on $\D(\P)$, a sublocale of $\D(\P)$ if we consider $\D(\P)$ as a locale. Since all these notions are equivalent, this gives a description of nuclei, congruences, and sublocales of $\D(\P)$ that correspond to Grothendieck topologies corresponding to subsets of $\P$.

\section{Preliminaries on order theory}\label{Order Theory}\label{Noetherian induction}
This section can be skipped if one is familiar with the basic concepts of order theory. We refer to \cite{DP} for a detailed exposition of order theory.

\begin{definition}
 A \emph{poset} $(\P,\leq)$ is a set $\P$ equipped with a \emph{(partial) order}\index{order!partial} $\leq$. That is, $\leq$, is binary relation, which is reflexive, antisymmetric and transitive. We often write $\P$ instead of $(\P,\leq)$ if it is clear which order is used. If either $p\leq q$ or $q\leq p$ for each $p,q\in\P$, we say that $\leq$ is a \emph{linear order}\index{order!linear}, and we call $\P$ a \emph{linearly ordered set}\index{linearly ordered set}. Given a poset $\P$ with order $\leq$, we define the opposite poset $\P^\op$ as the poset with the same underlying set $\P$, but where $p\leq q$ if and only if $q\leq p$ in the original order. .
\end{definition}

\begin{definition}
 Let $(\P,\leq)$ be a poset and $M\subseteq \P$ a subset. We say that $M$ is an \emph{up-set}\index{up-set} if for each $x\in M$ and $y\in \P$ we have $x\leq y$ implies $y\in M$. Similarly, $M$ is called a \emph{down-set}\index{down-set} if for each $x\in M$ and $y\in \P$ we have $x\geq y$ implies $y\in M$. Given an element $x\in \P$, we define the up-set and down-set generated by $x$ by $\up x=\{y\in \P:y\geq x\}$ and $\down x=\{y\in \P:y\leq x\}$, respectively. We can also define the up-set generated by a subset $M$ of $\P$ by $\up M=\{x\in \P:x\geq m$ for some $m\in M\}=\bigcup_{m\in M}\up m$, and similarly, we define the down-set generated by $M$ by $\down M=\bigcup_{m\in M}\down m$. We denote the collection of all up-sets of a poset $\P$ by $\U(\P)$ and the set of all down-sets by $\D(\P)$. If we want to emphasize that we use the order $\leq$, we write $\U(\P,\leq)$ instead of $\U(\P)$.
\end{definition}

\begin{lemma}\label{cosievegen}
Let $\P$ be a poset and $M\subseteq \P$. Then $\down M$ is the smallest down-set containing $M$, and $M=\down M$ if and only if $M$ is an down-set.
\end{lemma}

\begin{definition}
 Let $M$ be a non-empty subset of a poset $\P$.
\begin{enumerate}
 \item An element $x\in M$ such that $\up x\cap M=\{x\}$ is called a \emph{maximal element}\index{maximal element}. The set of maximal elements of $M$ is denoted by $\max M$. The set of all \emph{minimal elements}\index{minimal element} $\min M$ is defined dually.
 \item If there is an element $x\in M$ such that $x\geq y$ for each $y\in M$, we call $x$ the \emph{greatest element}\index{greatest element} of $M$, which is necesarrily unique. The \emph{least element}\index{least element} of $M$ is defined dually. If $\P$ itself has a greatest element, it is denoted by $1$. Dually, the least element of $\P$ is denoted by $0$.
 \item An element $x\in \P$ is called an \emph{upper bound}\index{upper bound} of $M$ if $y\leq x$ for each $y\in M$. If the set of upper bounds of $M$ has a least element, it is called the \emph{join}\index{join} of $M$, which is necesarrily unique and is denoted by $\bigvee M$. Dually, we can define \emph{lower bounds}\index{lower bound} and the \emph{meet}\index{meet} of $M$, which is denoted by $\bigwedge M$. The (binary) join and the (binary) meet of $\{x,y\}$ are denoted by $x\vee y$ and $x\wedge y$, respectively.
 \item $M$ is called upwards (downwards) \emph{directed}\index{directed subset of a poset} if for every $x,y\in M$ there is an upper (lower) bound $z\in M$.
\item $M$ is called a \emph{filter}\index{filter of a poset} if $M\in\U(\P)$ and $M$ is downwards directed. We denote the set of filters of $\P$ by $\F(\P)$. A filter is called \emph{principal}\index{principal filter} if it is equal to $\up p$ for some $p\in \P$.
\item $M$ is called an \emph{ideal}\index{ideal of a poset} if $M\in\D(\P)$ and $M$ is upwards directed. We denote the set of ideals of $\P$ by $\Idl(\P)$. An ideal is called \emph{principal}\index{principal ideal} if it is equal to $\down p$ for some $p\in \P$.
\end{enumerate}
\end{definition}
If we consider $\P$ as a category, meets are exactly products, joins are coproducts, and the greatest (least) element of $\P$ is exactly the terminal (initial) object. These concepts are unique in posets, whereas in arbitrary categories they are only unique up to isomorphism. This follows from the fact that there is at most one morphism between two elements in a poset, hence isomorphic elements in a poset are automatically equal.

\begin{definition}
Let $\P$ be a poset. Then $\P$ is called
\begin{enumerate}
 \item \emph{Artinian}\index{Artinian poset} (\emph{Noetherian})\index{Noetherian poset} if every non-empty subset contains a minimal (maximal) element;
 \item a \emph{meet-semilattice}\index{meet-semilattice} (\emph{join-semilattice})\index{join-semilattice} if all binary meets (joins) exist;
 \item a \emph{lattice}\index{lattice} if all binary meets and binary joins exist;
 \item a \emph{complete lattice}\index{complete lattice} if all meets and joins exist.
\item a \emph{distributive lattice}\index{distributive lattice} if it is a lattice such that the \emph{distributive law}\index{distributive law}
 \begin{equation}
  x\wedge(y\vee z)=(x\wedge y)\vee(x\wedge z)
 \end{equation}
 holds for each $x,y,z\in \P$
 \item a \emph{frame} if all joins and all finite meets exists and the \emph{infinite distributive law}\index{infinite distributive law}
 \begin{equation}\label{eq:infdist}
  x\wedge\bigvee_i y_i=\bigvee_i(x\wedge y_i)
 \end{equation}
 holds for each element $x$ and each family $y_i$ in $\P$.
\end{enumerate}
\end{definition}

Remark that in a meet-semilattice (and therefore also in a lattice and in a frame), we have $x\wedge y=x$ if and only if $x\leq y$.

\begin{lemma}\label{lem:completelattice}
 Let $\P$ be a poset. If $\P$ has all meets, then $\P$ becomes a complete lattice with join operation defined by
\begin{equation*}
 \bigvee S=\bigwedge\{p\in\P:s\leq p\ \forall s\in S\}
\end{equation*}
for each $S\subseteq\P$. Dually, if $\P$ has all joins, then $\P$ becomes a complete lattice with meet operation defined by
\begin{equation*}
\bigwedge S=\bigvee\{p\in\P:p\leq s\ \forall s\in S\}
\end{equation*}
for each $S\subseteq\P$. In particular a frame is a complete lattice.
\end{lemma}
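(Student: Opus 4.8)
The plan is to prove the first statement (assuming $\P$ has all meets); the second follows by applying the first to $\P^{\op}$, and the final sentence about frames follows because a frame has all joins by definition. So assume every subset of $\P$ has a meet. First I would observe that taking the meet of the empty set gives a greatest element $1=\bigwedge\emptyset$ (vacuously every $p$ is a lower bound of $\emptyset$, and $\bigwedge\emptyset$ is the greatest such), so $\P$ has a top element; this will be needed to know that the set $U_S=\{p\in\P:s\leq p\ \forall s\in S\}$ of upper bounds of a given $S\subseteq\P$ is always non-empty (it contains $1$), hence has a meet.

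The core of the argument is then to verify that $\bigvee S:=\bigwedge U_S$ is indeed the least upper bound of $S$. This splits into two routine checks. First, $\bigwedge U_S$ is an upper bound of $S$: fix $s\in S$; then $s\leq p$ for every $p\in U_S$ by definition of $U_S$, so $s$ is a lower bound of $U_S$, and since $\bigwedge U_S$ is the greatest lower bound of $U_S$ we get $s\leq\bigwedge U_S$. Second, $\bigwedge U_S$ is the least upper bound: if $q$ is any upper bound of $S$, then $q\in U_S$ by definition, and since $\bigwedge U_S$ is a lower bound of $U_S$ we get $\bigwedge U_S\leq q$. Uniqueness of the least upper bound is immediate from antisymmetry, so the formula is forced, and $\P$ together with all these joins is a complete lattice.

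I do not anticipate a genuine obstacle here; the only subtlety worth flagging is the empty-set/top-element point, since without knowing $U_S\neq\emptyset$ one could not even speak of $\bigwedge U_S$. Everything else is a direct unwinding of the definitions of meet and join as greatest lower bound and least upper bound. For the dual statement I would simply note that "$\P$ has all joins" is the same as "$\P^{\op}$ has all meets", apply the first part to $\P^{\op}$, and translate the resulting join formula in $\P^{\op}$ back into the displayed meet formula in $\P$. Finally, since a frame has all joins by its definition, the dual part applies and a frame is a complete lattice.
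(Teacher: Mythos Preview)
Your proof is correct and is the standard argument; the paper itself states this lemma without proof (it is treated as a well-known order-theoretic fact, with the reader referred to \cite{DP}), so there is nothing to compare against. One minor remark: the ``subtlety'' you flag about $U_S$ being non-empty is not strictly needed, since the hypothesis is that \emph{all} meets exist, including $\bigwedge\emptyset$, so $\bigwedge U_S$ is defined regardless; but your observation that $1\in U_S$ is of course correct and does no harm.
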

As a consequence, if $\P$ is a complete lattice, $\P^\op$ is a complete lattice as well.

Finally we define morphisms between posets as follows.
\begin{definition}
 Let $\P,\mathbf{Q}$ be posets and $f:\P\to \mathbf{Q}$ a map. Then $f$ is called
\begin{enumerate}
 \item an \emph{order morphism} if $x\leq y$ implies $f(x)\leq f(y)$ for each $x,y\in \P$;
 \item a \emph{embedding of posets} if $f(x)\leq f(y)$ if and only if $x\leq y$ for each $x,y\in\P$;
 \item a \emph{meet-semilattice morphism} if $\P$ and $\mathbf{Q}$ are both meet-semilattices and\\ $f(x\wedge y)=f(x)\wedge f(y)$ for each $x,y\in \P$;
 \item a \emph{lattice morphism} if $\P$ and $\mathbf{Q}$ are both lattices and $f$ is a meet-semilattice morphism such that $f(x\vee y)=f(x)\vee f(y)$ for each $x,y\in \P$;
 \item a \emph{frame morphism} if $\P$ and $\mathbf{Q}$ are both frames and $f\left(\bigvee M\right)=\bigvee f(M)$,\\ $f\left(\bigwedge N\right)=\bigwedge f(N)$ for each $M\subseteq \P$ and each finite $N\subseteq \P$.
\end{enumerate}
An order morphism, meet-semilattice morphism, lattice morphism, frame morphism is called an \emph{order isomorphism}, \emph{meet-semilattice isomorphism}, \emph{lattice isormphism}, \emph{frame isomorphism}\index{isomorphism}, respectively, if there is a morphism $g:\mathbf{Q}\to\P$ of the same type such that $f\circ g=1_{\mathbf{Q}}$ and $g\circ f=1_\P$.
\end{definition}
Clearly an embedding of posets $f$ is injective. If $f(x)=f(y)$, then $f(x)\leq f(y)$, so $x\leq y$, and in a similar way, we find $y\leq x$, so $x=y$.
The converse does not always hold. Consider for instance the poset $\P=\{p_1,p_2,p_3\}$ with $p_1,p_2<p_3$ and $\mathbf{Q}=\{q_1,q_2,q_3\}$ with $q_1<q_2<q_3$. Then $f:\P\to\mathbf{Q}$ defined by $f(p_i)=q_i$ for $q=1,2,3$ is clearly an injective order morphism (it is even bijective), but $p_1\nleq p_2$, whereas $f(p_1)\leq  f(p_2)$.

\begin{lemma}
 Let $f:\P\to \mathbf{Q}$ be an order isomorphism. Then $f$ respects meets and joins. Moreover, if $\P$ and $\mathbf{Q}$ are both meet-semilattices, lattices or frames, then $f$ is a meet-semilattice morphism, lattice morphism, or frame morphism, respectively, if and only if $f$ is an order isomorphism.
\end{lemma}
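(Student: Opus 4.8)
The plan is to use that an order isomorphism $f$ comes with an inverse $g=f^{-1}:\mathbf{Q}\to\P$ which is again an order isomorphism, so that \emph{both} $f$ and $g$ are order morphisms. First I would handle existing meets, joins being dual. Fix $S\subseteq\P$ and suppose $m=\bigwedge S$ exists in $\P$. As $m\leq s$ for all $s\in S$ and $f$ is an order morphism, $f(m)$ is a lower bound of $f(S)$ in $\mathbf{Q}$. If $q\in\mathbf{Q}$ is any lower bound of $f(S)$, then applying the order morphism $g$ gives $g(q)\leq g(f(s))=s$ for every $s\in S$, whence $g(q)\leq m$ and therefore $q=f(g(q))\leq f(m)$. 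So $f(m)$ is the \emph{greatest} lower bound of $f(S)$, i.e. $f\!\left(\bigwedge S\right)=\bigwedge f(S)$. Running this argument with $g$ in place of $f$ shows conversely that if $\bigwedge f(S)$ exists in $\mathbf{Q}$ then $\bigwedge S$ exists in $\P$; hence $f$ truly respects meets, and since $f$ is equally an order isomorphism $\P^{\op}\to\mathbf{Q}^{\op}$, the meet statement for the opposite posets gives the join statement for $\P$ and $\mathbf{Q}$.

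For the ``moreover'' part I would read the second assertion as the equivalence between $f$ being an isomorphism for the relevant algebraic structure and $f$ being an order isomorphism (this being plainly what is intended). One implication falls straight out of the first part: if $\P,\mathbf{Q}$ are meet-semilattices, taking $S=\{x,y\}$ gives $f(x\wedge y)=f(x)\wedge f(y)$, so $f$, and by the same token $g$, is a meet-semilattice morphism, hence $f$ is a meet-semilattice isomorphism; if $\P,\mathbf{Q}$ are lattices, the join statement adds $f(x\vee y)=f(x)\vee f(y)$, so $f$ and $g$ are lattice morphisms; and if $\P,\mathbf{Q}$ are frames, the first part yields $f\!\left(\bigvee M\right)=\bigvee f(M)$ for every $M\subseteq\P$ and $f\!\left(\bigwedge N\right)=\bigwedge f(N)$ for every finite $N\subseteq\P$, which is exactly the defining property of a frame morphism, so $f$ and $g$ are frame morphisms.

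For the reverse implication I would use that a morphism of any of the three kinds is automatically order-preserving: if $x\leq y$ then $x\wedge y=x$, so $f(x)=f(x\wedge y)=f(x)\wedge f(y)$, i.e. $f(x)\leq f(y)$ (this uses the recorded remark that $x\wedge y=x\iff x\leq y$ in a meet-semilattice). Consequently, if $f$ is a meet-semilattice, lattice, or frame isomorphism, then $f$ and its structure-preserving two-sided inverse are mutually inverse order morphisms, so $f$ is an order isomorphism; together with the previous paragraph this establishes the equivalence.

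I do not anticipate a genuine obstacle; the only step that calls for care is already the first one, where the monotonicity of the inverse $g$ — not merely of $f$ — is what turns ``$f(m)$ is \emph{a} lower bound of $f(S)$'' into ``$f(m)$ is \emph{the} greatest lower bound'', and is likewise what makes preservation of existence symmetric between $\P$ and $\mathbf{Q}$.
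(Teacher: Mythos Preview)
Your proof is correct and follows the standard argument: use that the inverse $g=f^{-1}$ is itself an order morphism to upgrade ``$f(m)$ is a lower bound of $f(S)$'' to ``$f(m)$ is the greatest lower bound'', dualize for joins, and then read off the algebraic morphism properties; for the converse you correctly invoke $x\wedge y=x\iff x\leq y$ to see that any meet-preserving map (hence any lattice or frame morphism) is order-preserving, so a structural isomorphism is an order isomorphism.

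There is nothing to compare against: the paper states this lemma in the preliminaries but does not supply a proof. Your argument would serve perfectly well as the omitted proof. Your reading of the somewhat awkwardly phrased ``moreover'' clause---as an equivalence between being a structure isomorphism and being an order isomorphism, rather than literally ``$f$ is a morphism iff $f$ is an order isomorphism'' under the standing hypothesis that $f$ already is one---is clearly the intended one.
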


\begin{definition}
 Let $\P$, $\mathbf Q$ be posets and $f :\P \to \QQ$, $g : \QQ \to \P$ order morphism. If $f(p) \leq q$ if and only if $p \leq g(q)$ holds for each $p \in\P$ and $q \in \QQ$, then $g$ is called the \emph{upper adjoint}\index{adjoint} of $f$ and $f$ is called the \emph{lower adjoint} of $g$. If the upper adjoint of $f$ exists, it is sometimes denoted by $f_*$. Similarly, the lower adjoint of $g$, if it exists, is sometimes denoted by $g^*$.
\end{definition}
If we consider posets as categories, an upper adjoint is exactly a right adjoint.

The following lemma can be found in \cite[Chapter 7]{DP}. Categorical generalizations of these facts are also well-known, but we will not need them here.
\begin{lemma}\label{lem:adjointequivalent}
Let $\P$, $\mathbf Q$ be posets and $f :\P \to \QQ$, $g : \QQ \to \P$ order morphisms. The following two statements are equivalent:
\begin{enumerate}
\item[(a)] $g$ is the upper adjoint of $f$.
\item[(b)] Both $1_{\P} \leq g \circ f$ and $f \circ g \leq 1_{\QQ}$. 
\end{enumerate}
If these statements hold, then we also have the following:
\begin{enumerate}
\item[(i)] Both $f \circ g \circ f = f$ and $g \circ f \circ g = g$.
\item[(ii)] $f$ is surjective if and only if $g$ is injective if and only if $f \circ g = 1_{\QQ}$.
\item[(iii)] $f$ is injective if and only if $g$ is surjective if and only if $g\circ f=1_{\P}$.
\item[(iv)] If $g' : \QQ \to \P$ is order-preserving and (a) or (b) holds when $g$ is replaced by $g'$, then $g' = g$.
\item[(v)] If $f' : \P \to \QQ$ is order-preserving and (a) or (b) holds when $f$ is replaced by $f'$, then $f' = f$.
\end{enumerate}
\end{lemma}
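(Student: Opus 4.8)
The plan is to prove the equivalence of (a) and (b) first, then extract (i), and finally bootstrap (i) together with (b) to get (ii)--(v).

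\medskip

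\noindent\textbf{Equivalence of (a) and (b).} For (a)$\Rightarrow$(b), I would simply instantiate the biconditional $f(p)\leq q \iff p\leq g(q)$ at $q=f(p)$ and at $p=g(q)$: since $f(p)\leq f(p)$ and $g(q)\leq g(q)$ by reflexivity, we read off $p\leq g(f(p))$ for all $p$ and $f(g(q))\leq q$ for all $q$, i.e.\ $1_{\P}\leq g\circ f$ and $f\circ g\leq 1_{\QQ}$. For (b)$\Rightarrow$(a): if $f(p)\leq q$, apply the order morphism $g$ to get $g(f(p))\leq g(q)$ and chain with $p\leq g(f(p))$ to conclude $p\leq g(q)$; conversely, if $p\leq g(q)$, apply $f$ to get $f(p)\leq f(g(q))$ and chain with $f(g(q))\leq q$ to conclude $f(p)\leq q$. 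Transitivity does all the work here.

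\medskip

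\noindent\textbf{Proof of (i).} Applying the order morphism $f$ to $1_{\P}\leq g\circ f$ gives $f\leq f\circ g\circ f$, while precomposing $f\circ g\leq 1_{\QQ}$ with $f$ gives $f\circ g\circ f\leq f$; antisymmetry yields $f\circ g\circ f=f$. The identity $g\circ f\circ g=g$ is obtained symmetrically, by applying $g$ to $1_{\P}\leq g\circ f$ (precomposing) and precomposing $f\circ g\leq 1_{\QQ}$ with $g$ on the left.

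\medskip

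\noindent\textbf{Proof of (ii) and (iii).} For (ii) I would prove the three-way equivalence as a cycle. That $f\circ g=1_{\QQ}$ forces $f$ surjective and $g$ injective is immediate (for surjectivity, $q=f(g(q))$; for injectivity, $g(q_1)=g(q_2)$ gives $q_1=f(g(q_1))=f(g(q_2))=q_2$). That $f$ surjective forces $f\circ g=1_{\QQ}$ uses (i): writing $q=f(p)$, we get $f(g(q))=(f\circ g\circ f)(p)=f(p)=q$. That $g$ injective forces $f\circ g=1_{\QQ}$ uses the second identity of (i): from $g\bigl(f(g(q))\bigr)=g(q)$ and injectivity of $g$ we get $f(g(q))=q$. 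Part (iii) is the exact dual, with the roles of $f,g$ and of $\P,\QQ$ interchanged, using $g\circ f\circ g=g$ and $f\circ g\circ f=f$ in the corresponding places.

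\medskip

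\noindent\textbf{Proof of (iv) and (v).} These are the standard uniqueness-of-adjoint arguments. Given a second order morphism $g'$ satisfying (a) in place of $g$, note that the implication (a)$\Rightarrow$(b) applies to $g'$ as well, so $f\circ g'\leq 1_{\QQ}$; we also have $f\circ g\leq 1_{\QQ}$. Feeding $f(g(q))\leq q$ into the adjunction biconditional for $g'$ gives $g(q)\leq g'(q)$, and feeding $f(g'(q))\leq q$ into the adjunction for $g$ gives $g'(q)\leq g(q)$; antisymmetry gives $g'=g$. Part (v) is dual, using $1_{\P}\leq g\circ f$ and $1_{\P}\leq g\circ f'$.

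\medskip

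I do not expect a genuine obstacle: the whole lemma is a sequence of short manipulations with reflexivity, transitivity, antisymmetry, and order-preservation. The only mild points to keep in mind are that the backward directions in (ii) and (iii) really need the composite identities of (i) (not just the inequalities of (b)), and that in (iv) and (v) one must first observe that a hypothetical second adjoint also satisfies (b) before the antisymmetry squeeze can be applied.
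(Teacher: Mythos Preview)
Your proof is correct and complete; each step is a standard manipulation using reflexivity, transitivity, antisymmetry, and monotonicity, and you have been careful to invoke (i) where it is genuinely needed in (ii) and (iii). Note that the paper does not actually supply a proof of this lemma: it merely cites \cite[Chapter 7]{DP}, so there is nothing to compare your argument against beyond observing that it is the standard one.
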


\begin{lemma}\label{lem:reflection}
 Let $g:\QQ\to\P$ be an embedding of posets. If $g$ has a lower adjoint $f:\P\to\QQ$, then $f$ is the left-inverse of $g$.
\end{lemma}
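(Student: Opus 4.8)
The goal is to show that $f\circ g=1_{\QQ}$, where $f\circ g\colon\QQ\to\QQ$ makes sense since $g\colon\QQ\to\P$ and $f\colon\P\to\QQ$. The plan is to reduce everything to Lemma~\ref{lem:adjointequivalent}, which applies verbatim here: $f$ being the lower adjoint of $g$ means exactly that $g$ is the upper adjoint of $f$ in the sense of that lemma, so all of (b) and (i)--(v) are at our disposal.

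First I would recall the remark made immediately after the definition of an embedding of posets: every embedding is injective (if $g(x)=g(y)$ then $g(x)\leq g(y)$ forces $x\leq y$, and symmetrically $y\leq x$). Hence $g$ is injective. Now statement~(ii) of Lemma~\ref{lem:adjointequivalent} says that $g$ injective is equivalent to $f\circ g=1_{\QQ}$; this gives the conclusion in one line, and $f$ is therefore a left inverse of $g$.

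If one prefers not to invoke (ii), there is an equally short direct argument that exposes where the embedding hypothesis enters. From (b) we have $f\circ g\leq 1_{\QQ}$, i.e.\ $f(g(q))\leq q$ for every $q\in\QQ$. For the reverse inequality, use (i): $g\circ f\circ g=g$, so $g\bigl(f(g(q))\bigr)=g(q)$, and in particular $g(q)\leq g\bigl(f(g(q))\bigr)$; since $g$ is an embedding it reflects the order, so $q\leq f(g(q))$. Antisymmetry then yields $f(g(q))=q$. The only point requiring any care is bookkeeping of the variance conventions — checking that it is $f\colon\P\to\QQ$ (and not $g$) that plays the role of the lower adjoint in the hypotheses of Lemma~\ref{lem:adjointequivalent} — and there is no genuine obstacle beyond that.
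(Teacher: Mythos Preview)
Your proof is correct. The paper states this lemma without proof, so there is nothing to compare against; your argument via Lemma~\ref{lem:adjointequivalent}(ii) (embedding $\Rightarrow$ injective $\Rightarrow$ $f\circ g=1_{\QQ}$) is exactly the intended one-line justification, and your alternative direct argument using (b) and (i) is also fine.
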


\begin{lemma}\label{lem:adjoint}
 Let $f:\P\to\QQ$ be an order morphism with an upper adjoint $g:\QQ\to\P$. Then $f$ preserves all existing joins, whereas $g$ preserves all existing meets. If $\P$ and $\QQ$ are frames, then an order morphism (not necessarily a frame morphism) $f:\P\to\QQ$ has an upper (lower) adjoint $g:\QQ\to\P$ if it preserves all joins (meets).
\end{lemma}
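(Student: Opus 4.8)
The plan is to dispose of the two preservation claims by a direct computation with the adjunction biconditional $f(p)\leq q\iff p\leq g(q)$, and then to prove the existence statement by writing down an explicit candidate for the adjoint and checking that it works.

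First I would show that $f$ preserves existing joins. Given $S\subseteq\P$ whose join $\bigvee S$ exists in $\P$, one sees that $f\bigl(\bigvee S\bigr)$ is an upper bound of $f(S)$ using only that $f$ is an order morphism. If $q\in\QQ$ is an arbitrary upper bound of $f(S)$, then $f(s)\leq q$ for all $s\in S$ transports across the adjunction to $s\leq g(q)$ for all $s\in S$, so $g(q)$ is an upper bound of $S$ and hence $\bigvee S\leq g(q)$; transporting back across the adjunction gives $f\bigl(\bigvee S\bigr)\leq q$, so $f\bigl(\bigvee S\bigr)=\bigvee f(S)$. The dual claim, that $g$ preserves existing meets, then follows either by rerunning this argument with lower bounds in place of upper bounds, or by observing that the adjunction biconditional says precisely that $f:\P^\op\to\QQ^\op$ is the upper adjoint of $g:\QQ^\op\to\P^\op$, whence $g$ preserves joins in $\QQ^\op$, i.e. meets in $\QQ$.

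For the existence statement, assuming $\P$ and $\QQ$ are frames and $f$ preserves all joins, I would set
\[
 g(q):=\bigvee\{p\in\P:f(p)\leq q\},
\]
which exists in $\P$ because a frame is a complete lattice (Lemma~\ref{lem:completelattice}). Monotonicity of $g$ is immediate from monotonicity of $f$, and the adjunction $f(p)\leq q\iff p\leq g(q)$ is checked in two halves: if $f(p)\leq q$ then $p$ lies in the set defining $g(q)$, so $p\leq g(q)$; conversely, if $p\leq g(q)$ then, since $f$ is order-preserving and preserves joins, $f(p)\leq f\bigl(g(q)\bigr)=\bigvee\{f(p'):f(p')\leq q\}\leq q$, the last inequality because $q$ bounds that set above. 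The statement about lower adjoints is the order-dual, using instead the candidate $g(q):=\bigwedge\{p\in\P:q\leq f(p)\}$, which exists since $\P$ is a complete lattice, and verifying $g(q)\leq p\iff q\leq f(p)$ in the same fashion.

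I do not expect a genuine obstacle: the whole lemma is the posetal shadow of the adjoint functor theorem, and each step is a one-line verification. The only point deserving a little care is the well-definedness of $g$ in the last step — this is exactly where the completeness supplied by the frame hypothesis via Lemma~\ref{lem:completelattice} is used — together with the (routine) point that the join-preservation hypothesis is being applied to the particular family $\{p'\in\P:f(p')\leq q\}$.
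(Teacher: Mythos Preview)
Your proof is correct and entirely standard. The paper itself states this lemma without proof (it is one of several preliminaries quoted without argument from \cite{DP}), so there is no paper-proof to compare against; your argument is exactly the usual one and would serve perfectly well as the omitted proof. One tiny quibble: monotonicity of $g$ follows from the inclusion $\{p:f(p)\leq q\}\subseteq\{p:f(p)\leq q'\}$ when $q\leq q'$, not directly from monotonicity of $f$ as you wrote, but this is cosmetic.
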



\begin{lemma}\label{lem:isomorphismisadjoint}
 Let $f:\P\to\mathbf{R}$ and $g:\mathbf{R}\to\QQ$ be order morphism with upper adjoints $f_*$ and $g_*$, respectively. Then $g\circ f$ has an upper adjoint $(g\circ f)_*$ equal to $f_*\circ g_*$. Moreover, if $f$ is an order isomorphism, then its upper adjoint $f_*$ is its inverse $f^{-1}$.
\end{lemma}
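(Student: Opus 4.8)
The plan is to verify directly that $f_* \circ g_*$ satisfies the defining property of the upper adjoint of $g \circ f$, and then invoke the uniqueness clause of Lemma~\ref{lem:adjointequivalent}. First I would record that $f_*$ and $g_*$ are order morphisms (being upper adjoints), so that $f_* \circ g_* : \QQ \to \P$ is an order morphism, as is $g \circ f : \P \to \QQ$; thus it makes sense to ask whether one is the upper adjoint of the other. Then, for arbitrary $p \in \P$ and $q \in \QQ$, I would chain the two adjunction equivalences:
\[
(g \circ f)(p) \leq q \iff f(p) \leq g_*(q) \iff p \leq f_*(g_*(q)),
\]
where the first equivalence is the adjunction between $g$ and $g_*$ applied to $f(p)$ and $q$, and the second is the adjunction between $f$ and $f_*$ applied to $p$ and $g_*(q)$. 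This is precisely the statement that $f_* \circ g_*$ is an upper adjoint of $g \circ f$; uniqueness of upper adjoints (Lemma~\ref{lem:adjointequivalent}(iv)) then allows us to write $(g \circ f)_* = f_* \circ g_*$.

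For the second assertion, suppose $f : \P \to \mathbf{R}$ is an order isomorphism, so that $f^{-1} : \mathbf{R} \to \P$ exists and is itself an order morphism. I would check that $f^{-1}$ satisfies the defining property of the upper adjoint of $f$: given $p \in \P$ and $r \in \mathbf{R}$, if $f(p) \leq r$, then applying the order morphism $f^{-1}$ yields $p = f^{-1}(f(p)) \leq f^{-1}(r)$; conversely, if $p \leq f^{-1}(r)$, then applying the order morphism $f$ yields $f(p) \leq f(f^{-1}(r)) = r$. Hence $f(p) \leq r$ if and only if $p \leq f^{-1}(r)$, so $f^{-1}$ is an upper adjoint of $f$, and by uniqueness again $f_* = f^{-1}$.

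Since every step is a one-line manipulation of the adjunction equivalences, I do not expect any genuine obstacle; the only points requiring a moment's care are noting that the relevant composites are order-preserving (so that ``upper adjoint'' is meaningful in the first place) and citing the uniqueness clause rather than re-deriving it. As an alternative for the first part, one could instead use characterisation (b) of Lemma~\ref{lem:adjointequivalent}, composing the inequalities $1_\P \leq f_* \circ f$, $1_{\mathbf{R}} \leq g_* \circ g$, $f \circ f_* \leq 1_{\mathbf{R}}$ and $g \circ g_* \leq 1_\QQ$ together with monotonicity of $f$ and $g_*$ to obtain $1_\P \leq (f_* \circ g_*) \circ (g \circ f)$ and $(g \circ f) \circ (f_* \circ g_*) \leq 1_\QQ$; but the equivalence-chaining argument above is shorter and makes the formula $(g\circ f)_* = f_* \circ g_*$ transparent.
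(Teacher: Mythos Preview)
Your proof is correct. The paper states this lemma without proof (it is one of several standard order-theoretic preliminaries in Section~\ref{Order Theory} that are recorded without argument), so there is no paper proof to compare against; your chaining of the two adjunction biconditionals is exactly the expected argument, and your verification that $f^{-1}$ satisfies the defining property of the upper adjoint is likewise the natural route.
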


\begin{lemma}\label{lem:frameadjoint}
 Let $f:F\to G$ be a frame morphism. Then $f$ has an adjoint $g:G\to F$ given by $g(y)=\bigvee\{x\in F:f(x)\leq y\}$.
\end{lemma}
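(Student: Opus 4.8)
The plan is to verify directly that the map $g$ defined by the displayed formula is an order morphism and satisfies the adjunction condition $f(p)\leq q \iff p\leq g(q)$; one could alternatively invoke Lemma~\ref{lem:adjoint} to get existence of an upper adjoint for free (a frame morphism preserves all joins, hence is a lower adjoint) and then only identify the formula, but the direct route is self-contained and exhibits the explicit form, so I would take it. Throughout, the crucial property in play is that a frame morphism preserves \emph{arbitrary} joins, not merely finite ones.

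First I would check that $g$ is order-preserving: if $y\leq y'$ in $G$, then $\{x\in F: f(x)\leq y\}\subseteq\{x\in F: f(x)\leq y'\}$, so taking joins gives $g(y)\leq g(y')$; in particular $g$ is a well-defined order morphism, so it makes sense to ask whether it is the upper adjoint of $f$. Next, for the forward implication, suppose $f(p)\leq q$. Then $p$ lies in the set $\{x\in F: f(x)\leq q\}$, hence $p\leq\bigvee\{x\in F: f(x)\leq q\}=g(q)$. For the reverse implication, suppose $p\leq g(q)$. Applying $f$ and using that $f$ is an order morphism preserving all joins, we get
\begin{equation*}
 f(p)\leq f(g(q))=f\Bigl(\bigvee\{x\in F: f(x)\leq q\}\Bigr)=\bigvee\{f(x): x\in F,\ f(x)\leq q\}\leq q,
\end{equation*}
where the last inequality holds because $q$ is an upper bound of the set $\{f(x): f(x)\leq q\}$, and a join is the least upper bound. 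This establishes the equivalence $f(p)\leq q\iff p\leq g(q)$, i.e.\ $g$ is the upper adjoint of $f$.

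There is no real obstacle here; the only point that deserves care is noticing exactly where the hypothesis is used: preservation of the join $\bigvee\{x: f(x)\leq q\}$ is needed, and this join is in general infinite, so "frame morphism" (rather than merely "lattice morphism") is essential — for a lattice morphism between non-complete lattices the defining join need not even exist. Finally, one may remark that by Lemma~\ref{lem:adjointequivalent}(iv) the upper adjoint is unique whenever it exists, so the displayed formula is the only possibility; this also shows the notation $f_*$ is unambiguous.
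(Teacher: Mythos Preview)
Your proof is correct. The paper states this lemma without proof (it is a standard fact about frame morphisms), so there is nothing to compare against; your direct verification of the adjunction condition, using that a frame morphism preserves arbitrary joins, is exactly the expected argument.
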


\begin{example}\label{ex:defheyimp}
Let $F$ be a frame. Then for fixed $x\in F$, we define $f_x:F\to F$ by $f_x(z)=x\wedge z$. Then $f_x\left(\bigvee_{i}z_i\right)=x\wedge\left(\bigvee_iz_i\right)=\bigvee_i(x\wedge z_i)=\bigvee_i f_x(z_i)$ by the infinite distributivity law, so $f$ preserves joins. So $f$ has an upper adjoint given by $g_x(y)=\bigvee\{z:x\wedge z\leq y\}$.
\end{example}

\begin{definition}\label{def:heyimplicationinframes}
 The element $g_x(y)$ as defined in the previous example is denoted by $x\to y$ and is called the \emph{relative pseudo-complement}\index{relative pseudo-complement} of $x$ with respect to $y$. We refer to the map $F\times F\to F$ given by $(x,y)\mapsto(x\to y)$ as the \emph{Heyting implication}\index{Heyting implication} or \emph{Heyting operator}\index{Heyting operator}. So we have
\begin{equation}\label{eq:HeyImp}
 x\wedge z\leq y \ \mathrm{if\ and\ only\ if\ }z\leq(x\to y).
\end{equation}
For each $x\in F$ we denote the element $x\to 0$ by $\neg x$, which we call the \emph{negation} of $x$.
\end{definition}

In the next lemma we state some useful identities for the Heyting implication and especially the negation.

\begin{lemma}\cite[Chapter I.8]{M&M}\label{lem:negationidentities}
 Let $F$ be a frame. Then for each $x,y\in F$ we have
\begin{enumerate}
 \item[(i)] $x\wedge(x\to y)=x\wedge y$;
 \item[(ii)] $x\wedge\neg x=0$;
 \item[(iii)] $y\leq\neg x$ if and only if $x\wedge y=0$;
 \item[(iv)] $x\leq y$ implies $\neg y\leq\neg x$;
 \item[(v)] $x\leq\neg\neg x$;
 \item[(vi)] $\neg x=\neg\neg\neg x$;
 \item[(vii)] $\neg\neg(x\wedge y)  =  (\neg\neg x)\wedge(\neg\neg y).$
\end{enumerate}
\end{lemma}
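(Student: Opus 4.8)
The plan is to derive all seven identities from the single adjunction property \eqref{eq:HeyImp}, that is, from the equivalence $x\wedge z\leq y$ iff $z\leq(x\to y)$, without ever unfolding the explicit join formula for $x\to y$. Equivalently, one may phrase the argument via Lemma \ref{lem:adjointequivalent} applied to the adjunction $f_x\dashv g_x$ from Example \ref{ex:defheyimp}, but working directly with \eqref{eq:HeyImp} is cleanest.

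First I would dispatch (i)--(iii). For (i): taking $z=x\to y$ in \eqref{eq:HeyImp} gives $x\wedge(x\to y)\leq y$, and since also $x\wedge(x\to y)\leq x$ we get $x\wedge(x\to y)\leq x\wedge y$; conversely $x\wedge y\leq y$ yields $y\leq(x\to y)$ by \eqref{eq:HeyImp}, whence $x\wedge y\leq x\wedge(x\to y)$, so equality holds. Item (ii) is the instance $y=0$ of (i) combined with the fact that $0$ is the least element of $F$. Item (iii) is just \eqref{eq:HeyImp} with $y$ replaced by $0$, again using that $x\wedge y\leq 0$ forces $x\wedge y=0$.

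Next, (iv)--(vi) follow by short deductions from (ii) and (iii). For (iv), if $x\leq y$ then $x\wedge\neg y\leq y\wedge\neg y=0$ by (ii), so $x\wedge\neg y=0$ and hence $\neg y\leq\neg x$ by (iii). Item (v) is (iii) read with the pair $(\neg x,x)$ in place of $(x,y)$: $x\leq\neg\neg x$ iff $\neg x\wedge x=0$, and the right-hand side is (ii). For (vi), applying (v) to $\neg x$ gives $\neg x\leq\neg\neg\neg x$, while applying (iv) to the inequality $x\leq\neg\neg x$ of (v) gives $\neg\neg\neg x\leq\neg x$; the two together give equality.

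The only part requiring genuine work is (vii), and it is the step I expect to be the main obstacle. The inclusion $\neg\neg(x\wedge y)\leq\neg\neg x\wedge\neg\neg y$ is immediate by applying (iv) twice to $x\wedge y\leq x$ and $x\wedge y\leq y$. For the reverse inclusion I would first establish the auxiliary fact $x\wedge\neg(x\wedge y)\leq\neg y$: meeting its left-hand side with $y$ gives $(x\wedge y)\wedge\neg(x\wedge y)=0$ by (ii), so (iii) applies. Meeting this with $\neg\neg y$ and using (ii) once more gives $x\wedge\neg\neg y\wedge\neg(x\wedge y)=0$, hence $\neg\neg y\wedge\neg(x\wedge y)\leq\neg x$ by (iii); meeting that with $\neg\neg x$ and invoking (ii) a final time yields $\neg\neg x\wedge\neg\neg y\wedge\neg(x\wedge y)=0$, and one last application of (iii) gives $\neg\neg x\wedge\neg\neg y\leq\neg\neg(x\wedge y)$. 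Combined with the forward inclusion this proves (vii). The recurring pattern --- collapsing a meet to $0$ via (ii) and then re-expressing it as a $\neg$-inequality via (iii) --- is exactly the mechanism that makes the double-negation operator multiplicative on a frame.
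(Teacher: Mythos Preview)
Your proof is correct in every part; the chain of reductions from the adjunction \eqref{eq:HeyImp} is clean, and the argument for (vii) via the repeated ``collapse to $0$ with (ii), then invoke (iii)'' pattern is the standard one. Note that the paper does not actually prove this lemma at all---it simply cites \cite[Chapter I.8]{M&M}---so there is no in-paper argument to compare against; your write-up supplies what the paper omits.
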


There are several categories which have frames as objects, but different classes of morphisms.
\begin{definition}
 The category $\mathbf{Frm}$ of frames is the category with frames as objects and frame morphisms as morphisms. The category $\mathbf{cHA}$ of \emph{complete Heyting algebras}\index{complete Heyting algebra} is defined as the category with frames as objects and as morphisms exactly the frame morphisms $f:H\to G$ which also satisfy $f(x\to y)=f(x)\to f(y)$ for each $x,y\in H$. Finally, the category $\mathbf{Loc}$ of \emph{locales}\index{locale} is defined as the opposite category of $\mathbf{Frm}$. That is, the objects of $\mathbf{Loc}$ are frames, and $f:L\to K$ is a locale morphism if it is the opposite of a frame morphism $K\to L$. If we want to stress that we work in the $\mathbf{Loc}$, we call the objects locales rather than frames.
\end{definition}

\begin{lemma}\label{lem:equivalentdefinitionsArtinian}
Let $\P$ be a poset. Then the following statements are equivalent:
\begin{enumerate}
\item $\P$ is Artinian;
\item All non-empty downwards directed subsets of $\P$ have a least element.
\item $\P$ satisfies the \emph{descending chain condition}\index{descending chain condition}. That is, if we have a sequence of
elements $x_1\geq x_2\geq \ldots$ in $\P$ (a descending chain), then the sequence stabilizes. That is, there is an
$n\in\N$ such that $x_k=x_{n}$ for all $k>n$.
\end{enumerate}
\end{lemma}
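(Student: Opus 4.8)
The plan is to prove the cycle of implications $(1)\Rightarrow(2)\Rightarrow(3)\Rightarrow(1)$, since each step is a standard and short argument about order-theoretic properties.

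For $(1)\Rightarrow(2)$, suppose $\P$ is Artinian and let $D\subseteq\P$ be non-empty and downwards directed. By hypothesis $D$ contains a minimal element $m$. I claim $m$ is in fact the least element of $D$: given any $d\in D$, downward directedness supplies $z\in D$ with $z\leq m$ and $z\leq d$; minimality of $m$ forces $z=m$, hence $m=z\leq d$. So $m\leq d$ for all $d\in D$, as required.

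For $(2)\Rightarrow(3)$, take a descending chain $x_1\geq x_2\geq\cdots$ and let $D=\{x_n:n\in\N\}$. This set is non-empty, and it is downwards directed because any two of its elements are comparable (given $x_i,x_j$, the smaller index gives the larger element, and $\min(x_i,x_j)$ lies in $D$). By $(2)$, $D$ has a least element, which must be $x_n$ for some $n$; then for every $k>n$ we have $x_k\geq x_n$ by leastness and $x_k\leq x_n$ since the chain is descending, so $x_k=x_n$, i.e.\ the chain stabilizes.

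For $(3)\Rightarrow(1)$, I would argue by contraposition: suppose some non-empty $S\subseteq\P$ has no minimal element, and construct a strictly descending chain by dependent choice. Pick any $x_1\in S$; since $x_1$ is not minimal in $S$, choose $x_2\in S$ with $x_2<x_1$ (here $x_2\leq x_1$ but $x_2\neq x_1$), and inductively, since $x_n$ is not minimal in $S$, choose $x_{n+1}\in S$ with $x_{n+1}<x_n$. The resulting chain $x_1>x_2>\cdots$ never stabilizes, contradicting $(3)$. The only subtlety worth flagging is the use of the axiom of (dependent) choice in this last step; everything else is elementary. I do not anticipate a genuine obstacle here — the lemma is routine — but if one wanted to avoid choice one could not, in general, and it is standard to invoke it.
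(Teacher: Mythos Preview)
Your proof is correct and follows essentially the same cycle $(1)\Rightarrow(2)\Rightarrow(3)\Rightarrow(1)$ as the paper, with the same arguments at each step (minimal element of a directed set is least; a chain is directed; and the contrapositive $\neg(1)\Rightarrow\neg(3)$ via Dependent Choice). Your explicit flagging of the use of Dependent Choice in the last implication matches the paper's treatment.
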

\begin{proof}
Assume $\P$ is Artinian and let $M$ a non-empty downwards directed subset of $\P$. Then $M$ must have a minimal element $x$. Now, if $y\in M$, then there must be an $z\in M$ such that $z\leq x,y$. Since $x$ is minimal, it follows that $z=x$, so $x\leq y$, whence $x$ is the least element of $M$.

Now, assume that every non-empty downwards directed subset of $\P$ has a least element. If $x_1\geq x_2\geq x_3\geq \ldots$ is a descending chain, then $M=\{x_i\}_{i\in\N}$ is clearly a directed subset, so it has a least element, say $x_n$. So we must have $x_k=x_n$ for all $k>n$, hence $\P$ satisfies the descending chain condition.

Finally, we show by contraposition that it follows from the descending chain condition that $\P$ must be Artinian.
So assume that $\P$ does not satisfy the descending chain
condition. Using the Axiom of Dependent Choice, we can construct a sequence $x_1\geq x_2\geq \ldots$ that
does not terminate. The set \mbox{$M=\{x_n:n\in\N\}$} is then a non-empty
subset of $\P$ without a minimal element. Thus $\P$ is not
Artinian.
\end{proof}

Since $\P$ is Noetherian if and only if $\P^\op$ is Artinian, we obtain an equivalent characterization of Noetherian posets.
\begin{lemma}\label{lem:equivalentdefinitionsNoetherian}
Let $\P$ be a poset. Then the following statements are equivalent:
\begin{enumerate}
\item $\P$ is Noetherian;
\item All non-empty upwards directed subsets of $\P$ have a greatest element.
\item $\P$ satisfies the \emph{ascending chain condition}\index{ascending chain condition}. That is, if we have a sequence of
elements $x_1\leq x_2\leq \ldots$ in $\P$ (an ascending chain), then the sequence stabilizes. That is, there is an
$n\in\N$ such that $x_k=x_{n}$ for all $k>n$.
\end{enumerate}
\end{lemma}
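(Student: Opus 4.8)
The statement is the exact order-dual of Lemma~\ref{lem:equivalentdefinitionsArtinian}, so the natural strategy is to transport that lemma across the passage to the opposite poset rather than to redo the three implications by hand. The key bookkeeping fact, already recorded just before the lemma, is that $\P$ is Noetherian if and only if $\P^\op$ is Artinian.

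\textbf{Main steps.} First I would spell out the dictionary between order-theoretic notions in $\P$ and in $\P^\op$: a subset $M\subseteq\P$ is upwards directed in $\P$ precisely when it is downwards directed in $\P^\op$ (since the roles of upper and lower bounds are swapped), an element $x\in M$ is the greatest element of $M$ in $\P$ precisely when it is the least element of $M$ in $\P^\op$, and a sequence $x_1\leq x_2\leq\cdots$ is an ascending chain in $\P$ exactly when $x_1\geq x_2\geq\cdots$ in $\P^\op$, i.e.\ a descending chain there, with the notion of stabilization unchanged. Second, I would apply Lemma~\ref{lem:equivalentdefinitionsArtinian} to the poset $\P^\op$: its three equivalent conditions read, after the above translation, as the three conditions in the present statement about $\P$. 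Concretely, ``$\P^\op$ is Artinian'' becomes ``$\P$ is Noetherian'', ``every non-empty downwards directed subset of $\P^\op$ has a least element'' becomes ``every non-empty upwards directed subset of $\P$ has a greatest element'', and ``$\P^\op$ satisfies the descending chain condition'' becomes ``$\P$ satisfies the ascending chain condition''. Hence the three statements are equivalent.

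\textbf{Anticipated obstacle.} There is no real mathematical obstacle here; the only thing to be careful about is that the dualization is applied uniformly and that the quantifiers (``non-empty'', ``for all $k>n$'') are preserved verbatim under $\P\leftrightarrow\P^\op$, which they are since those clauses do not refer to the order at all. If one preferred a self-contained argument, one could instead copy the proof of Lemma~\ref{lem:equivalentdefinitionsArtinian} line by line reversing every inequality and replacing ``minimal/least'' by ``maximal/greatest'' and ``descending'' by ``ascending'', invoking the Axiom of Dependent Choice in the same place for the contrapositive of (3)$\Rightarrow$(1); but the dual-poset route is shorter and is the one I would present.
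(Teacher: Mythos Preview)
Your proposal is correct and follows exactly the paper's approach: the paper states the lemma without a written proof, preceded only by the sentence ``Since $\P$ is Noetherian if and only if $\P^\op$ is Artinian, we obtain an equivalent characterization of Noetherian posets'', which is precisely the duality argument you spell out in detail.
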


\begin{proposition}[Principle of Artinian induction]\index{Artinian induction}\label{prop:ArtinianInduction}
Let $\P$ be an Artinian poset and $\PP$ a property such that:
\begin{enumerate}
\item Induction basis: $\PP(x)$ is true for each minimal $x\in \P$;
\item Induction step: $\PP(y)$ is true for all $y<x$ implies that $\PP(x)$ is true.
\end{enumerate}
Then $\PP(x)$ is true for each $x\in \P$.
\end{proposition}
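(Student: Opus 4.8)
The plan is to argue by contradiction, using the defining property of an Artinian poset that every non-empty subset has a minimal element. Concretely, I would introduce the set of counterexamples
\[
M=\{x\in\P:\PP(x)\text{ is false}\}
\]
and suppose, towards a contradiction, that $M\neq\emptyset$. Since $\P$ is Artinian, $M$ then contains a minimal element $x_0$; unpacking the definition of minimal element, this means $\down x_0\cap M=\{x_0\}$, so no $y\in M$ satisfies $y<x_0$.

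Next I would derive the contradiction by distinguishing the two cases covered by the hypotheses. If $x_0$ is a minimal element of $\P$, then the induction basis immediately gives that $\PP(x_0)$ is true, contradicting $x_0\in M$. If $x_0$ is not minimal in $\P$, then for every $y<x_0$ minimality of $x_0$ in $M$ forces $y\notin M$, i.e.\ $\PP(y)$ is true for all $y<x_0$; the induction step then yields $\PP(x_0)$, again contradicting $x_0\in M$. In both cases we obtain a contradiction, so $M=\emptyset$, which is exactly the assertion that $\PP(x)$ holds for every $x\in\P$.

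I do not expect a genuine obstacle here; the result is essentially a reformulation of the Artinian property. The only points that need care are (a) working with the correct (dual) notion of minimal element from the Definition, so that ``$x_0$ is minimal in $M$'' really does exclude \emph{every} strictly smaller element of $M$, not merely those comparable in some restricted sense, and (b) noting that the induction basis is in fact subsumed by the induction step applied at an element with no strict predecessors, so the case split above is a matter of exposition rather than necessity. If one prefers, the argument can be stated uniformly without the case split, since at a minimal element of $\P$ the hypothesis of the induction step is vacuously satisfied and hence already delivers $\PP(x_0)$.
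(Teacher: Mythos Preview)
Your proposal is correct and follows essentially the same approach as the paper: define the set $M$ of counterexamples, use the Artinian property to extract a minimal element, and derive a contradiction from the induction hypothesis. The paper's version is slightly terser in that it skips the case split and applies the induction step directly (exactly as you note in your remark~(b)), but the argument is the same.
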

\begin{proof}
Assume that $M=\{x\in \P:\PP(x)$ is not true$\}$ is non-empty. Since
$\P$ is Artinian, this means that $M$ has a minimal element $x$.
Hence $\PP(y)$ is true for all elements $y<x$, so $\PP(x)$ is true by
the induction step, contradicting the definition of $M$.
\end{proof}

\section{Grothendieck topologies on posets}
\begin{definition}
 Let $\P$ be a poset. Given an element $p\in\P$, a subset $S\subseteq\P$ is called a \emph{sieve} on $p$ if $S\in\D(\down p)$, where $\down p=\{q\in\P: q\leq p\}$. Equivalently, $S$ is a sieve on $p$ if $q\leq p$ for each $q\in S$ and $r\in S$ if $r\leq q$ for some $q\in S$. Then a \emph{Grothendieck topology}\index{Grothendieck topology} $J$ on $\P$ is a map $p\mapsto J(p)$ that assigns to each element $p\in\P$ a collection $J(p)$ of sieves on $p$ such that
\begin{enumerate}
 \item the maximal sieve $\down p$ is an element of $J(p)$;
 \item if $S\in J(p)$ and $q\leq p$, then $S\cap\down q\in J(q)$;
 \item if $S\in J(p)$ and $R$ is any sieve on $p$ such that $R\cap\down q\in J(q)$ for each $q\in S$, then $R\in J(p)$.
\end{enumerate}
The second and third axioms are called the \emph{stability axiom}\index{stability axiom} and the \emph{transitivity axiom}\index{transitivity axiom}, respectively. Given $p\in\P$, we refer to elements of $J(p)$ as $J$-\emph{covers}\index{$J$-cover} of $p$. A pair $(\P,J)$ consisting of a poset $\P$ and a Grothendieck topology $J$ on $\P$ is called a \emph{site}\index{site}. We denote the set of Grothendieck topologies on $\P$ by $\G(\P)$. This set can be ordered pointwisely: for any two Grothendieck topologies $J$ and $K$ on $\P$ we define $J\leq K$ if $J(p)\subseteq K(p)$ for each $p\in\P$.
\end{definition}

Notice that $J$ becomes a functor $\P^\op\to\Sets$ if we define $J(q\leq p)S=S\cap\down q$ for each $S\in J(p)$. Indeed, if $r\leq q\leq p$ and $S\in J(p)$, we have
\begin{equation*}
J(r\leq p)S=S\cap\down r=(S\cap\down q)\cap\down r=J(r\leq q)J(q\leq p)S.
\end{equation*}
Thus the stability axiom exactly expresses that fact that $J\in\Sets^{\P\op}$. In fact, it is enough to require that $S\in J(p)$ for some $p\in\P$ implies $S\cap\down q\in J(q)$ for each $q<p$. Indeed, if $q=p$, then $S\cap\down q=S$, which was already assumed to be in $J(p)$.

\begin{example}
 If $\P=\O(X)$, the set of open subsets of some topological space $X$ ordered by inclusion, the Grothendieck topology corresponding to the usual notion of covering is given by $J(U)=\{S\in\D(\down U):\bigcup S=U\}$.
\end{example}

\begin{example}\label{ex:examplesGT}
Let $\P$ be a poset. Then
\begin{itemize}
 \item The \emph{indiscrete}\index{Grothendieck topology!indiscrete} Grothendieck topology on $\P$ is given by $J_{\mathrm{ind}}(p)=\{\down p\}$.
 \item The \emph{discrete}\index{Grothendieck topology!discrete} Grothendieck topology on $\P$ is given by $J_{\mathrm{dis}}(p)=\D(\down p)$.
\item The \emph{atomic}\index{Grothendieck topology!atomic} Grothendieck topology on $\P$ can only be defined if $\P$ is downwards directed, and is given by $J_{\mathrm{atom}}(p)=\D(\down p)\setminus\{\emptyset\}$.
\end{itemize}
Notice that the existence of (finite) meets is sufficient for a poset to be downwards directed. An example of a poset that is not downwards directed and where the stability axiom for $J_{\mathrm{atom}}$ fails is as follows. Let $\P_3=\{x,y,z\}$ with $y\leq x$ and $z\leq x$. Then $\down y\in J_{\mathrm{atom}}(x)$ and since $z\leq x$, we should have $\down y\cap\down z\in J(x)$. But this means that $\emptyset\in J_{\mathrm{atom}}(x)$, a contradiction. On the other hand, if $\P$ is downwards directed, the stability axiom always holds. Indeed, let $S\in J_{\mathrm{atom}}(x)$ and $z\leq x$. Even with $z\notin S$, we have $S\cap\down z\neq\emptyset$, so $S\cap\down z\in J_{\mathrm{atom}}(z)$, since if we choose an arbitrary $y\in S$, there must be a $w\in\P$ such that $w\leq z$ and $w\leq y$. The latter inequality implies $w\in S$, so $w\in S\cap\down z$.
\end{example}

\begin{convention}
If we want to emphasize the poset on which a Grothendieck topology is defined, we add the poset as a superscript. So $J_\atom^\P$ is the atomic topology defined on the poset $\P$.
\end{convention}

The following lemma will be very useful; for arbitrary categories it can be found in \cite[pp. 110-111]{M&M}. We give a direct proof for posets.
\begin{lemma}\label{lem:filter}
 Let $J$ be a Grothendieck topology on $\P$. Then $J(p)$ is a filter of sieves on $p$ in the sense that:
 \begin{itemize}
  \item $S\in J(p)$ implies $R\in J(p)$ for each sieve $R$ on $p$ containing
  $S$;
  \item $S,R\in J(p)$ implies $S\cap R\in J(p)$.
  \end{itemize}
\end{lemma}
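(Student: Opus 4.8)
The plan is to verify the two filter properties directly from the three axioms of a Grothendieck topology, using the stability and transitivity axioms in tandem.

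First I would prove the upward-closure property: suppose $S\in J(p)$ and $R$ is a sieve on $p$ with $S\subseteq R$. To apply the transitivity axiom, I need $R\cap\down q\in J(q)$ for each $q\in S$. But for $q\in S$ we have $q\in R$ as well (since $S\subseteq R$), so $\down q\subseteq R$, whence $R\cap\down q=\down q$, which is the maximal sieve on $q$ and hence lies in $J(q)$ by the first axiom. Therefore the hypotheses of the transitivity axiom are met, and we conclude $R\in J(p)$.

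Next I would prove closure under binary intersection: suppose $S,R\in J(p)$. The idea is to apply the transitivity axiom with $S$ as the ``given'' cover and $S\cap R$ as the sieve to be certified. So I must check that $(S\cap R)\cap\down q\in J(q)$ for each $q\in S$. Fix such a $q$; since $q\le p$ and $R\in J(p)$, the stability axiom gives $R\cap\down q\in J(q)$. Now $(S\cap R)\cap\down q=(R\cap\down q)\cap\down q=R\cap\down q$ — here I use that $q\in S$ implies $\down q\subseteq S$, so intersecting with $S$ again changes nothing — so $(S\cap R)\cap\down q=R\cap\down q\in J(q)$. Note also that $S\cap R$ is indeed a sieve on $p$, being an intersection of sieves (equivalently, an intersection of down-sets of $\down p$). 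Hence the transitivity axiom applies and yields $S\cap R\in J(p)$.

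I expect the main (minor) obstacle to be bookkeeping with the set-theoretic identities $\down q\subseteq S$ when $q\in S$, and the resulting simplifications $R\cap\down q\cap S=R\cap\down q$; these are immediate from the definition of a sieve but are the crux of why the transitivity axiom can be invoked with the maximal-sieve hypothesis satisfied on each $q$ in the given cover. No deep idea is needed beyond the observation that restricting a sieve $R$ to $\down q$ for $q$ in a cover $S\subseteq R$ (resp. $q\in S$) trivializes the condition that needs to be checked.
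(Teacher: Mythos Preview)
Your proof is correct and follows essentially the same approach as the paper's own proof: upward closure via transitivity (noting $R\cap\down q=\down q$ for $q\in S\subseteq R$), and closure under intersection via stability followed by transitivity. The only cosmetic difference is that in the intersection argument the paper takes $R$ as the ``given'' cover and applies stability to $S$, whereas you do the symmetric thing; this is immaterial.
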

\begin{proof}
Let $S\in J(p)$ and $R\in\D(\down p)$ such that $S\subseteq R$. Then if $q\in S$, we have $q\in R$, so $R\cap\down q=\down q\in J(q)$. It follows now from the transitivity axiom that $R\in J(p)$.

If $S,R\in J(p)$, and let $q\in R$. Then $$(S\cap\down R)\cap\down q=S\cap(R\cap\down q)=S\cap\down q\in J(q)$$ by the stability axiom. So $(S\cap R)\cap\down q\in J(q)$ for each $q\in R$, hence by the transitivity axiom, it follows that $S\cap R\in J(p)$.
\end{proof}

We see that a Grothendieck topology is pointwise closed under finite intersections. In general, a Grothendieck topology is not closed under arbitrary intersections, which leads to the following definition.

\begin{definition}
 Let $J$ be a Grothendieck topology on a poset $\P$. We say that $J$ is \emph{complete}\index{complete Grothendieck topology} if for each $p\in\P$ and each family $\{S_i\}_{i\in I}$ in $J(p)$, for some index set $I$, we have $\bigcap_{i\in I}S_i\in J(p)$.
\end{definition}

\begin{lemma}
 For each $p\in\P$, denote $\bigcap\{S\in J(p)\}$ by $S_p$. Then $J$ is complete if and only if $S_p\in J(p)$ for each $p\in\P$.
\end{lemma}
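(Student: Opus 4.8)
The plan is to prove the two implications separately, both by essentially unwinding definitions, with the only nontrivial ingredient being Lemma \ref{lem:filter}.

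First, for the forward direction, suppose $J$ is complete. Fix $p\in\P$ and consider the family $\{S_i\}_{i\in I}$ indexed by $I=J(p)$ itself, i.e.\ the family consisting of \emph{all} members of $J(p)$; this is a family in $J(p)$, and it is nonempty since $\down p\in J(p)$ by the first axiom. By completeness, $\bigcap_{i\in I}S_i\in J(p)$, and by construction this intersection is exactly $S_p$. Hence $S_p\in J(p)$.

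For the converse, suppose $S_p\in J(p)$ for every $p\in\P$, and let $\{S_i\}_{i\in I}$ be an arbitrary family in $J(p)$ for some $p$. The set $\bigcap_{i\in I}S_i$ is again a sieve on $p$: an arbitrary intersection of down-sets of $\down p$ is a down-set of $\down p$, and in the degenerate case $I=\varnothing$ the intersection is $\down p$, which is a sieve. Since each $S_i$ lies in $J(p)$, the defining intersection $S_p=\bigcap\{S\in J(p)\}$ satisfies $S_p\subseteq\bigcap_{i\in I}S_i$. Now $\bigcap_{i\in I}S_i$ is a sieve on $p$ containing the $J$-cover $S_p$, so by the first bullet of Lemma \ref{lem:filter} it is itself a $J$-cover, i.e.\ $\bigcap_{i\in I}S_i\in J(p)$. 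As $p$ and the family were arbitrary, $J$ is complete.

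I do not expect a genuine obstacle here; the proof is routine. The two small points to keep in mind are that the family used in the forward direction must be taken to be all of $J(p)$ (so that its intersection is literally $S_p$), and that one should note arbitrary intersections of sieves on $p$ are again sieves on $p$ so that Lemma \ref{lem:filter} is applicable in the converse direction.
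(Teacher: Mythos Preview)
Your proof is correct and follows essentially the same approach as the paper: both directions are handled exactly as you describe, with the converse relying on $S_p\subseteq\bigcap_{i\in I}S_i$ together with Lemma \ref{lem:filter}. Your version is slightly more explicit (noting nonemptiness of $J(p)$, checking that the intersection is a sieve, and handling the empty index set), but the argument is the same.
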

\begin{proof}
If $J$ is complete, it follows directly from the definitions of $S_p$ and of completeness that $S_p\in J(p)$ for each $p\in\P$.
 Conversely, assume that $S_p\in J(p)$ for each $p\in\P$. For an arbitrary $p\in\P$ let $\{S_i\}_{i\in I}$ a family of sieves in $J(p)$. Then $$S_p=\bigcap\{S\in J(p)\}\subseteq\bigcap_{i\in I}S_i,$$ so by Lemma \ref{lem:filter}, we find that $\bigcap_{i\in I}S_i\in J(p)$.
\end{proof}

\begin{proposition}\label{prop:GTisCompleteLattice}
 Let $\P$ be a poset. Then $\G(\P)$ is a complete lattice where the meet $\bigwedge_{i\in I}J_i$ of a each collection $\{J_i\}_{i\in I}$ of Grothendieck topologies on $\P$ is defined by pointwise intersection: $\left(\bigwedge_{i\in I} J_i\right)(p)=\bigcap_{i\in I}J_i(p)$ for any collection $\{J_i\}_{i\in I}$ of Grothendieck topologies on $\P$.
\end{proposition}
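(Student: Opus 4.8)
The plan is to invoke Lemma~\ref{lem:completelattice}: it suffices to show that $\G(\P)$ has all meets, with the meet of an arbitrary family realised by pointwise intersection; the joins then come for free. Concretely, given a family $\{J_i\}_{i\in I}$ of Grothendieck topologies on $\P$ (with $I$ possibly empty), I would define $K$ by $K(p)=\bigcap_{i\in I}J_i(p)$ for each $p\in\P$, where the empty intersection is understood to be the full collection $\D(\down p)$ of sieves on $p$. The first task is then to verify that $K$ is itself a Grothendieck topology on $\P$, and the second is to check that $K$ is the greatest lower bound of $\{J_i\}_{i\in I}$ in the pointwise order.

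For the first task I would check the three axioms in turn. For each $p$, the maximal sieve $\down p$ lies in $J_i(p)$ for every $i$, hence in $K(p)$. For stability, if $S\in K(p)$ and $q\leq p$, then $S\in J_i(p)$ for every $i$, so $S\cap\down q\in J_i(q)$ for every $i$ by stability of each $J_i$, whence $S\cap\down q\in K(q)$. For transitivity, suppose $S\in K(p)$ and $R$ is a sieve on $p$ with $R\cap\down q\in K(q)$ for every $q\in S$; then for each fixed $i$ we have $S\in J_i(p)$ and $R\cap\down q\in J_i(q)$ for every $q\in S$, so $R\in J_i(p)$ by transitivity of $J_i$; as this holds for all $i$, we obtain $R\in K(p)$. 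When $I=\emptyset$ these verifications reduce to the already-noted fact (Example~\ref{ex:examplesGT}) that the discrete topology $J_{\mathrm{dis}}$ is a Grothendieck topology.

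For the second task, $K(p)\subseteq J_i(p)$ for every $i$ and every $p$, so $K\leq J_i$ for all $i$, i.e.\ $K$ is a lower bound of the family. If $L\in\G(\P)$ is any lower bound, then $L(p)\subseteq J_i(p)$ for every $i$ and every $p$, hence $L(p)\subseteq\bigcap_{i\in I}J_i(p)=K(p)$ for every $p$, i.e.\ $L\leq K$. Thus $K=\bigwedge_{i\in I}J_i$, and since $\G(\P)$ has all meets, Lemma~\ref{lem:completelattice} shows that it is a complete lattice, with meet given by pointwise intersection exactly as stated.

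I expect no serious obstacle here; the only point requiring a little care is the bookkeeping in the transitivity axiom — one must quantify over the index $i$ correctly, applying transitivity of each $J_i$ separately and only afterwards intersecting — together with the pedantic but necessary treatment of the empty family, so that the top element (the discrete topology) is subsumed under the same statement.
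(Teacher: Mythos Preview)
Your proof is correct and follows essentially the same approach as the paper's own proof: verify the three Grothendieck topology axioms for the pointwise intersection, check that it is the greatest lower bound, and then invoke Lemma~\ref{lem:completelattice}. The only difference is that you explicitly treat the empty-index case, which the paper leaves implicit.
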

\begin{proof}
 Let $\{J_{i}\}_{i\in I}$ be a collection of Grothendieck topologies on $\P$. We shall prove that $\bigwedge_{i\in I}J_i$ is a Grothendieck topology on $\P$. Let $p\in \P$, then $\down p\in J_i(p)$ for each $i\in I$, so $\down p\in\bigcap_{i\in I}J_i(p)$. For stability, assume $S\in\bigcap_{i\in I}J_i(p)$ and let $q\leq p$. Thus $S\in J_i(p)$ for each $i\in I$, so by the stability axiom for each $J_i$, we find $S\cap\down q\in J_i(q)$ for each $i\in I$. So $S\cap\down q\in\bigcap_{i\in I}J_i(q)$. Finally, let $S\in\bigcap_{i\in I}J_i(p)$ and $R\in\D(\down p)$ be such that $R\cap\down q\in \bigcap_{i\in I}J_i(q)$ for each $q\in S$. Then for each $i\in I$ we have $S\in J_i(p)$ and $R\cap\down q\in J_i(q)$ for each $q\in S$. So for each $i\in I$, by the transitivity axiom for $J_i$ we find that $R\in J_i(p)$. Thus $R\in\bigcap_{i\in I}J_i(p)$. We conclude that $\bigwedge J_i$ is indeed a Grothendieck topology on $\P$. Now, for each $k\in I$, we have $\bigwedge_{i\in I}J_i\leq J_k$, since $\bigcap_{i\in I}J_i(p)\subseteq J_k(p)$ for each $p\in\P$. If $K$ is another Grothendieck topology on $\P$ such that $J_k\leq K$ for each $k\in I$, then for each $p\in\P$ $$\bigcap_{i\in I}J_i(p)\subseteq J_k(p)\subseteq K(p).$$ Hence $\bigwedge_{i\in I}J_i\leq K$. This shows that pointwise intersection indeed defines a meet operation on $\G(\P)$, and by Lemma \ref{lem:completelattice}, it follows that $\G(\P)$ is a complete lattice.
\end{proof}

We will now describe a special class of Grothendieck topologies on a poset that are generated by subsets of the poset.
\begin{proposition}\label{prop:defJX}
Let $\P$ be a poset and $X$ a subset of $\P$. Then
\begin{equation}\label{eq:inducedtopology1}
J_X(p)  = \{S\in\D(\down p): X\cap\down p\subseteq S\}
\end{equation}
is a complete Grothendieck topology on $\P$. Moreover, if $X\subset Y\subseteq\P$, then $J_Y\leq J_X$.
\end{proposition}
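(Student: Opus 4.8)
The plan is to verify the three axioms of a Grothendieck topology directly from the defining condition ``$X\cap\down p\subseteq S$'', then check completeness by a one-line remark about intersections of down-sets, and finally read off the comparison statement. No clever idea is needed; the whole argument is elementary bookkeeping with down-sets.

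First I would record the trivial points. Every $S\in J_X(p)$ is by definition a sieve on $p$, so $J_X(p)$ is a collection of sieves on $p$; and the maximal sieve satisfies $X\cap\down p\subseteq\down p$, giving the first axiom $\down p\in J_X(p)$. For the stability axiom, suppose $S\in J_X(p)$ and $q\leq p$. Then $\down q\subseteq\down p$, hence $X\cap\down q\subseteq X\cap\down p\subseteq S$, and also $X\cap\down q\subseteq\down q$, so $X\cap\down q\subseteq S\cap\down q$. Since $S\cap\down q$ is again a sieve (being an intersection of down-sets contained in $\down q$), this shows $S\cap\down q\in J_X(q)$.

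The only axiom demanding a genuine, if short, argument is transitivity. Suppose $S\in J_X(p)$ and $R\in\D(\down p)$ is a sieve on $p$ with $R\cap\down q\in J_X(q)$ for every $q\in S$; I must show $X\cap\down p\subseteq R$. Pick $x\in X\cap\down p$. Since $X\cap\down p\subseteq S$ we have $x\in S$, so the hypothesis applies at $q=x$ and gives $X\cap\down x\subseteq R\cap\down x$. But $x\in X$ and $x\leq x$, so $x\in X\cap\down x\subseteq R$. Thus every element of $X\cap\down p$ lies in $R$, i.e. $R\in J_X(p)$. Completeness is then immediate: for a family $\{S_i\}_{i\in I}$ in $J_X(p)$ the set $\bigcap_{i\in I}S_i$ is an intersection of down-sets contained in $\down p$, hence a sieve on $p$ (with the convention that this is $\down p$ when $I=\emptyset$), and from $X\cap\down p\subseteq S_i$ for each $i$ we get $X\cap\down p\subseteq\bigcap_{i\in I}S_i$, so $\bigcap_{i\in I}S_i\in J_X(p)$.

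Finally, for the comparison claim, assume $X\subseteq Y\subseteq\P$ (strict inclusion is not needed). For any $p\in\P$ and $S\in J_Y(p)$ we have $X\cap\down p\subseteq Y\cap\down p\subseteq S$, so $S\in J_X(p)$; hence $J_Y(p)\subseteq J_X(p)$ for all $p$, which is exactly $J_Y\leq J_X$. I do not expect any real obstacle: the subtlest point is the transitivity step, where one must use both that $X\cap\down p\subseteq S$ (so the hypothesis on $R$ is available at the relevant $q=x$) and that $x$ itself belongs to $X\cap\down x$.
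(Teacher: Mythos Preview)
Your proof is correct and follows essentially the same approach as the paper. The only minor difference is in the transitivity step: you argue element-wise (pick $x\in X\cap\down p$, observe $x\in S$, apply the hypothesis at $q=x$, conclude $x\in R$), whereas the paper does a slightly more global computation via $X\cap S=\bigcup_{q\in S}(X\cap\down q)\subseteq\bigcup_{q\in S}(R\cap\down q)=R\cap S$; your version is if anything more direct.
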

\begin{proof}
 We have $\down p\in J_X(p)$, for $\down p$ contains $X\cap\down p$. If
\mbox{$S\in J_X(p)$}, i.e. $X\cap\down p\subseteq S$, and if $q<p$, then
$\down q\subset \down p$, so
\begin{equation*}
X\cap \down q=X\cap \down p\cap \down q \subseteq S\cap
\down q.
\end{equation*}
Hence we find that $S\cap\down q\in J_X(q)$, so
stability holds.

For transitivity, let $S\in J_X(p)$, so $X\cap\down p\subseteq S$. Let
$R$ be a sieve on $p$ such that for each $q\in S$ we have $R\cap \down q\in J_X(q)$, so $X\cap \down q\subseteq R\cap\down q$. Since $S$ is a sieve, we have $S=\down S=\bigcup_{q\in S}\down q$, whence we
find
\begin{equation*}
X\cap S  = X\cap\bigcup_{q\in S}\down q=\bigcup_{q\in S}(X\cap \down q)
\subseteq  \bigcup_{q\in S}(R\cap \down q)=R\cap\bigcup_{q\in
S}\down q=R\cap S,
\end{equation*}
from which $$
X\cap\down p=X\cap (X\cap \down p)\subseteq X\cap S\subseteq
R\cap S\subseteq R$$ follows. Thus $R\in J_X(p)$ and the transitivity
axiom holds.

We next have to show that $J_X$ is complete. So let $p\in\P$ and let $\{S_i\}_{i\in I}\subseteq J_X(p)$ be a collection of covers with index set $I$. This means that $X\cap\down p\subseteq S_i$ for each $i\in I$. But this implies that $X\cap\down p\subseteq\bigcap_{i\in I}S_i$, hence $\bigcap_{i\in I}S_i\in J_X(p)$.

Finally, let $Y\subseteq\P$ such that $X\subseteq Y$. Let $p\in\P$ and $S\in J_Y(p)$. Then $Y\cap\down p\subseteq S$, and since $X\subseteq Y$ this implies $X\cap\down p\subseteq S$. So $S\in J_X(p)$. We conclude that $J_Y\leq J_X$.
\end{proof}

We call $J_X$ the \emph{subset Grothendieck topology}\index{subset Grothendieck topology} generated by the subset $X$ of $\P$. It is easy to see that the indiscrete Grothendieck topology is exactly $J_\P$, whereas the discrete Grothendieck topology is $J_\emptyset$.


\begin{lemma}\label{lem:defXJ}
 Let $J$ be a Grothendieck topology on a poset $\P$ and define
$X_J\subseteq \P$ by
\begin{equation}\label{mJ}
X_J=\big\{p\in \P:J(p)=\{\down p\}\big\}.
\end{equation}
If $K$ is another Grothendieck topology on $\P$ such that $K\leq J$, then $X_J\subseteq X_K$.
\end{lemma}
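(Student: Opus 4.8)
The plan is to argue directly from the definitions, with no induction or case analysis needed. Fix an arbitrary $p \in X_J$; by the definition of $X_J$ in \eqref{mJ} this means $J(p) = \{\down p\}$. I want to conclude that $p \in X_K$, i.e.\ that $K(p) = \{\down p\}$ as well.

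First I would use the hypothesis $K \leq J$, which by the definition of the pointwise order on $\G(\P)$ says exactly that $K(p) \subseteq J(p)$ for every $p \in \P$. Applying this at our fixed $p$ gives $K(p) \subseteq J(p) = \{\down p\}$, so $K(p)$ is a subset of the one-element set $\{\down p\}$; hence $K(p)$ is either $\emptyset$ or $\{\down p\}$.

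The only remaining point is to rule out $K(p) = \emptyset$, and for this I would invoke the first axiom in the definition of a Grothendieck topology (the maximal sieve axiom): since $K$ is a Grothendieck topology, $\down p \in K(p)$ for every $p \in \P$, so $K(p)$ is never empty. Combining this with the previous step forces $K(p) = \{\down p\}$, i.e.\ $p \in X_K$. Since $p \in X_J$ was arbitrary, this proves $X_J \subseteq X_K$.

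There is no real obstacle here: the whole content is that the pointwise order squeezes $K(p)$ between the always-present maximal sieve $\down p$ and the singleton $J(p) = \{\down p\}$. (One may also note in passing the general principle that $X_{(-)}$ is order-reversing, of which this is the statement; the subset topologies $J_X$ of Proposition~\ref{prop:defJX} already exhibit the analogous order-reversal $X \subseteq Y \Rightarrow J_Y \leq J_X$.)
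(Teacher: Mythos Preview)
Your proof is correct and follows essentially the same argument as the paper's: from $K(p)\subseteq J(p)=\{\down p\}$ and the maximal-sieve axiom $\down p\in K(p)$ one immediately gets $K(p)=\{\down p\}$. The paper's proof is just a terser one-line version of what you wrote.
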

\begin{proof}
 Let $p\in X_J$. Then $K(p)\subseteq J(p)=\{\down p\}$, and since $\down p\in K(p)$ by definition of a Grothendieck topology, this implies $K(p)=\{\down p\}$. So $p\in X_K$.
\end{proof}

\begin{lemma}\label{lem:sublemma}
 Let $\P$ be a poset and $J$ a Grothendieck topology on $\P$. If $p\in \P$ such that $J(p)=\{\down p\}$, then for each sieve $S$ on $p$ such that $X_J\cap\down p\subseteq S$, we have $S\in J(p)$.
\end{lemma}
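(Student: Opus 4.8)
The plan is short, because the statement is essentially immediate once one unwinds the definitions. The hypothesis $J(p)=\{\down p\}$ says precisely that $p\in X_J$ (by the very definition of $X_J$ in Lemma~\ref{lem:defXJ}), and of course $p\in\down p$ by reflexivity. Hence $p\in X_J\cap\down p$. So I would begin by recording this: any sieve $S$ on $p$ satisfying $X_J\cap\down p\subseteq S$ must, in particular, contain the element $p$ itself.

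The one remaining step is to exploit that a sieve on $p$ is a down-set contained in $\down p$. Since $p\in S$ and every element of $\down p$ lies below $p$, downward closure of $S$ forces $\down p\subseteq S$; combined with $S\subseteq\down p$ this gives $S=\down p$. By the first axiom of a Grothendieck topology, $\down p\in J(p)$, so $S\in J(p)$, which is exactly the claim. (Equivalently: since $J(p)=\{\down p\}$ the assertion amounts to saying $\down p\subseteq X_J$, and the argument above shows $X_J\cap\down p$ generates all of $\down p$ as a sieve.)

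I do not anticipate any genuine obstacle: the whole content is the observation that $X_J\cap\down p$ already contains the top element $p$ of $\down p$, which then absorbs the entire sieve. The reason for isolating this as a lemma is, I expect, that it furnishes the degenerate case in a subsequent Artinian-induction proof that $J=J_{X_J}$ on Artinian posets: the inclusion $J_{X_J}(p)\subseteq J(p)$ is settled by this argument whenever $J(p)=\{\down p\}$, whereas the complementary case $p\notin X_J$ will require the inductive hypothesis on the elements strictly below $p$.
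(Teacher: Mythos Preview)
Your proof is correct and follows exactly the same line as the paper's: from $J(p)=\{\down p\}$ one has $p\in X_J\cap\down p\subseteq S$, and the only sieve on $p$ containing $p$ is $\down p$ itself, so $S=\down p\in J(p)$. One small slip in your parenthetical aside: the assertion does \emph{not} amount to $\down p\subseteq X_J$ (that would be much stronger and generally false); what you actually show, and all that is needed, is that $X_J\cap\down p$ generates $\down p$ as a sieve because it contains $p$.
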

\begin{proof}
 By definition of $X_J$ we have $p\in X_J$, so each sieve $S$ on $p$ containing $X_J\cap
\down p$ contains $p$. Since the only sieve on $p$ containing $p$ must be equal to $\down p$, we find that
$X_J\cap\down p\subseteq S$ implies $S\in J(p)$.
\end{proof}

\begin{proposition}\label{prop:correspsubsetsandtopologies}
Let $\P$ be a poset. Then
\begin{enumerate}
\item $Y=X_{J_Y}$ for each $Y\subseteq\P$;
\item $J_Y\leq J_Z$ if and only if $Z\subseteq Y$ for each $Y,Z\subseteq\P$;
\item $K\leq J_{X_K}$ for each Grothendieck topology $K$ on $\P$;
\item If $\P$ is Artinian, then the map $\PP(\P)^\op\to\G(\P)$ given by $X\mapsto J_X$ is an order isomorphism with inverse $J\mapsto X_J$;
\item If $\P$ is Artinian, then every Grothendieck topology on $\P$ is complete.
\end{enumerate}
\end{proposition}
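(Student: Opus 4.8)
The plan is to establish the five items in order, since (2)--(5) all rest on (1), and to locate (4) as the only item where the Artinian hypothesis does any real work.

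For (1), I would argue straight from the definitions: $p\in X_{J_Y}$ exactly when the only sieve $S$ on $p$ with $Y\cap\down p\subseteq S$ is $\down p$ itself. If $p\in Y$ this holds, since $p\in Y\cap\down p\subseteq S$ forces $S=\down p$; if $p\notin Y$, then $\down p\setminus\{p\}$ is a sieve on $p$ that is proper yet still contains $Y\cap\down p$, so $p\notin X_{J_Y}$. Item (2) is then immediate: $Z\subseteq Y\Rightarrow J_Y\leq J_Z$ is Proposition~\ref{prop:defJX}, and conversely $J_Y\leq J_Z$ gives $Z=X_{J_Z}\subseteq X_{J_Y}=Y$ via Lemma~\ref{lem:defXJ} and (1). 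For (3): given $S\in K(p)$ and $q\in X_K\cap\down p$, the stability axiom yields $S\cap\down q\in K(q)=\{\down q\}$, so $q\in S$; hence $X_K\cap\down p\subseteq S$, i.e.\ $S\in J_{X_K}(p)$, which is $K\leq J_{X_K}$.

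Item (4) is the heart. By (1) and (2), $X\mapsto J_X$ is an order embedding $\PP(\P)^\op\hookrightarrow\G(\P)$ with left inverse $J\mapsto X_J$, and Lemma~\ref{lem:defXJ} makes $J\mapsto X_J$ order-preserving into $\PP(\P)^\op$; so the only thing left is that $J\mapsto X_J$ is also a right inverse, i.e.\ $J_{X_K}\leq K$ for every Grothendieck topology $K$ (the reverse inequality being (3)). I would prove $J_{X_K}(p)\subseteq K(p)$ by Artinian induction (Proposition~\ref{prop:ArtinianInduction}) on $p$. Fix a sieve $S$ on $p$ with $X_K\cap\down p\subseteq S$. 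If $p\in X_K$, then $S\in K(p)$ by Lemma~\ref{lem:sublemma}. If $p\notin X_K$, then $K(p)\neq\{\down p\}$, so $K(p)$ contains some sieve $T\neq\down p$; as the only sieve on $p$ containing $p$ is $\down p$, every $q\in T$ satisfies $q<p$, so the induction hypothesis applies, and since $X_K\cap\down q=X_K\cap\down p\cap\down q\subseteq S\cap\down q$ we get $S\cap\down q\in J_{X_K}(q)\subseteq K(q)$ for each $q\in T$. Transitivity, applied to $T\in K(p)$ and the sieve $S$, then gives $S\in K(p)$. The induction basis ($p$ minimal) is this same argument with a vacuous hypothesis: any $T\neq\down p$ in $K(p)$ must be $\emptyset$, and transitivity applies trivially. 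Item (5) is then a corollary: on an Artinian poset every Grothendieck topology equals some $J_X$, which is complete by Proposition~\ref{prop:defJX}.

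The main obstacle is the induction step in (4): one has to conjure, purely from the failure of $K(p)=\{\down p\}$, a $K$-cover $T$ of $p$ built entirely from elements strictly below $p$, and then shepherd the candidate sieve $S$ through the transitivity axiom using the inductive control over $K(q)$ for $q<p$. Everything else is unwinding definitions and citing Lemmas~\ref{lem:defXJ} and~\ref{lem:sublemma}.
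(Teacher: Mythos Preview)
Your proposal is correct and follows essentially the same approach as the paper's proof. The only cosmetic difference is in the induction step of (4): where the paper explicitly upgrades to the specific cover $\down p\setminus\{p\}\in K(p)$ (using the filter property of Lemma~\ref{lem:filter}), you work directly with an arbitrary $T\in K(p)\setminus\{\down p\}$, which is just as good since any such $T$ already lies inside $\down p\setminus\{p\}$.
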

\begin{proof}\
\begin{enumerate}
\item Let $x\in Y$. Then $x\in Y\cap \down x$, so if $S\in
J_Y(x)$, that is $S$ is a sieve on $x$ containing $Y\cap\down x$, we
must have $x\in S$. The only sieve on $x$ containing $x$ is $\down x$,
so we find that $J_Y(x)=\{\down x\}$. By definition of $X_{J_Y}$ we find
that $x\in X_{J_Y}$.

Conversely, let $x\in X_{J_Y}$. This means that $J_Y(x)=\{\down x\}$, or
equivalently, that the only sieve on $x$ containing $Y\cap \down x$ is
$\down x$. Now, assume that $x\notin Y$. Then $x\notin Y\cap\down x$, so
$\down x\setminus\{x\}$ (possibly empty) is a sieve on $x$ which
clearly contains $X\cap \down x$, but which is clearly not equal to
$\down x$ in any case. So we must have $x\in Y$.

\item Let $Y,Z\subseteq\P$. We already found in Proposition \ref{prop:defJX} that $Z\subseteq Y$ implies $J_Y\leq J_Z$. So assume that $J_Y\leq J_Z$. By Lemma \ref{lem:defXJ}, this implies that $X_{J_Z}\subseteq X_{J_Y}$. But by the first statement of this proposition, this is exactly $Z\subseteq Y$.
\item Let $S\in K(p)$ and $q\in X_K\cap\down p$. So $q\leq p$ and $K(q)=\{\down q\}$.
Since $S\in K(p)$ and $q\leq p$, we have $\down q\cap S\in K(q)$ by
stability. In other words $S\cap\down q=\down q$. Thus $\down q\subseteq S$ and
so certainly $q\in S$. We see that $X_K\cap\down p\subseteq S$, hence
$S\in J_{X_K}$.
\item It follows from Proposition \ref{prop:defJX} and Lemma \ref{lem:defXJ} that $X\mapsto J_X$ and $J\mapsto X_J$ are order morphisms.
Since the first statement of this proposition is equivalent with saying that $J\mapsto X_J$ is a left inverse of $X\mapsto J_X$, we only have to show that it is also a right inverse. In other words, we have to show that each
Grothendieck topology $K$ on $\P$ equals $J_{X_K}$. So
let $K$ be a Grothendieck topology on $\P$. We shall
show using Artinian induction (see Lemma \ref{prop:ArtinianInduction}) that for any $p\in \P$, each sieve
$S$ on $p$ containing $X_K\cap\down p$ is an element of $K(p)$.

Let $p$ be a minimal element. Then the only possible sieves on $p$
are $\down p=\{p\}$ and $\emptyset$. Hence there are only two options
for $K(p)$, namely either $K(p)=\{\down p\}$ or
$K(p)=\{\emptyset,\down p\}$. In the first case, we find by Lemma \ref{lem:sublemma} that
$X_K\cap\down p\subseteq S$ implies $S\in K(p)$. If
$K(p)=\{\emptyset,\down p\}$, then $K(p)$ contains all possible sieves on
$p$, so we automatically have that $S\in K(p)$ for any sieve $S$ on $p$ such that
$X_K\cap \down p\subseteq S$.

For the induction step, assume that $p\in \P$ is not minimal and assume that for each $q<p$ the condition $X_K\cap
\down q\subseteq R$ implies $R\in K(q)$ for each sieve $R$ on $q$. If
$K(p)=\{\down p\}$, we again apply Lemma \ref{lem:sublemma} to conclude that
$S\in K(p)$ for all sieves $S$ on $p$ such that $X_K\cap\down p\subseteq S$. If
$K(p)\neq\{\down p\}$, we must have $\down p\setminus\{p\}\in K(p)$, which is non-empty, since $p$ is not minimal. If $S$ is a sieve on $p$ such that $X_K\cap
\down p\subseteq S$, we have for each $q\in \down p\setminus\{p\}$, that is, for each $q<p$, that
\begin{equation}\label{eq:identityXJ}
X_K\cap \down q=X_K\cap \down p\cap \down q\subseteq S\cap\down q.
\end{equation}
Our induction assumption on $q<p$ implies now that $S\cap\down q\in K(q)$ for each $q\in \down p\setminus\{p\}$, hence by the transitivity axiom we find
$S\in K(p)$.

So for all sieves $S$ on $p$ such that $X_K\cap\down p\subseteq S$, we found
that $S\in K(p)$, hence we have $J_{X_K}\subseteq K$. The other inclusion follows from the third statement of this proposition.
\item Since all subset Grothendieck topologies are complete, this follows from the fourth statement of this proposition.
\end{enumerate}
\end{proof}

\begin{theorem}\label{thm:equivalenceArtinianandSubsettopologies}
Let $\P$ be a poset. Then $\P$ is Artinian if and only if all Grothendieck topologies on $\P$ are subset Grothendieck topologies.
\end{theorem}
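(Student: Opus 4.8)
The plan is to prove the two directions separately. The forward direction is essentially already done: if $\P$ is Artinian, then by Proposition~\ref{prop:correspsubsetsandtopologies}(4) the map $X\mapsto J_X$ is a bijection from $\PP(\P)^\op$ onto $\G(\P)$, so every Grothendieck topology on $\P$ is of the form $J_X$ for some $X\subseteq\P$, i.e.\ is a subset Grothendieck topology. So this half of the theorem requires no new argument beyond citing the proposition.

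For the converse, I would argue by contraposition: assuming $\P$ is not Artinian, I must exhibit a Grothendieck topology on $\P$ that is not of the form $J_X$ for any $X\subseteq\P$. Since $\P$ is not Artinian, by Lemma~\ref{lem:equivalentdefinitionsArtinian} there is a strictly descending chain $x_1 > x_2 > x_3 > \cdots$ in $\P$ that does not stabilize; write $C=\{x_n : n\in\N\}$ and $D=\down C=\bigcup_n \down x_n$, a down-set with no minimal element among the $x_n$. The idea is to build a Grothendieck topology $K$ that "behaves like" covering along this chain: roughly, declare a sieve $S$ on $p$ to be a $K$-cover when it contains "enough" of the chain below $p$ — made precise, for instance, by asking that $S$ contain $\down x_n \cap \down p$ for some $n$ (cofinally far down the chain), while keeping $K$ as small as possible elsewhere. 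A clean way to get a candidate is to take $K = \bigwedge\{J_X : X\in\mathcal{F}\}$ for a suitable family $\mathcal{F}$ of subsets — for example the down-sets $D_n = \down x_n$ — using Proposition~\ref{prop:GTisCompleteLattice}; the resulting meet $K$ is a genuine Grothendieck topology, and the point will be that it is \emph{not} complete (an infinite intersection of $K$-covers of $x_1$, namely the $\down x_n$, fails to be a $K$-cover), whereas every subset Grothendieck topology is complete by Proposition~\ref{prop:defJX}. Hence $K$ cannot equal any $J_X$.

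The technical heart of the argument is verifying that the candidate $K$ is genuinely incomplete and simultaneously not equal to any $J_X$. The slick route is the completeness obstruction just described: if $K=\bigwedge_n J_{\down x_n}$, then each $\down x_m$ lies in $K(x_1)$ (since $\down x_k \cap \down x_1 \subseteq \down x_m$ for $k\ge m$ by the descending-chain relation, so $\down x_m \in J_{\down x_k}(x_1)$ for every $k$), but $\bigcap_m \down x_m = \down\!\big(\bigcap_m\{x_m\}\big)$ — more precisely $\bigcap_m \down x_m$ is the down-set of lower bounds of the chain, which does not contain any $x_n$ — and one checks this intersection is not in $K(x_1)$ because it fails to contain $\down x_n \cap \down x_1 = \down x_n$ for the relevant $n$. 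Since by Proposition~\ref{prop:defJX} every $J_X$ is complete, $K\ne J_X$ for all $X$. I would also record $X_K$ explicitly and observe directly that $J_{X_K}\ne K$, as a sanity check that the correspondence of Proposition~\ref{prop:correspsubsetsandtopologies} really does break down off the Artinian case.

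The main obstacle I anticipate is purely bookkeeping: making the candidate $K$ precise enough that (a) the three Grothendieck-topology axioms are transparently satisfied — which is why routing through the meet $\bigwedge_n J_{\down x_n}$ and Proposition~\ref{prop:GTisCompleteLattice} is attractive, as it makes the axioms automatic — and (b) the infinite intersection of chosen $K$-covers of $x_1$ provably lies outside $K(x_1)$. Point (b) is where one must use non-stabilization of the chain in an essential way: one needs that no single $\down x_n$ is contained in $\bigcap_m \down x_m$, equivalently that the chain has no least element, which is exactly the failure of the descending chain condition. Once that incompleteness is in hand, the contradiction with the completeness of subset topologies (Proposition~\ref{prop:defJX}) closes the proof with no further work.
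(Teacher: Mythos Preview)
Your forward direction is fine and matches the paper. The converse, however, has a genuine gap: your candidate $K=\bigwedge_n J_{\down x_n}$ is in fact a subset Grothendieck topology, so it cannot serve as a counterexample. By Proposition~\ref{prop:GTisCompleteLattice} the meet is computed pointwise, so
\[
K(p)=\bigcap_n J_{\down x_n}(p)=\{S\in\D(\down p):\forall n,\ \down x_n\cap\down p\subseteq S\}=\{S:(\textstyle\bigcup_n\down x_n)\cap\down p\subseteq S\}=J_Y(p)
\]
with $Y=\bigcup_n\down x_n$. Thus $K=J_Y$ is complete and your incompleteness argument never gets off the ground. Concretely, your claim that $\down x_m\in K(x_1)$ is false for $m\ge 2$: you correctly note $\down x_k\cap\down x_1\subseteq\down x_m$ for $k\ge m$, but then assert $\down x_m\in J_{\down x_k}(x_1)$ for \emph{every} $k$, which fails for $k<m$ since then $x_k\in\down x_k\cap\down x_1$ but $x_k\notin\down x_m$. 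More generally, arbitrary meets of subset topologies are always subset topologies (by the computation above), so no meet of $J_X$'s can give you a non-subset example.

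The paper does not rely on completeness for this theorem. Instead it takes a non-empty subset $X\subseteq\P$ with no minimal element (available since $\P$ is not Artinian) and writes down explicitly
\[
L_X(p)=\{S\in\D(\down p): \forall x\in X\cap\down p,\ S\cap\down x\cap X\neq\emptyset\},
\]
checks the three axioms directly, and then shows by a short case analysis on $Y$ that $L_X\neq J_Y$ for every $Y\subseteq\P$. Your incompleteness strategy can be made to work, but you need a genuinely different construction (for instance the join rather than the meet, or an explicit topology like $L_X$); the meet you chose collapses to $J_{\bigcup_n\down x_n}$.
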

\begin{proof}
 The previous proposition states that all Grothendieck topologies on $\P$ are subset Grothendieck topologies if $\P$ is Artinian. For the other direction, we first introduce another Grothendieck topology. Let $\P$ be a poset and $X\subseteq\P$, and define $L_X$ for each $p\in\P$ by $$L_X(p)=\{S\in\D(\down p): x\in X\cap\down p\implies S\cap\down x\cap X\neq\emptyset\}.$$ To see that this is a Grothendieck topology on $\P$, assume that $x\in X\cap\down p$, then clearly $\down p\cap\down x\cap X\neq\emptyset$, so $\down p\in L_X(p)$. If $S\in L_X(p)$ and $q\leq p$, assume that $x\in X\cap\down q$. Then $x\in X\cap\down p$, so $S\cap\down x\cap X\neq\emptyset$. Since $x\leq q$, we have $\down x=\down q\cap\down x$, hence $S\cap\down q\cap\down x\cap X\neq\emptyset$. We conclude that $S\cap\down q\in L_X(q)$. Finally, let $S\in L_X(p)$ and $R\in\D(\down p)$ such that $R\cap\down q\in L_X(q)$ for each $q\in S$. Let $x\in X\cap\down p$. Then $S\cap\down x\cap X\neq\emptyset$, so there is some $q\in S\cap\down x\cap X$. Since $q\in S$, we find $R\cap\down q\in L_X(q)$. Since $q\in X$, we find $q\in X\cap\down q$, so $(R\cap\down q)\cap\down q\cap X\neq\emptyset$. Since $q\leq x$, we find $\down q\subseteq\down x$, whence $R\cap\down x\cap X\neq\emptyset$. So $R\in L_X(p)$.

Now assume that $\P$ is non-Artinian. Then $\P$ contains a non-empty subset $X$ without a minimal element. We show that $L_X\neq J_Y$ for each $Y\subseteq\P$. First take $Y=\emptyset$. Then $\emptyset\in J_Y(p)$ for each $p\in\P$. However, since $X$ is assumed to be non-empty, there is some $p\in X$. Then $p\in X\cap\down p$, but $\emptyset\cap\down p\cap X=\emptyset$, so $\emptyset\notin L_X(p)$. We conclude that $L_X\neq J_Y$ if $Y=\emptyset$.

Assume that $Y$ is non-empty. Then there is some $p\in Y$, and $J_Y(p)=\{\down p\}$. Assume that $X\cap\down p=\emptyset$. Then $x\in X\cap\down p\implies \emptyset\cap\down x\cap X\neq\emptyset$ holds, so $\emptyset\in L_X(p)$. Thus $L_X\neq J_Y$ in this case. Assume that $X\cap\down p\neq\emptyset$. Hence there is some $x\in X\cap\down p$. Even if $p\in X$, we can assume that $x$ is strictly smaller than $p$, since $X$ does not contain a minimal element, so $X\cap\down p\setminus\{p\}\neq\emptyset$. Let $S=\down x$, then $S\cap\down x\cap X\neq\emptyset$, so $\down x\in L_X(p)$. We conclude that $L_X(p)\neq\{\down p\}$, so $L_X\neq J_Y$ in all cases.
\end{proof}

The Grothendieck topology $L_X$ in the proof somewhat falls out of the sky. In the next section we explore techniques, which helped us to find this Grothendieck topology.

\section{Non-Artinian and downwards directed posets}

If a poset is non-Artinian, it is much harder to find all Grothendieck topologies. This section is devoted to the question how to find a Grothendieck topology $J$ on a non-Artinian poset such that $J$ is not a subset Grothendieck topology. It turns out that this gives a wide class of new Grothendieck topologies.

\begin{proposition}\label{prop:densetopologydef}
 Let $\P$ be a poset. Then $J_\dense$ defined by $$J_\dense(p)=\{S\in\D(\down p):\down p\subseteq\up S\},$$ for each $p\in\P$ is a Grothendieck topology on $\P$, called the \emph{dense topology}\index{dense topology} \cite[p. 115]{M&M}.

If $\P$ is downwards directed, the dense topology is exactly the atomic topology on $\P$, and is not complete if $\P$ does not contain a least element.
\end{proposition}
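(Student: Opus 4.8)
The plan is to prove the three assertions of the proposition in turn. Throughout I would use the translation that, for a sieve $S$ on $p$, the condition $\down p\subseteq\up S$ says exactly that every $r\leq p$ satisfies $s\leq r$ for some $s\in S$ (``$S$ is dense below $p$''); this rests on the identity $\up S=\bigcup_{s\in S}\up s$.

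For the claim that $J_\dense$ is a Grothendieck topology, the maximal-sieve axiom is immediate, since $r\leq r$ gives $r\in\up(\down p)$ for every $r\in\down p$. For stability, given $S\in J_\dense(p)$ and $q\leq p$, any $r\leq q$ is also $\leq p$, so some $s\in S$ has $s\leq r$; then $s\leq q$ places $s$ in $S\cap\down q$ and witnesses $r\in\up(S\cap\down q)$. For transitivity, suppose $S\in J_\dense(p)$ and $R$ is a sieve on $p$ with $R\cap\down q\in J_\dense(q)$ for each $q\in S$; given $r\leq p$, choose $s\in S$ with $s\leq r$, and then apply density of $R\cap\down s$ below $s$ to the element $s$ itself to obtain $t\in R$ with $t\leq s\leq r$, so $r\in\up R$. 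Transitivity is the only one of the three axioms that uses all the hypotheses at once, but it remains a short chase.

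For the identification $J_\dense=J_\atom$ when $\P$ is downwards directed, I would show $J_\dense(p)=\D(\down p)\setminus\{\emptyset\}$ for each $p\in\P$. The inclusion $\subseteq$ is trivial: $\down p\subseteq\up S$ forces $p\in\up S$, hence $S\neq\emptyset$. For $\supseteq$, downward directedness is exactly what is needed: given a nonempty sieve $S$ on $p$, fix $s_0\in S$; for any $r\leq p$ a lower bound $w$ of $\{s_0,r\}$ satisfies $w\leq s_0\in S$ and $w\leq r\leq p$, so $w\in S$ (as $S$ is a down-set of $\down p$), whence $r\in\up S$.

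For the failure of completeness when $\P$ is downwards directed without a least element (the empty poset being vacuous, and excluded), the key observation — which I regard as the main point to get right — is that then $\down p$ has no least element for any $p\in\P$: a least element $m$ of $\down p$ would, by downward directedness of $\P$, lie below every element of $\P$ and hence be a least element of $\P$, contrary to hypothesis. Granting this, fix any $p$ and consider the family $\{\down r:r\in\down p\}$, each member of which is a nonempty sieve on $p$, hence lies in $J_\atom(p)=J_\dense(p)$. Its intersection is $\{q\in\down p: q\leq r\text{ for all }r\in\down p\}$, i.e.\ the (empty) set of least elements of $\down p$; since this is $\emptyset\notin J_\atom(p)$, the topology $J_\dense$ is not complete, directly from the definition of completeness. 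The only steps requiring any thought anywhere are the transitivity check in the first part and this no-least-element observation in the third; the remainder is routine bookkeeping.
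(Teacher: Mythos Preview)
Your proof is correct and follows essentially the same approach as the paper: the same verifications for the three Grothendieck-topology axioms, the same two inclusions for $J_\dense=J_\atom$ under downward directedness, and the same intersection $\bigcap_{q\leq p}\down q=\emptyset$ to witness non-completeness. The only cosmetic difference is that in the last part the paper argues $\min(\P)=\emptyset$ while you argue directly that $\down p$ has no least element; these are the same observation, and your remark about the empty poset is a point of care the paper omits.
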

\begin{proof}
 Clearly $\down p\subseteq\up(\down p)$, so $\down p\in J_\dense(p)$. Let $S\in J_\dense(p)$ and $q\leq p$. Then $\down p\subseteq\up S$, so if $r\in\down q$, then $r\in\down p$, so there must be an $s\in S$ such that $r\leq s$. Then $s\in S\cap\down q$, since $q\geq r$. We conclude that $\down q\subseteq\up(S\cap\down q)$, so $S\cap\down q\in J_\dense(q)$. Finally, let $S\in J_\dense(p)$ and $R\in\D(\down p)$ such that $R\cap\down q\in J_\dense(q)$ for each $q\in S$. So $\down p\subseteq\up S$ and $\down q\subseteq\up(R\cap\down q)$ for each $q\in S$. Now, let $r\in\down p$. We aim to prove that $r\in\up R$. Since $\down p\subseteq \up S$, there is a $q\in S$ such that $q\leq r$. Moreover, $\down q\subseteq(R\cap\down q)$ holds, so there is a $t\in R\cap\down q)$ such that $t\leq q$. Then $r\in\up R$, for $r\geq q\geq t$. We conlude that $R\in J_\dense(p)$.

If we assume that $\P$ is downwards directed, let $S$ be a non-empty sieve on $p\in\P$. So $S$ contains an $s\leq p$, since $S\in\D(\down p)$. Let $q\in\down p$. Since $\P$ is downwards directed, there is an $r\leq q,s$, which implies that $r\in S$, whence $q\in\up S$. So $S\in J_\dense(p)$. The other way round, if $S\in J_\dense(p)$, we should have $p\in\down p\subseteq\up S$, which is only possible if $S$ is non-empty. So $S\in J_\atom(p)$.

If $\P$ does not contain a least element, $\min(\P)=\emptyset$, since if $x\in\min(\P)$, and $y\in\P$, there is some $z\in\P$ such that $z\leq x,y$ for $\P$ is downwards directed. But $x\in\min(\P)$, so $z\leq x$ implies $z=x$, hence $x\leq y$ contradicting that $\P$ does not contain a least element. Now, let $p\in\P$. Then $\down q\in J_\atom(q)$ for each $q\leq p$. Moreover, if $x\leq p$, then there is a $q<x$, since otherwise $x$ would be minimal. Hence $x\notin\bigcap_{q\leq p}\down q$, so $\bigcap_{q\leq p}\down q=\emptyset\notin J_\atom(p)$. Hence $J_\atom$ is not complete.
\end{proof}

\begin{proposition}\label{prop:conditiondensetopologynotsubsettopology}
 Let $\P$ be a poset. Then $J_\dense=J_{\min(\P)}$ if and only if $\up\min(\P)=\P$. In particular, $J_\dense=J_{\min(\P)}$ if $\P$ is Artinian. If $\up\min(\P)\neq\P$, then $J_\dense$ is not equal to any subset Grothendieck topology.
\end{proposition}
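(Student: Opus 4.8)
The plan is to establish the equivalence by two inclusions (one of which holds for every poset), to read off the Artinian case as an immediate corollary, and to deal with the final non-generation statement by computing the subset $X_{J_\dense}$ attached to $J_\dense$ and invoking the correspondence $Y=X_{J_Y}$ from Proposition~\ref{prop:correspsubsetsandtopologies}(1).

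First I would check that $J_\dense(p)\subseteq J_{\min(\P)}(p)$ for \emph{every} poset $\P$ and every $p\in\P$: if $S\in J_\dense(p)$ and $m\in\min(\P)\cap\down p$, then $m\in\down p\subseteq\up S$ gives some $s\in S$ with $s\le m$, and minimality of $m$ forces $s=m\in S$; hence $\min(\P)\cap\down p\subseteq S$, i.e.\ $S\in J_{\min(\P)}(p)$. For the reverse inclusion under the hypothesis $\up\min(\P)=\P$: given $S\in J_{\min(\P)}(p)$ and $q\in\down p$, choose $m\in\min(\P)$ with $m\le q$; then $m\in\min(\P)\cap\down p\subseteq S$, so $q\in\up S$, and therefore $\down p\subseteq\up S$, i.e.\ $S\in J_\dense(p)$. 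This proves $\up\min(\P)=\P\implies J_\dense=J_{\min(\P)}$. For the converse I would argue contrapositively: since $\emptyset\in J_X(p)$ iff $X\cap\down p=\emptyset$, and $\min(\P)\cap\down q\ne\emptyset$ iff $q\in\up\min(\P)$, if $\up\min(\P)\ne\P$ and $q\notin\up\min(\P)$ then $\emptyset\in J_{\min(\P)}(q)$ whereas $\emptyset\notin J_\dense(q)$ because $q\in\down q\not\subseteq\up\emptyset=\emptyset$; hence $J_\dense\ne J_{\min(\P)}$. This completes the equivalence. The Artinian case is then immediate: for each $q\in\P$ the non-empty set $\down q$ has a minimal element $m$, and any such $m$ is minimal in $\P$ (an element strictly below $m$ would again lie in $\down q$), so $q\in\up\min(\P)$; thus $\up\min(\P)=\P$ and the equivalence applies.

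For the last assertion, assume $\up\min(\P)\ne\P$; I claim $J_\dense\ne J_Y$ for every $Y\subseteq\P$. The crucial computation is $X_{J_\dense}=\min(\P)$: a minimal $p$ has $\down p=\{p\}$, whose only proper sieve is $\emptyset\notin J_\dense(p)$, so $J_\dense(p)=\{\down p\}$ and $p\in X_{J_\dense}$; a non-minimal $p$ has the proper sieve $\down p\setminus\{p\}$, which is a non-empty down-set satisfying $\down p\subseteq\up(\down p\setminus\{p\})$, hence lies in $J_\dense(p)$, so $J_\dense(p)\ne\{\down p\}$ and $p\notin X_{J_\dense}$. Now if $J_\dense=J_Y$ for some $Y\subseteq\P$, then Proposition~\ref{prop:correspsubsetsandtopologies}(1) gives $Y=X_{J_Y}=X_{J_\dense}=\min(\P)$, so $J_\dense=J_{\min(\P)}$, contradicting the equivalence since $\up\min(\P)\ne\P$.

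Essentially every step here is a short verification; the only points needing a little care are the identification $X_{J_\dense}=\min(\P)$, which is exactly what powers the non-generation argument, and, in the reverse direction of the equivalence, noticing that the empty sieve already separates $J_\dense$ from $J_{\min(\P)}$ at any $q\notin\up\min(\P)$.
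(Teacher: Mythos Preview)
Your proof is correct and follows essentially the same route as the paper: both establish the two inclusions for the ``if'' direction in the same way, both derive the Artinian case from $\up\min(\P)=\P$, and both handle the final non-generation claim by computing $X_{J_\dense}=\min(\P)$ and invoking Proposition~\ref{prop:correspsubsetsandtopologies}(1). Your treatment is in fact slightly cleaner in two places: you note that $J_\dense\le J_{\min(\P)}$ holds for \emph{every} poset (the paper proves this inclusion only under the hypothesis $\up\min(\P)=\P$, though the hypothesis is not actually used there), and you give a direct contrapositive argument for the ``only if'' direction via the empty sieve at some $q\notin\up\min(\P)$, whereas the paper leaves that direction somewhat implicit and only recovers it at the end through the $X_{J_\dense}$ computation.
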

\begin{proof}
 Assume that $\up\min(\P)=\P$ and let $p\in\P$. If $S\in J_{\min(\P)}(p)$, we have by definition $\min(\P)\cap\down p\subseteq S$. Let $q\leq p$. Since $\up\min(\P)=\P$, there is an $m\in\min(\P)$ such that $m\leq q$. Then $$m\in\min(\P)\cap\down q\subseteq\min(\P)\cap\down p.$$ Thus $m\in S$, whence $q\in\up S$. So we find that $\down p\subseteq\up S$, hence $S\in J_\dense(p)$.

Conversely, if $S\in J_\dense(p)$, we have $\down p\subseteq\up S$. Now, $\min(\P)\cap\down p$ is non-empty, since $\up\min(\P)=\P$, so let $m\in\min(\P)\cap\down p$. Then $m\in\up S$, so there is an $s\in S$ such that $s\leq m$. But $m\in\min(\P)$, so $s=m$, and we conclude that $\min(\P)\cap\down p\subseteq S$, hence $S\in J_{\min(\P)}(p)$.

If $\P$ is Artinian, there are two ways to show that $J_\dense=J_{\min(\P)}$. Firstly, if $p\in\P$, the set $\down p$ is non-empty, so by the Artinian property, it contains a minimal element $m$. Now, if $q\leq m$, we have $q\in\down p$, so $m\leq q$ by the minimality of $m$. So $q=m$, which implies that $m\in\min(\P)$. Thus $p\in\up\min(\P)$, so $\up\min(\P)=\P$ and we conclude that $J_\dense=J_{\min(\P)}$.

 Another path we could take is using Proposition \ref{prop:correspsubsetsandtopologies}, which says that $J_\dense$ is equal to $J_Y$, with $$Y=X_{J_{\dense}}=\{p\in\P:J_\dense(p)=\{\down p\}\},$$ if $J_\dense$ is a subset topology, which is always the case if $\P$ is Artinian. Then we find $J_{\dense(p)}=\{\down p\}$ if and only if $\down p\setminus\{p\}\notin J_\dense(p)$ (using Lemma \ref{lem:filter}). Thus $p\in Y$ if and only if $\down p\nsubseteq\up(\down p\setminus\{p\})$ if and only if $\down p\setminus\{p\}=\emptyset$ if and only if $p\in\min(\P)$, so $Y=\min(\P)$. Proposition \ref{prop:correspsubsetsandtopologies} implies $X_{J_Z}=Z$ for each $Z\subseteq\P$. So if $J_\dense=J_Z$ for some $Z$, this implies that $Z=\min(\P)$. Thus $J_\dense\neq J_Z$ for each $Z\subseteq\P$ such that $Z\neq\min(\P)$, hence $J_\dense$ is not equal to any subset topology if $\up\min(\P)\neq\P$.
\end{proof}

The next example shows that there exists a non-Artinian poset $\P$ with $\up\min(\P)=\P$, so with $J_\dense=J_{\min(\P)}$.
\begin{example}
Let $\P=X\cup Y$ with $X=\{x_n\}_{n\in\N}$ and $Y=\{y_n\}_{n\in\N}$ with the ordering $x_n\geq y_n,x_{n+1}$ for each $n\in\N$. We can visualize this in the following diagram

\begin{equation*}
\xymatrix{x_1\\
x_2\ar[u] & y_1\ar[ul]\\
x_3\ar[u] & y_2\ar[ul]\\
\vdots\ar[u]& y_3\ar[ul]\\
}
\end{equation*}
where $a\to b$ represents the inequality $a\leq b$ for each $a,b\in\P$. Then $\P$ is clearly non-Artinian, since it contains the chain $x_1\geq x_2\geq x_3\geq\ldots$, which does not terminate. However, we have $\min(\P)=Y$, whence $\up\min(\P)=\P$.
\end{example}

So the dense topology fails to be a universal counterexample of a Grothendieck topology that is not a subset Grothendieck topology on non-Artinian posets. Given a non-Artinian poset with non-empty subset $X$ without minimal elements, we found in the proof of Theorem \ref{thm:equivalenceArtinianandSubsettopologies}
a Grothendieck topology $L_X$ on $\P$ that is never a subset Grothendieck topology. This Grothendieck topology was found by ``extending'' the dense Grothendieck topology on $X$ (which is on $X$ not a subset Grothendieck topology by Proposition \ref{prop:conditiondensetopologynotsubsettopology}) to a Grothendieck topology on $\P$.

In order to explain in detail how $L_X$ was found, we have to introduce a notion of an extension of a Grothendieck topology to a larger poset. Since a structure on some object is called an extension of a structure on a smaller object if its restriction equals the structure on the smaller object, we also have to define how to restrict Grothendieck topologies to subsets. First we have to introduce some notation in order to discriminate between down-sets of $X$ and down-sets of $\P$.
\begin{definition}\label{def:subposetdownset}
 Let $\P$ be a poset and $X\subseteq\P$ a subset. For any other subset $A\subseteq\P$, we shall use the notation $\bar\down A=\down A\cap X$. In the same way, $\bar\up A$ is defined as the set $\up A\cap X$.
\end{definition}
 Given this definition, if $x\in X$, the set $\down x$ is the set of elements in $\P$ below $x$, whereas $\bar\down x$ is the set of elements of $X$ below $x$. Hence a sieve on $x\in X$ with respect to the subposet $X$ of $\P$ is precisely an element of $\D(\bar\down x)$.

\begin{lemma}\label{lem:inducedtopology}
 Let $J$ be a Grothendieck topology on a poset $\P$ and let $X\subseteq\P$ be a subset such that $J_X\leq J$. Then $\bar J$ defined by $$\bar J(x)=\{S\cap X:S\in J(x)\}$$ for each $x\in X$ is a Grothendieck topology on $X$ called the \emph{induced Grothendieck topology}\index{induced Grothendieck topology}. Furthermore, let $x\in X$. Then for each $\bar S\in\D(\bar\down x)$ we have $\bar S=\down \bar S\cap X$. Moreover, we have $\bar S\in \bar J(x)$ if and only if $\down\bar S\in J(x)$, i.e.,
     $$\bar J(x)=\{\bar S\in\D(\bar\down x):\down\bar S\in J(x)\}$$
     for each $x\in X$.
\end{lemma}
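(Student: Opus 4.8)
The plan is to verify the three Grothendieck-topology axioms for $\bar J$ on the subposet $X$, then establish the two supplementary identities, noting that the hypothesis $J_X \leq J$ is exactly what makes the indexing by $x \in X$ behave well. First I would record the elementary observations about the bar-notation: for $x \in X$, a sieve on $x$ in $X$ is an element of $\D(\bar\down x)$, and for any $A \subseteq \down x$ the set $\down A$ is a sieve on $x$ in $\P$ (by Lemma \ref{cosievegen}), so the two operations $S \mapsto S \cap X$ (from sieves on $x$ in $\P$ to sieves on $x$ in $X$) and $\bar S \mapsto \down \bar S$ (the other way) are the maps I will play off against each other. Crucially, $\down \bar S \cap X = \bar S$ whenever $\bar S$ is already a down-set of $X$: the inclusion $\supseteq$ is trivial, and $\subseteq$ holds because any $y \in X$ with $y \leq s$ for some $s \in \bar S$ lies in $\bar S$ since $\bar S$ is downward closed in $X$. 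This is the identity $\bar S = \down\bar S \cap X$ claimed at the end, and I would prove it up front because it gets reused.

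For the axioms: the maximal sieve $\bar\down x = \down x \cap X$ is $\bar J(x)$-covering because $\down x \in J(x)$. For stability, take $\bar S \in \bar J(x)$, say $\bar S = S \cap X$ with $S \in J(x)$, and let $y \leq x$ in $X$; then $S \cap \down y \in J(y)$ by stability of $J$, and $(S \cap \down y) \cap X = (S \cap X) \cap (\down y \cap X) = \bar S \cap \bar\down y$, which is therefore in $\bar J(y)$. For transitivity, suppose $\bar S \in \bar J(x)$ and $\bar R \in \D(\bar\down x)$ satisfies $\bar R \cap \bar\down y \in \bar J(y)$ for every $y \in \bar S$; I want $\bar R \in \bar J(x)$. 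The natural move is to pass to $\P$: set $R = \down\bar R$, a sieve on $x$ in $\P$, and write $\bar S = S \cap X$ with $S \in J(x)$. I would then want to show $R \cap \down q \in J(q)$ for each $q \in S$ and invoke transitivity of $J$ to get $R \in J(x)$, whence $\bar R = R \cap X \in \bar J(x)$ (using $\bar R = \down\bar R \cap X$). But the hypothesis only tells me directly about $q \in \bar S = S \cap X$, not about all $q \in S$ — and here is where $J_X \leq J$ enters: if $q \in S$ is \emph{not} in $X$, I claim $X \cap \down q \subseteq R$, so that $R \cap \down q \in J_X(q) \subseteq J(q)$ automatically.

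That claim — reducing the "$q \notin X$" case to $J_X$ — is the main obstacle, and I expect it needs a small argument: for $q \in S$ and $y \in X \cap \down q$, since $S$ is a sieve $y \in S$, and $y \in X$, so $y \in S \cap X = \bar S$; then by hypothesis $\bar R \cap \bar\down y \in \bar J(y)$, and since $\bar\down y \in \bar J(y)$ always (maximal sieve) and $\bar R \cap \bar\down y \subseteq \bar R$... actually the cleanest route is: for $y \in \bar S$, $\bar R \cap \bar\down y \in \bar J(y)$ forces $\bar R \cap \bar\down y$ to contain $X_{J}\cap\bar\down y$-type data; more directly, since $\bar J(y)$ is a filter of sieves and $\bar R \cap \bar\down y$ is $\bar J(y)$-covering, and the \emph{only} way a sieve on $y$ can be covering while we also need it for all $y \in \bar S$, one shows $y \in \down\bar R = R$ by applying stability/transitivity bookkeeping. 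So concretely: $y \in \bar S$ and $\bar R \cap \bar\down y \in \bar J(y)$ means $\down(\bar R \cap \bar\down y) \in J(y)$; but I have not yet proven that equivalence. To avoid circularity I would instead argue the "$q\notin X$" reduction purely from $J_X \leq J$ together with the fact that every $y \in X \cap \down q$ lies in $\bar S$ and hence (by the filter property of $\bar J(y)$ and $\bar R \cap \bar\down y$ covering, so nonempty, so containing some element $\leq y$ which forces $y$-downset membership once one unwinds that $\bar R$ is a down-set) satisfies $y \in \bar R \subseteq R$ — giving $X \cap \down q \subseteq \bar R \subseteq R$. Once $R \cap \down q \in J(q)$ for all $q \in S$ (the $q \in X$ case from the hypothesis after checking $\down\bar S \cap \down q \subseteq \down(\bar R \cap \bar\down q) = R \cap \down q$... hmm, I'd verify $R\cap\down q = \down(\bar R\cap\bar\down q)$ for $q\in X$ using the up-front identity), transitivity of $J$ finishes it. Finally the last assertion, $\bar J(x) = \{\bar S \in \D(\bar\down x) : \down\bar S \in J(x)\}$: "$\supseteq$" is immediate since $\down\bar S \in J(x)$ gives $\bar S = \down\bar S \cap X \in \bar J(x)$; "$\subseteq$" needs that if $\bar S = S \cap X$ with $S \in J(x)$ then $\down\bar S \in J(x)$ — and $\down\bar S = \down(S \cap X)$ is a sieve contained in $S$, so it is \emph{not} automatically covering; I would instead show $\down\bar S \in J(x)$ by the transitivity axiom of $J$, checking for each $q \in S$ that $\down\bar S \cap \down q \in J(q)$, splitting again on whether $q \in X$ (then $q \in \bar S \subseteq \down\bar S$, so $\down\bar S \cap \down q = \down q \in J(q)$) or $q \notin X$ (then $X \cap \down q \subseteq \down\bar S$ by the same sieve argument as above, so $\down\bar S \cap \down q \in J_X(q) \subseteq J(q)$). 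This second split is really the same idea as the obstacle above, so once that is isolated as a sublemma the rest is routine.
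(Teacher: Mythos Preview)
Your overall architecture is right, and your proofs of the identity $\bar S = \down\bar S \cap X$, the maximal-sieve and stability axioms, and the final equivalence $\bar J(x) = \{\bar S : \down\bar S \in J(x)\}$ are all correct and essentially match the paper. The gap is in the transitivity argument, specifically the claim that for $q \in S$ one has $X \cap \down q \subseteq R = \down\bar R$. Your justification --- ``$\bar R \cap \bar\down y$ covering, so nonempty, so containing some element $\leq y$, which forces $y \in \bar R$ since $\bar R$ is a down-set'' --- is wrong on two counts: covers in $\bar J(y)$ need not be nonempty (nothing rules out $\emptyset \in J(y)$), and even if some $z \leq y$ lies in $\bar R$, that gives $z \in \bar R$, not $y \in \bar R$; down-sets close downward, not upward. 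So the claimed inclusion fails, and with it the reduction to $J_X(q)$.

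The fix is the reorganization you yourself gesture at in the last sentence, and it is exactly what the paper does: prove the equivalence $\bar T \in \bar J(y) \Leftrightarrow \down\bar T \in J(y)$ \emph{first}, before the transitivity axiom. There is no circularity here --- your own argument for ``$\subseteq$'' uses only the transitivity axiom of $J$ and the hypothesis $J_X \leq J$ (in fact your case split is unnecessary: for every $q \in S$ one has $X \cap \down q \subseteq S \cap X = \bar S \subseteq \down\bar S$, so $\down\bar S \cap \down q \in J_X(q) \subseteq J(q)$ uniformly). With the equivalence in hand, transitivity of $\bar J$ goes through cleanly by working over the $J$-cover $\down\bar S$ rather than $S$: for $y \in \bar S$ the hypothesis gives $\down(\bar R \cap \bar\down y) \in J(y)$, hence $\down\bar R \cap \down y \in J(y)$ by the filter property (Lemma~\ref{lem:filter}), and then stability pushes this to all $z \in \down\bar S$, so transitivity of $J$ yields $\down\bar R \in J(x)$.
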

\begin{proof}
Let $x\in X$ and $\bar S\in\D(\bar \down x)$. Then $\bar S\subseteq X$, so $\bar S\subseteq(\down\bar S)\cap X$. On the other hand, if $y\in(\down\bar S)\cap X$, we have $y\in X$ and $y\leq s$ for some $s\in\bar S$. Since $\bar S\in\D(X)$, we obtain $y\in\bar S$. So $(\down\bar S)\cap X=\bar S$. Then if $\down\bar S\in J(x)$, it follows immediately that $\bar S\in \bar J(x)$. Now assume $\bar S\in\bar J(x)$. Then there is an $S\in J(x)$ such that $\bar S=S\cap X$. Let  $y\in S$, so $\down y\subseteq S$, then we have $$\down(X\cap\down y)=\down(X\cap\down y)\cap\down y\subseteq\down(X\cap S)\cap\down y=\down\bar S\cap \down y.$$ Since $J_X\leq J$ and so $\down(X\cap\down y)\in J(y)$, we find by Lemma \ref{lem:filter} that $\down\bar S\cap \down y\in J(y)$ for each $y\in S$. Hence by the transitivity axiom for $J$ it follows that $\down\bar S\in J(x)$.

In order to show that $\bar J$ is a Grothendieck topology on $X$, let $x\in X$. Then $\bar\down x=\down x\cap X$, and since $\down x\in J(x)$, we have $\bar\down x\in \bar J(x)$. If $\bar S\in \bar J(x)$ and $y\leq x$ in $X$, then $\bar S=S\cap X$ for some $S\in J(x)$. Since $J$ is a Grothendieck topology, we have $S\cap\down y\in J(y)$. Now, $\bar S\cap\bar\down y=S\cap X\cap\down y$, so $\bar S\cap\bar\down y\in \bar J(y)$. Finally, let $\bar S\in \bar J(x)$ and $\bar R\in\D(\bar\down x)$ such that $\bar R\cap\bar \down y\in \bar J(y)$ for each $y\in \bar S$. This means that $\down\bar S\in J(x)$ and $\down(\bar R\cap\bar\down y)\in J(y)$ for each $y\in \bar S$. Now, let $R=\down\bar R$. If $y\in\bar S$, we have $\bar R\cap\bar\down y\subseteq\down R\cap\down y$, so $\down(\bar R\cap\bar\down y)\subseteq\down\bar R\cap\down y$. Since $\down(\bar R\cap\bar\down y)\in J(y)$, Lemma \ref{lem:filter} assures that $\down\bar R\cap\down y\in J(y)$ for each $y\in\bar S$. Now assume $z\in\down\bar S$. Then $z\leq y$ for some $y\in\bar S$, and since $\down\bar R\cap\down y\in J(y)$, it follows that $$\down\bar R\cap\down z=(\down\bar R\cap\down y)\cap\down z\in J(z).$$ Again by the transitivity axiom for $J$ it follows that $\down\bar R\in J(x)$, so $\bar R\in\bar J(x)$.
\end{proof}

If we regard induced Grothendieck topologies as restrictings of Grothendieck topologies to subsets, we aim to find a Grothendieck topology $J_K$ on $\P$ such that $\bar J_K=K$ given a Grothendieck topology $K$ on $X$. First we recall Definition \ref{def:subposetdownset}, where, for a fixed subset $X$ of a poset $\P$, for each subset $A$ of $\P$, we denoted the set $X\cap\down A$ by $\bar\down A$. So given $x\in X$, this means that $\down x$ is the set of elements in $\P$ below $x$, whereas $\bar\down x$ means the set of elements of $X$ below $x$. Hence a sieve on $x\in X$ with respect to the subposet $X$ of $\P$ is precisely an element of $\D(\bar\down x)$. Using this notation, we can formulate the following definition.
\begin{definition}\label{def:extensionGrothTop}
Let $K$ be a Grothendieck topology of a subset $X$ of a poset $\P$. Then we define the \emph{extension}\index{extension of a Grothendieck topology} $J_K$ of $K$ on $\P$ by
\begin{equation*}
J_K(p)=\{S\in\D(\down p): S\cap\bar\down x\in K(x)\ \forall x\in\bar\down p\}.
\end{equation*}
\end{definition}

We perform a sanity check that this is a Grothendieck topology. Let $x\in\bar\down p$, so $x\in X$ and $x\leq p$. Then $$\down p\cap\bar\down x=\down p\cap X\cap\down x=X\cap\down x=\bar\down x\in K(x).$$ So $\down p\in J_K(p)$. For stability, let $S\in J_K(p)$, so $S\cap \bar\down x\in K(x)$ for each $x\in\bar\down p$. Let $q\leq p$. If $x\in\bar\down q$, then $x\in\bar\down p$ and $\down x\cap\down q=\down x$, hence $$S\cap\down q\cap\bar\down x=S\cap\down q\cap X\cap\down x=S\cap X\cap\down x=S\cap\bar\down x\in K(x),$$ so $S\cap\down q\in J_K(q)$. Finally, for transitivity, let $S\in J_K(p)$ and $R\in\D(\down p)$ be such that $R\cap\down q\in J_K(q)$ for each $q\in S$. Let $x\in\bar\down p$. Since $S\in J_K$ by definition of $J_K$ we find that $S\cap\bar\down x\in K(x)$. Then for each $y\in S\cap\bar\down x$, we have $y\in S$, so $R\cap\down y\in J_K(y)$. This means that $R\cap\down y\cap\bar\down z\in K(z)$ for each $z\in\bar\down y$. Since also $y\in\bar\down x\subseteq X$, we have $y\in\bar\down y$, so we are allowed to choose $z=y$. Hence we find for each $y\in\bar\down x$ that $$R\cap\bar\down x\cap\bar\down y=R\cap\bar\down y=R\cap\down y\cap\bar\down y \in K(y).$$ Since $\bar\down x\in K(x)$, the transitivity axiom for $K$ implies that $R\cap\bar\down x\in K(x)$. We conclude that $R\in J_K(p)$.

The next proposition assures that $J_K$ is indeed an extension of $K$.

\begin{proposition}\label{prop:extensionofGrothTop}
 Let $X$ be a subset of a poset $\P$ and $K$ a Grothendieck topology on $X$. Then:
 \begin{enumerate}
 \item[(i)] $J_X\leq J_K$, where $J_K$ is the extension of $K$ on $
P$;
 \item[(ii)] $\bar J_K$, defined in Lemma \ref{lem:inducedtopology} for each $x\in X$ by
 \begin{eqnarray*}
 \bar J_K(x) & = &  \{S\cap X:S\in J_K(x)\}\\
 & = & \{\bar S\in\D(\bar\down x):\down\bar S\in J_K(x)\},
 \end{eqnarray*}
is a well-defined Grothendieck topology on $X$;
\item[(iii)] $\bar J_K=K$;
\item[(iv)] The assignment $\G(X)\to\G(\P)$ given by $K\mapsto J_K$ is injective;
\item[(v)] For $Y\subseteq X$, where the subset Grothendieck topology on $X$ induced by $Y\subseteq X$ is denoted by $J_Y^X$, we have $J_{J_Y^X}=J_Y$ and $J^X_Y=\bar J_Y$;
\item[(vi)] If $J_K$ is a subset Grothendieck topology on $\P$, then $K$ is a subset Grothendieck topology on $X$;
\item[(vii)] If $J_K$ is a complete Grothendieck topology on $\P$, then $K$ is a complete Grothendieck topology on $X$;
\item[(viii)] Denote the dense topology on $X$ by $J_\dense^X$, then $L_X$ defined by
     \begin{equation*}
    L_X(p)=\{S\in\D(\down p): x\in X\cap\down p\implies S\cap\down x\cap X\neq\emptyset\}
    \end{equation*}
    for each $p\in\P$ is a well-defined Grothendieck topology on $\P$ such that $J_X\leq L_X$, $J_{J_\dense^X}=L_X$ and $\bar L_X=J^X_\dense$.
\end{enumerate}
\end{proposition}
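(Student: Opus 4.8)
The plan is to prove (i) by a one-line computation, deduce (ii) from (i) together with Lemma~\ref{lem:inducedtopology}, make (iii) the central statement, and then read off (iv)--(vii) and most of (viii) from (iii) with only short arguments. For (i): if $S\in J_X(p)$, i.e.\ $X\cap\down p\subseteq S$, and $x\in\bar\down p$, then $\bar\down x=X\cap\down x\subseteq X\cap\down p\subseteq S$, so $S\cap\bar\down x=\bar\down x$, which is the maximal sieve on $x$ in the poset $X$ and hence lies in $K(x)$; thus $S\in J_K(p)$, giving $J_X\leq J_K$. Since $J_X\leq J_K$, Lemma~\ref{lem:inducedtopology} applies to the pair $(J_K,X)$ --- this is (ii) --- and it also supplies the description $\bar J_K(x)=\{\bar S\in\D(\bar\down x):\down\bar S\in J_K(x)\}$ together with the identity $\down\bar S\cap X=\bar S$ for $\bar S\in\D(\bar\down x)$ (which holds because such an $\bar S$ is automatically a down-set of $X$).

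The heart of the proof is (iii), $\bar J_K=K$. Here I would observe that for $y\in\bar\down x$,
\[
\down\bar S\cap\bar\down y=(\down\bar S\cap X)\cap\down y=\bar S\cap\down y=\bar S\cap\bar\down y ,
\]
so that $\down\bar S\in J_K(x)$ holds precisely when $\bar S\cap\bar\down y\in K(y)$ for every $y\in\bar\down x$. One direction of the equivalence ``$\bar S\in K(x)\iff$ this condition'' is just the stability axiom for $K$; the other is obtained by taking $y=x$ and using $\bar S=\bar S\cap\bar\down x$. Hence $\bar J_K(x)=K(x)$ for every $x\in X$. This stability-axiom step is the one place where a genuine Grothendieck-topology axiom enters, so I expect it to be the main (if short) obstacle, the rest being bookkeeping with the two down-set notations.

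The remaining parts follow from (iii). (iv): $J_K=J_{K'}$ gives $K=\bar J_K=\bar J_{K'}=K'$. (v): unwinding $J_{J^X_Y}(p)$ and using $Y\subseteq X$ turns its defining condition into ``$Y\cap\down x\subseteq S$ for all $x\in\bar\down p$''; since $\bigcup_{x\in\bar\down p}(Y\cap\down x)=Y\cap\down p$ (any $q\in Y\cap\down p$ lies in $X$, hence $x=q$ is an admissible index), this collapses to $Y\cap\down p\subseteq S$, i.e.\ $J_{J^X_Y}=J_Y$; moreover $\bar J_Y$ is legitimate because $Y\subseteq X$ gives $J_X\leq J_Y$ by Proposition~\ref{prop:correspsubsetsandtopologies}, and then $J^X_Y=\bar J_{J^X_Y}=\bar J_Y$ by (iii). (vi): if $J_K=J_Z$ then $J_X\leq J_Z$ by (i), so $Z\subseteq X$ by Proposition~\ref{prop:correspsubsetsandtopologies}, whence $K=\bar J_K=\bar J_Z=J^X_Z$ by (iii) and (v). (vii): for $\{\bar S_i\}_{i\in I}\subseteq K(x)$, (iii) gives $\down\bar S_i\in J_K(x)$, completeness of $J_K$ gives $\bigcap_i\down\bar S_i\in J_K(x)$, and intersecting with $X$ gives $\bigcap_i\bar S_i=\bigl(\bigcap_i\down\bar S_i\bigr)\cap X\in\bar J_K(x)=K(x)$ --- note this avoids having to commute $\down$ with $\bigcap$.

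Finally, for (viii): $L_X$ is already known to be a Grothendieck topology on $\P$ from the proof of Theorem~\ref{thm:equivalenceArtinianandSubsettopologies}. I would unwind $J^X_\dense(x)=\{\bar S\in\D(\bar\down x):\forall\,y\in X\cap\down x\ \exists\,s\in\bar S\ \text{with}\ s\leq y\}$ and check that, for a sieve $S$ on $p$, the condition ``$S\cap\bar\down x\in J^X_\dense(x)$ for every $x\in\bar\down p$'' unfolds to ``$S\cap X\cap\down y\neq\emptyset$ for every $x\in\bar\down p$ and every $y\in X\cap\down x$''; since the $y$ appearing here range over exactly $\bar\down p$ (take $x=y$ in one direction, use $y\leq x\leq p$ in the other), this is precisely ``$S\cap\down x\cap X\neq\emptyset$ for all $x\in X\cap\down p$'', i.e.\ $J_{J^X_\dense}=L_X$. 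Then $\bar L_X=\bar J_{J^X_\dense}=J^X_\dense$ by (iii), and $J_X\leq L_X$ follows from (i) (or directly from the formula for $L_X$). The only thing to watch throughout is not confusing $\down,\up$ (computed in $\P$) with $\bar\down,\bar\up$ (computed in $X$), and remembering that any $\bar S\in\D(\bar\down x)$ is automatically a down-set of $X$.
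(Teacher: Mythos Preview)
Your proof is correct and follows essentially the same route as the paper: prove (i) directly, deduce (ii) from Lemma~\ref{lem:inducedtopology}, establish (iii) as the core, and then read off (iv)--(viii). The one small difference is in (iii): where the paper invokes the transitivity axiom for $K$ (with $S=\bar\down x$) to get $\bar S\in K(x)$ from the condition ``$\bar S\cap\bar\down y\in K(y)$ for all $y\in\bar\down x$'', you simply take $y=x$ and use $\bar S\cap\bar\down x=\bar S$, which is shorter and avoids transitivity altogether.
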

\begin{proof}\
\begin{enumerate}
\item[(i)]
 Let $p\in\P$ and $S\in J_X(p)$. So $X\cap\down p\subseteq S$. If $x\in X$ and $x\leq p$, then $$\bar\down x=X\cap\down x=(X\cap\down p)\cap (X\cap\down x)\subseteq S\cap\bar\down x,$$ and since $\bar\down x\in K(x)$, it follows that $S\cap \bar\down x\in K(x)$. We conclude that $S\in J_K(p)$, so $J_X\leq J_K$.
\item[(ii)] Since the inequality in (i) holds, Lemma \ref{lem:inducedtopology} garantuees that $\bar J_K$ is a well-defined Grothendieck topology.
\item[(iii)] Let $x\in X$. Since $\down\bar S\in J_K(x)$ if and only $\down\bar S\cap\down y\in K(y)$ for each $y\in\bar\down x$, and $$\down\bar S\cap\bar\down y=\down\bar S\cap X\cap\down y=\bar S\cap\bar\down y,$$ where we again used Lemma \ref{lem:inducedtopology}, we find $$\bar J_K(x)=\{\bar S\in\D(\bar\down x): \bar S\cap\bar\down y\in K(y)\ \forall y\in\bar\down x\}$$ for each $x\in X$. Let $x\in X$. Since $\bar\down x\in K(x)$, it follows immediately by the transitivity axiom for $K$ that $\bar S\in K(x)$ if $\bar S\in\bar J_K(x)$. Conversely, if $\bar S\in K(x)$, it follows immediately by the stability axiom for $K$ that $\bar S\in\bar J_K(x)$. Hence $\bar J_K=K$.
\item[(iv)] Let $K_1,K_2$ be Grothendieck topologies on $X$ such that $J_{K_1}=J_{K_2}$. Then by (iii), $$K_1=\bar J_{K_1}=\bar J_{K_2}=K_2,$$ hence $K\mapsto J_K$ is an injective map.
\item[(v)] By a direct calculation, we show that $J_K=J_Y$, where we take $K=J_Y^X$, which is more explicitly given by
 $$J_Y^X(x)=\{\bar S\in\D(\bar\down x):Y\cap\bar\down x\subseteq\bar S\}$$ for each $x\in X$. Then for each $p\in\P$,
\begin{eqnarray*}
J_K(p) & = & \{S\in\D(\down p): Y\cap\bar\down x\subseteq S\cap\bar\down x\ \forall x\in\bar\down p\}\\
& = & \{S\in\D(\down p): Y\cap\down x\subseteq S\cap X\cap\down x\ \forall x\in X\cap\down p\},
\end{eqnarray*}
where we used $Y\subseteq X$, so $Y\cap\bar\down x=Y\cap\down x$. Let $p\in\P$ and $S\in J_Y(p)$. If $x\in X\cap\down p$, then the stability axiom for $J_Y$ implies that $Y\cap\down x\subseteq S\cap\down x$, and since $Y\subseteq X$, it follows that $Y\cap\down x\subseteq S\cap X\cap\down x$. So $S\in J_K(p)$. Now assume that $S\in J_K(p)$ and let $x\in Y\cap\down p$. Since $Y\subseteq X$, this implies $x\in X\cap\down p$, so $Y\cap\down x\subseteq S\cap X\cap\down x$ for $S\in J_K(p)$. But $x\in Y$, so this implies that $x\in S$. Thus $Y\cap\down p\subseteq S$, whence $S\in J_Y(p)$.

By (iii), $J^X_Y=\bar J_Y$ follows, but we shall also give a direct proof. We have
    \begin{eqnarray*}
    \bar J_Y(x) &=&\{\bar S\in\D(\bar\down x):\down\bar S\in J_Y(x)\}\\
    & = & \{\bar S\in\D(\bar\down x):Y\cap\down x\subseteq\down\bar S\}
    \end{eqnarray*}
    for each $x\in X$. Now let $x\in X$ and assume that $Y\cap\down x\subseteq \down\bar S$. Then we have $$Y\cap\bar\down x=Y\cap\down x\cap X\subseteq\down \bar S\cap X=\bar S,$$
    where we used Lemma \ref{lem:inducedtopology} in the last equality. Hence $\bar J_Y(x)\subseteq J_Y^X(x)$. Conversely, let $\bar S\in J_Y^X(x)$. Since $Y\subseteq X$, we find that $$Y\cap\down x=Y\cap X\cap\down x=Y\cap \bar\down x\subseteq\bar S\subseteq\down\bar S,$$ so $\down\bar S\in J_Y(x)$. Thus $\bar S\in\bar J_Y(x)$.
\item[(vi)]
Assume that $J_K$ is a subset Grothendieck topology on $\P$, so there is a subset $Y$ of $\P$ such that $J_K=J_Y$. By (i), we have $J_X\leq J_Y$. By Proposition \ref{prop:correspsubsetsandtopologies}(ii) this implies $Y\subseteq X$. Thus by (iii) and (v), we find $$K=\bar J_K=\bar J_Y=J_Y^X.$$
So $K$ is a subset Grothendieck topology on $X$.
\item[(vii)] Let $J$ be a Grothendieck topology on $\P$ such that $J_X\leq J$. Then $\bar J$ is well defined. Now, let $x\in X$ and let $\{\bar S_i\}_{i\in I}\subseteq\bar J(x)$ be a collection of covers of $x$. Then for each $i\in I$, there is some $S_i\in J(x)$ such that $\bar S_i=S_i\cap X$. Since $J$ is complete, we have $\bigcap_{i\in I}S_i\in J(x)$, hence $$\bigcap_{i\in I}\bar S_i=\bigcap_{i\in I}(S_i\cap X)=\left(\bigcap_{i\in I}S_i\right)\cap X\in\bar J(x).$$ Now, let $K$ be a Grothendieck topology on $X$ such that $J_K$ is complete. By (i), we have $J_X\leq J_K$, so by (iii) it follows that $K$ is complete.
\item[(viii)]Let $X$ be a subset of $\P$. It was already shown in Theorem \ref{thm:equivalenceArtinianandSubsettopologies} that $L_X$ is a well-defined Grothendieck topology on $\P$.

Another method is to take $K=J^X_\dense$ and to show that $J_K=L_X$. It follows then by (ii) that $L_X$ is well-defined. Moreover, by (i) and (iii) it follows that $J_X\leq L_X$ and $\bar L_X=J^X_\dense$, respectively.
First notice that $S\in J_\dense(p)$ if and only if $S\cap\down y\neq\emptyset$ for each $y\in\down p$. Similarly, we have $\bar S\in J_\dense^X(x)$ if and only if $\bar S\cap\bar\down y\neq\emptyset$ for each $y\in\bar\down x$. Now take $K=J_\dense^X$. Then we find for each $p\in\P$:
\begin{eqnarray*}
 J_K(p) & = & \{S\in\D(\down p):\forall x\in\bar\down p(S\cap\bar\down x\in K(x))\}\\
& = & \{S\in\D(\down p):\forall x\in X\cap\down p(S\cap X\cap\down x\in J^X_\dense(x))\}\\
& = & \{S\in\D(\down p):\forall x\in X\cap\down p(\forall y\in\bar\down x(S\cap X\cap\down x\cap\bar\down y\neq\emptyset))\}\\
& = & \{S\in\D(\down p):\forall x\in X\cap\down p(\forall y\in X\cap\down x(S\cap X\cap\down x\cap X\cap\down y\neq\emptyset))\}\\
& = & \{S\in\D(\down p):\forall x\in X\cap\down p(\forall y\in X\cap\down x(S\cap X\cap\down y\neq\emptyset))\}\\
& = & \{S\in\D(\down p):\forall x\in X\cap\down p(S\cap X\cap\down x\neq\emptyset)\}\\
& = & L_X(p).
\end{eqnarray*}
\end{enumerate}
\end{proof}

So we see that $L_X$ is indeed the extension of the dense Grothendieck topology on $X$. Moreover, (vi) of the Proposition shows that $L_X$ is not a subset Grothendieck topology if $J_dense^X$ is not a subset Grothendieck topology.

\section{Sheaves and morphisms of sites}
In this section we want to explore several notions of the equivalence of sites. A possibility is to say that $(\P,J)$ and $(\QQ,K)$ are equivalent if there exists an equivalence between their categories of sites. However, it might be more natural to explore notions of morphisms of sites, so we postpone the discussion of equivalence of sites in terms of sheaves.

\begin{definition}\cite[Section VII.10]{M&M}
 Let $(\P,J)$ and $(\QQ,K)$ be sites. An order morphism $\phi:\P\to\QQ$ \emph{preserves covers}\index{cover preserving order morphism}\index{clp} if $\down\phi[S]\in K(\phi(p))$ for each $p\in\P$ and each $S\in J(p)$. An order morphism $\pi:\QQ\to\P$ has the \emph{covering lifting property}\index{covering lifting property} (abbreviated by ``clp'') if for each $q\in\QQ$ and each $S\in J(\pi(q))$ there is an $R\in K(q)$ such that $\pi[R]\subseteq S$.
\end{definition}

\begin{proposition}
 Let $\phi:(\P_1,J_1)\to(\P_2,J_2)$ and $\pi:(\P_2,J_2)\to(\P_3,J_3)$ be order morphisms. Then $\pi\circ\phi$ has the clp if $\pi$ and $\phi$ have the clp. Moreover, if $\pi$ and $\phi$ preserve covers, then $\pi\circ\phi$ preserves covers.
\end{proposition}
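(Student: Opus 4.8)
The plan is to prove the two claims separately, each by a direct chase through the definitions, peeling the composite $\pi\circ\phi:\P_1\to\P_3$ from the outside in and keeping careful track of at which point of $\P_2$ each hypothesis is invoked.

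For the clp, fix $p\in\P_1$ and a sieve $S\in J_3\big((\pi\circ\phi)(p)\big)=J_3\big(\pi(\phi(p))\big)$. Apply the clp of $\pi$ at the point $\phi(p)\in\P_2$: there is $R'\in J_2(\phi(p))$ with $\pi[R']\subseteq S$. Next apply the clp of $\phi$ at $p$ to the cover $R'\in J_2(\phi(p))$: there is $R\in J_1(p)$ with $\phi[R]\subseteq R'$. Then $(\pi\circ\phi)[R]=\pi[\phi[R]]\subseteq\pi[R']\subseteq S$, which is exactly what the clp of $\pi\circ\phi$ demands. No subtlety arises beyond monotonicity of images under set inclusion.

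For cover preservation, fix $p\in\P_1$ and $S\in J_1(p)$. Cover preservation of $\phi$ gives $\down\phi[S]\in J_2(\phi(p))$, and then feeding this cover into cover preservation of $\pi$ at the point $\phi(p)$ gives $\down\pi[\down\phi[S]]\in J_3\big(\pi(\phi(p))\big)$. It remains to identify this sieve with $\down(\pi\circ\phi)[S]$. The inclusion $\down(\pi\circ\phi)[S]\subseteq\down\pi[\down\phi[S]]$ is immediate since $(\pi\circ\phi)[S]=\pi[\phi[S]]\subseteq\pi[\down\phi[S]]$. For the reverse inclusion I would use that $\pi$ is an order morphism: if $y\leq\phi(s)$ for some $s\in S$, then $\pi(y)\leq\pi(\phi(s))\in(\pi\circ\phi)[S]$, so $\pi[\down\phi[S]]\subseteq\down(\pi\circ\phi)[S]$; taking down-sets and using that the down-set operator is idempotent (Lemma \ref{cosievegen}) yields $\down\pi[\down\phi[S]]\subseteq\down(\pi\circ\phi)[S]$. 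Substituting the resulting equality $\down\pi[\down\phi[S]]=\down(\pi\circ\phi)[S]$ shows $\down(\pi\circ\phi)[S]\in J_3\big((\pi\circ\phi)(p)\big)$, as required.

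The only step that needs more than bookkeeping is that last set identity, and even there the sole ingredients are order-preservation of $\pi$ and idempotency of $\down$; so I expect no genuine obstacle, merely the need to be precise about which point of $\P_2$ each property is applied at.
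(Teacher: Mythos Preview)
Your proof is correct and follows essentially the same approach as the paper's: the clp argument is identical, and for cover preservation both you and the paper reduce to comparing $\down\pi[\down\phi[S]]$ with $\down(\pi\circ\phi)[S]$ via order-preservation of $\pi$. The only cosmetic difference is that you establish the full equality of these two sieves, whereas the paper proves only the inclusion $\down\pi[\down\phi[S]]\subseteq\down(\pi\circ\phi)[S]$ and then invokes Lemma~\ref{lem:filter} (upward closure of $J_3(\pi\circ\phi(p))$ under sieve inclusion) to conclude.
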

\begin{proof}
 Let $p\in\P_1$ and $S_3\in J_3(\pi\circ\phi(p))$. Since $\pi$ has the clp, there must be an $S_2\in J_2(\phi(p))$ such that $\pi[S_2]\subseteq S_3$. Since $\phi$ has the clp, there must be an $S_1\in J_1(p)$ such that $\phi[S_1]\subseteq S_2$, hence $\pi\circ\phi[S_1]\subseteq\pi[S_2]$. Combining both inclusions, we obtain $\pi\circ\phi[S_1]\subseteq S_3$, hence $\pi\circ\phi$ must have the clp.

Now assume that $\pi$ and $\phi$ preserve covers and let $p\in\P$ and $S\in J_1(p)$. Then $\down\phi[S]\in J_2(\phi(p))$, since $\phi$ preserves covers, hence $\down\pi[\down\phi[S]]\in J_3(\pi\circ\phi(p))$ for $\pi$ preserves covers. However, in order to show that $\pi\circ\phi$ preserves covers, we have to show that $\down\pi\circ\phi[S]\in J_3(\pi\circ\phi(p))$. Let $x\in\down\pi[\down\phi[S]]$. Then there is a $y\in\down\phi[S]$ such that $x\leq\pi(y)$. Moreover, there must be an $s\in S$ such that $y\leq\phi(s)$. Since $\pi$ is an order morphism, we find $\pi(y)\leq\pi\circ\phi(s)$, so $x\leq\pi\circ\phi(s)$. We conclude that $\down\pi[\down\phi[S]]\subseteq\down\pi\circ\phi[S]$, hence by Lemma \ref{lem:filter} it follows that $\down\pi\circ\phi[S]\in J_3(\pi\circ\phi(p))$.
\end{proof}


In order to define a correct and suitable notion of a morphism of sites, it seems like we have to choose between the cover preserving property and the clp. However, both notions are related to each other as the following lemma shows. 
\begin{lemma}\cite[Lemma VII.10.3]{M&M}
 Let $(\P,J)$ and $(\QQ,K)$ be sites and $\phi:\P\to\QQ$ the upper adjoint of $\pi:\QQ\to\P$. Then $\phi$ preserves covers if and only if $\pi$ has the clp.
\end{lemma}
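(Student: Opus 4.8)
The plan is to translate both conditions through the unit–counit description of the adjunction. Since $\phi\colon\P\to\QQ$ is the upper adjoint of $\pi\colon\QQ\to\P$, Lemma~\ref{lem:adjointequivalent} — read with the two posets interchanged, because here the upper adjoint is the map out of $\P$ — tells us that $q\leq\phi(\pi(q))$ for every $q\in\QQ$ and $\pi(\phi(p))\leq p$ for every $p\in\P$; equivalently $\pi(q)\leq p$ if and only if $q\leq\phi(p)$. These two inequalities, together with the stability axioms of $J$ and $K$ and the filter property of Grothendieck topologies (Lemma~\ref{lem:filter}), are all that the argument needs.

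For the implication ``$\phi$ preserves covers $\Rightarrow$ $\pi$ has the clp'', suppose $q\in\QQ$ and $S\in J(\pi(q))$. I would propose the sieve $R=\down\phi[S]\cap\down q$ as the required $K$-cover of $q$. Indeed, since $\phi$ preserves covers, $\down\phi[S]\in K(\phi(\pi(q)))$, and since $q\leq\phi(\pi(q))$ the stability axiom for $K$ yields $\down\phi[S]\cap\down q\in K(q)$. To see $\pi[R]\subseteq S$, take $r\in R$; then $r\leq\phi(s)$ for some $s\in S$, so $\pi(r)\leq\pi(\phi(s))\leq s$, and as $S$ is a sieve this forces $\pi(r)\in S$.

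For the converse, suppose $\pi$ has the clp, and let $p\in\P$ and $S\in J(p)$. First note that $\down\phi[S]$ is a sieve on $\phi(p)$, because every $s\in S$ satisfies $s\leq p$, hence $\phi(s)\leq\phi(p)$. Now set $q=\phi(p)$. Since $\pi(\phi(p))\leq p$, the stability axiom for $J$ gives $S\cap\down\pi(\phi(p))\in J(\pi(q))$, so by the clp there is $R\in K(q)=K(\phi(p))$ with $\pi[R]\subseteq S\cap\down\pi(\phi(p))\subseteq S$. For $r\in R$ put $s=\pi(r)\in S$; then $r\leq\phi(\pi(r))=\phi(s)$, so $r\in\down\phi[S]$. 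Thus $R\subseteq\down\phi[S]$, and since $\down\phi[S]$ is a sieve on $\phi(p)$ containing the $K$-cover $R$, Lemma~\ref{lem:filter} gives $\down\phi[S]\in K(\phi(p))$, that is, $\phi$ preserves covers.

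The computations above are short, so I do not expect a serious obstacle; the one point that requires care is the bookkeeping of which topology sits on which poset and the fact that one must cut the sieves down to the appropriate principal down-set — $\down q$ in the first direction, $\down\pi(\phi(p))$ in the second — before the stability axiom can be applied. Making sure the interchange of $\P$ and $\QQ$ in invoking Lemma~\ref{lem:adjointequivalent} is done correctly (so that the unit is $q\leq\phi(\pi(q))$ and the counit $\pi(\phi(p))\leq p$, not the reverse) is the other place where an error could slip in.
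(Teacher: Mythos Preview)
Your proof is correct and follows essentially the same route as the paper: in each direction you set up the same candidate sieve ($\down\phi[S]\cap\down q$ for the clp direction, and $R$ obtained from the clp applied to $S\cap\down\pi(\phi(p))$ for the converse), invoke stability, and finish with Lemma~\ref{lem:filter}. The only cosmetic differences are that you phrase the adjunction via the unit $q\leq\phi(\pi(q))$ and counit $\pi(\phi(p))\leq p$ where the paper uses the bi-implication $\pi(q)\leq p\Leftrightarrow q\leq\phi(p)$ directly, and you check that $\down\phi[S]$ is a sieve on $\phi(p)$ up front (and more simply, via monotonicity of $\phi$) rather than at the end.
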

\begin{proof}
 Assume that $\phi$ preserves covers. Let $q\in\QQ$ and $S\in J(\pi(q))$. Let $p=\pi(q)$, so $S\in J(p)$. Then $\pi(q)\leq p$, so using the adjunction, we obtain $q\leq\phi(p)$. Since $\phi$ preserves covers, we have $\down\phi[S]\in K(\phi(p)$, and since $q\leq\phi(p)$, it follows by stability of $K$ that $R=\down\phi[S]\cap\down q\in K(q)$. Let $x\in R$, then $\pi(x)\in\pi[R]$. Moreover, we have $R\subseteq\down\phi[S]$, so there is a $y\in S$ such that $x\leq\phi(y)$. By the adjunction, we obtain $\pi(x)\leq y$ and since $S$ is a down-set, we find $\pi(x)\in S$. Thus $\pi[R]\subseteq S$, which shows that $\pi$ has the clp.

Conversely, assume that $\pi$ has the clp and let $p\in\P$ and $S\in J(p)$. Let $q=\phi(p)$, then $q\leq\phi(p)$, so using the adjunction, we find $\pi(q)\leq p$. By stability of $J$, it follows that $S\cap\down\pi(q)\in J(\pi(q)$, and since $\pi$ has the clp, we obtain an $R\in K(q)$ such that $\pi[R]\subseteq S\cap\down\pi(q)$. Let $x\in R$. Then $s=\pi(x)\in S$, so $\pi(x)\leq s$. Using the adjunction we obtain $x\leq\phi(s)$, whence $x\in\down\phi[S]$, so $R\subseteq\down\phi[S]$. By Lemma \ref{lem:filter} it follows that $\down\phi[S]\in J(q)$ if we can show that $\down\phi[S]\in\D(\down q)$. Thus we have to show that $q$ is an upper bound of $\down\phi[S]$. Let $y\in\down\psi[S]$. Then there is an $s\in S$ such that $y\leq\phi(s)$. Using the adjunction, we find $\pi(y)\leq s\leq p$, so using the adjunction again yields $y\leq\phi(p)=q$. We conclude that $\down\phi[S]\in K(\phi(p))$, hence $\phi$ preserves covers.
\end{proof}

If we assume that we have a definition of site morphism, $\phi:(\P,J)\to(\QQ,K)$ is a site isomorphism if it has an inverse $\pi:(\QQ,K)\to(\P,J)$. On the poset part of the site, this means that $\phi$ and $\pi$ are order isomorphisms, which are each others inverses. Since this implies that $\phi$ is both the upper adjoint and lower adjoint of $\pi$, the preceding lemma implies that it does not matter whether we define an order morphism $\phi:(\P,J)\to(\QQ,K)$ to be a site morphism if it preserves covers of if it has the clp. Independent of which choice we make, we obtain the same notion of site isomorphisms, which is in the end the notion in which we are interested.

\begin{definition}
 Let $(\P,J)$ and $(\QQ,K)$ be sites. Then an order isomorphism $\phi:\P\to\QQ$ is called an \emph{isomorphism of sites}\index{isomorphism} if it satisfies one of the following equivalent conditions:
\begin{enumerate}
 \item $\phi$ preserves covers and has the clp;
 \item $\phi$ and $\phi^{-1}$ both preserve covers;
 \item $\phi$ and $\phi^{-1}$ both have the clp.
\end{enumerate}
\end{definition}

\begin{lemma}\label{lem:orderisopreservessieves}
 Let $\phi:\P\to\QQ$ be an order isomorphism. Let $p\in\P$. Then $S\in\D(\down p)$ implies $\phi[S]\in\D(\down\phi(p))$. Moreover, $\down\phi(p)=\phi[\down p]$.
\end{lemma}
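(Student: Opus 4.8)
The plan is to dispatch the two assertions in the order in which they are most useful: first the set equality $\down\phi(p)=\phi[\down p]$, and then the sieve claim, which will follow easily from it. The only structural input I need is that the inverse $\psi=\phi^{-1}$ of an order isomorphism is again an order morphism, which is immediate from the definition of order isomorphism (alternatively one can invoke Lemma \ref{lem:isomorphismisadjoint}, which says $\psi$ is the upper adjoint of $\phi$ and hence order-preserving).

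For $\down\phi(p)=\phi[\down p]$, the inclusion $\phi[\down p]\subseteq\down\phi(p)$ is just order-preservation of $\phi$: if $q\leq p$ then $\phi(q)\leq\phi(p)$. For the reverse inclusion I would take $r\in\QQ$ with $r\leq\phi(p)$, apply $\psi$, and use that $\psi$ is order-preserving to get $\psi(r)\leq\psi(\phi(p))=p$; thus $\psi(r)\in\down p$ and $r=\phi(\psi(r))\in\phi[\down p]$.

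For the first assertion, given $S\in\D(\down p)$ I would first note $\phi[S]\subseteq\phi[\down p]=\down\phi(p)$ by the equality just proved, so $\phi[S]$ is contained in the maximal sieve on $\phi(p)$; it remains to check downward closure. Here the one point worth stating explicitly is that, since $\down\phi(p)$ is itself a down-set of $\QQ$, a sieve on $\phi(p)$ is the same as a down-set of $\QQ$ contained in $\down\phi(p)$, so it suffices to show: if $r\in\QQ$ and $r\leq\phi(s)$ for some $s\in S$, then $r\in\phi[S]$. Applying $\psi$ gives $\psi(r)\leq s\leq p$, so $\psi(r)\in\down p$; since $S$ is a down-set of $\down p$ and $\psi(r)\leq s\in S$, we conclude $\psi(r)\in S$, hence $r=\phi(\psi(r))\in\phi[S]$.

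There is no real obstacle: the argument is entirely formal once one observes that $\phi$ being an order \emph{isomorphism} lets one transport inequalities both ways via $\phi$ and $\psi$. The only mild subtlety is the bookkeeping remark above about sieves on $\phi(p)$ coinciding with down-sets of $\QQ$ below $\phi(p)$, which is what permits the downward-closure verification to quantify over all of $\QQ$.
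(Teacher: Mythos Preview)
Your proof is correct and uses the same underlying idea as the paper: transport inequalities through $\phi$ and $\phi^{-1}$. The only difference is organizational: the paper first proves the sieve claim directly and then applies it to $S=\down p$ (observing that a down-set equals its own down-closure) to obtain $\down\phi(p)=\phi[\down p]$, whereas you prove the set equality first by a direct two-inclusion argument and then feed it into the sieve claim. Both routes are equally short and rest on the same computation.
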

\begin{proof}
 For each $s\in S\in\D(\down p)$ we have $s\leq p$, so $\phi(s)\leq\phi(p)$. If $x\in\phi[S]$ and $y\leq x$, then $\phi^{-1}(x)\in S$ and $\phi^{-1}(y)\leq\phi^{-1}(x)$, so $\phi^{-1}(y)\in S$. Hence $y\in\phi[S]$, and we conclude that $\phi[S]\in\D(\down\phi(p))$.

Since $\down p\in\D(\down p)$, we immediately obtain $\phi[\down p]\in\D(\down\phi(p))$, hence $\phi[\down p]=\down\phi[\down p]$. Now $q\in\down\phi[\down p]$ if and only if $q\leq\phi(p')$ for some $p'\leq p$ if and only if $q\leq\phi(p)$, so $\down\phi[\down p]=\down\phi(p)$.
\end{proof}

\begin{proposition}
 Let $(\P,J)$ and $(\QQ,K)$ be sites. Then an order isomorphism $\phi:\P\to\QQ$ is an isomorphism of sites $(\P,J)\to(\QQ,K)$ if and only if for each $p\in\P$ and $S\in\PP(\P)$, we have $S\in J(p)$ if and only if $\phi[S]\in K(\phi(p))$.
\end{proposition}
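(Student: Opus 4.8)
The plan is to unwind the definition of \emph{isomorphism of sites} through condition (2), namely that both $\phi$ and $\phi^{-1}$ preserve covers. The single technical ingredient is Lemma~\ref{lem:orderisopreservessieves}: an order isomorphism sends a sieve $S$ on $p$ to a sieve $\phi[S]$ on $\phi(p)$, and since $\phi[S]\in\D(\down\phi(p))$ it is its own down-closure, so $\down\phi[S]=\phi[S]$. Consequently the clause ``$\phi$ preserves covers'' — which literally reads $\down\phi[S]\in K(\phi(p))$ for every $S\in J(p)$ — is, for an order isomorphism, the same as demanding $\phi[S]\in K(\phi(p))$ for every $S\in J(p)$; and likewise for $\phi^{-1}$, using that $\phi^{-1}$ is again an order isomorphism so Lemma~\ref{lem:orderisopreservessieves} applies to it as well.

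For the ``only if'' direction I would assume $\phi$ is an isomorphism of sites and fix $p\in\P$, $S\in\PP(\P)$. If $S\in J(p)$, then $S$ is a sieve on $p$, so preservation of covers by $\phi$ together with the observation above gives $\phi[S]=\down\phi[S]\in K(\phi(p))$. Conversely, if $\phi[S]\in K(\phi(p))$, then $\phi[S]$ is a sieve on $\phi(p)$, hence $\phi^{-1}[\phi[S]]=S$ is a sieve on $\phi^{-1}(\phi(p))=p$ by Lemma~\ref{lem:orderisopreservessieves} applied to $\phi^{-1}$; now preservation of covers by $\phi^{-1}$ yields $S=\down\phi^{-1}[\phi[S]]\in J(p)$. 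This establishes the stated biconditional for all $p$ and $S$.

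For the ``if'' direction I would assume the biconditional holds for all $p\in\P$ and all $S\in\PP(\P)$ and check both halves of condition (2). To see that $\phi$ preserves covers, take $S\in J(p)$; the hypothesis gives $\phi[S]\in K(\phi(p))$, and since $\phi[S]$ is a down-set below $\phi(p)$ we get $\down\phi[S]=\phi[S]\in K(\phi(p))$. To see that $\phi^{-1}$ preserves covers, take $q\in\QQ$ and $T\in K(q)$, and set $p=\phi^{-1}(q)$; applying the hypothesis to the subset $\phi^{-1}[T]\in\PP(\P)$ and using $\phi[\phi^{-1}[T]]=T\in K(q)=K(\phi(p))$, we obtain $\phi^{-1}[T]\in J(p)$. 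Since $T$ is a sieve on $q$, Lemma~\ref{lem:orderisopreservessieves} (for $\phi^{-1}$) shows $\phi^{-1}[T]$ is a sieve on $p$, so $\down\phi^{-1}[T]=\phi^{-1}[T]\in J(\phi^{-1}(q))$. Thus $\phi$ and $\phi^{-1}$ both preserve covers, i.e.\ $\phi$ is an isomorphism of sites.

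I do not anticipate a genuine obstacle here; the only point requiring care is the bookkeeping between an image $\phi[S]$ and its down-closure $\down\phi[S]$ — which collapse for order isomorphisms by Lemma~\ref{lem:orderisopreservessieves} — and, in the ``if'' direction, remembering to instantiate the hypothesis at the \emph{subset} $\phi^{-1}[T]$ of $\P$ rather than at some sieve chosen in advance.
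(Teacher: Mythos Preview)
Your proof is correct and follows essentially the same approach as the paper: both directions rely on Lemma~\ref{lem:orderisopreservessieves} to identify $\down\phi[S]$ with $\phi[S]$ (and likewise for $\phi^{-1}$), then unwind condition~(2) of the definition of site isomorphism. Your presentation is in fact slightly more explicit about why the lemma applies to $\phi^{-1}$ and about instantiating the hypothesis at the subset $\phi^{-1}[T]$ in the ``if'' direction, but the argument is the same.
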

\begin{proof}
 Assume that $\phi$ is an isomorphism of sites and let $p\in\P$ and $S\in J(p)$. Since $\phi$ preserves covers, we have $\down\phi[S]\in K(\phi(p))$. By the preceding lemma, we have $\phi[S]\in\D(\down\phi(p))$, so $\down\phi[S]=\phi[S]$. Hence $\phi[S]\in K(\phi(p))$. Conversely, if $S\in\PP(\P)$ such that $\phi[S]\in K(\phi(p))$, then the same argument for $\phi^{-1}$ instead of $\phi$ yields $\phi^{-1}[\phi[S]]\in J(\phi^{-1}\circ\phi(p))$, so $S\in J(p)$.

Now assume that for each $p\in\P$, we have $S\in J(p)$ if and only if $\phi[S]\in K(\phi(p))$. Let $p\in\P$ and $S\in J(p)$. Then $\phi[S]\in K(\phi(p))$, so certainly $\down\phi[S]\in K(\phi(p))$. Let $q\in\QQ$ and $R\in K(q)$. By Lemma \ref{lem:orderisopreservessieves} we find $\phi^{-1}[R]\in\D(\down\phi^{-1}(q))$. Since $\phi[\phi^{-1}[R]]=R$, we find that $\phi[\phi^{-1}[R]]\in K(\phi\circ\phi^{-1}(q))$, so $\phi^{-1}[R]\in J(\phi^{-1}(q))$. We conclude that both $\phi$ and $\phi^{-1}$ preserve covers, so $\phi$ is an isomorphism of sites.
\end{proof}

\begin{corollary}
 Let $\P$ and $\QQ$ posets, and $X\subseteq\P$ and $Y\subseteq\QQ$ subsets. Then an order isomorphism $\phi:\P\to\QQ$ is an isomorphism of sites $(\P,J_X)\to(\QQ,J_Y)$ if and only if $\phi[X]=Y$.
\end{corollary}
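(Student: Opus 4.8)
The plan is to reduce everything to the proposition immediately preceding this corollary, which characterizes isomorphisms of sites among order isomorphisms: $\phi\colon(\P,J)\to(\QQ,K)$ is an isomorphism of sites exactly when, for every $p\in\P$ and every sieve $S$ on $p$, one has $S\in J(p)$ iff $\phi[S]\in K(\phi(p))$. Applying this with $J=J_X$ and $K=J_Y$ turns the corollary into the statement that $\phi[X]=Y$ is equivalent to the condition: for all $p\in\P$ and all $S\in\D(\down p)$, $X\cap\down p\subseteq S$ iff $Y\cap\down\phi(p)\subseteq\phi[S]$. Before starting either direction I would record the two bookkeeping facts that make the argument go: since $\phi$ is injective it commutes with intersections of subsets, and by Lemma \ref{lem:orderisopreservessieves} it satisfies $\phi[\down p]=\down\phi(p)$ and restricts to a bijection $S\mapsto\phi[S]$ from sieves on $p$ to sieves on $\phi(p)$, with inverse $R\mapsto\phi^{-1}[R]$.

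For the forward direction I would assume $\phi[X]=Y$. Then $\phi[X\cap\down p]=\phi[X]\cap\phi[\down p]=Y\cap\down\phi(p)$, and by injectivity of $\phi$ the inclusion $X\cap\down p\subseteq S$ is equivalent to $\phi[X\cap\down p]\subseteq\phi[S]$, i.e.\ to $Y\cap\down\phi(p)\subseteq\phi[S]$. Unwinding the definition of the subset Grothendieck topologies $J_X$ and $J_Y$, this says precisely that $S\in J_X(p)$ iff $\phi[S]\in J_Y(\phi(p))$, so the cited proposition gives that $\phi$ is an isomorphism of sites.

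For the converse I would avoid re-deriving $X$ and $Y$ by hand and instead use Proposition \ref{prop:correspsubsetsandtopologies}(1), which gives $X=X_{J_X}$ and $Y=X_{J_Y}$, where $X_J=\{p\in\P:J(p)=\{\down p\}\}$. Assuming $\phi$ is an isomorphism of sites, the cited proposition yields $S\in J_X(p)\iff\phi[S]\in J_Y(\phi(p))$ for all $p$ and all sieves $S$ on $p$; combining this with the sieve bijection and $\phi[\down p]=\down\phi(p)$ shows $J_X(p)=\{\down p\}$ iff $J_Y(\phi(p))=\{\down\phi(p)\}$, that is, $p\in X_{J_X}$ iff $\phi(p)\in X_{J_Y}$. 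Since $\phi$ is bijective this gives $\phi[X_{J_X}]=X_{J_Y}$, hence $\phi[X]=Y$.

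None of this is deep; the only thing requiring care is the bookkeeping with images of down-sets and sieves under $\phi$ and $\phi^{-1}$ — in particular checking that $\phi$ really induces a bijection $\D(\down p)\to\D(\down\phi(p))$ respecting the inclusions that define $J_X$ and $J_Y$, and the identity $\phi[X\cap\down p]=Y\cap\down\phi(p)$. That is the main (mild) obstacle; once those facts are in place, both directions are immediate consequences of the preceding proposition and of Proposition \ref{prop:correspsubsetsandtopologies}(1).
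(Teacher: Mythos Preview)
Your proof is correct and follows essentially the same approach as the paper: both directions go through the preceding proposition characterizing site isomorphisms, and both hinge on the equivalence $p\in X\iff J_X(p)=\{\down p\}$. The only cosmetic difference is that you package this equivalence as a citation to Proposition~\ref{prop:correspsubsetsandtopologies}(1), whereas the paper makes the same observation inline (noting that $J_X(x)=\{\down x\}$ for $x\in X$, and that $J_Y(\phi(x))$ is a singleton iff $\phi(x)\in Y$), and then applies the argument to $\phi^{-1}$ to get the reverse inclusion.
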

\begin{proof}
 Assume that $\phi:(\P,J_X)\to(\QQ,J_Y)$ is an isomorphism of sites and let $x\in X$. Then $J(x)=\{\down x\}$, so $J_Y(\phi(x))=\{\phi[\down x]\}$ for $\phi$ is an isomorphism of sites. Now $J_Y(\phi(x))$ contains only one element if and only if $\phi(x)\in Y$, hence $\phi[X]\subseteq Y$. Replacing $\phi$ by $\phi^{-1}$ gives $\phi^{-1}[Y]\subseteq X$, hence $\phi[X]=Y$.

Conversely assume that $\phi[X]=Y$. Let $p\in\P$ and $S\in J_X(p)$. Then $X\cap\down p\subseteq S$, hence $\phi[X]\cap\phi[\down p]\subseteq\phi[S]$. By Lemma \ref{lem:orderisopreservessieves}, $\phi[S]\in\D(\down\phi(p))$ and $\phi[\down p]=\down\phi(p)$. Moreover, $\phi[X]=Y$, hence $Y\cap\down\phi(p)\subseteq\phi[S]$, and we conclude that $\phi[S]\in J_Y(\phi(p))$. Since $\phi$ is an order isomorphism, we have $\phi^{-1}[Y]=X$. Hence applying the same arguments to $\phi^{-1}$ gives the implication $\phi[S]\in J_Y(\phi(p))$ implies $S\in J_X(p)$. We conclude that $\phi$ is an isomorphism of sites.
\end{proof}

If $(\P,J_X)$ and $(\QQ,J_Y)$ are sites, it might be interesting as well to examine how to express the cover preserving property and the clp of an order morphism $\phi:\P\to\QQ$ in terms of $\phi$, $X$ and $Y$.

\begin{proposition}
 Let $\P$ and $\QQ$ posets and $X$ and $Y$ subsets of $\P$ and $\QQ$, respectively. Let $\phi:\P\to\QQ$ an order morphism. Then $\phi:(\P,J_X)\to(\QQ,J_Y)$ has the clp if and only if $\phi[X]\subseteq Y$.
\end{proposition}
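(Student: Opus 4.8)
The plan is to prove both implications directly from the definition of the covering lifting property, after unwinding what membership in $J_X(p)$ and in $J_Y(\phi(p))$ means via Proposition~\ref{prop:defJX}: recall $S\in J_X(p)$ iff $S\in\D(\down p)$ and $X\cap\down p\subseteq S$, and similarly for $J_Y$. Since $\phi:(\P,J_X)\to(\QQ,J_Y)$ has the clp precisely when for every $p\in\P$ and every $S\in J_Y(\phi(p))$ there is $R\in J_X(p)$ with $\phi[R]\subseteq S$, both directions reduce to elementary manipulations with down-sets and the order-preservation of $\phi$.

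For the ``if'' direction, assume $\phi[X]\subseteq Y$. Given $p\in\P$ and $S\in J_Y(\phi(p))$, so $Y\cap\down\phi(p)\subseteq S$, I would exhibit the explicit lift $R=\phi^{-1}[S]\cap\down p$. First one checks $R$ is a sieve on $p$: $\phi^{-1}[S]$ is a down-set of $\P$ because $\phi$ is order-preserving and $S$ is a down-set, and intersecting with $\down p$ keeps it an element of $\D(\down p)$. Next, $\phi[R]\subseteq S$ is immediate from $R\subseteq\phi^{-1}[S]$. Finally, to see $R\in J_X(p)$ one needs $X\cap\down p\subseteq R$: if $x\in X\cap\down p$ then $\phi(x)\leq\phi(p)$ and $\phi(x)\in\phi[X]\subseteq Y$, so $\phi(x)\in Y\cap\down\phi(p)\subseteq S$, i.e.\ $x\in\phi^{-1}[S]$; together with $x\leq p$ this gives $x\in R$.

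For the ``only if'' direction I would argue by contraposition. Suppose some $x\in X$ has $\phi(x)\notin Y$. Then the punctured sieve $S=\down\phi(x)\setminus\{\phi(x)\}$ is a sieve on $\phi(x)$ (it is a down-set of $\QQ$, since anything $q'$ with $q'\leq q<\phi(x)$ cannot equal $\phi(x)$ by antisymmetry), and because $\phi(x)\notin Y$ we have $Y\cap\down\phi(x)\subseteq S$, so $S\in J_Y(\phi(x))$. Applying the clp at $p=x$ yields $R\in J_X(x)$ with $\phi[R]\subseteq S$. But $R\in J_X(x)$ forces $X\cap\down x\subseteq R$, and $x\in X\cap\down x$, so $x\in R$ and hence $\phi(x)\in\phi[R]\subseteq S=\down\phi(x)\setminus\{\phi(x)\}$, contradicting $\phi(x)\notin S$. (This argument also handles the edge case where $\phi(x)$ is minimal, in which case $S=\emptyset$.)

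I do not anticipate a genuine obstacle; the only points requiring care are the two ``is it really a sieve'' checks — that $\phi^{-1}[S]\cap\down p$ lies in $\D(\down p)$ rather than merely being a down-set of $\P$, and that $\down\phi(x)\setminus\{\phi(x)\}$ is a down-set of $\QQ$ — both of which are routine consequences of $\phi$ being order-preserving together with transitivity and antisymmetry of $\leq$.
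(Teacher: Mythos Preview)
Your proof is correct. It follows the same overall strategy as the paper --- prove each implication directly from the definitions by exhibiting an explicit witness cover --- but with different concrete choices. For the ``if'' direction you take the largest possible lift $R=\phi^{-1}[S]\cap\down p$ and check it covers, whereas the paper takes the smallest $J_X$-cover $R=\down(X\cap\down p)$ and checks its image lands in $S$; these are dual verifications of the same fact. For ``only if'', you test against the punctured sieve $\down\phi(x)\setminus\{\phi(x)\}$, while the paper tests against the minimal $J_Y$-cover $\down(Y\cap\down\phi(x))$; in both cases the key observation is that $x\in X$ forces $x\in R$ for any $R\in J_X(x)$, and then $\phi(x)$ lands somewhere it should not. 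Your choice of test cover is arguably cleaner, since $\phi(x)\notin S$ is immediate rather than requiring the short ``every $z$ is strictly below $\phi(x)$'' argument the paper runs.
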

\begin{proof}
Assume that $\phi$ has the clp and let $x\in X$. Since $\phi$ has the clp and $\down(Y\cap\down\phi(x))\in J_Y(\phi(x))$, there must be an $R\in J_X(x)$ such that $\phi[R]\subseteq\down(Y\cap\down\phi(x))$. Since $x\in X$, it follows that $J_X(x)$ contains only $\down x$, hence $\phi[\down x]\subseteq\down(Y\cap\down\phi(x))$. In particular, we must have $\phi(x)\in\down (Y\cap\down\phi(x))$. Now assume that $\phi(x)\notin Y$. Then for each $y\in Y\cap\down\phi(x)$ we have $y<\phi(x)$. If $z\in\down(Y\cap\down\phi(x))$, we must have $z\leq y$ for some $y\in Y\cap\down\phi(x)$, so $z<\phi(x)$ for each $z\in \down(Y\cap\down\phi(x)$. Thus the choice $z=\phi(x)$ gives a contradiction, so we must have $\phi(x)\in Y$. We conclude that $\phi[X]\subseteq Y$.

Assume that $\phi[X]\subseteq Y$ and let $p\in\P$ and $S\in J_Y(\phi(p))$. Then $Y\cap\down\phi(p)\subseteq S$, so $\phi[X]\cap\down \phi(p)\subseteq S$. Moreover, since $S$ is a down-set, we obtain $\down(\phi[X]\cap\down\phi(p))\subseteq S$. Let $R=\down (X\cap\down p)$, then $R\in J_X(p)$. Let $y\in R$. Then there is an $x\in X$ such that $y\leq x\leq p$. Since $\phi$ is an order morphism, we find $\phi(y)\leq\phi(x)\leq\phi(p)$. So $\phi(x)\in \phi[X]\cap\down\phi(p)$, whence $\phi(y)\in\down(\phi[X]\cap\down\phi(p))$. Thus we find that $\phi(y)\in S$, hence $\phi[R]\subseteq S$, and we conclude that $\phi$ has the clp.
\end{proof}

For $\phi:(\P,J_X)\to(\QQ,J_Y)$ preserving covers, we can state a similar statement, although we have add bijectivity of $\phi$ as extra condition.

\begin{proposition}
Let $\P$ and $\QQ$ posets and $X$ and $Y$ subsets of $\P$ and $\QQ$, respectively. Let $\phi:\P\to\QQ$ an order isomorphism. Then $\phi:(\P,J_X)\to(\QQ,J_Y)$ preserves covers if and only if $Y\subseteq\phi[X]$.
\end{proposition}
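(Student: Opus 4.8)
The plan is to reduce the cover-preserving condition to a statement purely about the subsets $X$ and $Y$ and the bijection $\phi$, and then dispatch each implication by a short argument. Since $\phi$ is an order isomorphism, Lemma~\ref{lem:orderisopreservessieves} gives $\phi[S]\in\D(\down\phi(p))$ whenever $S\in\D(\down p)$, so that $\down\phi[S]=\phi[S]$, and it also gives $\phi[\down p]=\down\phi(p)$. First I would use this to observe that $\phi$ preserves covers if and only if, for every $p\in\P$ and every sieve $S$ on $p$ with $X\cap\down p\subseteq S$, one has $Y\cap\down\phi(p)\subseteq\phi[S]$; this is just unwinding $S\in J_X(p)$ and $\down\phi[S]\in J_Y(\phi(p))$ through the two identities above.

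For the ``if'' direction I would assume $Y\subseteq\phi[X]$, fix $p\in\P$ and $S\in J_X(p)$, so $X\cap\down p\subseteq S$, take $y\in Y\cap\down\phi(p)$, write $y=\phi(x)$ with $x\in X$, and use that $\phi$ is an order isomorphism together with $\phi(x)\leq\phi(p)$ to conclude $x\leq p$; then $x\in X\cap\down p\subseteq S$, hence $y=\phi(x)\in\phi[S]$. This shows $Y\cap\down\phi(p)\subseteq\phi[S]$, and since $\phi[S]\in\D(\down\phi(p))$ this is exactly $\down\phi[S]=\phi[S]\in J_Y(\phi(p))$.

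For the ``only if'' direction I would argue on a single element: let $y\in Y$ and set $p=\phi^{-1}(y)$, which is legitimate because $\phi$ is surjective. I claim $p\in X$. If not, then $p\notin X\cap\down p$, so $\down p\setminus\{p\}$ is a sieve on $p$ containing $X\cap\down p$, that is, $\down p\setminus\{p\}\in J_X(p)$. Since $\phi$ is a bijection with $\phi[\down p]=\down\phi(p)$, it maps $\down p\setminus\{p\}$ onto $\down\phi(p)\setminus\{\phi(p)\}$, which is again a down-set, so $\down\phi[\down p\setminus\{p\}]=\down\phi(p)\setminus\{\phi(p)\}$. Cover preservation then forces $\down\phi(p)\setminus\{\phi(p)\}\in J_Y(\phi(p))$, hence $Y\cap\down\phi(p)\subseteq\down\phi(p)\setminus\{\phi(p)\}$; as $\phi(p)\in\down\phi(p)$ this gives $\phi(p)\notin Y$, i.e.\ $y\notin Y$, a contradiction. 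So $p\in X$ and $y=\phi(p)\in\phi[X]$, which proves $Y\subseteq\phi[X]$.

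The argument is essentially bookkeeping once Lemma~\ref{lem:orderisopreservessieves} is in hand; the one place where bijectivity of $\phi$ — rather than mere order-preservation — is genuinely used, and hence the point to be careful about, is the identity $\phi[\down p\setminus\{p\}]=\down\phi(p)\setminus\{\phi(p)\}$ in the ``only if'' direction: injectivity is what guarantees that deleting the top element $p$ from $\down p$ removes exactly the single element $\phi(p)$ from the image $\phi[\down p]=\down\phi(p)$, while surjectivity is what lets us name the preimage $p=\phi^{-1}(y)$ at all. This also explains why, in contrast to the analogous proposition for the clp, bijectivity must be added as a hypothesis here.
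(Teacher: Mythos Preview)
Your proof is correct and follows essentially the same route as the paper's: both directions hinge on Lemma~\ref{lem:orderisopreservessieves} to replace $\down\phi[S]$ by $\phi[S]$, and the ``if'' arguments are identical. The only difference is in the ``only if'' direction: the paper reduces to the smallest cover $\down(X\cap\down p)$ and argues that $\phi(p)$ cannot lie in its image when $p\notin X$, whereas you test cover-preservation on the sieve $\down p\setminus\{p\}$ and use that its image under the bijection $\phi$ is exactly $\down\phi(p)\setminus\{\phi(p)\}$ --- a minor and equally valid variation.
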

\begin{proof}
 Assume that $\phi$ preserves covers. Then $\down\phi[S]\in J_Y(\phi(p))$ for each $p\in\P$ and $S\in J_X(p)$. Notice that Lemma \ref{lem:orderisopreservessieves} assures that $\down\phi[S]=\phi[S]$ for $\phi$ is assumed to be an order isomorphism. Since $\down(X\cap\down p)$ is the least element of $J_X(p)$, Lemma \ref{lem:filter} assures that $\phi[S]\in J_Y(\phi(p))$ for each $S\in J_X(p)$ if and only if $\phi[\down(X\cap\down p))]\in J_Y(\phi(p))$, we find that $\phi$ preserves covers if and only if for
\begin{equation}\label{eq:coveringpreservingforsubsettopologies}
Y\cap\down\phi(p)\subseteq\phi[\down(X\cap\down p)]
\end{equation} for each $p\in\P$.

Let $y\in Y$. By the surjectivity of $\phi$ there is some $p\in\P$ such that $\phi(p)=y$. Since $\phi$ preserves covers, we find that $y=\phi(p)\in\phi[\down(X\cap\down p)]$. Assume that $p\notin X$. Then for each $x\in X\cap\down p$, we must have $x<p$. Then if $p'\in\down(X\cap\down p)$, there must be an $x\in X\cap\down p$ such that $p'\leq x$, hence $p'<p$. Then $\phi(p')<\phi(p)$ for $\phi$ is an order isomorphism, so $z<\phi(p)$ for each $z\in\phi[\down(X\cap\down p)$. Since the choice $z=y$ gives a contradiction, we must have $\phi(p)\in X$. We conclude that $Y\subseteq\phi[X]$.

Conversely, assume that $Y\subseteq\phi[X]$ and let $p\in\P$. We have to show that (\ref{eq:coveringpreservingforsubsettopologies}) holds, so let $y\in Y\cap\down\phi(p)$. Since $Y\subseteq\phi[X]$, there is some $x\in X$ such that $\phi(x)=y$. Hence $\phi(x)\leq\phi(p)$, whence $x\leq p$ for $\phi$ is an order isomorphism. Hence $y=\phi(x)$ with $x\in X\cap\down p$, which shows that (\ref{eq:coveringpreservingforsubsettopologies}) indeed holds.
\end{proof}


If $\P$ is a poset and $X\subseteq\P$ a subset, it might be interesting to find a description of all $J_X$-sheaves.
\begin{definition}\cite[Chapter III.4]{M&M}
Let $\P$ be a poset and $J$ a Grothendieck topology on $\P$.
Furthermore, let $F:\P^\op\to\Sets$ be a functor (a contravariant functor to $\Sets$ is also called a \emph{presheaf}\index{presheaf}). Let $p\in\P$ and $S\in J(p)$. Then a family $\langle a_x\rangle_{x\in S}\in\prod_{x\in S}F(x)$ is called a \emph{matching family}\index{matching family} for the cover $S$ with elements of $F$ if \begin{equation}\label{matchfam}
 F(y\leq x)a_x=a_y
\end{equation}
for each $x,y\in S$ such that $y\leq x$. An element $a\in F(p)$ such that $F(x\leq p)a=a_x$ for each $x\in S$ is called an \emph{amalgamation}\index{amalgamation}. We say that $F$ is a $J$-\emph{sheaf}\index{sheaf} if for each $p\in\P$ for each $S\in J(p)$ and for each matching family $\langle a_x\rangle_{x\in S}$ there is a unique
\emph{amalgamation}\index{amalgamation} $a\in F(p)$.
\end{definition}

\begin{example}\label{ex:indiscretetopologysheaves}
 Let $J$ be a topology on a poset $\P$. If we consider the indiscrete topology given by $J_{\mathrm{ind}}(p)=\{\down p\}$ on $\P$, we have $\Sh(\P,J_{\mathrm{ind}})=\Sets^{\P^\op}$. Indeed, let $F\in\Sets^{\P^\op}$ and $p\in\P$. Then there is only one cover of $p$, namely $\down p$. Hence if $\langle a_q\rangle_{q\in\down p}$ is a matching family for $\down p$, we have $p\in\down p$, so $a_q=F(q\leq p)a_p$. Thus $a_p$ is an amalgamation of the matching family. It is also unique, since if $a\in F(p)$ is another amalgamation, we have $a_p=F(p\leq p)a=a$.
\end{example}

\begin{example}\label{ex:discretetopologysheaves}
 Let $J$ be a topology on a poset $\P$. Given a $J$-sheaf $F$ and a point $p\in\P$ such that $\emptyset\in J(p)$. Then a matching family for $\emptyset$ must be a function on the empty set. There exists only one function with the empty set as domain, and it is clearly a matching family. An amalgamation must be a point in $F(p)$, but since there are no restrictions on this point except that it must be unique, it follows that $F(p)$ is a singleton set $1$. In particular, if $J$ is the discrete topology, then $\Sh(\P,J)=1$, since $\emptyset\in J(p)$ for each $p\in\P$.
\end{example}

\begin{example}\label{ex:atomsheaves}
 Let $\P$ be a poset and let $J$ be a topology on $\P$. Let $F$ be a $J$-sheaf on $\P$ and let $q\leq p$ be an element such that $\down q\in J(p)$. Notice that $F(q)$ is non-empty if $F(p)$ is non-empty, since we have a map $F(q\leq p):F(p)\to F(q)$. Then given any $b\in F(q)$, we can define $a_r:=F(r\leq q)b$ for each $r\in\down q$, which yields a matching family for $\down q\in J(p)$. Since $F$ is a sheaf, we find that there is a unique $a\in F(p)$ such that $F(r\leq p)a=a_r$. This shows not only that $F(p)$ must be non-empty if $F(q)$ is non-empty, but also that $a$ is the unique element such that $F(q\leq p)a=b$, so $F(q\leq p)$ is a bijection.

Now assume that $\P$ is downwards directed consider the atomic topology $J_\atom$. Then $\down q\in J_\atom(p)$ for each $q\leq p$. Let $p_1,p_2\in\P$. Since $\P$ is directed, we find that there is a $q\in\P$ such that $p_1,p_2\geq q$. From the preceding, we find that if $F$ is a $J_\atom$-sheaf, $F(q\leq p_1)$ and $F(q\leq p_2)$ are bijections, so $F(q\leq p_2)^{-1}\circ F(q\leq p_1)$ is a bijection from $F(p_1)$ to $F(p_2)$. Thus we find that all sheaves on $\P$ are constant up to isomorphism. Hence, $\Sh(\P,J_\atom)\cong\Sets$.
\end{example}

The indiscrete topology is the coarsest topology, but from the first example, we see that all presheaves are sheaves with respect to this topology. So $\Sh(\P,J_{\mathrm{ind}})=\Sets^{\P^\op}$. On the other hand, from the second example we see that only one presheaf is a sheaf with respect to the discrete topology, which is the finest topology possible. So $\Sh(\P,J_{\mathrm{dis}})\cong 1\cong\Sets^{\emptyset^\op}$ if we consider the empty set as a subposet of $\P$.
In the third example, we consider the atomic topology, which lies between the indiscrete and the discrete topology if we order Grothendieck topologies from coarse to fine. Recall that the atomic topology is only defined for downwards directed posets, but if $\P$ is Artinian, this is certainly the case. Then the atomic topology is induced by the least element $0$, and we see that $\Sh(\P,J_\atom)\cong\Sets\cong\Sets^{\{0\}^\op}$. In all these cases, the topology in question is induced by some subset $X$ of $\P$, and the category of sheaves with respect to this topology is equivalent to $\Sets^{X^\op}$.

It turns out to be a direct consequence of the so-called \emph{Comparison Lemma}\index{Comparison Lemma} that this is true for each subset $X$ of $\P$. The idea behind the Comparison Lemma is the following. Given a subset $X$ of a poset $\P$, the inclusion map $I:X\to\P$ induces a map $I^*:\Sets^{\P^\op}\to\Sets^{X^\op}$ given by $F\mapsto F\circ I$. If $J$ is a Grothendieck topology on $\P$, one could ask about the image of $\Sh(\P,J)$ under $I^*$. The Comparison Lemma gives a sufficient condition on $X$ for the existence of a Grothendieck topology $\bar J$ on $X$ such that $I^*$ provides an equivalence $\Sh(X,\bar J)\cong\Sh(\P,J)$, where we consider $X$ a subposet of $\P$, so if $x,y\in X$ and $x\leq y$ in $\P$, then $x\leq y$ in $X$. In categorical terms, this says that $X$ is a full subcategory of $\P$. We shall state the Comparison Lemma restricted to the posetal case as Theorem \ref{thm:ComparisonLemma} below. The proof we present is based on the proof of \cite[Theorem C.2.2.3]{Elephant2}, where the Comparison Lemma is stated for arbitrary categories, although we do not explicitely make use of Kan extensions. Another proof of the Comparison Lemma for arbitrary categories can be found in \cite{K&M}. In these references, the Lemma is also stated in a sharper version, i.e., the fullness condition is dropped. The condition that we will put on $X$ is called $J$-\emph{denseness}\index{$J$-dense}, which says that for each $p\in\P$ one should have $\down(X\cap\down p)\in J(p)$. Since $\down(X\cap\down p)$ is the smallest cover in $J_X(p)$, we find by Lemma \ref{lem:filter} that $X$ is $J$-dense if and only if $J_X\subseteq J$.

Let $X\subseteq\P$ and $I:X\embeds\P$ be the inclusion. Then we write $\bar F=I^*(F)=F\circ I$, if we want to stress when we restrict the domain of $F$ to $X$.
\begin{lemma}
 Let $J$ be a Grothendieck topology on a poset $\P$ and let $X\subseteq\P$ be a subset such that $J_X\leq J$. For each $x\in X$, let $\bar J$ be the Grothendieck topology on $X$ defined in Lemma \ref{lem:inducedtopology}. Then the functor $I^*:\Sets^{\P^\op}\to\Sets^{X^\op}$ restricts to a functor $\Sh(\P,J)\to\Sh(X,\bar J)$, which we will denote by $I^*$ as well.
\end{lemma}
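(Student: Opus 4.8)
The plan is as follows. Since $I^*\colon\Sets^{\P^\op}\to\Sets^{X^\op}$ is already a functor on presheaves, all that must be checked is that it carries $J$-sheaves to $\bar J$-sheaves. So I would fix a $J$-sheaf $F$ on $\P$, write $\bar F=I^*(F)$, fix $x\in X$ and $\bar S\in\bar J(x)$, and let $\langle a_y\rangle_{y\in\bar S}$ be a matching family for $\bar S$ with elements of $\bar F$. By Lemma~\ref{lem:inducedtopology} we have $\bar S=\down\bar S\cap X$ and $\down\bar S\in J(x)$, so $\down\bar S$ is a genuine $J$-cover of $x$ in $\P$; the idea is to transport the amalgamation problem for $\langle a_y\rangle$ over $\bar F$ to this cover.

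The central step is to extend $\langle a_y\rangle_{y\in\bar S}$ to a matching family $\langle b_w\rangle_{w\in\down\bar S}$ for $\down\bar S$ with elements of $F$: for $w\in\down\bar S$ choose $y\in\bar S$ with $w\le y$ and put $b_w=F(w\le y)a_y$. The main obstacle is well-definedness, since $\bar S$ need not be downwards directed; here the hypothesis $J_X\le J$ is exactly what is needed, as it makes $\down(X\cap\down w)$ a $J$-cover of $w$. Given $u\le v$ with $v\in X\cap\down w$, the element $v$ lies in $\bar S$ (it is in $X$, below $x$, and below any fixed $y\in\bar S$ with $w\le y$, and $\bar S$ is a down-set in $\bar\down x$), so for two admissible choices $y_1,y_2$ one computes, using the matching relations $F(v\le y_i)a_{y_i}=a_v$, that $F(u\le w)F(w\le y_i)a_{y_i}=F(u\le v)a_v$ for $i=1,2$. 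Since amalgamations over the $J$-cover $\down(X\cap\down w)$ of $w$ are unique in the $J$-sheaf $F$, and both $F(w\le y_1)a_{y_1}$ and $F(w\le y_2)a_{y_2}$ restrict to the same family over it, these two elements coincide; hence $b_w$ is well defined, and the matching identity $F(w'\le w)b_w=b_{w'}$ for $w'\le w$ in $\down\bar S$ is then an immediate functoriality check.

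Finally, since $F$ is a $J$-sheaf and $\down\bar S\in J(x)$, there is a unique $a\in F(x)$ with $F(w\le x)a=b_w$ for all $w\in\down\bar S$. Taking $w=y\in\bar S$ and noting $b_y=F(y\le y)a_y=a_y$ gives $\bar F(y\le x)a=a_y$, so $a$ is a $\bar F$-amalgamation of $\langle a_y\rangle$. For uniqueness, if $a'\in F(x)$ satisfies $\bar F(y\le x)a'=a_y$ for all $y\in\bar S$, then for any $w\in\down\bar S$, choosing $y\in\bar S$ with $w\le y$ gives $F(w\le x)a'=F(w\le y)F(y\le x)a'=F(w\le y)a_y=b_w$, so $a'$ is an amalgamation of $\langle b_w\rangle$ for the $J$-cover $\down\bar S$, whence $a'=a$ by uniqueness in the $J$-sheaf $F$. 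This shows $\bar F\in\Sh(X,\bar J)$, so $I^*$ restricts as claimed.
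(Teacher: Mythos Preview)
Your proposal is correct and follows essentially the same route as the paper's proof: extend the $\bar S$-matching family to a matching family on $\down\bar S\in J(x)$, use $J_X\le J$ to produce the auxiliary cover $\down(X\cap\down w)$ and the separatedness of $F$ to show the extension is well defined, then invoke the $J$-sheaf property of $F$ for existence and uniqueness of the amalgamation. The only differences are notational ($b_w$ versus the paper's $a_z$, and your explicit pair $y_1,y_2$ versus the paper's $y,y'$).
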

\begin{proof}
  Let $F\in\Sh(\P,J)$ and $\bar F=F\circ I$ with $I:X\embeds\P$ the inclusion. Let $x\in X$ and $\bar S\in\bar J(x)$. Let $\langle a_y\rangle_{y\in\bar S}$ be a matching family for $\bar S$ of elements of $\bar F$. By Lemma \ref{lem:inducedtopology}, $S=\down\bar S\in J(x)$. Thus we are going to extend the matching family $\langle a_y\rangle_{y\in\bar S}$ to a matching family for $S$ of elements of $F$, as follows. If $z\in S$, there must be at least one $y\in\bar S$ such that $z\leq y$, hence we define $a_z=F(z\leq y)a_y$. Now, if there is another $y'\in\bar S$ such that $z\leq y'$, we should have $a_z=F(z\leq y')a_{y'}$. Call the right-hand side $b_z$. Since $$R=\down(X\cap\down z)\in J(z),$$ we have matching families $\langle F(w\leq z)a_z\rangle_{w\in R}$ and $\langle F(w\leq z)b_z\rangle_{w\in R}$ for $R$ of elements of $F$. Now, the matching families must be equal, since if $u\in\down(X\cap\down z)$, there is a $v\in X\cap\down z$ such that $u\leq v\leq z$. Notice that also $v\in\bar S$, since $v\in X\cap z$, and $z\leq y$ with $y\in\bar S\in\D(X)$. Then $F(v\leq y)a_y=F(v\leq y')a_{y'}$, since $v,y,y'\in\bar S$. So indeed
\begin{eqnarray*}
 F(u\leq z)a_z & = & F(u\leq v)F(v\leq z)a_z=F(u\leq v)F(v\leq z)F(z\leq y)a_y\\
 & = & F(u\leq v)F(v\leq y)a_y=F(u\leq v)F(v\leq y')a_{y'}\\
& = & F(u\leq v)F(v\leq z)F(z\leq y')a_{y'}=F(u\leq z)b_z.
\end{eqnarray*}
Hence both matching families must have the same amalgamation, for $F$ is a $J$-sheaf, but since both $a_z$ and $b_z$ are amalgamations, we find $a_z=b_z$. Hence $\langle a_z\rangle_{z\in\down\bar S}$ is a matching family for $\down\bar S\in J(x)$, so again since $F$ is a $J$-sheaf, it has a unique amalgamation $a\in F(x)$ such that $F(z\leq x)a=a_z$ for each $z\in\down\bar S$, so in particular for $z\in\bar S$. We conclude that $a$ is an amalgamation of $\langle a_y\rangle_{y\in\bar S}$. Now assume $b$ is another amalgamation. Then $a_y=F(y\leq x)b$ for each $y\in\bar S$, and if $z\in\down\bar S$, we have $$F(z\leq x)b=F(z\leq y)F(y\leq x)b=a_y$$ for some $y\in\bar S$ such that $z\leq y$. So $b$ is also an amalgamation for $\langle a_z\rangle_{z\in\down\bar S}$, whence $b=a$.
\end{proof}

The functor $E:\Sets^{X^\op}\to\Sets^{\P^\op}$ in the opposite direction is defined on objects by sending $F\mapsto\hat F$, where for each $p\in\P$, $\hat F(p)$ is defined as the collection of all functions $f:X\cap\down p\to\bigcup_{x\in X}F(x)$ such that $f(x)\in F(x)$ for each $x\in X\cap\down p$, and
\begin{equation}\label{eq:elFhat}
F(y\leq x)f(x)=f(y)
\end{equation}
 for each $x,y\in X\cap\down p$ such that $y\leq x$. The action of $\hat F$ on morphisms $q\leq p$ in
$\P$ is defined by
\begin{equation}\label{eq:hatFmor}
\hat F(q\leq p)(f)=\left.f\right|_{X\cap\down q},
\end{equation}
where $f\in \hat F(p)$. It is immediate that $\hat F(q\leq
p)f\in\hat F(q)$ and that $\hat F$ is a functor.

If $\alpha:F\to G$ is a natural transformation between
$F,G\in\Sets^{X^\op}$, we define the action of $E$ on $\alpha$, denoted by
$\hat\alpha:\hat F\to\hat G$, by
\begin{equation}\label{eq:hatalph}
\hat\alpha_p(f)(x)=\alpha_x\big(f(x)\big),
\end{equation}
where $p\in\P$, $f\in\hat F(p)$ and $x\in X\cap\down p$.

This action on morphisms is well defined. For this, we need the naturality of $\alpha$:
\begin{equation*}
\xymatrix{F(x)\ar[rr]^{\alpha_x}\ar[dd]_{F(y\leq x)} && G(x)\ar[dd]^{G(y\leq x)}\\
\\
F(y)\ar[rr]_{\alpha_y} && G(y)}
\end{equation*}
We have to show that $\hat\alpha_p(f)\in\hat G(p)$. So let $y\leq x$ in $X\cap\down p$. Then
\begin{align*}
 G(y\leq x)\hat\alpha_p(f)(x) &  =G(y\leq x)\alpha_x\big(f(x)\big) =\alpha_y\big(F(y\leq x)f(x)\big)\\
 & =\alpha_y\big(f(y)\big) =\hat\alpha_p(f)(y).
\end{align*}
Furthermore, we have to show that $\hat\alpha$ is a natural transformation between $\hat F$ and $\hat G$. That is,
\begin{equation*}
\xymatrix{\hat F(p)\ar[rr]^{\hat \alpha_p}\ar[dd]_{\hat F(q\leq p)} && \hat G(p)\ar[dd]^{\hat G(q\leq p)}\\
\\
\hat F(q)\ar[rr]_{\hat \alpha_q} && \hat G(q)}
\end{equation*}
commutes. Notice that given a function $f$ with domain $X\cap\down p$, both $\hat G(q\leq p)\circ\hat\alpha_p(f)$ and $\hat\alpha_q\circ\hat F(q\leq p)(f)$ are functions with domain $X\cap\down q$. Hence for arbitrary $x\in X\cap\down q$ we find
\begin{align*}
\hat G(q\leq p)\circ\hat\alpha_p(f)(x) & = \left.\hat\alpha_p(f)\right|_{X\cap\down q}(x) = \hat\alpha_p(f)(x) = \alpha_x(f(x))\\
& = \alpha_x\left(\left.f\right|_{X\cap\down q}(x)\right) =  \hat\alpha_q\left(\left.f\right|_{X\cap\down q}\right)(x)\\
&  = \hat\alpha_q\circ\hat F(q\leq p)(f)(x),
\end{align*}
where we used (\ref{eq:hatFmor}) in the first and the last equality, and used (\ref{eq:hatalph}) in the third and fifth equality.
So the
diagram indeed commutes.

\begin{lemma}
Let $J$ be a Grothendieck topology on $\P$ such that $J_X\leq J$. Then $E$ restricts to a functor $\Sh(X,\bar J)\to \Sh(\P,J)$.
\end{lemma}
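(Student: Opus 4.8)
The plan is to verify the sheaf condition for $\hat F = E(F)$ directly, assuming $F$ is a $\bar J$-sheaf on $X$. So I fix $p\in\P$, a cover $S\in J(p)$ and a matching family $\langle a_s\rangle_{s\in S}$ with $a_s\in\hat F(s)$; recall that each $a_s$ is a function on $X\cap\down s$ with $a_s(y)\in F(y)$ satisfying (\ref{eq:elFhat}), and that the matching condition reads $a_s|_{X\cap\down t}=a_t$ whenever $t\leq s$ in $S$. The goal is to produce a unique amalgamation $a\in\hat F(p)$, and I build it ``pointwise'' over $X\cap\down p$.

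For each $x\in X\cap\down p$ set $\bar T_x:=X\cap S\cap\down x$. The first step is to check that $\bar T_x\in\bar J(x)$: it is plainly a sieve on $x$ in $X$, and its down-closure $\down\bar T_x$ lies in $J(x)$ because $S\cap\down x\in J(x)$ by stability, while for each $q\in S\cap\down x$ one has $X\cap\down q\subseteq\bar T_x$, hence $\down(X\cap\down q)\subseteq\down\bar T_x\cap\down q$ with $\down(X\cap\down q)\in J_X(q)\subseteq J(q)$, so $\down\bar T_x\cap\down q\in J(q)$ by Lemma \ref{lem:filter}, and transitivity applies; then $\bar T_x\in\bar J(x)$ by Lemma \ref{lem:inducedtopology}. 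Next, for $y\in X\cap S$ put $b_y:=a_y(y)\in F(y)$ (legitimate since $y\in X\cap\down y$). If $y'\leq y$ with $y\in\bar T_x$, then $y'\in S$ because $S$ is a sieve, and combining (\ref{eq:elFhat}) for $a_y$ with the matching identity $a_y|_{X\cap\down y'}=a_{y'}$ gives $F(y'\leq y)b_y=a_y(y')=a_{y'}(y')=b_{y'}$; hence $\langle b_y\rangle_{y\in\bar T_x}$ is a matching family for $\bar T_x$ of elements of $F$. Since $F$ is a $\bar J$-sheaf there is a unique $a(x)\in F(x)$ with $F(y\leq x)a(x)=b_y$ for all $y\in\bar T_x$, and I set $a:=\langle a(x)\rangle_{x\in X\cap\down p}$.

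It then remains to verify three points, each by invoking uniqueness of amalgamations in $F$. First, $a\in\hat F(p)$: for $x'\leq x$ in $X\cap\down p$ we have $\bar T_{x'}\subseteq\bar T_x$, and both $F(x'\leq x)a(x)$ and $a(x')$ satisfy $F(y\leq x')(\cdot)=b_y$ for $y\in\bar T_{x'}$, so they coincide, which is exactly (\ref{eq:elFhat}) for $a$. Second, $a$ amalgamates $\langle a_s\rangle_{s\in S}$, i.e.\ $a|_{X\cap\down s}=a_s$ for $s\in S$: given $y\in X\cap\down s$, every $z\in\bar T_y$ satisfies $z\leq s$, so $z\in X\cap\down s$ and $F(z\leq y)a_s(y)=a_s(z)=a_z(z)=b_z$ by (\ref{eq:elFhat}) and the matching identity $a_s|_{X\cap\down z}=a_z$; thus $a_s(y)$ is an amalgamation of $\langle b_z\rangle_{z\in\bar T_y}$ for the cover $\bar T_y$ and therefore equals $a(y)$. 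Third, uniqueness: if $a'\in\hat F(p)$ also satisfies $a'|_{X\cap\down s}=a_s$ for all $s\in S$, then for $x\in X\cap\down p$ and $y\in\bar T_x$ we have $y\in S$, hence $a'(y)=(a'|_{X\cap\down y})(y)=a_y(y)=b_y$ and $F(y\leq x)a'(x)=a'(y)=b_y$ by (\ref{eq:elFhat}), so $a'(x)$ is an amalgamation of $\langle b_y\rangle_{y\in\bar T_x}$ and thus equals $a(x)$; therefore $a'=a$.

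I expect the difficulty here to be bookkeeping rather than conceptual: one constantly juggles three kinds of restriction — of functions in $\hat F(p)$ to smaller down-sets, of the sieve $S$ on $p$ to the sieves $\bar T_x$ on elements of $X$, and of matching families — and repeatedly feeds the results to the sheaf property of $F$. The only genuinely substantive step is showing $\bar T_x\in\bar J(x)$, which is where the hypothesis $J_X\leq J$ (equivalently, $J$-denseness of $X$) enters, through the transitivity axiom and Lemma \ref{lem:filter}.
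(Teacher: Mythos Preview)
Your proof is correct and follows essentially the same approach as the paper's: both construct the amalgamation pointwise on $X\cap\down p$ by using that $S\cap\down x\cap X\in\bar J(x)$ and amalgamating the matching family $\langle a_y(y)\rangle$ via the sheaf property of $F$. The only minor difference is that you argue $\bar T_x\in\bar J(x)$ via transitivity and Lemma~\ref{lem:inducedtopology}, whereas the paper obtains it directly from the first description $\bar J(x)=\{S\cap X:S\in J(x)\}$ applied to $S\cap\down x\in J(x)$.
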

\begin{proof}
We have to show that $\hat F\in\Sh(\P,J)$ for each $F\in\Sh(X,\bar J)$.
 So let $\langle
f_q\rangle_{q\in S}$ be a matching family for $S\in J(p)$. Then by definition of $f_q\in\hat F(q)$, $f_q$ is a function $X\cap\down
q\to\bigcup_{x\in X}F(x)$ with $f_q(x)\in F(x)$ for each $x\in X\cap\down q$, and such that (\ref{eq:elFhat}) holds if we substitute $f_q$ for $f$ and $q$ for $p$.
Furthermore, the condition that $\langle f_q\rangle_{q\in S}$ is a matching family for
$S$ of elements of $\hat F$ translates to $\hat F(r\leq q)f_q=f_r$. By (\ref{eq:hatFmor}), it follows that for each $q\in S$, $r\leq q$ and $x\in X\cap\down r$, we have
\begin{align}\label{eq:mfforhatF}
f_q(x)=f_r(x).
\end{align}

We have to find a unique amalgamation $f$ for $\langle f_q\rangle_{q\in S}$. This means that we have to construct a function $f\in\hat F(p)$ such that $\hat F(q\leq p)f=f_q$ for each $q\in S$, and if there is another function $g\in\hat F(p)$ such that
\begin{equation}\label{eq:uniquenessf}
\hat F(q\leq p)g=f_q,
\end{equation}
for each $q\in S$, we must have $f=g$.

Let $x\in X\cap\down p$. We would like to define $f(x)=f_x(x)$, but since it might be possible that $x\notin S$ and $f_x$ is only given for $x\in S$, we cannot do so. However, it is still possible to construct $f$. First notice that if $x\in S$, $f_x\in \hat F(x)$ is defined such that $f_x(y)\in F(y)$ for each $y\in X\cap\down x$, so $f_x(x)\in F(x)$ for each $x\in S$.

For each $x\in X\cap\down p$, we have $x\leq p$, and since $S\in J(p)$, we have $S\cap\down x\in J(x)$ by the stability axiom for Grothendieck topologies. It follows that $S\cap\down x\cap X\in\bar J(x)$. Now, $f_y$ exists for each $y\in S\cap\down x\cap X$, and $\langle f_y(y)\rangle_{y\in S\cap\down x\cap X}$ is a matching family for $S\cap\down x\cap X$ of elements of $F$. Indeed, if $z\leq y$ in $S\cap\down x\cap X\subseteq X\cap\down x$, we have
\begin{equation*}
F(z\leq y)f_y(y)=f_y(z)=f_z(z),
\end{equation*}
where, in the first equality, we used $f_y\in\hat F(y)$, and in the second equality, we used (\ref{eq:mfforhatF}).
Since $F$ is a $\bar J$-sheaf, we find that $\langle f_y(y)\rangle_{y\in S\cap\down x\cap X}$ has a unique amalgamation $f(x)\in F(x)$. Notice that if $x\in S$, then $f(x)=f_x(x)$.

 If $y\leq x$ in $X\cap\down p$, then $\langle f_z(z)\rangle_{z\in S\cap\down y\cap X}\subseteq\langle f_z(z)\rangle_{z\in S\cap\down x\cap X}$. The unique amalgamation of the left-hand side of the inclusion is $f(y)$. Since $f(x)$ is the amalgamation of the right-hand side of the inclusion, we have $$f_z(z)=F(z\leq x)f(x)=F(z\leq y)F(y\leq x)f(x)$$ for each $z\in S\cap\down y\cap X$. Thus $F(y\leq x)f(x)$ is another amalgamation of the left-hand side of the inclusion, whence $F(y\leq x)f(x)=f(y)$. So $f:X\cap\down p\to\bigcup_{x\in X}F(x)$, defined by $x\mapsto f(x)$, is an element of $\hat F(p)$.

Let $q\in S$. Then for each $x\in X\cap\down q$, we have $x\in S$, so $f(x)=f_x(x)$ as we have noted before. By (\ref{eq:mfforhatF}) we have $f_q(x)=f_x(x)$, so $\hat F(q\leq p)f=f_q$, and we see that $f$ is indeed an amalgamation for the matching family
$f_q$.

If $g\in\hat F(p)$ satisfies (\ref{eq:uniquenessf}) for each $q\in S$, then $$g(q)=\left.g\right|_{X\cap\down q}=\left(\hat F(q\leq p)g\right)(q)=f_q(q).$$
Since $g\in\hat F(p)$, we have $F(y\leq x)g(x)=g(y)$ for each $y\leq x$ in $X\cap\down p$ and since we just noticed that $f_y(y)=g(y)$ for each $y\in S\cap\down x\cap X$, which is a subset of $X\cap\down p$, it follows that also $g(x)$ is an amalgamation of $\langle f_y(y)\rangle_{y\in S\cap\down x\cap X}$. Hence $g(x)=f(x)$ for each $x\in X\cap\down p$.
It follows that $f=g$, so $f$ is the unique amalgamation of $\langle f_q\rangle_{q\in S}$.

\end{proof}

If $J_X\leq J$, we have $\down(X\cap\down p)\in J(p)$ for each $p\in\P$. It turns out that if $F\in\Sh(\P,J)$, then $F$ satisfies a sheaflike condition for the generating set $X\cap\down p$:
\begin{lemma}
Let $X$ be a subset of a poset $\P$ and $J$ a Grothendieck topology on $\P$ such that $J_X\leq J$. Let $F\in\Sh(\P,J)$. Then for each $p\in\P$, if $\langle a_x\rangle_{x\in X\cap\down p}$ such that $a_x\in F(x)$ for each $x\in X\cap\down p$ and $F(y\leq x)a_x=a_y$ for each $y\leq x$ in $X\cap\down p$, then there is a unique $a\in F(p)$ such that $F(x\leq p)a=a_x$.
\end{lemma}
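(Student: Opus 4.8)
The plan is to reduce everything to the ordinary $J$-sheaf condition for $F$ by enlarging the index set $X\cap\down p$ to the sieve $S:=\down(X\cap\down p)$, which lies in $J(p)$ precisely because $J_X\leq J$ (see the discussion of $J$-denseness preceding Theorem~\ref{thm:ComparisonLemma}, or directly: $\down(X\cap\down p)$ is the least element of $J_X(p)$, so Lemma~\ref{lem:filter} gives it membership in $J(p)$). First I would extend the given family $\langle a_x\rangle_{x\in X\cap\down p}$ to a family $\langle a_z\rangle_{z\in S}$ by setting $a_z:=F(z\leq x)a_x$ whenever $z\leq x$ with $x\in X\cap\down p$; such an $x$ always exists by definition of $S$.

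The main obstacle is the well-definedness of $a_z$: if $z\leq x$ and $z\leq x'$ with $x,x'\in X\cap\down p$, I must show $F(z\leq x)a_x=F(z\leq x')a_{x'}$, and here $x,x'$ need not be comparable. I would handle this exactly as in the proof of the comparison lemma above. Put $R:=\down(X\cap\down z)$, which is in $J(z)$ since $J_X\leq J$, and note that for any $u\in R$ there is $v\in X\cap\down z$ with $u\leq v\leq z$; then $v\leq x$, $v\leq x'$, and $v\in X\cap\down p$, so the matching hypothesis on the original family gives $F(v\leq x)a_x=a_v=F(v\leq x')a_{x'}$. Applying $F(u\leq z)$ to the two candidate values $b:=F(z\leq x)a_x$ and $b':=F(z\leq x')a_{x'}$ and factoring through $v$ yields $F(u\leq z)b=F(u\leq v)a_v=F(u\leq z)b'$ for every $u\in R$. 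Thus $b$ and $b'$ are both amalgamations of the single matching family $\langle F(u\leq z)b\rangle_{u\in R}$ for the cover $R\in J(z)$, so $b=b'$ because $F$ is a $J$-sheaf. Taking $x$ itself as the witness shows the extension agrees with the original family on $X\cap\down p$.

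Next I would verify that $\langle a_z\rangle_{z\in S}$ is genuinely a matching family for $S$ with elements of $F$: given $w\leq z$ in $S$, pick $x\in X\cap\down p$ with $z\leq x$; then $w\leq x$, so by the well-definedness just established $a_w=F(w\leq x)a_x$, while $F(w\leq z)a_z=F(w\leq z)F(z\leq x)a_x=F(w\leq x)a_x=a_w$ by functoriality of $F$. This is a short routine check.

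Finally, since $S\in J(p)$ and $F\in\Sh(\P,J)$, there is a unique $a\in F(p)$ with $F(z\leq p)a=a_z$ for all $z\in S$; restricting to $z=x\in X\cap\down p$ gives $F(x\leq p)a=a_x$, which is the desired existence. For uniqueness, if $a'\in F(p)$ also satisfies $F(x\leq p)a'=a_x$ for all $x\in X\cap\down p$, then for arbitrary $z\in S$ with $z\leq x\in X\cap\down p$ we get $F(z\leq p)a'=F(z\leq x)F(x\leq p)a'=F(z\leq x)a_x=a_z$, so $a'$ is an amalgamation of $\langle a_z\rangle_{z\in S}$ and hence $a'=a$ by the sheaf property. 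I expect no difficulty beyond the well-definedness argument, which is the crux.
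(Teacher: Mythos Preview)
Your proposal is correct and follows essentially the same approach as the paper: extend the family from $X\cap\down p$ to the sieve $S=\down(X\cap\down p)\in J(p)$, prove well-definedness of the extension by restricting both candidates along the cover $\down(X\cap\down z)\in J(z)$ and invoking the sheaf property, then apply the sheaf condition at $p$ for existence and uniqueness. The paper separates out the degenerate cases $X\cap\down p=\emptyset$ and $X\cap\down z=\emptyset$ explicitly (showing $F(p)\cong 1$, respectively $F(z)\cong 1$), whereas you handle them implicitly as vacuous instances of the same argument; both treatments are valid.
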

Although $X\cap\down p$ generates a sieve on $p$, but is not necessarily a sieve itself, we shall also refer to $a$ as the amalgamation of $\langle a_x\rangle_{x\in X\cap\down p}$.
\begin{proof}
First we consider the case that $X\cap\down p=\emptyset$. Then there
is only one family $\langle a_x\rangle_{x\in X\cap\down p}$ such that the conditions of the lemma hold, namely the empty family, and clearly any point in $F(p)$ is an amalgamation of this empty family. So we have to show that $F(p)\cong 1$. Since $X\cap\down
p=\emptyset$, it follows that $\emptyset=\down(X\cap\down p)\in J(p)$. Now, there
is only one matching family for this sieve, namely the empty
matching family, which must have a unique amalgamation $a$, so indeed
$F(p)=\{a\}$. Notice that the same conclusion could be drawn from
Example \ref{ex:discretetopologysheaves}.

If $X\cap\down p\neq\emptyset$, for each
$z\in\down(X\cap\down p)$ we have an $x\in X\cap\down p$ such that $z\leq
x$. So given a family $\langle a_x\rangle_{x\in X\cap\down p}$ satisfying the conditions stated in the lemma, we define $a_z:=F(z\leq x)a_x$. We
have to show that $a_z$ does not depend on the point $x\geq z$. As we have seen above, we have $F(z)\cong 1$ if
$X\cap\down z=\emptyset$. Since $a_z\in F(z)$, we find that
$F(z)=\{a_z\}$, so if $y\in X\cap\down p$ such that $z\leq y$, we
also must have $a_z=F(z\leq y)a_y$.

If $X\cap\down z\neq\emptyset$, then also $\down(X\cap\down z)\in
J(z)$ is non-empty. So for each $v\in\down(X\cap\down z)$ there is
a $w\in X\cap\down z$ such that $v\leq w$. Hence
\begin{eqnarray*}
F(v\leq z)F(z\leq x)a_x & = & F(v\leq w)F(w\leq z)F(z\leq
y)a_y=F(v\leq w)F(w\leq y)a_y\\
& =& F(v\leq w)a_w,
\end{eqnarray*}
since $w,x\in X\cap\down p$. In a similar way, we find $$F(v\leq
z)F(z\leq y)a_y=F(v\leq w)a_w.$$ In other words, for each
$v\in\down(X\cap\down z)\in J(z)$, the images under $F(v\leq z)$
of $F(z\leq x)a_x$ and $F(z\leq y)a_y$ are the same, and since $F$
is a $J$-sheaf, it follows that $$F(z\leq x)a_x=F(z\leq y)a_y.$$
So $a_z$ is uniquely determined and it is clear from the way it is
defined that $\langle a_z\rangle_{z\in\down(X\cap\down p)}$ is a
matching family. It follows that there is a unique amalgamation $a\in F(p)$ for $\langle a_x\rangle_{x\in\down(X\cap\down p)}$, so $a$ is also an amalgamation for $\langle a_x\rangle_{x\in X\cap\down p}$. In order to show that $a$ is also the unique amalgamation of $\langle a_x\rangle_{x\in X\cap\down p}$, let $b\in F(p)$ such that $a_x=F(x\leq p)b$ for each $x\in X\cap\down p$. Then for each $z\in\down(X\cap\down p)$, there must be an $x\in X\cap\down p$ such that $z\leq x$, whence
\begin{equation*}
F(z\leq p)b=F(z\leq x)F(x\leq p)b=F(z\leq x)a_x=a_z.
\end{equation*}
So we see that $b$ is an amalgamation for $\langle a_z\rangle_{z\in\down(X\cap\down p)}$, whence $b=a$.
\end{proof}

\begin{lemma}
 Let $J$ be a Grothendieck topology such that $J_X\leq J$. Then for each $F\in\Sh(\P,J)$, there is a natural bijection $\phi_F:E\circ I^*(F)\to F$. Moreover, the family $\{\phi_F\}_F$ constitutes a natural isomorphism $\phi:E\circ I^*\to 1_{\Sh(\P,J)}$.
\end{lemma}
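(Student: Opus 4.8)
The plan is to define the component $\phi_{F,p}$ directly via the amalgamation produced in the preceding lemma, and then to deduce both naturalities from uniqueness of amalgamations. Fix $F\in\Sh(\P,J)$ and $p\in\P$. By construction of $E$, an element of $\bigl(E\circ I^*(F)\bigr)(p)$ is exactly a function $f\colon X\cap\down p\to\bigcup_{x\in X}F(x)$ with $f(x)\in F(x)$ for all $x$ and $F(y\leq x)f(x)=f(y)$ whenever $y\leq x$ in $X\cap\down p$ --- that is, $\langle f(x)\rangle_{x\in X\cap\down p}$ is precisely a family of the sort treated in the preceding lemma. Since $J_X\leq J$, that lemma yields a \emph{unique} $a\in F(p)$ with $F(x\leq p)a=f(x)$ for all $x\in X\cap\down p$; I would set $\phi_{F,p}(f):=a$.

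First I would check that $\phi_{F,p}$ is a bijection. Injectivity is immediate, since $\phi_{F,p}(f)=\phi_{F,p}(g)=a$ forces $f(x)=F(x\leq p)a=g(x)$ for every $x\in X\cap\down p$. For surjectivity, given $a\in F(p)$ put $f(x):=F(x\leq p)a$; functoriality of $F$ shows $f\in\bigl(E\circ I^*(F)\bigr)(p)$, and $a$ is an amalgamation of $\langle f(x)\rangle_{x}$, so $\phi_{F,p}(f)=a$ by uniqueness. (The degenerate case $X\cap\down p=\emptyset$, where the domain is a singleton --- the empty function --- and $F(p)\cong 1$, is already subsumed by the preceding lemma.)

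Next I would verify naturality of $\phi_F$ in $p$, i.e.\ $F(q\leq p)\circ\phi_{F,p}=\phi_{F,q}\circ\bigl(E\circ I^*(F)\bigr)(q\leq p)$ for $q\leq p$. Writing $a=\phi_{F,p}(f)$ and recalling from (\ref{eq:hatFmor}) that the right-hand vertical map sends $f$ to $f|_{X\cap\down q}$, one has $F(x\leq q)\bigl(F(q\leq p)a\bigr)=F(x\leq p)a=f(x)$ for each $x\in X\cap\down q$; thus $F(q\leq p)a$ is an amalgamation of $f|_{X\cap\down q}$ and hence equals $\phi_{F,q}(f|_{X\cap\down q})$ by uniqueness. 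So $\phi_F$ is an isomorphism of presheaves, and since we already know $E\circ I^*(F)\in\Sh(\P,J)$, it is an isomorphism in $\Sh(\P,J)$.

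Finally, for naturality of $\phi$ in $F$, I would take a morphism $\alpha\colon F\to G$ in $\Sh(\P,J)$ and show $\alpha\circ\phi_F=\phi_G\circ\bigl(E\circ I^*(\alpha)\bigr)$, where $E\circ I^*(\alpha)$ acts at $p$ by $f\mapsto\bigl(x\mapsto\alpha_x(f(x))\bigr)$ by (\ref{eq:hatalph}). With $a=\phi_{F,p}(f)$, naturality of $\alpha$ gives $G(x\leq p)\bigl(\alpha_p(a)\bigr)=\alpha_x\bigl(F(x\leq p)a\bigr)=\alpha_x(f(x))$ for each $x\in X\cap\down p$, so $\alpha_p(a)$ is the amalgamation of $\bigl(E\circ I^*(\alpha)\bigr)_p(f)$; by uniqueness this means $\alpha_p(\phi_{F,p}(f))=\phi_{G,p}\bigl((E\circ I^*(\alpha))_p(f)\bigr)$, as required. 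The only real work is notational bookkeeping --- keeping straight the action of $E$ on objects and morphisms --- since the substantive content, the existence and uniqueness of amalgamations from the generating families $X\cap\down p$, has already been established in the preceding lemma; I do not expect any genuine obstacle beyond that.
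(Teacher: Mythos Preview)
Your proposal is correct and follows essentially the same approach as the paper: define $(\phi_F)_p(f)$ as the unique amalgamation of $\langle f(x)\rangle_{x\in X\cap\down p}$ furnished by the preceding lemma, then derive both naturalities from uniqueness of amalgamations. The only difference is that you spell out the bijectivity of $(\phi_F)_p$ explicitly (injectivity from the amalgamation condition, surjectivity by taking $f(x)=F(x\leq p)a$), whereas the paper simply asserts it; the naturality arguments in $p$ and in $F$ are identical to the paper's.
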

\begin{proof}
Let $F\in\Sh(\P,J)$ and write $\hat F$ instead of $E\circ I^*(F)=\widehat{F\circ I}$. Then $\hat F(p)$ is the set of functions $f:X\cap\down p\to\bigcup_{x\in X}F(x)$ such that $f(x)\in F(x)$ for each $x\in X$ and $F(y\leq x)f(x)=f(y)$ for each $x,y\in X$ such that $y\leq x$.
We define the natural bijection $\phi_F: \hat F\to F$ as follows. Since the family $\langle f(x)\rangle_{x\in X\cap\down p}$ satisfies the conditions of the previous lemma, we define $(\phi_F)_p(f)$ to be the unique amalgamation of this family, whose existence is assured by the same lemma. Clearly $(\phi_F)_p$ is a bijection, but we also have to show that it is natural, i.e., we have to show that for each $q\leq p$, in $\P$
the following diagram commutes:
\begin{equation*}
\xymatrix{\hat F(p)\ar[rr]^{(\phi_F)_p}\ar[dd]_{\hat F(q\leq p)} && F(p)\ar[dd]^{F(q\leq p)}\\
\\
\hat F(p)\ar[rr]_{(\phi_F)_q} && F(q)}
\end{equation*}
Let $f\in\hat F(p)$, so that $(\phi_F)_p(f)$ is the amalgamation of the family $\langle f(x)\rangle_{x\in X\cap\down p}$. Let $$g=\hat
F(q\leq p)f=\left.f\right|_{X\cap\down q}.$$ Then for each $x\in X\cap\down q$, we find $$g(x)=f(x)=F(x\leq p)(\phi_F)_p(f)=F(x\leq q)F(q\leq p)(\phi_F)_p(f).$$ In other words, $F(q\leq p)(\phi_F)_p(f)$ is the amalgamation of the family $\langle g(x)\rangle_{x\in X\cap\down q}$. But by definition of $\phi_F$, this is exactly $(\phi_F)_q(g)$. Hence $$F(q\leq p)(\phi_F)_p(f)=(\phi_F)_q\circ F(q\leq p)(f).$$

Finally, we have to show that $\phi$ is a natural isomorphism. Since each component $\phi_F$ is an isomorphism in $\Sets$, we only have to show that $\phi$ is natural in $F$. In other words, let $\alpha:F\to G$ be a natural transformation in $\Sh(\P,J)$. Then for each $p\in\P$,
\begin{equation*}
\xymatrix{\hat F(p)\ar[rr]^{(\phi_F)_p}\ar[dd]_{\hat\alpha_p} && F(p)\ar[dd]^{\alpha_p}\\
\\
\hat G(p)\ar[rr]_{(\phi_G)_p} && G(p)}
\end{equation*}
should commute. So let $f\in\hat F(p)$, let $a=(\phi_F)_p(f)$ be the amalgamation of $\langle f(x)\rangle_{x\in X\cap\down p}$, and let $g\in\hat G(p)$ be given by $\hat\alpha(f)$, which means that $g(x)=\alpha_x(f(x))$ for each $x\in X\cap\down p$. Then $(\phi_G)_p(g)$ is the unique amalgamation of $\langle g(x)\rangle_{x\in X\cap\down p}$, but since $\alpha:F\to G$ is natural, we find for each $x\in X\cap\down p$ that
\begin{equation*}
 G(x\leq p)\alpha_p(a)=\alpha_xF(x\leq p)a=\alpha_x(f(x))=g(x),
\end{equation*}
whence $\alpha_p(a)$ is an amalgamation of $\langle g(x)\rangle_{x\in X\cap\down p}$. In other words, we must have $(\phi_G)_p(g)=\alpha_p(a)$.
\end{proof}

\begin{lemma}
  Let $J$ be a Grothendieck topology such that $J_X\leq J$. Then for each $F\in\Sh(X,\bar J)$, there is a natural bijection $\psi_F:I^*\circ E(F)\to F$. Moreover, the $\psi_F$ constitute a natural isomorphism $\psi:I^*\circ E\to 1_{\Sh(X,\bar J)}$.
\end{lemma}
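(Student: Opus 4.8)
The plan is to compute $I^{*}\circ E(F)$ explicitly and to observe that, on objects of $X$, passing through $E$ and back via $I^{*}$ is essentially trivial, because $\bar\down x$ has a greatest element.

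First I would unwind the definitions. For $F\in\Sh(X,\bar J)$ and $x\in X$, since $I$ is the inclusion we have $\big(I^{*}\circ E(F)\big)(x)=\hat F(x)$, the set of functions $f:\bar\down x\to\bigcup_{y\in X}F(y)$ with $f(y)\in F(y)$ for all $y\in\bar\down x$ and $F(z\leq y)f(y)=f(z)$ whenever $z\leq y$ in $\bar\down x$, the restriction maps being $\hat F(y\leq x)(f)=f|_{\bar\down y}$. The key point is that $x=\max(\bar\down x)$: indeed $x\in\bar\down x$ since $x\in X$, and every element of $\bar\down x$ lies below $x$. Hence each such $f$ is completely determined by its top value $f(x)\in F(x)$ via $f(y)=F(y\leq x)f(x)$ (applying the compatibility condition with $y\leq x$ in $\bar\down x$), and conversely every $a\in F(x)$ arises this way from the family $\langle F(y\leq x)a\rangle_{y\in\bar\down x}$, which one checks at once lies in $\hat F(x)$. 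I would therefore define $(\psi_F)_x:\hat F(x)\to F(x)$ by $(\psi_F)_x(f)=f(x)$; the remarks above show it is a bijection with inverse $a\mapsto\langle F(y\leq x)a\rangle_{y\in\bar\down x}$. Note that this step does not even use that $F$ is a sheaf; alternatively one may phrase $(\psi_F)_x(f)$ as the amalgamation of the matching family $f$ for the maximal cover $\bar\down x\in\bar J(x)$, in parallel with the previous lemma.

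Next I would verify that $\psi_F$ is natural in the poset variable. For $y\leq x$ in $X$ and $f\in\hat F(x)$, on one side $(\psi_F)_y\circ\hat F(y\leq x)(f)=(\psi_F)_y\big(f|_{\bar\down y}\big)=f(y)$, and on the other $F(y\leq x)\circ(\psi_F)_x(f)=F(y\leq x)f(x)=f(y)$ by compatibility of $f$; so the square commutes and $\psi_F:I^{*}\circ E(F)\to F$ is a natural isomorphism of presheaves on $X$. Since $I^{*}\circ E(F)$ lies in $\Sh(X,\bar J)$ by the two preceding lemmas (and $F$ does by hypothesis), $\psi_F$ is an isomorphism in $\Sh(X,\bar J)$.

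Finally, for naturality in $F$, let $\alpha:F\to G$ be a morphism in $\Sh(X,\bar J)$. Recalling that $E(\alpha)=\hat\alpha$ acts by $\hat\alpha_x(f)(y)=\alpha_y(f(y))$, the component of $I^{*}\circ E(\alpha)$ at $x\in X$ sends $f$ to $\langle\alpha_y(f(y))\rangle_{y\in\bar\down x}$; evaluating this function at its top $x$ gives $\alpha_x(f(x))$, which is exactly $\alpha_x\circ(\psi_F)_x(f)$ as well as $(\psi_G)_x$ applied to the image of $f$. Hence the naturality square commutes, and the family $\{\psi_F\}_F$ constitutes a natural isomorphism $\psi:I^{*}\circ E\to 1_{\Sh(X,\bar J)}$. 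I do not expect a genuine obstacle here: the only thing to get right is the bookkeeping of the two naturality squares, and the one observation that carries the whole argument is $x=\max(\bar\down x)$, which makes the round trip $I^{*}\circ E$ act as the identity on objects of $X$ up to the canonical bijection.
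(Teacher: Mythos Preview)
Your proof is correct and follows essentially the same route as the paper: both define $(\psi_F)_x(f)=f(x)$ and verify the two naturality squares in the same way. Your argument for bijectivity is in fact slightly cleaner than the paper's, since you observe directly that $x=\max(\bar\down x)$ forces $f$ to be determined by $f(x)$ (no sheaf condition needed), whereas the paper routes this through the amalgamation property of the maximal cover $\bar\down x\in\bar J(x)$; you even note this alternative phrasing yourself.
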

\begin{proof}
 Let $F\in\Sh(X,\bar J)$. Then for each $x\in X$ we have $$I^*\circ E(F)(x)=\hat F\circ I(x)=\hat F(x).$$ Then define $(\psi_F)_x:\hat F\circ I(x)\to F(x)$ by $f\mapsto f(x)$. This is well defined, for $x\in X$ and $f$ is a function $X\cap\down x\to\bigcup_{x\in X}$, so $x$ lies in the domain of $f$. Now, $\down(X\cap\down x)\in J(x)$, so $X\cap\down x\in\bar J(x)$, so $\langle f(y)\rangle_{y\in X\cap\down x}$ is a matching family for the cover $X\cap\down x$ of elements of $F\in\Sh(X,\bar J)$, hence there is a unique amalgamation $a\in F(x)$ such that $F(y\leq x)a=f(y)$. But since clearly $f(x)$ is also an amalgamation, we see that $a=f(x)$, whence the assignment $(\psi_F)_x:f\mapsto f(x)$ is a bijection. We have to show that $\psi_F$ is natural, so for each $x,y\in X$ such that $y\leq x$,
\begin{equation*}
\xymatrix{\hat F(x)\ar[rr]^{(\psi_F)_x}\ar[dd]_{\hat F(y\leq x)} && F(x)\ar[dd]^{F(y\leq x)}\\
\\
\hat F(x)\ar[rr]_{(\psi_F)_y} && F(y)}
\end{equation*}
should commute. Now, if $f\in\hat F(x)$, let $$g=\hat F(y\leq x)f=\left.f\right|_{X\cap\down q}.$$ Then $$(\psi_F)_y(g)=g(y)=f(y),$$ whilst $F(y\leq x)f(x)=f(y)$ by definition of $f\in\hat F(x)$. So the diagram clearly commutes.

Finally, we show that $\psi$ is a natural isomorphism. Again, since each component $\psi_F$ is a bijection, we only have to show that $\psi$ is natural in $F$, i.e., if $\alpha:F\to G$ in $\Sh(X,\bar J)$, then the following diagram
\begin{equation*}
\xymatrix{\hat F(x)\ar[rr]^{(\psi_F)_x}\ar[dd]_{\hat\alpha_x} && F(x)\ar[dd]^{\alpha_x}\\
\\
\hat G(x)\ar[rr]_{(\psi_G)_x} && G(x)}
\end{equation*}
should commute for each $x\in X$. Let $g=\hat\alpha(f)\in\hat G(x)$. Then $g(y)=\alpha_y(f(y))$, so $(\psi_G)_x(g)=g(x)=\alpha_x(f(x))$. But this is exactly $\alpha_x\circ(\psi_F)_x(f)$.
\end{proof}

\begin{theorem}[Comparison Lemma, posetal case]\label{thm:ComparisonLemma}
 Let $\P$ be a poset and $X\subseteq\P$ a subset. If $I:X\to\P$ is the inclusion, and $J$ is a Grothendieck topology on $\P$ such that $J_X\leq J$, then $I^*:\Sh(\P,J)\to\Sh(X,\bar J)$ and $E:\Sh(X,\bar J)\to\Sh(\P,J)$ form an equivalence of categories.
\end{theorem}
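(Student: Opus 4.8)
The plan is to assemble the preceding lemmas, since between them they already contain every ingredient of an equivalence of categories. Recall that to exhibit an equivalence $\Sh(\P,J)\simeq\Sh(X,\bar J)$ it suffices to produce a pair of functors in opposite directions whose two composites are naturally isomorphic to the respective identity functors.

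First I would observe that the standing hypothesis $J_X\leq J$ is exactly the hypothesis under which $\bar J$ (the induced topology of Lemma \ref{lem:inducedtopology}) is defined and under which all the preliminary lemmas of this section were proved, so every one of them applies verbatim. The lemma stating that $I^*:\Sets^{\P^\op}\to\Sets^{X^\op}$ restricts to a functor $\Sh(\P,J)\to\Sh(X,\bar J)$, together with the lemma stating that $E:\Sets^{X^\op}\to\Sets^{\P^\op}$ restricts to a functor $\Sh(X,\bar J)\to\Sh(\P,J)$, supplies the two functors. Then the lemma constructing, for each $F\in\Sh(\P,J)$, the natural bijection $\phi_F:E\circ I^*(F)\to F$ and assembling the family $\{\phi_F\}_F$ into a natural isomorphism $\phi:E\circ I^*\to 1_{\Sh(\P,J)}$, and dually the lemma furnishing the natural isomorphism $\psi:I^*\circ E\to 1_{\Sh(X,\bar J)}$ from the bijections $\psi_F:I^*\circ E(F)\to F$, provide precisely the two natural isomorphisms required. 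Concatenating these statements yields the theorem.

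Since the substantive work has already been carried out — in particular the explicit construction of the functor $E$ via the sets $\hat F(p)$ of compatible families indexed by $X\cap\down p$, the proof (using the sheaf-gluing condition for the generating set $X\cap\down p$ together with stability of $J$) that $E$ lands in $\Sh(\P,J)$, and the naturality verifications for $\phi$ and $\psi$ in both variables — essentially nothing remains to be done here. The only point requiring a small amount of care is bookkeeping: one must check that the $\bar J$ occurring in the various lemmas is literally the same induced Grothendieck topology, and that the domain/codomain restrictions of $I^*$ and $E$ match up so that $E\circ I^*$ and $I^*\circ E$ make sense as endofunctors of $\Sh(\P,J)$ and $\Sh(X,\bar J)$ respectively; this is immediate from the definitions. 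If one insists on identifying a hard step, it is already behind us, namely the construction of $E$ and the proof that $\hat F$ is a $J$-sheaf whenever $F$ is a $\bar J$-sheaf.
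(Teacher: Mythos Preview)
Your proposal is correct and matches the paper's proof essentially verbatim: the paper's argument is the one-line observation that the two preceding lemmas give $E\circ I^*\cong 1$ and $I^*\circ E\cong 1$, hence $\Sh(\P,J)\simeq\Sh(X,\bar J)$. Your write-up is more expansive but the content is identical.
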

\begin{proof}
The previous two lemmas show that $I^*\circ E\cong 1$ and $E\circ I^*\cong 1$, so $\Sh(\P,J)\cong\Sh(X,\bar J)$.
\end{proof}

\begin{corollary}\label{cor:JXsheaves}
 Let $\P$ be a poset and $X\subseteq\P$ a subset. Then $\Sh(\P,J_X)\cong\Sets^{X^\op}$.
\end{corollary}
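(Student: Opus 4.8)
The plan is to apply the posetal Comparison Lemma (Theorem~\ref{thm:ComparisonLemma}) with the Grothendieck topology $J = J_X$ itself. Since $J_X \leq J_X$ holds trivially, the hypothesis $J_X \leq J$ of that theorem is met, so it produces an equivalence of categories $\Sh(\P, J_X) \cong \Sh(X, \bar J_X)$, where $\bar J_X$ denotes the induced Grothendieck topology on $X$ of Lemma~\ref{lem:inducedtopology}. The whole corollary therefore reduces to identifying $\bar J_X$ and computing its category of sheaves.

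Next I would check that $\bar J_X$ is the indiscrete topology on $X$. By Lemma~\ref{lem:inducedtopology} we have, for $x \in X$, that $\bar J_X(x) = \{\bar S \in \D(\bar\down x) : \down\bar S \in J_X(x)\}$, and by the definition of $J_X$ the condition $\down\bar S \in J_X(x)$ means $X \cap \down x \subseteq \down\bar S$. Intersecting this inclusion with $X$ and using $\bar S = \down\bar S \cap X$ (again from Lemma~\ref{lem:inducedtopology}) together with $X \cap \down x \subseteq X$, the condition becomes $\bar\down x = X \cap \down x \subseteq \bar S$. As every $\bar S \in \D(\bar\down x)$ also satisfies $\bar S \subseteq \bar\down x$, this forces $\bar S = \bar\down x$, so $\bar J_X(x) = \{\bar\down x\}$ for all $x \in X$; that is, $\bar J_X$ is the indiscrete topology on $X$. (Equivalently one may invoke Proposition~\ref{prop:extensionofGrothTop}(v) with $Y = X$, which gives $\bar J_X = J^X_X$, and $J^X_X$ is indiscrete because the indiscrete topology on any poset is the subset topology generated by the whole poset.)

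Finally I would combine this with Example~\ref{ex:indiscretetopologysheaves}, which asserts that for the indiscrete topology on any poset every presheaf is a sheaf, so $\Sh(X, \bar J_X) = \Sets^{X^\op}$. Together with the equivalence obtained from the Comparison Lemma this yields $\Sh(\P, J_X) \cong \Sets^{X^\op}$, as desired.

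There is no genuine obstacle here: all the real work has been carried out in the preceding lemmas, and the only step requiring a moment's care is the elementary computation showing that $\bar J_X$ is indiscrete, which rests on the identity $\bar S = \down\bar S \cap X$ and the trivial containments $\bar S \subseteq \bar\down x$ and $X \cap \down x \subseteq X$.
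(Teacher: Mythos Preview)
Your proposal is correct and follows essentially the same approach as the paper: apply the Comparison Lemma with $J = J_X$, verify that the induced topology $\bar J_X$ on $X$ is indiscrete, and then invoke Example~\ref{ex:indiscretetopologysheaves}. The only minor difference is in how indiscreteness is checked: the paper observes directly that $J_X(x) = \{\down x\}$ for $x \in X$ (since any sieve containing $X \cap \down x \ni x$ must be all of $\down x$), whence $\bar J_X(x) = \{\down x \cap X\} = \{\bar\down x\}$, while you work via the second description of $\bar J$ in Lemma~\ref{lem:inducedtopology}; both computations are equally valid.
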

\begin{proof}
 For each $x\in $, we have $\bar J_X(x)=\{S\cap X:S\in J_X(x)\}$. Since $x\in X$, it follows that $$J_X(x)=\{S\in\D(\down x):X\cap\down x\subseteq S\}=\{\down x\},$$ so $\bar{J}_X(x)=\{\bar\down x\}$, the indiscrete topology on $X$. By Example \ref{ex:indiscretetopologysheaves}, we find $\Sh(X,\bar{J}_X)=\Sets^{X^\op}$.
\end{proof}

We can put an equivalence relation on the set of Grothendieck topologies of a poset by declaring two Grothendieck topologies $J_1$ and $J_2$ to be equivalent if and only if $\Sh(\P,J_1)\cong\Sh(\P,J_2)$. If $X,Y\subseteq\P$ are subsets, using the previous corollary we find that $J_X\simeq J_Y$ if and only if there is an order isomorphism between $X$ and $Y$ if we equip both sets with the order inherited from $\P$.

A question which might be interesting is which subset topologies are subcanonical. In the case of an Artinian poset, we are also able to determine the condition of canonicity. We start by introducing the notion of canonical and subcanonical topologies.
\begin{definition}
 Let $J$ be a Grothendieck topology on a category $\CC$. Then $J$ is called \emph{subcanonical}\index{subcanonical Grothendieck topology}
if all representable presheaves are $J$-sheaves. That is, if
$\y(C)=\Hom(-,C)$ is a $J$-sheaf for each $C\in\CC$. The
\emph{canonical}\index{canonical Grothendieck topology} topology on
$\CC$ is the finest subcanonical topology.
\end{definition}

It turns out to be quite easy to find a condition for subcanonicity of an arbitrary Grothendieck topology on an arbitrary poset, since every homset of each poset $\P$ contains at most one element. Therefore, given a matching family for a cover of elements of $\y(p)$ for $p\in\P$, the existence of an amalgamation automatically implies uniqueness. Another way of expressing this is saying that $\y(p)$ is a \emph{separated presheaf}. The existence of an amalgamation for a matching family for a cover of an element $q\in\P$ is assured if $q\leq p$. So if $q\nleq p$, the sheaf condition only holds if there is no matching family for every cover of $q$. The next proposition assures that this condition is necessary and sufficient.

\begin{proposition}\label{prop:representablesheaf}
 Let $J$ be a Grothendieck topology on a poset $\P$ and let $p\in\P$. Then $\y(p)$ is an $J$-sheaf if and only if $J(q)\cap\D(\down p)=\emptyset$ for each $q\nleq p$.
\end{proposition}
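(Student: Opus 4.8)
The plan is to exploit the fact that every hom-set of $\P$ has at most one element, so that $\y(p)(q)=\Hom(q,p)$ is a one-element set when $q\leq p$ and empty otherwise, and consequently $\y(p)$ is automatically a separated presheaf: any two amalgamations of a matching family for a cover of $q$ both lie in $\y(p)(q)$ and hence coincide. Thus the only thing that can go wrong is the \emph{existence} of an amalgamation, and the whole argument reduces to bookkeeping about when $\y(p)(x)$ is empty.

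First I would record the key triviality about matching families. If $q\in\P$ and $S$ is a sieve on $q$, then a matching family $\langle a_x\rangle_{x\in S}$ with elements of $\y(p)$ exists if and only if $\y(p)(x)\neq\emptyset$ for every $x\in S$, i.e.\ if and only if $x\leq p$ for every $x\in S$; and since $S$ is a down-set this is equivalent to $S\subseteq\down p$, that is, to $S\in\D(\down p)$. The point here is that once an element $a_x\in\y(p)(x)$ has been chosen for each $x\in S$, the compatibility equation $\y(p)(y\leq x)a_x=a_y$ holds automatically, because $\y(p)(y)$ has at most one element.

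Next I would split on whether $q\leq p$. If $q\leq p$, then every $x$ in any sieve on $q$ satisfies $x\leq q\leq p$, so $\y(p)(x)$ is a singleton; hence every matching family for every $S\in J(q)$ has the unique point of $\y(p)(q)$ as an amalgamation, and the sheaf condition at $q$ holds for \emph{any} Grothendieck topology. If $q\nleq p$, then $\y(p)(q)=\emptyset$, so no matching family for a cover of $q$ can possibly have an amalgamation; hence the sheaf condition at $q$ holds if and only if there is no matching family for any $S\in J(q)$ at all, which by the previous paragraph means that no $S\in J(q)$ lies in $\D(\down p)$, i.e.\ $J(q)\cap\D(\down p)=\emptyset$.

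Finally I would combine the two cases: $\y(p)$ is a $J$-sheaf if and only if the sheaf condition holds at every $q\in\P$; since it holds automatically at every $q\leq p$, this is equivalent to requiring it at every $q\nleq p$, which is exactly the condition $J(q)\cap\D(\down p)=\emptyset$ for all $q\nleq p$. There is no serious obstacle here; the only subtleties to get right are that a sieve $S$ on $q$ lying in $\D(\down p)$ really does yield a genuine matching family of elements of $\y(p)$ (using separatedness and the triviality of hom-sets), and, in the converse direction, that if some $q\nleq p$ has $\emptyset\in J(q)$ then the empty sieve already furnishes a matching family — the empty one — with no amalgamation, which is consistent with the stated criterion since $\emptyset\in\D(\down p)$.
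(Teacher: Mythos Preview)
Your proof is correct and takes essentially the same approach as the paper: both split on whether $q\leq p$, observe that the sheaf condition is automatic when $q\leq p$ since $\y(p)(q)$ is a singleton, and that when $q\nleq p$ the condition reduces to the non-existence of any matching family, which in turn amounts to no cover $S\in J(q)$ lying in $\D(\down p)$. Your presentation is slightly more streamlined in first isolating the equivalence ``a matching family for $S$ exists iff $S\in\D(\down p)$'' as a standalone observation, whereas the paper weaves this into the case analysis, but the underlying argument is identical.
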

\begin{proof}
We first show that for $q\leq p$, the sheaf condition is always satisfied. Let $q\leq p$ and $S\in J(q)$. Then if $\langle a_x\rangle_{x\in S}$ is a matching family for $S$ of elements of $\y(p)$, we have $x\leq p$, so $a_x\in\y(p)(x)=\Hom(x,p)\cong 1$. Thus $a_x$ is fixed for each $x\in S$, so there is only one matching family for $S$.  As a consequence, if $R\in J(q)$ such that $S\subseteq R$ and $\langle b_x\rangle_{x\in R}$ is a matching family for $R$, we must have $b_x=a_x$ for each $x\in S$. In particular, we can take $R=\down q$. Now, $\langle\y(p)(x\leq q)a\rangle_{x\leq q}$ is a matching family for $R$, where $a$ is the unique point in $\y(p)(q)=\Hom(q,p)$, which exists since $q\leq p$. So $a_x=\y(p)(x\leq q)a$ for each $x\in S$, which proves that $a$ is the unique amalgamation of $\langle a_x\rangle_{x\in S}$.

Now assume that $q\nleq p$ and $J(q)\cap\D(\down p)=\emptyset$. Let $S\in J(q)$, then $S\notin\D(\down p)$, so $S\nsubseteq\down p$. Then there must be an $x\in S$ such that $x\notin\down p$, or equivalently, there is an $x\in S$ such that $x\nleq p$. Hence $\y(p)(x)=\Hom(x,p)=\emptyset$, so there is no matching family $\langle a_y\rangle_{y\in S}$ for $S$, since this requires the existence of an element $a_x\in\y(p)(x)$. We conclude that $\y(p)\in\Sh(\P,J)$.

The other direction follows by contraposition. So assume that $q\nleq p$ and furthermore, assume that $J(q)\cap\D(\down p)\neq\emptyset$. So there is an $S\in J(q)$ such that $S\subseteq\down p$. Then for each $x\in S$ we have $x\leq p$, so there is a unique element $a_x\in\y(p)(x)=\Hom(x,p)\cong 1$, and $\langle a_x\rangle_{x\in S}$ define a matching family for $S$. However, since $q\nleq p$, we have $\y(p)(q)=\Hom(q,p)=\emptyset$, so there is no amalgamation of the matching family. We conclude that $\y(p)\notin\Sh(\P,J)$.
\end{proof}

\begin{proposition}\label{prop:subcanonicalJX}
 Let $X$ be a subset of a poset $\P$, and let $p\in\P$. Then $\y(p)\in\Sh(\P,J_X)$ for each $p\in\P$ if and only if $X\to\down p=\down p$. As a consequence, $J_X$ is subcanonical if and only if $X\to\down p=\down p$ for each $p\in \P$.
\end{proposition}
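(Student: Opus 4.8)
The plan is to derive this directly from Proposition~\ref{prop:representablesheaf}, which says that $\y(p)\in\Sh(\P,J)$ precisely when $J(q)\cap\D(\down p)=\emptyset$ for every $q\nleq p$. So the whole content is to rewrite the condition ``$J_X(q)\cap\D(\down p)=\emptyset$ for all $q\nleq p$'' as ``$X\to\down p=\down p$''. Here I use that $X\to\down p=\{q\in\P:X\cap\down q\subseteq\down p\}$; this is a down-set, and it coincides with the Heyting implication in the frame $\D(\P)$ when $X$ happens to be a down-set, which for a general subset $X$ we take as the reading of the symbol.

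First I would pin down, for a fixed $q$, when $J_X(q)\cap\D(\down p)$ is nonempty. Since $J_X(q)=\{S\in\D(\down q):X\cap\down q\subseteq S\}$, its least element is the sieve $\down(X\cap\down q)$. If some $S$ lies in $J_X(q)\cap\D(\down p)$, then $X\cap\down q\subseteq S\subseteq\down p$; conversely, if $X\cap\down q\subseteq\down p$, then, because $\down p$ is a down-set, also $\down(X\cap\down q)\subseteq\down p$, and $\down(X\cap\down q)$ is a sieve on $q$, so it witnesses $J_X(q)\cap\D(\down p)\neq\emptyset$. Hence $J_X(q)\cap\D(\down p)\neq\emptyset$ if and only if $X\cap\down q\subseteq\down p$, i.e.\ if and only if $q\in X\to\down p$.

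Feeding this into Proposition~\ref{prop:representablesheaf}, $\y(p)$ is a $J_X$-sheaf exactly when no $q\nleq p$ belongs to $X\to\down p$, that is, exactly when $X\to\down p\subseteq\down p$. The reverse inclusion $\down p\subseteq X\to\down p$ is automatic, since $q\leq p$ gives $X\cap\down q\subseteq\down q\subseteq\down p$. Therefore $\y(p)\in\Sh(\P,J_X)$ if and only if $X\to\down p=\down p$, and since $J_X$ is subcanonical iff $\y(p)$ is a $J_X$-sheaf for every $p\in\P$, the stated consequence follows at once.

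I do not anticipate a serious obstacle; the only mild subtlety is that $X$ is an arbitrary subset rather than a down-set, so $X\cap\down q$ must be replaced by its down-closure $\down(X\cap\down q)$ to get an honest sieve on $q$, and one has to note that passing to this down-closure does not affect whether the set is contained in $\down p$.
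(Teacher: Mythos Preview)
Your proposal is correct and follows essentially the same route as the paper: both reduce to Proposition~\ref{prop:representablesheaf} and then translate the condition ``$J_X(q)\cap\D(\down p)=\emptyset$ for all $q\nleq p$'' into ``$X\to\down p=\down p$'' via the adjunction defining the Heyting implication. Your version is slightly more streamlined in that you prove the single biconditional $J_X(q)\cap\D(\down p)\neq\emptyset \Longleftrightarrow q\in X\to\down p$ once, using that $\down(X\cap\down q)$ is the least element of $J_X(q)$, whereas the paper handles the two directions separately (one directly, one by contraposition); but the content is the same.
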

\begin{proof}
Assume that $X\to\down p=\down p$ and let $q\nleq p$. Then $\down q\nsubseteq X\to\down p$, so by definition of the Heyting implication, we find that $X\cap\down q\nsubseteq \down p$. Let $S\in J_X(q)$. Then $S$ contains $X\cap\down q$, hence $S\nsubseteq\down p$. Thus $J_X(q)\cap\D(\down p)=\emptyset$.

Now let $X\to\down p\neq\down p$. Since $X\cap\down p\subseteq\down p$, we have $\down p\subseteq X\to\down p$. Hence, there must be a $q\notin\down p$, i.e. $q\nleq p$, such that $q\in X\cap\down p$. But then $\down q\subseteq X\to\down p$, so $X\cap\down q\subseteq\down p$. Hence $\down(X\cap\down q)\subseteq\down p$, and since $\down(X\cap\down q)\in J_X(q)$, we find that $J_X(q)\cap\D(\down p)\neq\emptyset$. By contraposition, we find that $J_X(q)\cap\D(\down p)=\emptyset$ for each $q\nleq p$ implies $X\to\down p=\down p$. The claim now follows directly from Proposition \ref{prop:representablesheaf}.
\end{proof}

\begin{example}
 Let $\P=\{x,y,z\}$ with the order specified by $x<y$ and $x<z$. Then $J_X$ with $X=\{y,z\}$ is subcanonical, since $X\to\down p=\down p$ for $p=x,y,z$. On the other hand, $J_Y$ with $Y=\{y\}$ is not subcanonical, since $Y\to\down x=\down z$.
\end{example}

Let $\P$ be an Artinian poset with $X,Y\subseteq\P$ such that $X\subseteq Y$. Then $S\in J_Y(p)$ if and only if $Y\cap\down p\subseteq S$, which implies that $X\cap\down p\subseteq S$, or equivalently, $S\in J_X(p)$. So $X\subseteq Y$ implies $J_Y\subseteq J_X$. The canonical topology $J_{\mathrm{canonical}}$ on $\P$ is defined as the largest subcanonical topology on $\P$. Since on Artinian posets all topologies are of the form $J_X$ for some subset $X$ of $\P$, we find that $J_{\mathrm{canonical}}=J_X$, where $X$ is the smallest subset of $\P$ satisfying $X\to\down p=\down p$ for all $p\in\P$.

\section{Other Grothendieck topologies and its sheaves}

We found that if $\P$ is downwards-directed, then the dense Grothendieck topology is actually the atomic Grothendieck topology on $\P$.
It turns out that we can find a whole new class of Grothendieck topologies on $X$ related to the atomic Grothendieck topology that are not always subset Grothendieck topologies. Since we can extend these Grothendieck topologies to Grothendieck topologies on $\P$ that are not subset Grothendieck topologies, they might be interesting to investigate. Another reason for investigating this class of Grothendieck topologies is that, together with the subset Grothendieck topologies, they exhaust all Grothendieck topologies on $\Z$, which can be found in the Appendix as Proposition \ref{prop:classificationGTofZ}.

\begin{proposition}
 Let $\P$ be a downwards directed poset and $X\subseteq\P$. Then $K_X$ defined pointwise by
\begin{equation}
 K_X(p)=J_X(p)\setminus\{\emptyset\}
\end{equation}
is a Grothendieck topology on $\P$, which we call the \emph{derived Grothendieck
topology}\index{derived Grothendieck topology} with respect to the subset $X$.
\end{proposition}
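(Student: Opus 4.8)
The plan is to deduce everything from Proposition~\ref{prop:defJX}, which already tells us that $J_X$ is a Grothendieck topology, since by construction $K_X(p)=J_X(p)\setminus\{\emptyset\}$ is obtained from $J_X(p)$ merely by discarding the empty sieve. So for each of the three axioms it will suffice to check two things: (i) the relevant sieve lies in $J_X$, which every time follows from the corresponding axiom for $J_X$; and (ii) the sieve is non-empty. Only step (ii) of the stability axiom will genuinely use that $\P$ is downwards directed; everything else is bookkeeping on top of Proposition~\ref{prop:defJX}.

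For the maximal-sieve axiom: $\down p\in J_X(p)$ and $p\in\down p$, so $\down p$ is non-empty and hence $\down p\in K_X(p)$. For transitivity, given $S\in K_X(p)$ and a sieve $R$ on $p$ with $R\cap\down q\in K_X(q)$ for every $q\in S$, I would observe that in particular $R\cap\down q\in J_X(q)$ for every $q\in S$ and $S\in J_X(p)$, so the transitivity axiom for $J_X$ gives $R\in J_X(p)$; and since $S\neq\emptyset$, picking some $q_0\in S$ makes $R\cap\down q_0\in K_X(q_0)$ non-empty, whence $R\neq\emptyset$. Thus $R\in K_X(p)$.

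The stability step is where I expect the only real content to sit. Let $S\in K_X(p)$ and $q\leq p$. The stability axiom for $J_X$ gives $S\cap\down q\in J_X(q)$, so the task is to show $S\cap\down q\neq\emptyset$. Since $S\neq\emptyset$, pick $s\in S$; then $s\leq p$, and because $\P$ is downwards directed there is $w\in\P$ with $w\leq s$ and $w\leq q$. As $S$ is a down-set, $w\leq s$ forces $w\in S$, and $w\leq q$ gives $w\in\down q$, so $w\in S\cap\down q$. This is exactly the argument already used for $J_{\atom}$ in Example~\ref{ex:examplesGT}, and it is also the one point that can fail on a poset which is not downwards directed (cf.\ the poset $\P_3$ in that example). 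Hence $S\cap\down q\in K_X(q)$, completing the verification that $K_X$ is a Grothendieck topology.
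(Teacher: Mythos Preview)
Your proof is correct and follows essentially the same approach as the paper: reduce each axiom to the corresponding axiom for $J_X$ via Proposition~\ref{prop:defJX}, and then verify non-emptiness of the relevant sieve, with downward directedness used precisely in the stability step. Your stability argument is in fact slightly more explicit than the paper's, which simply asserts that $S\neq\emptyset$ and $\P$ downwards directed imply $S\cap\down q\neq\emptyset$ without naming the witness $w$.
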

\begin{proof}
 Since $\down p\in J_X(p)$ for each $p\in\P$, we have $\down p\in K_X(p)$ for each $p\in\P$. If $S\in K_X(p)$, then $S\in J_X(p)$ so $S\cap\down q\in J_X(q)$ for each $q\leq p$. Moreover, since $S\in K_X(p)$, we must have $S\neq\emptyset$. Since $\P$ is downwards directed, this implies that $S\cap\down q\neq\emptyset$, so $S\cap\down q\in K_X(q)$ for each $q\leq p$. Finally, let $S\in K_X(p)$ and $R\in\D(\down p)$ be such that $R\cap\down q\in K_X(q)$ for each $q\in S$. So $R\neq\emptyset$, since $R\cap\down q\neq\emptyset$ if there exists a $q\in S$, which is the case since $S\in K_X(p)$ implies that $S$ is non-empty. Moreover, $S\in K_X(p)$ implies $S\in J_X(p)$, and $R\cap\down q\in K_X(q)$ for each $q\in S$ implies $R\cap\down q\in J_X(q)$ for each $q\in S$. So $R\in J_X(p)$, and since $R\neq\emptyset$, we find that $R\in K_X(p)$.
\end{proof}

 In the definition it might not be directly visible that the derived Grothendieck topologies are related to the atomic Grothendieck topology, but this follows from the next lemma.

\begin{lemma}\label{lem:derivedtopologies}
 Let $\P$ be a downward directed poset and $X\subseteq\P$ a subset of $\P$. Then
 \begin{equation}
 K_X(p)=\left\{
       \begin{array}{ll}
         J_X(p), & p\in\up X; \\
         J_{\mathrm{atom}}(p), & p\notin \up X.
       \end{array}
     \right.
\end{equation}
In particular, if $\up X=\P$, we have $K_X=J_X$.
\end{lemma}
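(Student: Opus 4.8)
The plan is to observe that the whole statement reduces to a single dichotomy: for $p\in\P$ one has $p\in\up X$ if and only if $X\cap\down p\neq\emptyset$, since $p\in\up X$ means $m\leq p$ for some $m\in X$, which is precisely the assertion that some element of $X$ lies in $\down p$. Once this reformulation is in hand, the two cases of the displayed formula fall out directly from unwinding the definitions of $J_X$, $K_X$ and $J_{\mathrm{atom}}$, and no use of the directedness hypothesis is needed beyond what is already built into the definition of $K_X$ (indeed $K_X$ and $J_\atom$ are only defined because $\P$ is downwards directed).

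First I would treat the case $p\in\up X$. Here $X\cap\down p$ is non-empty, so every sieve $S\in J_X(p)$ satisfies $\emptyset\neq X\cap\down p\subseteq S$ and hence $S\neq\emptyset$. Therefore $\emptyset\notin J_X(p)$, which gives $K_X(p)=J_X(p)\setminus\{\emptyset\}=J_X(p)$, matching the first line of the formula.

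Next I would treat the case $p\notin\up X$. Now $X\cap\down p=\emptyset$, so the defining condition $X\cap\down p\subseteq S$ is vacuous, and $J_X(p)=\D(\down p)$ is the set of all sieves on $p$. Consequently $K_X(p)=\D(\down p)\setminus\{\emptyset\}=J_\atom(p)$, which is the second line. Finally, for the ``in particular'' clause, if $\up X=\P$ then every $p\in\P$ falls into the first case, so $K_X(p)=J_X(p)$ for all $p$, i.e.\ $K_X=J_X$. I do not expect a genuine obstacle here; the only point requiring a moment's care is the equivalence $p\in\up X\iff X\cap\down p\neq\emptyset$, and making sure the vacuous-containment argument in the second case is stated cleanly.
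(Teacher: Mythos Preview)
Your proof is correct and follows essentially the same approach as the paper: both arguments hinge on the equivalence $p\in\up X\iff X\cap\down p\neq\emptyset$, then split into the two cases to show $\emptyset\notin J_X(p)$ in the first and $J_X(p)=\D(\down p)$ in the second. The paper's write-up is slightly terser but the logical content is identical.
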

\begin{proof}
Let $p\in\up X$. Then $X\cap\down p\neq\emptyset$, since otherwise
we must have $x\nleq p$ for each $x\in X$, contradicting $p\in\up X$.
It follows that $X\cap\down p\nsubseteq \emptyset$, so
$\emptyset\notin J_X(p)$, whence $J_X(p)=K_X(p)$. If $p\notin\up X$,
we have $X\cap\down p=\emptyset$, so $J_X(p)=\D(\down p)$. It
follows that $K_X(p)=\D(\down
p)\setminus\{\emptyset\}=J_{\mathrm{atom}}(p)$.
\end{proof}
Since the atomic topology is only defined on downwards directed
subsets, $(\up X)^c$ should be downwards directed. This is indeed
the case: if $p_1,p_2\in(\up X)^c$, then there is a $q\in\P$ such
that $q\leq p_1,p_2$, since $\P$ is downwards directed. However, we
cannot have $q\in\up X$, since this would imply that $p_1,p_2\in\up
X$. Hence $q\in(\up X)^c$.

Furthermore, we see that $K_\emptyset=J_{\mathrm{atom}}$. Even if
$X$ is not dense, there might be another subset $Y\subseteq\P$ such
that $K_X=J_Y$.

\begin{lemma}\label{lem:derivedtopologiesforposetswithleastelement}
 Let $\P$ be a downwards directed poset with a least element $0$. Then for each subset $X\subseteq\P$, we have $K_X=J_{X_0}$, where $X_0=X\cup\{0\}$.
\end{lemma}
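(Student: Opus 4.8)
The plan is to compute $J_{X_0}(p)$ directly for an arbitrary $p\in\P$ and check that it equals $K_X(p)=J_X(p)\setminus\{\emptyset\}$; no induction or appeal to the Comparison Lemma is needed here. The only structural fact used is that $0\leq p$ for every $p$, so that $0\in\down p$, and consequently a down-set $S\in\D(\down p)$ is non-empty if and only if $0\in S$: if $s\in S$ then $0\leq s$ forces $0\in S$ since $S$ is a down-set, and conversely $0\in S$ obviously makes $S$ non-empty.

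First I would record that, because $0\in\down p$, we have $X_0\cap\down p=(X\cap\down p)\cup\{0\}$. Hence for $S\in\D(\down p)$ the condition $X_0\cap\down p\subseteq S$ says exactly that $X\cap\down p\subseteq S$ and $0\in S$. By the observation above, $0\in S$ is equivalent to $S\neq\emptyset$. Therefore
\[
J_{X_0}(p)=\{S\in\D(\down p):X\cap\down p\subseteq S,\ S\neq\emptyset\}=J_X(p)\setminus\{\emptyset\}=K_X(p),
\]
and since $p$ was arbitrary this gives $K_X=J_{X_0}$.

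There is essentially no obstacle here; the only thing worth a sanity check is that deleting $\emptyset$ from $J_X(p)$ changes nothing unless $\emptyset\in J_X(p)$, which happens precisely when $X\cap\down p=\emptyset$, i.e.\ when $p\notin\up X$. This matches the case distinction of Lemma \ref{lem:derivedtopologies}, and one could alternatively derive the result from that lemma: it gives $K_X(p)=J_X(p)$ for $p\in\up X$ and $K_X(p)=J_{\mathrm{atom}}(p)=\D(\down p)\setminus\{\emptyset\}$ for $p\notin\up X$, and $J_{X_0}(p)$ is readily seen to equal the same thing in both cases (using $\up X_0=\P$). Finally, note that the hypothesis that $\P$ be downwards directed is automatic once $\P$ has a least element; it is stated only because $K_X$ was a priori defined only on downwards directed posets.
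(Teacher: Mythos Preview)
Your proof is correct and rests on the same key observation as the paper's: for $S\in\D(\down p)$ one has $S\neq\emptyset$ iff $0\in S$. The paper organizes the argument as a case split on whether $p\in\up X$ and then invokes Lemma~\ref{lem:derivedtopologies}, whereas you go directly from $X_0\cap\down p=(X\cap\down p)\cup\{0\}$ to $J_{X_0}(p)=J_X(p)\setminus\{\emptyset\}=K_X(p)$; this is a slightly more streamlined packaging of the same idea.
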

\begin{proof}
Let $p\in\P$. Then if $S\in\D(\down p)$, we clearly have
$S\neq\emptyset$ if and only if $0\in S$, since $S$ is downwards
closed and $0$ lies below each element of $S$. Assume $p\in\up X$
and $S\in\D(\down p)$. Then $X_0\cap\down p\subseteq S$ implies
$X\cap\down p\subseteq S$, so $J_{X_0}(p)\subseteq J_X(p)$. On the
other hand, since $p\in\up X$, we have $X\cap\down p\neq\emptyset$,
so if $X\cap\down p\subseteq S$, then $S\neq\emptyset$, so $0\in S$.
Hence we find that $X_0\cap\down p\subseteq S$, whence
$J_X(p)\subseteq J_{X_0}(p)$. Thus $J_{X_0}(p)=J_X(p)$.

Now assume $p\notin\up X$. Then $X\cap\down p=\emptyset$, so
$X_0\cap\down p=\{0\}$. We find that $J_{X_0}(p)$ is exactly the set
of all $S\in\D(\down p)$ such that $0\in S$, which are exactly all
non-empty sieves on $p$. Thus $J_{X_0}(p)=J_\atom(p)$. By the
previous lemma, we conclude that $J_{X_0}(p)=K_X(p)$ for each
$p\in\P$.
\end{proof}
Notice that if $\P$ is downwards directed and Artinian, we
automatically have a least element $0$. An example of a downwards
directed poset without a least element is $\Z$. Then for any
$p\in\Z$ we have
\begin{equation*}
\bigcap K_\emptyset(p)=\bigcap\left(\D(\down
p)\setminus\{\emptyset\}\right)=\bigcap_{q\leq p}\down q=\emptyset,
\end{equation*}
so $K_\emptyset$ is not complete. Since $J_X$ is complete for each
$X\subseteq\Z$, we find that $K_\emptyset\neq J_X$ for each
$X\subseteq\P$. So we see that if $\P$ is non-Artinian, all
topologies are neither necessarily a subset Grothendieck topology, nor are they necessarily complete.

In Lemma \ref{lem:derivedtopologies} we found that $K_X$ is a
Grothendieck topology, which is constructed from two other
Grothendieck topologies, but since one of these topologies is the
atomic topology, we have to consider posets which are downwards
directed. Since the dense topology can be seen as a generalisation of the atomic topology for posets that are not necessarily downwards directed, one could try to generalize $K_X$ by defining
 \begin{equation*}
 K'_X(p)=\left\{
       \begin{array}{ll}
         J_X(p), & p\in\up X; \\
         J_{\mathrm{dense}}(p), & p\notin \up X.
       \end{array}
     \right.
\end{equation*}
However, this is not always a Grothendieck topology. For instance,
consider the poset $\P=\{x,y_1,y_2\}$ with $y_i\leq x$. Let
$X=\{y_1\}$. Then $K'_X(x)=J_X(x)=\{\down x,\down y_1\}$,
$K'_X(y_1)=J_X(y_1)=\{\down y_1\}$, whereas
$K'_X(y_2)=J_{\mathrm{dense}}(y_2)=\{\down y_2\}$. Now, since
$y_2\leq x$ and $\down y_1\in K'_X(x)$, we should have
$\emptyset=\down y_1\cap\down y_2\in K'_X(y_2)$ by the stability
axiom. Since this is not the case, $K'_X$ cannot be a Grothendieck
topology.

We shall calculate $K_X$-sheaves on a downwards directed poset $\P$. Moreover, if $\P$ is a poset, $X\subseteq\P$ is downwards directed and $Y\subseteq X$, we can consider the Grothendieck topology $L_{X,Y}$ on $\P$ obtained by extending the Grothendieck topology $K_Y^X$ on $X$ to $\P$. Since $L_{X,Y}$ is defined by the extension of a Grothendieck topology, whose sheaves are known, we can easily calculate the $L_{X,Y}$-sheaves on $\P$ as well. For the calculation of the $K_X$-sheaves we need the following definition.

\begin{definition}
Let $\P$ be a downwards directed poset. Even if $\P$ already has a
least element, we can add a new least element, which we denote by
$0$. We define $\P_0=\P\cup\{0\}$, where $0<p$ for each $p\in\P$. If
$A\subseteq\P$, then we write $A_0=A\cup\{0\}$. If $p\in\P$, then
$\down p$ with respect to $\P_0$ is exactly $(\down p)_0$, where
$\down p$ is defined with respect to $\P$. If $X\subseteq\P$, then $X\cap\down
p=X\cap(\down p)_0$, in which case the notation does not matter. If
$J$ is a topology on $\P$ and we want to define a similar topology
on $\P_0$, we denote this topology by $J^0$. For instance, $K^0_X$
is the derived topology on $\P_0$ with respect to $X$. By the
previous remark about $X\cap\down p=X\cap(\down p)_0$, we have $S\in
J_X^0(p)$ if and only if $X\cap\down p\subseteq S$.
\end{definition}

\begin{lemma}\label{lem:KX0intermsofKX}
 Let $\P$ be a downwards-directed poset. Then we have a bijection $\D(\P)\to\D(\P_0)\setminus\{\emptyset\}$ given by $A\mapsto A_0$, with inverse $B\mapsto B\setminus\{0\}$, which, for each $X\subseteq\P$ and $p\in\P$, restricts to a bijection $K_X(p)\to K^0_X(p)\setminus\{\{0\}\}$. Moreover, $\{0\}\in K_X^0(p)$ if and only if $p\in(\up X)^c$, where the complement is taken with respect to $\P_0$.
\end{lemma}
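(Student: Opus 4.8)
The plan is to prove the three assertions in turn; all are elementary consequences of how $\P_0$ is built, and the only thing needing care is keeping the two ambient posets apart notationally. First, to establish the bijection $A\mapsto A_0$ between $\D(\P)$ and $\D(\P_0)\setminus\{\emptyset\}$: if $A\in\D(\P)$, then $A_0=A\cup\{0\}$ is downward closed in $\P_0$, since the only relations in $\P_0$ not already present in $\P$ are $0\leq p$ for $p\in\P$, and these cause no problem because $0\in A_0$; moreover $A_0\neq\emptyset$. Conversely, any non-empty down-set $B$ of $\P_0$ contains $0$ (as $0$ lies below every element of $\P_0$), so $B\setminus\{0\}$ is a down-set of $\P$; and the two assignments are mutually inverse because $0$ is a genuinely new element, hence $0\notin A$ whenever $A\subseteq\P$. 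This step uses nothing about downward directedness and applies verbatim with $\P$ replaced by $\down p$, recalling that the down-set of $p$ computed in $\P_0$ is $(\down p)_0$.

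Next, for the restriction to $K_X(p)\to K^0_X(p)\setminus\{\{0\}\}$: applying the first step to $\down p$ gives a bijection $\D(\down p)\to\D((\down p)_0)\setminus\{\emptyset\}$, $A\mapsto A_0$. Since $X\cap(\down p)_0=X\cap\down p$ contains no $0$, the inclusion $X\cap\down p\subseteq A$ holds if and only if $X\cap\down p\subseteq A_0$, so $A\in J_X(p)$ if and only if $A_0\in J^0_X(p)$. Now $K_X(p)=J_X(p)\setminus\{\emptyset\}$ and $K^0_X(p)=J^0_X(p)\setminus\{\emptyset\}$, and under $A\mapsto A_0$ the set $\emptyset$ corresponds to $\{0\}$ while every other member of $\D((\down p)_0)\setminus\{\emptyset\}$ is $A_0$ for some non-empty $A$; combining these gives $A\in K_X(p)$ if and only if $A_0\in K^0_X(p)\setminus\{\{0\}\}$. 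Surjectivity onto $K^0_X(p)\setminus\{\{0\}\}$ is immediate, since any $B\in K^0_X(p)$ is non-empty, hence contains $0$, hence equals $(B\setminus\{0\})_0$ with $B\setminus\{0\}\in K_X(p)$.

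Finally, for the criterion for $\{0\}\in K^0_X(p)$: the singleton $\{0\}$ is always a non-empty down-set of $(\down p)_0$, so $\{0\}\in K^0_X(p)$ if and only if $X\cap\down p\subseteq\{0\}$, that is, if and only if $X\cap\down p=\emptyset$ (again using $0\notin X$), that is, if and only if $p\notin\up X$. Since $0$ lies below everything and $0\notin X$, the set $\up X$ is the same whether computed in $\P$ or in $\P_0$, so for $p\in\P$ this is exactly $p\in(\up X)^c$ with the complement taken in $\P_0$. The whole argument is routine; the only point requiring attention is the bookkeeping — systematically distinguishing $\down p$ in $\P$ from $(\down p)_0$ in $\P_0$, and tracking that the empty sieve on the $\P$-side matches the sieve $\{0\}$ on the $\P_0$-side, which is precisely the mismatch the statement absorbs by deleting $\{0\}$ from $K^0_X(p)$.
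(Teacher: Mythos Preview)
Your proof is correct and follows essentially the same elementary route as the paper's. The only organizational difference is that the paper splits the verification of the restricted bijection into the cases $p\in\up X$ and $p\notin\up X$ (using the description of $K_X$ as $J_X$ or $J_\atom$ according to the case), whereas you work directly from the definition $K_X(p)=J_X(p)\setminus\{\emptyset\}$ together with the observation that under $A\mapsto A_0$ the empty sieve corresponds to $\{0\}$; your argument is therefore marginally cleaner, but the substance is the same.
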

\begin{proof}
Clearly, $A_0\in\D(\P_0)\setminus\{\emptyset\}$ if $A\in\D(\P)$ and $B\setminus\{0\}\in\D(\P)$ if $B\in\D(\P_0)\setminus\{\emptyset\}$. It is also clear that $A_0\setminus\{0\}=A$ for $A\in\D(\P)$. It follows from $0\in B$ for each $B\in\D(\P_0)\setminus\{\emptyset\}$ that $(B\setminus\{0\}_0=B$.

Let $S\in K_X(p)$. If $p\in\up X$, this means that $\emptyset\neq X\cap\down p\subseteq S$. Then $X\cap\down p\subseteq S_0$, so $S_0\in K^0_X(p)$. If $p\notin\up X$, then $S\in J_\atom(p)$, so $S$ is non-empty. Then also $S_0\neq\emptyset$, so $S_0\in J^0_\atom(p)=K^0_X(p)$. In both cases, since $S\neq\emptyset$, we have $S_0\neq\{0\}$.

Let $R\in K^0(p)$ and assume that $R\neq\{0\}$. If $p\in\up X$, this means that $R\in J^0_X(p)$, so $X\cap\down p\subseteq R$. Since $0\notin X$, we find that  $X\cap\down p\subseteq R\setminus\{0\}$, hence $R\setminus\{0\}\in J_X(p)=K_X(p)$. If $p\notin\up X$, we have $R\in J^0_\atom(p)$, so $R$ is non-empty. Since also $R\neq\{0\}$, we must have $R\setminus\{0\}\neq\emptyset$, so $R\setminus\{0\}\in J_\atom(p)=K_X(p)$.

If $p\in(\up X)^c$, then for each $x\in X$ we have $x\nleq p$, so $X\cap\down p=\emptyset\subseteq\{0\}$, whence $\{0\}\in K_X^0(p)=J_X^0(p)\setminus\{\emptyset\}$. If $p\in\up X$, we have $x\leq p$ for some $x\in X$ and since $X\subseteq\P$, we have $x\neq 0$. Hence $x\in X\cap\down p$, so $X\cap\down p\nsubseteq\{0\}$. Thus $\{0\}\notin K_X^0(p)$.

\end{proof}

\begin{lemma}
 Let $\P$ be a downwards directed poset. Let $X\subseteq\P$ be a subset. If $I_0:\P\embeds\P_0$ denotes the map, then $I_0^*:\Sets^{\P_0^\op}\to\Sets^{\P^\op}$ given by $F\mapsto F\circ I_0$ restricts to a functor $\Sh(\P_0,K^0_X)\to\Sh(\P,K_X)$.
\end{lemma}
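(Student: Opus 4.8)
The plan is to show directly that $\bar F := F\circ I_0$ is a $K_X$-sheaf whenever $F$ is a $K^0_X$-sheaf; since $I_0^*$ is precomposition with $I_0$, it automatically sends natural transformations to natural transformations, so this is all that is needed for the claimed restriction. Throughout I will use the bijection $A\mapsto A_0=A\cup\{0\}$ of Lemma \ref{lem:KX0intermsofKX} to transport covers and matching families between $\P$ and $\P_0$. Fix $p\in\P$ and a cover $S\in K_X(p)$; by that lemma $S_0\in K^0_X(p)$. Note $\bar F(x)=F(I_0(x))=F(x)$ for $x\in\P$ since $I_0$ is the inclusion, and likewise $\bar F$ agrees with $F$ on morphisms of $\P$. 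Given a matching family $\langle a_x\rangle_{x\in S}$ for $S$ with elements of $\bar F$, the first step is to extend it to a matching family for $S_0$ with elements of $F$ by setting $a_0:=F(0\leq x)a_x$ for an arbitrarily chosen $x\in S$.

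The key point is that $a_0$ is well defined, i.e.\ independent of the chosen $x\in S$. Here I would use that $\P$ is downwards directed: given $x,y\in S$, pick $z\in\P$ with $z\leq x$ and $z\leq y$; since $S$ is a down-set, $z\in S$, and then $F(0\leq x)a_x=F(0\leq z)F(z\leq x)a_x=F(0\leq z)a_z=F(0\leq z)F(z\leq y)a_y=F(0\leq y)a_y$, applying the matching-family condition for $S$ twice. (There is something to choose since $S$ is non-empty, being an element of $K_X(p)=J_X(p)\setminus\{\emptyset\}$.) A routine case check then shows that $\langle a_v\rangle_{v\in S_0}$ is a matching family for $S_0$ of elements of $F$: the only relations not already assumed are $F(0\leq v)a_v=a_0$ for $v\in S$, and these hold by the very definition of $a_0$.

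Since $F$ is a $K^0_X$-sheaf and $S_0\in K^0_X(p)$, the extended matching family has a unique amalgamation $a\in F(p)$ with $F(v\leq p)a=a_v$ for all $v\in S_0$; in particular $F(x\leq p)a=a_x$ for all $x\in S$, so $a$ is an amalgamation of $\langle a_x\rangle_{x\in S}$ for $\bar F$. For uniqueness, suppose $b\in\bar F(p)=F(p)$ also satisfies $F(x\leq p)b=a_x$ for all $x\in S$; choosing any $x\in S$ we get $F(0\leq p)b=F(0\leq x)F(x\leq p)b=F(0\leq x)a_x=a_0$, so $b$ restricts correctly along all of $S_0$, whence $b=a$ by the uniqueness clause in $\Sh(\P_0,K^0_X)$. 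Thus $\bar F\in\Sh(\P,K_X)$, and $I_0^*$ restricts to the asserted functor $\Sh(\P_0,K^0_X)\to\Sh(\P,K_X)$.

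I do not anticipate a serious obstacle; the only genuinely delicate step is the well-definedness of $a_0$, and that is exactly the place where downward directedness of $\P$ (hence of each sieve $S$ it contains) and the non-emptiness of $K_X$-covers are both used.
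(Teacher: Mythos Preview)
Your proposal is correct and follows essentially the same approach as the paper: extend a matching family for $S\in K_X(p)$ to one for $S_0\in K^0_X(p)$ by setting $a_0=F(0\leq x)a_x$, verify well-definedness via downward directedness of $\P$ (picking a common lower bound, which lies in $S$ since $S$ is a sieve), apply the $K^0_X$-sheaf property of $F$, and check uniqueness by showing any amalgamation for $S$ is automatically one for $S_0$. The paper's proof is identical in structure and detail.
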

\begin{proof}
 Let $F\in\Sh(\P_0,K^0_X)$ and let $p\in\P$. Let $S\in K_X(p)$ and let $\langle a_x\rangle_{x\in S}$ be a matching family for $S$ of elements of $I_0^*(F)=F\circ I_0$. So $a_x\in F(x)$ for each $x\in S$. Since $K_X$-covers are always non-empty, we can find a $q\in S$ that allows us to define $a_0\in F(0)$ by $a_0=F(0\leq q)a_q$. This is independent of the chosen $q\in S$; if $q'\in S$, there exists an $r\leq q,q'$, for $\P$ is downwards directed, hence
\begin{eqnarray*}
F(0\leq q')a_{q'} & = & F(0\leq r)F(r\leq q')a_{q'}=F(0\leq r)a_r\\
 & = & F(0\leq r)F(r\leq q)a_q=F(0\leq q)a_q=a_0.
\end{eqnarray*}
By the previous lemma, $S_0\in K^0_X(p)$, and $a_0$ is constructed exactly in such a way that $\langle a_x\rangle_{x\in S_0}$ is a matching family for $S_0$ of elements of $F$. Since $F\in\Sh(\P_0,K^0_X)$, we find that there is a unique amalgamation $a\in F(p)$ such that $a_x=F(x\leq p)$ for each $x\in S_0$. To see that this is also a unique amalgamation for $S$, let $b\in F(p)$ such that $a_x=F(x\leq p)b$ for each $x\in S$. Then $$F(0\leq p)b=F(0\leq q)F(q\leq p)b=F(0\leq q)a_q=a_0,$$ so $b$ is also an amalgamation of $\langle a_x\rangle_{x\in S_0}$, whence $b=a$.
\end{proof}

\begin{definition}\label{def:E0}
 Let $\P$ be a downwards directed poset and take $X\subseteq\P$ such that $\up X\neq\P$. Given a fixed $p_0\in(\up X)^c$, define $E_0:\Sh(\P,K_X)\to\Sh(\P_0,K_X^0)$ as follows. $E_0$ acts on $F\in\Sh(\P,K_X)$ by $F\mapsto\bar F$, where $\bar F$ acts on objects of $\P_0$ by
 \begin{equation*}
 \bar F(p)=\left\{
       \begin{array}{ll}
         F(p), & p\neq 0; \\
         F(p_0), & p=0,
       \end{array}
     \right.
\end{equation*}
and acts on morphisms by
 \begin{equation*}
 \bar F(q\leq p)=\left\{
       \begin{array}{ll}
         F(q\leq p), & q\neq 0; \\
         F(r\leq p_0)^{-1}F(r\leq p), & q=0\mathrm{\ for\ some\ }r\leq p,p_0\mathrm{\ in\ }\P.
       \end{array}
     \right.
\end{equation*}
Furthermore, if $\alpha:F\to G$ in $\Sh(\P,K_X)$, then $E_0$ acts on $\alpha$ by $\alpha\mapsto\bar\alpha$, where
 \begin{equation*}
 \bar\alpha_p=\left\{
       \begin{array}{ll}
         \alpha_p, & p\neq 0; \\
         \alpha_{p_0}, & p=0.
       \end{array}
     \right.
\end{equation*}
\end{definition}
\begin{lemma}
$E_0$ is well defined.
\end{lemma}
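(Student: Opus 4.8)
The plan is to check, in order, that $\bar F$ is a well-defined functor $\P_0^\op\to\Sets$, that it is a $K_X^0$-sheaf, that $\bar\alpha$ is a natural transformation $\bar F\to\bar G$ whenever $\alpha\colon F\to G$ in $\Sh(\P,K_X)$, and that $E_0$ preserves identities and composition. The single fact powering all of this is: since $p_0\in(\up X)^c$, Lemma \ref{lem:derivedtopologies} gives $K_X(p_0)=J_\atom(p_0)=\D(\down p_0)\setminus\{\emptyset\}$, so $\down r\in K_X(p_0)$ for every $r\leq p_0$; hence the argument of Example \ref{ex:atomsheaves} shows that $F(r\leq p_0)$ is a \emph{bijection} for every such $r$ and every $F\in\Sh(\P,K_X)$, and likewise with $p_0$ replaced by any element of $(\up X)^c$.

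First I would verify that $\bar F$ is well defined on morphisms; the only case needing attention is $\bar F(0\leq p)$ for $p\neq 0$. Given two lower bounds $r,r'\leq p,p_0$ in $\P$, downward directedness supplies $s\in\P$ with $s\leq r,r'$; using functoriality of $F$ and the invertibility of $F(r\leq p_0)$, $F(r'\leq p_0)$ and $F(s\leq p_0)$, one rewrites both $F(r\leq p_0)^{-1}F(r\leq p)$ and $F(r'\leq p_0)^{-1}F(r'\leq p)$ as $F(s\leq p_0)^{-1}F(s\leq p)$, so the two agree. Functoriality of $\bar F$ is then a short case check (according to whether the source of a composite is $0$), the only nontrivial case $0\leq q\leq p$ with $q\neq 0$ reducing to functoriality of $F$ once one computes $\bar F(0\leq q)$ and $\bar F(0\leq p)$ using a common lower bound of $q$ and $p_0$; and $\bar F(0\leq 0)$ is, as always, the identity on $\bar F(0)$.

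Next, $\bar F\in\Sh(\P_0,K_X^0)$. By Lemma \ref{lem:KX0intermsofKX}, for $p\in\P$ a $K_X^0$-cover of $p$ is $S_0$ for some $S\in K_X(p)$, or (only if $p\in(\up X)^c$) the sieve $\{0\}$; and $K_X^0(0)=\{\{0\}\}$, so the sheaf condition at $0$ is automatic. For the cover $\{0\}$ with $p\in(\up X)^c$: $\bar F(0\leq p)=F(r\leq p_0)^{-1}F(r\leq p)\colon F(p)\to F(p_0)$ is a composite of two bijections (the second since $p\in(\up X)^c$), so the amalgamation exists and is unique. For a cover $S_0$ with $S\in K_X(p)$: a matching family $\langle a_x\rangle_{x\in S_0}$ of elements of $\bar F$ restricts to a matching family $\langle a_x\rangle_{x\in S}$ of elements of $F$, which has a unique amalgamation $a\in F(p)=\bar F(p)$ as $F$ is a $K_X$-sheaf; one then checks $\bar F(0\leq p)a=a_0$ by picking any $x_0\in S$ (nonempty, since $K_X$-covers are) and a lower bound $r'\leq x_0,p_0$ in $\P$, whence $a_0=\bar F(0\leq x_0)a_{x_0}=F(r'\leq p_0)^{-1}F(r'\leq x_0)F(x_0\leq p)a=F(r'\leq p_0)^{-1}F(r'\leq p)a=\bar F(0\leq p)a$. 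Uniqueness is inherited from $F$, since any amalgamation of $\langle a_x\rangle_{x\in S_0}$ restricts to one of $\langle a_x\rangle_{x\in S}$.

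Finally, for $\alpha\colon F\to G$ in $\Sh(\P,K_X)$, naturality of $\bar\alpha\colon\bar F\to\bar G$ is checked square by square: for $q\leq p$ with $q,p\neq 0$ it is the naturality square of $\alpha$; for $0\leq p$ with $p\neq 0$ it follows from the naturality squares of $\alpha$ at $r\leq p$ and $r\leq p_0$ (for a chosen lower bound $r\leq p,p_0$ in $\P$) together with invertibility of $F(r\leq p_0)$ and $G(r\leq p_0)$; and for $0\leq 0$ it is trivial. That $E_0$ preserves identities and composition of natural transformations is immediate from the componentwise formula for $\bar\alpha$. I expect the main obstacle to be exactly the two places where choices enter — the independence of $\bar F(0\leq p)$ from the auxiliary lower bound, and the sheaf condition attached to the ``$0$-part'' of $K_X^0$-covers — and both are dispatched by the bijectivity observation above.
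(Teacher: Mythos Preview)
Your proposal is correct and follows essentially the same approach as the paper: both arguments hinge on the bijectivity of $F(r\leq p_0)$ (and of $F(r\leq p)$ when $p\in(\up X)^c$) coming from Example \ref{ex:atomsheaves}, use a common lower bound to show independence of the auxiliary $r$, invoke Lemma \ref{lem:KX0intermsofKX} to split the $K_X^0$-sheaf condition into the cases $R=\{0\}$ and $R=S_0$ with $S\in K_X(p)$, and verify naturality of $\bar\alpha$ by the same case analysis. The only minor addition in your outline is the explicit remark that $E_0$ preserves identities and composition, which the paper leaves implicit.
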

\begin{proof}
Since $\up X\neq\P$, we can indeed find a $p_0\in(\up X)^c$. Let $F\in\Sh(\P,K_X)$. We have to show that $\bar F(0\leq p)$ is well defined. Since $\P$ is downwards directed, there is indeed an $r\in\P$ such that $r\leq p,p_0$. Now, $r\in\up X$ implies $p_0\in\up X$, which is not the case, so we have $r\notin\up X$. Since $p_0\notin\up X$, we have $K_X(p_0)=J_\atom(p_0)$. So $\down r\in K_X(p_0)$, and hence by Example \ref{ex:atomsheaves}, $F(r\leq p_0)$ is a bijection, so its inverse is defined. We have to show that the definition of $\bar F(0\leq p)$ is independent of the choice of $r\leq p,p_0$. That is, if $r'$ is another element of $\P$ such that $r'\leq p,p_0$, then we have to show that
\begin{equation}\label{eq:independentofr}
 F(r'\leq p_0)^{-1}F(r'\leq p)=F(r\leq p_0)^{-1}F(r\leq p).
\end{equation}
Since $\P$ is downwards directed, there is an $r''\leq r,r'$, which is necesarrily in $(\up X)^c$, so $F(r''\leq r)$ and $F(r''\leq r')$ are bijections.
Then
\begin{eqnarray*}
 F(r'\leq p_0)^{-1}F(r'\leq p) &  =  &  F(r'\leq p_0)^{-1}F(r''\leq r')^{-1}F(r''\leq r')F(r'\leq p)\\
&  = &  F(r''\leq p_0)^{-1}F(r''\leq p),
\end{eqnarray*}
and in the same way, we find $$F(r\leq p_0)^{-1}F(r\leq p)=F(r''\leq p_0)^{-1}F(r''\leq p),$$ which shows that (\ref{eq:independentofr}) indeed holds. To prove that $\bar F$ is a functor, we must show that $$\bar F(0\leq p)=\bar F(0\leq p')\bar F(p'\leq p)$$ if $p'\leq p$ in $\P$. Therefore, take an $r\leq p',p_0$, then also $r\leq p$, so
\begin{equation*}
 \bar F(0\leq p)=F(r\leq p_0)^{-1}F(r\leq p)=F(r\leq p_0)F(r\leq p')F(p'\leq p)=\bar F(0\leq p')\bar F(p'\leq p).
\end{equation*}

We have to show that $\bar F$ is a $K_X^0$-sheaf if $F\in\Sh(\P,K_X)$. Let $p\in\P_0$, $R\in K_X^0(p)$, and let $\langle a_x\rangle_{x\in R}$ be a matching family for $R$ of elements of $\bar F$. If $p=0$, then $R=\{0\}$, since there are no other covers. Then the matching family consists only of the single element $a_0\in\bar F(0)$, which is also automatically the unique amalgamation, so the sheaf condition is satisfied. Let $p\in\P$ and assume that $R\neq\{0\}$. Then by Lemma \ref{lem:KX0intermsofKX},  we have $R\setminus\{0\}\in K_X(p)$. Since $a_x\in\bar F(x)=F(x)$ for $x\in R\setminus\{0\}$, we find that $\langle a_x\rangle_{x\in R\setminus\{0\}}$ is a matching family for $R\setminus\{0\}$ of elements of $F$, which is a $K_X$-sheaf, so there is a unique amalgamation $a\in F(p)$ for this matching family. Since $R\neq\{0\}$, there is an $r_0\in\P$ such that $r_0\in R$. Now, take some $r\leq r_0,p_0$, then $r\in R$, so we find that
\begin{eqnarray*}
\bar F(0\leq p)a & = & F(r\leq p_0)^{-1}F(r\leq p)a=F(r\leq p_0)^{-1}F(r\leq r_0)F(r_0\leq p)a\\
& = & \bar F(r\leq r_0)a_{r_0}=a_0,
\end{eqnarray*}
where $F(r_0\leq p)a=a_{r_0}$ in the third equality, since $a$ is the amalgamation for $\langle a_x\rangle_{x\in R\setminus\{0\}}$ of elements of $F$. The last equality holds since $r\leq r_0$ in $R$ and $\langle a_x\rangle_{x\in R}$ is a matching family of elements of $\bar F$. Thus $a$ is also an amalgamation of the matching family for $R$ of elements of $\bar F$. If $b\in F(p)$ is another amalgamation of the family for $R$ of elements of $\bar F$, then $b$ is also an amalgamation of the family for $R\setminus\{\emptyset\}$ of elements of $F$, so $b=a$. This shows that $a$ is the unique amalgamation of $\langle a_x\rangle_{x\in R}$.

Now assume $R=\{0\}$. By Lemma \ref{lem:KX0intermsofKX}, this is only possible if $p\notin\up X$. So $K_X(p)=J_\atom(p)$, hence $\down r\in K_X(p)$ for each $r\leq p$ in $\P$. By Example \ref{ex:atomsheaves}, we find that $F(r\leq p)$ is a bijection for each $r\leq p$ in $\P$. Then if we choose $r\leq p,p_0$, we find that $\bar F(0\leq p)=F(r\leq p_0)^{-1}F(r\leq p)$ is also a bijection. So if $\langle a_x\rangle_{x\in R}$ is the matching family for $R=\{0\}$ with elements of $\bar F$, we see that this matching family consists only of a single element $a_0\in\bar F(0)$. Since $\bar F(0\leq p)$ is a bijection, it is clear that the matching family has an amalgamation $\bar F(0\leq p)^{-1}a_0$, which is necesarrily unique.

Finally, we have to show that $\bar\alpha:\bar F\to\bar G$ is natural if $\alpha:F\to G$ is a natural transformation in $\Sh(\P,K_X)$.
Now, $\bar\alpha_p=\alpha_p$, $\bar F(p)=F(p)$ and $\bar G(p)=G(p)$ if $p\in\P$, so we clearly have $\bar G(q\leq p)\bar\alpha_p=\bar\alpha_q\bar F(q\leq p)$ if $q,p\in\P$, since this is exactly the naturality of $\alpha$. If $q=0$, then
\begin{eqnarray*}
\bar G(0\leq p)\bar\alpha_p &
= & G(r\leq p_0)^{-1}G(r\leq p)\alpha_p=G(r\leq p_0)^{-1}\alpha_rF(r\leq p)\\
& = & G(r\leq p_0)^{-1}F(r\leq p_0)^{-1}F(r\leq p_0)\alpha_rF(r\leq p)\\
 & = & G(r\leq p_0)^{-1}\alpha_rF(r\leq p_0)F(r\leq p_0)^{-1}F(r\leq p)\\
& = & G(r\leq p_0)^{-1}G(r\leq p_0)\alpha_{p_0}F(r\leq p_0)^{-1}F(r\leq p) = \bar\alpha_{0}\bar F(0\leq p).
\end{eqnarray*}
So $\bar\alpha$ is indeed a natural transformation.
\end{proof}

\begin{theorem}
 Let $\P$ be a poset and take $X\subseteq\P$ such that $\up X\neq\P$. Then
\begin{eqnarray*}
I_0^*&:&\Sh(\P_0,K_X^0)\to\Sh(\P,K_X);\\
E_0&:&\Sh(\P,K_X)\to\Sh(\P_0,K_X^0)
\end{eqnarray*}
 form an equivalence of categories.
\end{theorem}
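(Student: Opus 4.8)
The plan is to verify directly that $I_0^* \circ E_0$ is (literally) the identity functor on $\Sh(\P,K_X)$ and that $E_0 \circ I_0^*$ is naturally isomorphic to the identity functor on $\Sh(\P_0,K_X^0)$; the two preceding lemmas already guarantee that $I_0^*$ and $E_0$ restrict to functors between these sheaf categories, so only these two facts remain to be established.

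First I would dispose of the easy composite. For $F \in \Sh(\P,K_X)$, Definition \ref{def:E0} gives $E_0(F) = \bar F$ with $\bar F(p) = F(p)$ and $\bar F(q \leq p) = F(q \leq p)$ for every morphism $q \leq p$ with $p \in \P$ (equivalently with $q \neq 0$, since $0$ is the least element of $\P_0$). Hence restricting $\bar F$ along $I_0 : \P \embeds \P_0$ returns $F$ verbatim, and likewise $\bar\alpha$ restricts to $\alpha$ for any natural transformation $\alpha$; so $I_0^* \circ E_0 = 1_{\Sh(\P,K_X)}$ on the nose.

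The content is in the other composite. Fix $G \in \Sh(\P_0,K_X^0)$, put $F = I_0^*(G) = G \circ I_0$ and $\bar F = E_0(F)$. Then $\bar F(p) = F(p) = G(p)$ for $p \in \P$, while $\bar F(0) = F(p_0) = G(p_0)$, where $p_0 \in (\up X)^c$ is the element fixed by $E_0$. Since $p_0 \notin \up X$, Lemma \ref{lem:KX0intermsofKX} yields $\{0\} = \down 0 \in K_X^0(p_0)$, so the argument of Example \ref{ex:atomsheaves} shows that $G(0 \leq p_0) : G(p_0) \to G(0)$ is a bijection. I would then define $\theta_G : \bar F \to G$ by $(\theta_G)_p = \mathrm{id}_{G(p)}$ for $p \in \P$ and $(\theta_G)_0 = G(0 \leq p_0)$. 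Checking naturality of $\theta_G$ is immediate for morphisms inside $\P$; the one genuine case is a morphism $0 \leq p$ with $p \in \P$, where one chooses $r \leq p, p_0$ in $\P$ (possible as $\P$ is downwards directed), observes that $r \notin \up X$ so $G(r \leq p_0)$ is a bijection, and computes
\begin{equation*}
(\theta_G)_0\,\bar F(0 \leq p) = G(0 \leq p_0)\,G(r \leq p_0)^{-1}\,G(r \leq p) = G(0 \leq r)\,G(r \leq p) = G(0 \leq p) = G(0 \leq p)\,(\theta_G)_p,
\end{equation*}
using functoriality of $G$ along the chains $0 \leq r \leq p_0$ and $0 \leq r \leq p$.

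Finally I would check that $\theta = \{\theta_G\}$ is natural in $G$. For $\alpha : G \to G'$ the naturality square at $p \in \P$ is trivial, and at $0$ it asserts $G'(0 \leq p_0)\,\alpha_{p_0} = \alpha_0\,G(0 \leq p_0)$ --- precisely the naturality of $\alpha$ at the arrow $0 \leq p_0$, recalling that $E_0 I_0^*(\alpha)$ has component $\alpha_{p_0}$ at $0$. Since every $\theta_G$ is a bijection, $\theta$ is a natural isomorphism $E_0 \circ I_0^* \cong 1_{\Sh(\P_0,K_X^0)}$, and together with the previous paragraph this exhibits the claimed equivalence. The only point requiring care is the naturality check at the adjoined bottom element: the somewhat artificial formula for $\bar F(0 \leq p)$ has to be reconciled with $G$, and this works exactly because $K_X^0$ behaves atomically below $p_0$, making the relevant transition maps of $G$ invertible; everything else is routine bookkeeping.
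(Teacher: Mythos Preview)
Your proposal is correct and follows essentially the same route as the paper: you show $I_0^*\circ E_0=1$ on the nose, then build a natural isomorphism $E_0\circ I_0^*\cong 1$ whose components are identities away from $0$ and the bijection $G(0\leq p_0)$ at $0$, with the naturality checks carried out exactly as in the paper (the key computation $G(0\leq p_0)\,G(r\leq p_0)^{-1}=G(0\leq r)$ via functoriality along $0\leq r\leq p_0$). The only differences are notational.
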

\begin{proof}
 Clearly, we have $I^*_0\circ E_0(F)=\bar F\circ I_0=F$ for each $F\in\Sh(\P,K_X)$, so $I^*_0\circ E_0=1_{\Sh(\P,K_X)}$. We aim to construct a natural isomorphism $\beta: E_0\circ I_0^*\to 1_{\Sh(\P_0,K_X^0)}$. First, given $F\in\Sh(\P_0,K_X^0)$, we have $E_0\circ I_0^*(F)=\widehat{F\circ I_0}$. If we abbreviate the right-hand side by $\bar F$, notice that $\bar F(p)=F(p)$ for each $p\in\P$ and $\bar F(q\leq p)=F(q\leq p)$ if $q\leq p$ in $\P$. Let $p_0$ the fixed point we chose in the previous lemma in order to construct $E_0$. Since $p_0\notin\up X$, we have $\{0\}=\down 0\in K_X^0(p_0)$, where we used Lemma \ref{lem:KX0intermsofKX}. Since $F\in\Sh(\P_0,K^0_X)$, we find by Example \ref{ex:atomsheaves} that $F(0\leq p_0)$ is a bijection. This allows us to define the component $\beta_F$ by
 \begin{equation*}
 (\beta_F)_p=\left\{
       \begin{array}{ll}
         1_{F(p)}, & p\neq 0; \\
         F(0\leq p_0), & p=0.
       \end{array}
     \right.
\end{equation*}
Notice that indeed $(\beta_F)_0$ maps $\bar F(0)$ into $F(0)$, since $\bar F(0)=F(p_0)$ by definition of $\bar F$. Furthermore, $(\beta_F)_p$ is clearly a bijection for each $p\in\P_0$. We have to check that it is natural, that is,
\begin{equation*}
\xymatrix{\bar F(p)\ar[rr]^{(\beta_F)_p}\ar[dd]_{\bar F(q\leq p)} &&  F(p)\ar[dd]^{ F(q\leq p)}\\
\\
\bar F(q)\ar[rr]_{(\beta_F)_q} && F(q)}
\end{equation*}
commutes for each $q\leq p$ in $\P_0$. If $q\in\P$ (so also $p\in\P$), this is exactly $$F(q\leq p)1_{F(p)}=1_{F(q)}F(q\leq p),$$ so assume $q=0$.
Choose an $r\leq p,p_0$. Then
\begin{eqnarray*}
 (\beta_F)_0\bar F(0\leq p) & = & F(0\leq p_0)F(r\leq p_0)^{-1}F(r\leq p)\\
& =& F(0\leq r)F(r\leq p_0)F(r\leq p_0)^{-1}F(r\leq p)\\
 & = & F(0\leq p)1_{F(p)} = \bar F(0\leq p)(\beta_F)_p.
\end{eqnarray*}
So we find that $\beta_F$ is a natural bijection for each $F$.
Finally, we have to show that $\beta$ is natural in $F$. So let $\alpha:F\to G$ in $\Sh(\P_0,K_X^0)$ and abbreviate $E_0\circ I_0^*(\alpha)=\widehat{\alpha I_0}$ by $\bar\alpha$. Then $\bar\alpha_p=\alpha_p$ for $p\in\P$ and $\bar\alpha_0=\alpha_{p_0}$. We have to show that for each $p\in\P_0$, the diagram
\begin{equation*}
\xymatrix{\bar F(p)\ar[rr]^{(\beta_F)_p}\ar[dd]_{\bar \alpha_p} && F(p)\ar[dd]^{ \alpha_p}\\
\\
\bar G(p)\ar[rr]_{(\beta_G)_p} && G(p)}
\end{equation*}
commutes. If $p\in\P$, this says that $1_{G(p)}\alpha_p=\alpha_p1_{F(p)}$, so assume $p=0$. Then by the naturality of $\alpha$ we obtain
\begin{equation*}
 (\beta_G)_0\bar\alpha_0=G(0\leq p_0)\alpha_{p_0}=\alpha_0F(0\leq p_0)=\alpha_0(\beta_F)_0.
\end{equation*}
So $\beta$ is a natural transformation such that each component $\beta_F$ is a natural bijection, hence $\beta:E_0\circ I_0^*\to 1$ is a natural isomorphism. We conclude that $E_0$ and $I_0^*$ constitute an equivalence of categories.
\end{proof}

\begin{corollary}\label{cor:KXsheaves}
 Let $\P$ be a downwards directed poset and $X\subseteq\P$. Then $$\Sh(\P,K_X)\cong\Sets^{Y^\op},$$ where $Y=X$ if $\up X=\P$, and $Y=X_0$ (considered as subset of $\P_0$) otherwise.
\end{corollary}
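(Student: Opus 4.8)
The plan is to reduce the statement to Corollary~\ref{cor:JXsheaves}, which already computes the sheaves for a subset Grothendieck topology, via a case analysis on whether $\up X=\P$.

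If $\up X=\P$, then Lemma~\ref{lem:derivedtopologies} gives $K_X=J_X$ outright, so $\Sh(\P,K_X)=\Sh(\P,J_X)\cong\Sets^{X^\op}$ by Corollary~\ref{cor:JXsheaves}; since $Y=X$ in this case, we are done. If instead $\up X\neq\P$, the idea is to pass to the poset $\P_0$ obtained by freely adjoining a least element $0$. The theorem immediately preceding this corollary asserts that $I_0^*$ and $E_0$ form an equivalence $\Sh(\P_0,K_X^0)\simeq\Sh(\P,K_X)$, so it is enough to compute $\Sh(\P_0,K_X^0)$. Now $\P_0$ is downwards directed and has a least element, so Lemma~\ref{lem:derivedtopologiesforposetswithleastelement}, applied to the poset $\P_0$ and the subset $X\subseteq\P_0$, yields $K_X^0=J_{X_0}^{\P_0}$ with $X_0=X\cup\{0\}$. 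Hence by Corollary~\ref{cor:JXsheaves}, applied now to $\P_0$ and $X_0\subseteq\P_0$, we get $\Sh(\P_0,K_X^0)\cong\Sets^{X_0^\op}$, and combining with the equivalence above, $\Sh(\P,K_X)\cong\Sets^{X_0^\op}$, which is the claim since $Y=X_0$ here.

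I do not expect any real obstacle: this corollary is essentially a concatenation of Lemma~\ref{lem:derivedtopologies}, Lemma~\ref{lem:derivedtopologiesforposetswithleastelement}, Corollary~\ref{cor:JXsheaves}, and the equivalence theorem just established. The only points needing a moment's attention are checking that $\P_0$ indeed satisfies the hypotheses (downwards directed and possessing a bottom element) so that $K_X^0$ and Lemma~\ref{lem:derivedtopologiesforposetswithleastelement} are both legitimately available, and observing that $\Sets^{X_0^\op}$ is formed with respect to the order on $X_0$ inherited from $\P_0$, under which $X_0$ is just $X$ with an adjoined least element --- matching the phrasing ``$Y=X_0$ considered as a subset of $\P_0$''.
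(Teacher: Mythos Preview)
Your proposal is correct and follows essentially the same route as the paper's own proof: a case split on whether $\up X=\P$, invoking Lemma~\ref{lem:derivedtopologies} in the first case and, in the second, the equivalence $\Sh(\P,K_X)\cong\Sh(\P_0,K_X^0)$ together with Lemma~\ref{lem:derivedtopologiesforposetswithleastelement} and Corollary~\ref{cor:JXsheaves}. The only cosmetic difference is that the paper additionally notes $\up(X_0)=\P_0$, but this is not needed for the cited lemma, and your argument is if anything slightly cleaner.
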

\begin{proof}
Assume that $\up X=\P$. Then by Lemma \ref{lem:derivedtopologies}, we have $K_X=J_X$, so $$\Sh(\P,K_X)=\Sh(\P,J_X)\cong\Sets^{X^\op},$$ by Corollary \ref{cor:JXsheaves}. Now assume that $\up X\neq\P$. Then $\up (X_0)=\P_0$, so $K^0_{X}=J^0_{X_0}$ by Lemma \ref{lem:derivedtopologiesforposetswithleastelement}. Using the previous theorem and Corollary \ref{cor:JXsheaves}, we find $$\Sh(\P,K_X)\cong\Sh(\P_0,K^0_X)=\Sh(\P_0,J^0_{X_0})\cong\Sets^{X_0^\op}.$$
\end{proof}

We are also interested in the subcanonicity of the derived Grothendieck topologies. It turns out that the condition for subcanonicity is exactly the same as for subset topologies.
\begin{proposition}
 Let $\P$ be a downwards directed poset and $X\subseteq\P$. Then for each $p\in\P$, $\y(p)$ is a $K_X$-sheaf if and only if $X\to\down p=\down p$. As a consequence, $K_X$ is subcanonical if and only if $X\to\down p=\down p$ for each $p\in\P$.
\end{proposition}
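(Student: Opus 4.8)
The plan is to reduce the statement to Proposition~\ref{prop:representablesheaf} and then to compare the resulting condition with the one already established for $J_X$ in Proposition~\ref{prop:subcanonicalJX}. First I would apply Proposition~\ref{prop:representablesheaf} to the Grothendieck topology $K_X$ (which is indeed a Grothendieck topology on $\P$ by the proposition above): $\y(p)$ is a $K_X$-sheaf if and only if $K_X(q)\cap\D(\down p)=\emptyset$ for every $q\nleq p$. Since $K_X(q)=J_X(q)\setminus\{\emptyset\}$, this is the same as requiring $J_X(q)\cap\D(\down p)\subseteq\{\emptyset\}$ for every $q\nleq p$. On the other hand, the proof of Proposition~\ref{prop:subcanonicalJX} (via Proposition~\ref{prop:representablesheaf}) shows that the condition $X\to\down p=\down p$ is equivalent to $J_X(q)\cap\D(\down p)=\emptyset$ for every $q\nleq p$. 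So the whole statement comes down to the elementary claim: for $q\nleq p$, one has $J_X(q)\cap\D(\down p)\subseteq\{\emptyset\}$ if and only if $J_X(q)\cap\D(\down p)=\emptyset$.

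The nontrivial direction of this claim is exactly where downward directedness of $\P$ is used, and it is the only real obstacle. Suppose $J_X(q)\cap\D(\down p)\subseteq\{\emptyset\}$ for some $q\nleq p$. If $q\notin\up X$, then $X\cap\down q=\emptyset$, hence $J_X(q)=\D(\down q)$; but $\down q\cap\down p$ is a down-set contained in both $\down q$ and $\down p$, and it is non-empty since $\P$ is downwards directed, so it is a non-empty element of $J_X(q)\cap\D(\down p)$, contradicting the hypothesis. Therefore $q\in\up X$, so $X\cap\down q\neq\emptyset$, and then $\emptyset\notin J_X(q)$ because every sieve in $J_X(q)$ contains $X\cap\down q$; hence $J_X(q)\cap\D(\down p)\subseteq\{\emptyset\}$ forces $J_X(q)\cap\D(\down p)=\emptyset$. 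The converse implication is trivial. Chaining the three equivalences gives that $\y(p)\in\Sh(\P,K_X)$ if and only if $X\to\down p=\down p$.

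Finally, the consequence is immediate from the definition of subcanonicity: $K_X$ is subcanonical precisely when $\y(p)$ is a $K_X$-sheaf for every $p\in\P$, which by the first part happens precisely when $X\to\down p=\down p$ for every $p\in\P$. The argument is thus essentially a bookkeeping reduction, the single substantive point being the observation that a common lower bound of $p$ and $q$ produces a non-empty common sub-down-set, which rules out $\emptyset$ being the only element of $J_X(q)\cap\D(\down p)$.
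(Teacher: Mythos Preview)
Your proof is correct and follows essentially the same route as the paper: both reduce to Proposition~\ref{prop:representablesheaf}, invoke Proposition~\ref{prop:subcanonicalJX} for the $J_X$ side, and handle the gap between $K_X$ and $J_X$ by case-splitting on whether $q\in\up X$, using downward directedness in the $q\notin\up X$ case to produce a non-empty sieve in $\D(\down p)$. The only difference is organizational: you package the two directions into a single pointwise equivalence $J_X(q)\cap\D(\down p)\subseteq\{\emptyset\}\Leftrightarrow J_X(q)\cap\D(\down p)=\emptyset$, whereas the paper argues the two implications separately (forward via $K_X\leq J_X$, backward by contrapositive exhibiting an explicit cover), but the substantive steps are identical.
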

\begin{proof}
Let $X\to\down p=\down p$. Proposition \ref{prop:subcanonicalJX} assures that $\y(p)\in\Sh(\P,J_X)$, so by Proposition \ref{prop:representablesheaf}, we obtain $J_X(q)\cap\D(\down p)=\emptyset$ for each $q\nleq p$. Now $K_X\subseteq J_X$, so certainly $K_X(q)\cap\D(\down p)=\emptyset$ for each $q\nleq p$. Again by Proposition \ref{prop:representablesheaf}, we obtain $\y(p)\in\Sh(\P,K_X)$.

Now assume that $X\to\down p\neq\down p$. In the proof of Proposition \ref{prop:subcanonicalJX}, we showed that this implies that $X\cap\down q\subseteq\down p$ for some $q\nleq p$. If $q\in\up X$, we have $X\cap\down q\neq\emptyset$, so $\down(X\cap\down q)\in K_X(q)$, whereas also $\down(X\cap\down q)\subseteq\down p$. If $q\notin\up X$, it follows from Lemma \ref{lem:derivedtopologies} that $K_X(q)=J_\atom(q)$. Now, $\P$ is downwards directed, so there is an $r\leq p,q$, which implies that $\down r\subseteq\down p$ and $\down r\in K_X(q)$. So in both cases we find that $K_X(q)\cap\D(\down p)\neq\emptyset$, whence by Proposition \ref{prop:representablesheaf}, we obtain that $\y(p)\notin\Sh(\P,K_X)$.
\end{proof}

Finally, if $X$ is a downwards directed subset of a poset $\P$, we once again consider the map $\G(X)\to\G(\P)$ that assigns to each Grothendieck topology on $X$ its extension on $\P$ in Definition \ref{def:extensionGrothTop}.
\begin{definition}
 Let $\P$ be a poset and $X$ a downwards directed subset of $\P$. For each subset $Y$ of $X$, we define $L_{X,Y}$ to be the extension on $\P$ of the derived Grothendieck topology induced by $Y\subseteq X$ on $X$, which we denote by $K_Y^X$.
\end{definition}
Notice that $K_Y^X=J^X_\atom$, the atomic Grothendieck topology on $X$ if $Y=\emptyset$. So $L_{X,\emptyset}=L_X$.
 Recall the notion $\bar\up Z=\up Z\cap X$ in Definition \ref{def:subposetdownset}.
\begin{proposition}
 Let $\P$ be a poset and $X$ a downwards directed subset of $\P$. For each subset $Y$ of $X$ we have $$\Sh(\P,L_{X,Y})\cong\Sets^{Z^\op},$$ where $Z=Y$ if $\bar\up Z=X$, whereas $Z=Y_0$ if $\bar\up Y\neq X$.
\end{proposition}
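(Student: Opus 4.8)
The plan is to reduce the statement to Corollary~\ref{cor:KXsheaves} by means of the posetal Comparison Lemma (Theorem~\ref{thm:ComparisonLemma}). Recall that, by Definition~\ref{def:extensionGrothTop}, $L_{X,Y}$ is the extension $J_K$ to $\P$ of the derived Grothendieck topology $K=K_Y^X$ on the downwards directed subposet $X$. First I would note that Proposition~\ref{prop:extensionofGrothTop}(i) gives $J_X\leq L_{X,Y}$, so that the hypothesis of the Comparison Lemma is met with $J=L_{X,Y}$, while Proposition~\ref{prop:extensionofGrothTop}(iii) identifies the induced topology on $X$ as $\bar L_{X,Y}=K_Y^X$. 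Theorem~\ref{thm:ComparisonLemma} then yields
\[
\Sh(\P,L_{X,Y})\;\cong\;\Sh(X,\bar L_{X,Y})\;=\;\Sh(X,K_Y^X).
\]

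Next I would apply Corollary~\ref{cor:KXsheaves} with the downwards directed poset in its statement taken to be $X$ itself — which is legitimate by the standing hypothesis that $X$ is downwards directed — and with the subset taken to be $Y\subseteq X$. This gives $\Sh(X,K_Y^X)\cong\Sets^{Z^\op}$, where $Z=Y$ exactly when the up-closure of $Y$ formed inside the poset $X$ is all of $X$, and $Z=Y_0$, regarded as a subset of $X_0$, otherwise. Composing with the previous equivalence gives $\Sh(\P,L_{X,Y})\cong\Sets^{Z^\op}$.

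Finally I would match the case distinction with the one in the statement: since $X$ carries the order inherited from $\P$, the up-closure of $Y$ inside $X$ is precisely $\up Y\cap X=\bar\up Y$ in the notation of Definition~\ref{def:subposetdownset}, so the two cases are $\bar\up Y=X$ and $\bar\up Y\neq X$, as asserted (the condition displayed as $\bar\up Z=X$ in the statement is to be read as $\bar\up Y=X$). The argument is essentially bookkeeping once Theorem~\ref{thm:ComparisonLemma} and Corollary~\ref{cor:KXsheaves} are in hand; the only mild point of care — hardly an obstacle — is this identification of up-closures in the subposet $X$ with their $\bar\up$-counterparts, together with checking that the hypotheses ``$X$ downwards directed'' and ``$J_X\leq L_{X,Y}$'' of the two cited results are indeed available.
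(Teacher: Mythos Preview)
Your proposal is correct and follows essentially the same route as the paper: invoke Proposition~\ref{prop:extensionofGrothTop} to get $J_X\leq L_{X,Y}$ and $\bar L_{X,Y}=K_Y^X$, apply the Comparison Lemma (Theorem~\ref{thm:ComparisonLemma}) to reduce to $\Sh(X,K_Y^X)$, and finish with Corollary~\ref{cor:KXsheaves}. Your added remarks spelling out the identification of the up-closure of $Y$ in $X$ with $\bar\up Y$ and flagging the typo $\bar\up Z$ for $\bar\up Y$ are accurate and slightly more explicit than the paper's terse version.
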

\begin{proof}
 By Proposition \ref{prop:extensionofGrothTop}, $J_X\leq L_{X,Y}$, so $\bar L_{X,Y}$ is well defined and is equal to $K_Y^X$. Moreover, we are allowed to use Theorem \ref{thm:ComparisonLemma}, i.e., the Comparison Lemma. Hence $$\Sh(\P,L_{X,Y})\cong\Sh(X,K_X^Y)\cong\Sets^{Z^\op},$$ where we used Corollary \ref{cor:KXsheaves} in the last equivalence.
\end{proof}

\section*{Conclusion}
We have found that a poset $\P$ is Artinian if and only if every Grothendieck topology on $\P$ equals $J_X$ for some subset $X$ of $\P$.
If $\P$ is not Artinian, we used the extension of the atomic Grothendieck topology on a certain subset of $\P$ in order to show the existence of Grothendieck topologies that are not generated by some subset of $\P$. Now, one could ask the following: if we completely identify all Grothendieck topologies on downwards directed subsets of $\P$, do the extensions of these Grothendieck topologies together with the subset Grothendieck topologies exhaust all Grothendieck topologies on $\P$? And if so, how do we characterize all Grothendieck topologies on a downwards directed poset $\P$? A partial answer to the last question is given by the Grothendieck topologies of the form $K_X$ for some subset $X$ of $\P$. In case of $\P=\Z$, these Grothendieck topologies together with the $J_X$-topologies exhaust all Grothendieck topologies on $\Z$.

If there exist more Grothendieck topologies on a downwards directed poset $\P$ not equal to $\Z$, it might be possible to find the corresponding sheaf topoi using \emph{Artin glueing}. Uniqueness of sites yielding a sheaf topos is not assured, see for instance the remark below Corollary \ref{cor:JXsheaves}, hence this might not give a method of finding new Grothendieck topologies, but at least it provides some information about the corresponding Grothendieck topologies. A technique of obtaining new sheaf topoi is provided by \emph{Artin glueing}\index{Artin glueing}, which is described in \cite{CJ}. We illustrate why this technique might be useful by considering $K_X$-sheaves again.

By Lemma \ref{lem:derivedtopologies}, we know that $K_X$ is a Grothendieck topology on $\P$, which is equal to $J_X$ on $\up X$, whereas it is equal to $J_\atom$ on its complement in $\P$. By Corollary \ref{cor:JXsheaves}, $\Sets^{X^\op}$ is equivalent to $\Sh(\up X,J_X)$, with $J_X$ the subset Grothendieck topology on $\up X$ generated by $X$. Moreover, $\Sh((\up X)^c,J_\atom)\cong\Sets$ by Example \ref{ex:atomsheaves}, since each sheaf on $(\up X)^c$ is constant. Moreover, by Lemma \ref{lem:derivedtopologies}, we know that $K_X$ is a Grothendieck topology on $\P$, which is equal to $J_X$ on $\up X$, whereas it is equal to $J_\atom$ on its complement in $\P$. Therefore, in some way the $K_X$-sheaves are obtained by glueing $J_X$-sheaves on $\up X$ and $J_\atom$-sheaves on $(\up X)^c$, or equivalently, by glueing $\Sets^{X^\op}$ together with $\Sets$.

Artin glueing can be described as follows. Given two categories $\CC$ and $\mathcal{D}$ and a functor $F:\CC\to\mathcal{D}$, the category obtained by Artin glueing along $F$ is the comma category $(\mathcal{D}\down F)$ with objects $(C,D,f)$, where $C$ is an object of $\CC$, $D$ an object of $\mathcal{D}$ and $f$ a morphism $f:D\to FC$. A morphism $(C,D,f)\to (C',D',f')$ between objects of $(\mathcal{D}\down F)$ is a pair $(\alpha,\beta)$, where $\alpha:C\to C'$ in $\CC$ and $\beta:D\to D'$ in $\mathcal{D}$ such that
\begin{equation*}
\xymatrix{D\ar[rr]^{\beta}\ar[dd]_{f} &&  D'\ar[dd]^{f'}\\
\\
 FC\ar[rr]_{F\alpha} && FC'}
\end{equation*}

Let $\P$ be a downwards directed poset and take $X\subseteq\P$ such that $\up X\neq\P$. Let $\CC=\Sets$, $\mathcal{D}=\Sets^{X^\op}$, and $F=\Delta_{X^\op}$. Let the functor $\Sets\to\Sets^{X^\op}$ be defined as $\Delta_{X^\op}(A)(x)=A$ for each $x\in X$ and $A\in\Sets$. Then the category obtained by Artin glueing along $\Delta_{X^\op}$ consists of objects $(A,G,f)$, where $G\in\Sets^{X^\op}$, $A\in\Sets$ and $f$ a natural transformation $G\to\Delta_{X^\op}(A)$. This corresponds exactly to a presheaf $G_0\in\Sets^{X_0^\op}$, where $X_0$ is the set obtained by adding a least element to $X$. The correspondence is given by defining $G_0(x)=G(x)$ if $x\in X$ and $G_0(0)=A$, $G_0(x\leq y)=G(x\leq y)$ if $x\in X$ and $G_0(0\leq y)=f_y$ for each $y\in X$. Moreover, if $(\alpha,\beta):(A,G,f)\to(A',G',f')$ is a morphism, this corresponds to a natural transformation $\gamma:G_0\to G_0'$ defined by $\gamma_x=\beta_x$ if $x\in X$ and $\gamma_0=\alpha$. Since $f'\circ\beta=\Delta_{X^\op}(A)\circ f$, this is indeed a natural transformation. Conversely, given an element $H\in\Sets^{X_0^\op}$, we obtain an element $(H(0),H|_{X},f)$ in $(\Sets\down\Delta_{X^\op})$, the category obtained by Artin glueing along $\Delta_{X^\op}$, where $f:H|_X\to\Delta_{X^\op}(H(0))$ is defined as $f_x=H(0\leq x)$ for each $x\in X$. It turns out that these correspondences constitute an equivalence between $(\Sets\down\Delta_{X^\op})$ and $\Sets^{X_0^\op}$. In Corollary \ref{cor:KXsheaves}, we already found that the latter is equivalent to $\Sh(\P,K_X)$, hence Artin glueing yields exactly the $K_X$-sheaves.

It might be interesting to see what other topoi we obtain by Artin glueing along other functors, by glueing different categories together. For instance, the $K_X$ sheaves are obtained by glueing $\Sets^{X^\op}$ and $\Sets$, which corresponds to adding an element $0$ below $X$. What happens if we replace $\Sets$ by $\Sets^{Y^\op}$ for some poset $Y$? Can we glue this presheaf category together with $\Sets^{X^\op}$. Does this correspond to a topos equivalent to $\Sh(\P,J)$ for some Grothendieck topology $J$ on $\P$? If so, how can we guess what form $J$ should have? Is $J$ a mixture of two already known Grothendieck topologies $J_1$ and $J_2$ just like $K_X$ is a mixture of $J_X$ and $J_\atom$? In any case, Artin glueing might be an interesting technique for further research.

\appendix

\section{The poset of commutative unital C*-subalgebras of a C*-algebra}
In this section, we will always assume that all C*-algebras are unital and that every *-homomorphism preserves the unit.

\begin{definition}
 Let $\A$ be a C*-algebra with unit $1_\A$. We denote the set of its commutative C*-subalgebras containing $1_\A$ by $\CCC(\A)$.
\end{definition}

\begin{lemma}\cite[Proposition 14]{BH}.
Let $\A$ be a C*-algebra. Then the following statements are equivalent.
\begin{enumerate}
 \item[(i)] $\A$ is commutative;
 \item[(ii)] $\CCC(\A)$ contains a greatest element;
 \item[(iii)] $\CCC(\A)$ is a complete lattice.
\end{enumerate}
\end{lemma}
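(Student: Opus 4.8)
The plan is to prove the three-way equivalence by a cycle of implications, exploiting the fact that $\CCC(\A)$ is always a meet-semilattice (the intersection of two commutative C*-subalgebras containing $1_\A$ is again one), together with the general lattice-theoretic fact (Lemma \ref{lem:completelattice}) that a meet-semilattice with a greatest element in which all meets exist is a complete lattice.

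First I would show $(i)\Rightarrow(ii)$: if $\A$ is commutative, then $\A$ itself is a commutative C*-subalgebra containing $1_\A$, so $\A \in \CCC(\A)$, and since every element of $\CCC(\A)$ is by definition a subalgebra of $\A$, the algebra $\A$ is the greatest element. Next, $(ii)\Rightarrow(iii)$: suppose $\CCC(\A)$ has a greatest element $M$; I claim $M$ is commutative, because $M \in \CCC(\A)$ means $M$ is a commutative C*-subalgebra. Then every $C \in \CCC(\A)$ satisfies $C \subseteq M$, and since $M$ is commutative, so is every subalgebra, which just re-confirms the setup; the real content is that $\CCC(\A)$ is closed under arbitrary intersections: given any family $\{C_i\}_{i\in I}$ in $\CCC(\A)$, the intersection $\bigcap_i C_i$ is a norm-closed *-subalgebra containing $1_\A$, and it is commutative since it sits inside each $C_i$. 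Hence all meets exist in $\CCC(\A)$, and with the greatest element $\A$ (or $M$) present, Lemma \ref{lem:completelattice} gives that $\CCC(\A)$ is a complete lattice. Finally $(iii)\Rightarrow(i)$: if $\CCC(\A)$ is a complete lattice, it has a greatest element $M$; every singleton-generated subalgebra $C^*(1_\A, a, a^*)$ for a normal element $a$, and more generally every commutative C*-subalgebra, is $\leq M$, so $M$ contains every normal element of $\A$; but then taking $a$ and $b$ self-adjoint, both lie in $M$, which is commutative, so $ab = ba$; since $\A$ is spanned by its self-adjoint elements, $\A$ is commutative.

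The main obstacle, and the step I would be most careful about, is the argument that $M$ contains every self-adjoint (or normal) element of $\A$ in the proof of $(iii)\Rightarrow(i)$. For a single normal element $a$, the C*-subalgebra $C^*(1_\A,a)$ generated by $1_\A$ and $a$ is commutative and lies in $\CCC(\A)$, hence is $\leq M$, so $a \in M$; this is the crux. One must make sure that "greatest element" genuinely means it contains every member of $\CCC(\A)$ as a subset (which it does, by the definition of the order as inclusion), and that $C^*(1_\A,a)$ really is a member of $\CCC(\A)$, i.e.\ that it is commutative — which holds because $a$ is normal and commutes with $1_\A$ and with itself. Once every normal element lies in $M$, commutativity of $M$ forces $[a,b]=0$ for all self-adjoint $a,b$, and the decomposition $x = \tfrac12(x+x^*) + i\cdot\tfrac{1}{2i}(x-x^*)$ of an arbitrary $x\in\A$ into self-adjoint parts finishes the argument.

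Since the excerpt already cites this as \cite[Proposition 14]{BH}, I would keep the write-up short, presenting the cycle $(i)\Rightarrow(ii)\Rightarrow(iii)\Rightarrow(i)$ and invoking Lemma \ref{lem:completelattice} explicitly for the middle implication, rather than reproving standard C*-algebra facts about closedness of intersections and the spectral/self-adjoint decomposition in detail.
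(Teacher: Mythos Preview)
Your proposal is correct and follows the same cycle $(i)\Rightarrow(ii)\Rightarrow(iii)\Rightarrow(i)$ as the paper, with essentially the same argument for the first two implications. The only noticeable difference is in $(iii)\Rightarrow(i)$: the paper never invokes the greatest element but instead uses binary joins, building $C_a = C^*(a_1,1_\A)\vee C^*(a_2,1_\A)$ for the self-adjoint parts $a_1,a_2$ of $a$, doing the same for $b$, and then observing that $a,b\in C_a\vee C_b\in\CCC(\A)$. Your route---pass to the greatest element $M$ and note that $C^*(1_\A,a)\subseteq M$ for every self-adjoint $a$---is a mild simplification of the same idea (the greatest element is just the join of everything), and it avoids the somewhat roundabout step of first decomposing $a$ and then re-assembling it inside $C_a$. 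Either way the crux is the same: the lattice structure forces enough elements into a single commutative subalgebra.
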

\begin{proof}
Assume that $\A$ is commutative, hence $\A$ is clearly the greatest element of $\CCC(\A)$.

Now Assume that $\CCC(\A)$ contains a greatest element. This is equivalent with saying that $\CCC(\A)$ contains the empty meet ($=\A$). Since in any case $\CCC(\A)$ contains all non-empty meets, this implies that $\CCC(\A)$ contains all meets. As a consequence, $\CCC(\A)$ is a complete lattice.

Finally, assume that $\CCC(\A)$ is a complete lattice. Denote the join operation on $\CCC(\A)$ by $\vee$. Let $a,b\in\A$. Then $a$ can be written as linear combination of two self-adjoint elements $a_1$ and $a_2$. Then $C^*(a_i,1_\A)$, the closure of all polynomials in $a_i$, is a unital C*-subalgebra of $\A$ for each $i\in\{1,2\}$. Let $C_a=C^*(a_1,1_\A)\vee C^*(a_2,1_\A)$. Then $C_a\in\CCC(\A)$ and since $C^*(a_1,1_\A), C^*(a_2,1_\A)\subseteq C_a$, it follows that $a_1,a_2\in C_a$. Hence $a\in C_a$. In the same way, there is a $C_b\in\CCC(\A)$ such that $C_b$. Now, $a,b\in C_a\vee C_b$ for $C_a,C_b\subseteq C_a\vee C_b$, and since $C_a\vee C_b$ is commutative, it follows that $ab=ba$. We conclude that $\A$ is commutative.
\end{proof}

If we denote the category of unital C*-algebras by $\mathbf{uCStar}$ and the category of posets by $\mathbf{Poset}$, then $\CC:\mathbf{uCStar}\to\mathbf{Poset}$ can be made into a functor \cite[Proposition 5.3.3]{Heunen}.
\begin{lemma}\label{lem:CAisinvariantforA}
$\CC:\mathbf{uCStar}\to\mathbf{Poset}$ becomes a functor if we define $\CC(f):\CC(\A)\to\CC(\B)$ for *-homomorphisms $f:\A\to\B$ between C*-algebras $\A$ and $\B$ by  $C\mapsto f[C]$. Moreover, $\CC(f)$ is injective if $f$ is injective, and surjective if $f$ is surjective. As a consequence, if $f$ is a *-isomorphism, then $\CC(f)$ is an order isomorphism.
\end{lemma}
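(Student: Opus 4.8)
The plan is to check successively that $\CC(f)$ is a well-defined order morphism, that $\CC$ respects identities and composition, that $f$ injective implies $\CC(f)$ injective, and that $f$ surjective implies $\CC(f)$ surjective; the statement about $*$-isomorphisms will then follow formally from functoriality. For well-definedness, fix $C\in\CC(\A)$: the restriction $f|_C:C\to\B$ is a unital $*$-homomorphism, and the image of a $*$-homomorphism of C*-algebras is a C*-subalgebra of the codomain (it factors as $C\onto C/\ker(f|_C)\embeds\B$ with the second arrow an injective, hence isometric, $*$-homomorphism, so $f[C]$ is closed). Since $C$ is commutative, $f(x)f(y)=f(xy)=f(yx)=f(y)f(x)$ for $x,y\in C$, so $f[C]$ is commutative, and $1_\B=f(1_\A)\in f[C]$ because $1_\A\in C$; hence $f[C]\in\CC(\B)$. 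As $C\subseteq C'$ visibly gives $f[C]\subseteq f[C']$, the map $\CC(f):\CC(\A)\to\CC(\B)$ is an order morphism.

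Functoriality is then immediate from elementary properties of set-images: $\mathrm{id}_\A[C]=C$ gives $\CC(\mathrm{id}_\A)=1_{\CC(\A)}$, and $(g\circ f)[C]=g\big[f[C]\big]$ gives $\CC(g\circ f)=\CC(g)\circ\CC(f)$ for $f:\A\to\B$ and $g:\B\to\E$. In particular, if $f$ is a $*$-isomorphism then $\CC(f)\circ\CC(f^{-1})=\CC(\mathrm{id}_\B)=1_{\CC(\B)}$ and $\CC(f^{-1})\circ\CC(f)=1_{\CC(\A)}$, so $\CC(f)$ is an order isomorphism with inverse $\CC(f^{-1})$; this proves the final assertion of the lemma from functoriality alone, independently of the injectivity/surjectivity clauses. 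Injectivity is just as quick: if $f$ is injective and $f[C]=f[C']$, applying preimage gives $C=f^{-1}\big[f[C]\big]=f^{-1}\big[f[C']\big]=C'$.

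I expect the surjectivity clause to be the real obstacle. The obvious candidate $C=f^{-1}[D]$ does not work, because for $a,b\in f^{-1}[D]$ one only gets $f([a,b])=[f(a),f(b)]=0$, i.e. $[a,b]\in\ker f$, so $f^{-1}[D]$ need not be commutative. Instead, given $D\in\CC(\B)$, I would apply Zorn's lemma to the family of commutative unital C*-subalgebras $C\subseteq\A$ with $f[C]\subseteq D$ (nonempty, since $\C 1_\A$ belongs to it; closed under unions of chains, since the closure of such a union is again a commutative unital C*-subalgebra and its image stays inside the closed set $D$) to obtain a maximal such $C$, and then try to show $f[C]=D$. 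If $f[C]$ were properly contained in $D$, one could choose a self-adjoint $d\in D\setminus f[C]$ and, by surjectivity of $f$, a self-adjoint lift $a\in\A$ with $f(a)=d$; then $[a,c]\in\ker f$ for every $c\in C$, and the crux is to correct $a$ by an element of $\ker f$ to a self-adjoint $\tilde a$ commuting with all of $C$, since $C^*(C\cup\{\tilde a\})$ would then be a strictly larger commutative unital C*-subalgebra with image still contained in $D$, contradicting maximality. This commuting-lift step is the delicate point; it is transparent when $\A$ is finite-dimensional — split $\A$ and $\ker f$ into matrix blocks and transport the commutative subalgebra of $\B\cong\A/\ker f$ block by block — which is the situation relevant to the rest of the paper, and in general I would reduce to the quotient picture $\B\cong\A/\ker f$ and analyse the resulting extension of the commutative C*-algebra $D$.
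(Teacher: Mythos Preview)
Your treatment of well-definedness, functoriality, and the injectivity clause is correct and essentially identical to the paper's argument. Your instinct on the surjectivity clause is also correct, and in fact sharper than the paper: the paper's own proof asserts that for surjective $f$ and $D\in\CC(\B)$ the preimage $f^{-1}[D]$ is a \emph{commutative} C*-subalgebra of $\A$, which is exactly the step you (rightly) reject. So the paper's proof has a gap at precisely the point you identify, and your Zorn-plus-lifting strategy is the natural repair attempt.

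However, the surjectivity claim is actually false as stated, so the lifting step you call ``delicate'' cannot be completed in general. Take $\pi:B(H)\to Q(H)$ the quotient onto the Calkin algebra of a separable infinite-dimensional Hilbert space. The image $\pi(S)$ of the unilateral shift $S$ is unitary in $Q(H)$, so $D=C^*(\pi(S),1)\in\CC(Q(H))$. If some $C\in\CC(B(H))$ satisfied $\pi[C]=D$, then $C$ would contain an element $a$ with $\pi(a)=\pi(S)$; since $C$ is commutative and $*$-closed, $a$ is normal, hence Fredholm of index $0$, while $a-S\in K(H)$ forces $\mathrm{ind}(a)=\mathrm{ind}(S)=-1$. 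Thus $\CC(\pi)$ is not surjective. Your remark that the finite-dimensional case goes through is correct, and the paper only ever invokes this lemma for inclusions and isomorphisms, so the error does not propagate.
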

\begin{proof}
Let $C\in\CC(\A)$. Then the restriction of $f$ to $C$ is a *-homomorphism with codomain $\B$. By \cite[Theorem 4.1.9]{KR1}, it follows that $f[C]$ is a  C*-subalgebra of $\B$. Since $f$ is multiplicative, it follows that $f[C]$ is commutative. Clearly $f[C]$ is a *-subalgebra of $\B$, so $f[C]\in\CC(\B)$. Moreover, we have $f[C]\subseteq f[D]$ if $C\subseteq D$, so $\CC(f)$ is an order morphism. If $f:\A\to\B$ and $g:\B\to\mathcal{D}$ are *-homomorphisms, then $\CC(g\circ f)(C)=g\circ f[C]=g[f[C]]=\CC(g)\circ\CC(f)$, and if $\mathrm{Id}_\A:\A\to\A$ is the identity morphism, then $\CC(\mathrm{Id_\A})=\mathrm{Id}_{\CC(\A)}$, the identity morphism of $\CC(\A)$. Thus $\CC$ is indeed a functor.

Assume that $f$ is injective. Then $f^{-1}[f[C]]=C$ for each $C\in\CC(\A)$. If $\CC(f)(C)=\CC(f)(D)$, then $f[C]=f[D]$, hence $C=D$. Now assume that $f$ is surjective and let $C\in\CC(\B)$. By the linearity and multiplicativity of $f$ it follows that $f^{-1}[C]$ is a commutative subalgebra of $\A$. Since $C$ is closed in $\B$ and $f$ is a *-homomorphism, hence continuous, it follows that $f^{-1}[C]$ is closed in $\A$, so $f^{-1}[C]$ is a commutative C*-subalgebra of $\A$. Hence $f^{-1}[C]\in\CC(\A)$, and by the surjectivity of $f$, we have $f[f^{-1}[C]]=C$. We conclude that $\CC(f)$ is surjective.
\end{proof}

Notice that a maximal commutative C*-subalgebra of $\A$ is precisely a maximal element of $\CCC(\A)$.
\begin{lemma}\label{lem:finitemaximalimpliesfinitedimension}
 Let $\A$ be a C*-algebra. Then $\CCC(\A)$ contains a maximal element. Moreover, $\A$ is finite dimensional if and only if there is a maximal element of $\CCC(\A)$ that is finite dimensional.
\end{lemma}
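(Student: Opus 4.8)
The plan is to establish the two assertions in turn, the second being the substantial one. For the existence of a maximal element I would apply Zorn's Lemma to $\CCC(\A)$, which is nonempty since $\C 1_\A\in\CCC(\A)$. For a chain $\{C_\lambda\}$ in $\CCC(\A)$, the union $\bigcup_\lambda C_\lambda$ is a commutative *-subalgebra of $\A$ containing $1_\A$ (it is a *-subalgebra as a union of a chain of *-subalgebras, and commutative because any two of its elements lie in a common $C_\lambda$), and joint continuity of multiplication shows that its norm-closure $\overline{\bigcup_\lambda C_\lambda}$ is a commutative C*-subalgebra containing $1_\A$, hence an element of $\CCC(\A)$ that is an upper bound of the chain. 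Zorn's Lemma then produces a maximal element. One implication of the equivalence is immediate from this: if $\A$ is finite-dimensional, every member of $\CCC(\A)$ is a linear subspace of $\A$, hence finite-dimensional, so the maximal element just obtained is finite-dimensional.

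For the converse, let $M\in\CCC(\A)$ be maximal with $\dim M<\infty$. The first step is to show that $M$ coincides with its relative commutant $\{a\in\A: am=ma \text{ for all } m\in M\}$. If $a$ commutes with $M$, then taking adjoints shows $a^*$ commutes with $M$ as well, so the self-adjoint parts $a_1,a_2$ of $a$ commute with $M$; for each $i$ the C*-subalgebra generated by $M\cup\{a_i\}$ is commutative (the *-algebra generated by a commutative algebra together with a single self-adjoint element commuting with it is commutative, and so is its closure) and contains $1_\A$, so maximality of $M$ forces $a_i\in M$, whence $a\in M$.

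Next, $\dim M<\infty$ gives $M\cong\C^n$ with minimal projections $p_1,\dots,p_n$ satisfying $p_ip_j=\delta_{ij}p_i$ and $\sum_i p_i=1_\A$, and $\A=\bigoplus_{i,j}p_i\A p_j$ as a vector space since $a=\sum_{i,j}p_iap_j$. I would then bound each corner. If $x\in p_i\A p_i$ then $p_jx=xp_j$ for every $j$ (both sides equal $x$ when $j=i$ and $0$ otherwise), so $x$ commutes with $M$ and hence $x\in M$ by the previous step; then $x=p_ixp_i$ forces $x\in\C p_i$, so $\dim p_i\A p_i=1$. For $i\neq j$, any $x,y\in p_i\A p_j$ satisfy $xy^*\in p_i\A p_i=\C p_i$ and $x^*y\in p_j\A p_j=\C p_j$; normalizing nonzero $x,y$ so that $\|x\|=\|y\|=1$ one gets $xx^*=yy^*=p_i$ and $x^*x=y^*y=p_j$, and writing $xy^*=\alpha p_i$, $x^*y=\beta p_j$, the identity $x^*(xy^*)y=(x^*x)(y^*y)=p_j$ yields $\alpha\beta=1$, while $|\alpha|\le\|x\|\|y\|=1$ and $|\beta|\le1$ force $|\alpha|=|\beta|=1$. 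Replacing $y$ by $\alpha y$ makes $xy^*=p_i$ and $x^*y=p_j$, whence $(x-y)(x-y)^*=2p_i-p_i-p_i=0$, i.e. $x=y$; so any two nonzero elements of $p_i\A p_j$ are proportional and $\dim p_i\A p_j\le1$. Summing, $\dim\A\le n^2<\infty$.

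I expect the only genuine obstacle to be the off-diagonal corner computation — extracting $\alpha\beta=1$ and using the norm bounds to conclude $|\alpha|=1$, which is precisely what collapses each $p_i\A p_j$ to at most a line; the remaining arguments are bookkeeping with the orthogonal projections $p_i$ and with the maximality of $M$.
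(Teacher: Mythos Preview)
Your proof is correct. The Zorn's Lemma argument for existence matches the paper exactly, and the trivial direction of the equivalence is the same. For the substantive converse, however, the paper simply invokes \cite[Exercise 4.6.12]{KR1} (with solution in \cite{KR3}) and stops, whereas you supply a complete self-contained argument: you identify $M$ with its relative commutant via maximality, write $M\cong\C^n$ with minimal projections $p_1,\dots,p_n$ summing to $1_\A$, and then bound each Peirce corner $p_i\A p_j$ by showing the diagonal ones collapse to $\C p_i$ (since their elements commute with $M$) and the off-diagonal ones are at most one-dimensional via the partial-isometry calculation $xx^*=p_i$, $x^*x=p_j$, and the identity $x^*(xy^*)y=(x^*x)(y^*y)$ forcing $\alpha\beta=1$ with $|\alpha|,|\beta|\le 1$. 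This yields $\dim\A\le n^2$. Your route is more elementary and transparent than a black-box citation; the paper's approach is shorter on the page but defers the actual content to Kadison--Ringrose.
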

\begin{proof}
If $\CCC(\A)$ is Noetherian, it automatically contains a maximal element. If $\CCC(\A)$ is not Noetherian, we need the Lemma of Zorn. Let $\{C_i\}_{i\in I}$ be a chain of commutative C*-subalgebras of $\A$ containing $1_\A$. Let $C=\bigcup_{i\in I}C_i$. Then $C$ is a subalgebra of $\A$ containing $1_\A$, which is commutative, since if $x,y\in C$, there is a $j\in I$ such that $x,y\in C_j$. Hence $xy=yx$ by the commutativity of $C_j$. Then $\overline{C}$ is a C*-subalgebra of $\A$, which is commutative. Indeed, let $x,y\in\overline{C}$. Then there are sequences $\{x_n\}_{n\in\N},\{y_n\}_{n\in\N}$ in $C$ converging to $x$ and $y$, respectively. Then $\{x_n\}_n$ is bounded, since it is Cauchy. Indeed, if $\epsilon>0$, there is an $N\in\N$ such that $\|x_n-x_m\|<\epsilon$ for each $n,m\geq N$. Then for each $n\geq N$, we find $$\|x_n\|=\|x_n-x_N+x_N\|\leq\|x_n-x_N\|+\|x_N\|<\epsilon+\|x_N\|.$$ Let $K=\max_{1\leq k\leq N}\{\epsilon+\|x_k\|\}$. Then clearly $\|x_n\|\leq K$ for each $n\in\N$. It follows now that $x_ny_n\to xy$ if $n\to\infty$, since
\begin{eqnarray*}
\|x_ny_n-xy\| & = & \|x_ny_n-x_ny+x_ny-xy\|\leq\|x_ny_n-x_ny\|+\|x_ny-xy\|\\
 & \leq & \|x_n\|\|y_n-y\|+\|x_n-x\|\|y\|\leq K\|y_n-y\|+\|x_n-x\|\|y\|\to 0
\end{eqnarray*}
if $n\to\infty$. But since $x_ny_n=y_nx_n$ for each $n\in\N$, it follows that $x_ny_n\to yx$ as well. Since $\overline{C}$ is Hausdorff, it follows that $xy=yx$.
It follows that for every chain in $\CCC(\A)$, there is an upper bound for the chain in $\CCC(\A)$. By Zorn's Lemma it follows that $\CCC(\A)$ contains a maximal element.

Assume that $\A$ is finite dimensional. Then all subalgebras of $\A$ must be finite dimensional as well, hence all maximal elements of $\CCC(\A)$ are finite dimensional. Conversely, assume that there is a maximal element of $\CCC(\A)$ of finite dimension. By \cite[Exercise 4.6.12]{KR1}, whose solution can be found in \cite{KR3}, it follows that $\A$ must have finite dimension.
\end{proof}

\begin{definition}
 Let $X$ be a topological space with topology $\O(X)$. Then $X$ is called \emph{Noetherian}\index{Noetherian topological space} if $\O(X)$ ordered by inclusion is Noetherian.
\end{definition}

\begin{lemma}\cite[Exercise I.1.7]{Hartshorne}\label{lem:Noetheriancharacterization}
 Let $X$ be a topological space. Then $X$ is Noetherian if and only if every subset of $X$ is compact. Moreover, if $X$ is Noetherian and Hausdorff, then $X$ must be finite.
\end{lemma}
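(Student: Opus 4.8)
The plan is to split the statement into three implications: (1) if $X$ is Noetherian then every subset of $X$ is compact; (2) if every subset of $X$ is compact then $X$ is Noetherian; and (3) if $X$ is Noetherian and Hausdorff then $X$ is finite. Throughout I would read ``Noetherian'' via Lemma~\ref{lem:equivalentdefinitionsNoetherian}, i.e.\ as the ascending chain condition on $\O(X)$, and ``compact'' as the property that every open cover has a finite subcover.

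For (1), I would first reduce to the whole space by checking that every subspace $Y\subseteq X$ of a Noetherian space is again Noetherian: the map $\O(X)\to\O(Y)$, $V\mapsto V\cap Y$, is a surjective order morphism, so given an ascending chain $W_1\subseteq W_2\subseteq\cdots$ in $\O(Y)$ one may choose $V_n\in\O(X)$ with $V_n\cap Y=W_n$ and replace $V_n$ by $V_1\cup\cdots\cup V_n$; the resulting chain is ascending in $\O(X)$ and still satisfies $(V_1\cup\cdots\cup V_n)\cap Y=W_n$, hence stabilizes, whence $W_n$ stabilizes. It then remains to show that a Noetherian space $Y$ is compact. Given an open cover $\{U_i\}_{i\in I}$ of $Y$ (the case $I=\emptyset$, i.e.\ $Y=\emptyset$, being trivial), the family $\mathcal F$ of all finite unions $U_{i_1}\cup\cdots\cup U_{i_n}$ is a nonempty subset of $\O(Y)$ and so has a maximal element $W$; for each $j\in I$ the set $W\cup U_j$ again belongs to $\mathcal F$ and contains $W$, so $W\cup U_j=W$ by maximality, i.e.\ $U_j\subseteq W$; therefore $Y=\bigcup_{i\in I}U_i\subseteq W$, so $W=Y$ is a finite union of members of the cover.

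For (2), I would assume every subset of $X$ is compact and take an ascending chain $U_1\subseteq U_2\subseteq\cdots$ in $\O(X)$. Its union $U=\bigcup_n U_n$ is a subset of $X$, hence compact, and $\{U_n\}_n$ is an open cover of $U$; a finite subcover, together with the fact that the chain is ascending, yields $U=U_N$ for the largest index $N$ appearing, so the chain stabilizes at $U_N$, and Lemma~\ref{lem:equivalentdefinitionsNoetherian} gives that $\O(X)$ is Noetherian. For (3), I would combine (1) with two standard point-set facts: in a Hausdorff space every compact set is closed (via the usual argument separating a point from a compact set), so by (1) every subset of $X$ is closed; passing to complements, every subset of $X$ is then also open, so $X$ is discrete; and a discrete space in which every subset is compact must be finite, since otherwise the open cover of $X$ by singletons would admit no finite subcover.

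None of the steps is genuinely hard; the main point requiring care is the ``finite-union'' bookkeeping used twice in (1) --- once to transport an ascending chain down to a subspace, once to run the maximal-element argument for compactness --- together with the need to dispatch the empty cover (equivalently $Y=\emptyset$) separately so that the relevant families are nonempty. I expect (1) to carry essentially all the content, with (2) and (3) being a few lines each.
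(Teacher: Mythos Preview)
Your proposal is correct and follows essentially the same route as the paper. The only organizational difference is in part (1): the paper works directly in $\O(X)$, taking the family of finite unions of the covering sets as a nonempty upwards directed subset of $\O(X)$ and extracting a greatest element via Lemma~\ref{lem:equivalentdefinitionsNoetherian}, thereby bypassing your preliminary step that subspaces of Noetherian spaces are Noetherian; parts (2) and (3) are virtually identical to the paper's argument.
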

\begin{proof}
 Assume that $X$ is Noetherian, let $Y\subseteq X$ and $\mathcal{U}$ a cover of $Y$. Notice that $\mathcal{U}$ cannot be empty. Now replace the cover by $\mathcal{V}$, where each element $V\in\mathcal{V}$ is the union of finite elements of $\mathcal{U}$. Since $\mathcal{V}$ contains $\mathcal{U}$, it must also cover $Y$. Moreover, $\mathcal{V}$ must be upwards directed. Indeed, let $V_1,V_2\in\mathcal{V}$. Then both $V_1$ and $V_2$ can be written as union of finite elements of $\mathcal{U}$, so $V_1\cup V_2$ can be written as union of finite elements of $\mathcal{U}$ as well. Hence $V_1\cup V_2\in\mathcal{V}$, and since $V_1,V_2\subseteq V_1\cup V_2$, it follows that $\mathcal{V}$ is upwards directed. Since $\mathcal{V}$ is a non-empty upwards directed subset of $\O(X)$, which is a Noetherian poset, it follows that $\mathcal{V}$ contains a greatest element $V$. Since all other elements of $\mathcal{V}$ are contained in $V$, it follows that $Y\subseteq\bigcup\mathcal{V}=V$. Since $V$ as an element of $\mathcal{V}$ can be written as as union of finite elements of $\mathcal{U}$, it follows that $\mathcal{U}$ has a finite subcover of $Y$. So $Y$ is compact.

Conversely, assume that every subset of $X$ is compact. Let $\mathcal{V}$ be a non-empty upwards directed subset of $\O(X)$. Then $V=\bigcup\mathcal{V}$ is a subset of $X$, which is compact and covered by $\mathcal{V}$. Hence there must be $V_1,\ldots V_n\in\mathcal{V}$ for some $n\in\N$ such that $V=\bigcup_{i=1}^nV_i$. Since $\mathcal{V}$ is upwards directed, it follows that there is a $V'\in\mathcal V$ such that $V_1,\ldots,V_n\subseteq V'$. Hence $V\subseteq V'$, but since $V$ is the union of $\mathcal V$, we must also have $V'\subseteq V$. So $V=V'$ and it follows that $V\in\mathcal V$. We conclude that $\mathcal V$ has a greatest element, namely $V$, so $X$ must be Noetherian.

Finally, assume that $X$ is Noetherian and Hausdorff. For each $x\in X$, it follows that $X\setminus\{x\}$ is compact, and since compact subsets of Hausdorff spaces are closed, it follows that $\{x\}$ is open. So $X$ is discrete. As a Noetherian space, $X$ must be compact itself. Since infinite discrete spaces cannot be compact, it follows that $X$ must be finite.
\end{proof}

\begin{lemma}\label{lem:infinitedimensionalcommutativealgebra}
 Let $\A$ be a commutative C*-algebra with infinite dimension. Then $\CCC(\A)$ contains both a descending chain that does not terminate and an ascending chain that does not terminate.
\end{lemma}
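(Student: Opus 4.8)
The plan is to push the whole problem into a single commutative unital C*-subalgebra of the form $C(K)$ with $K$ an infinite compact subset of $\R$, and then to write the two chains down explicitly inside $C(K)$ using a convergent sequence in $K$.

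First I would locate a self-adjoint element $a\in\A$ whose spectrum $K=\sigma(a)$ is infinite; this is the only place where infinite-dimensionality is really used. If, on the contrary, every self-adjoint element of $\A$ had finite spectrum, then by the functional calculus each self-adjoint element would be a finite real-linear combination of orthogonal projections of $\A$, so $\A$ would be the linear span of its projections. The projections of the commutative algebra $\A$ form a Boolean algebra: if this Boolean algebra were infinite it would contain an infinite family $q_1,q_2,\dots$ of pairwise orthogonal nonzero projections, and then $a=\sum_n\frac1n q_n$ would converge in norm to a self-adjoint element with $\frac1n\in\sigma(a)$ for all $n$, contradicting the assumption; and if it were finite, with atoms $f_1,\dots,f_\ell$, then $\A=\mathrm{span}_{\C}(f_1,\dots,f_\ell)$ would be finite-dimensional. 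Hence some self-adjoint $a$ has infinite spectrum, and by the continuous functional calculus $\B:=C^*(a,1_\A)\cong C(K)$. As $\B$ is commutative and unital, every unital C*-subalgebra of $\B$ belongs to $\CCC(\A)$ and inclusions among such subalgebras are the same in $\B$ as in $\A$; so it suffices to build a non-terminating ascending chain and a non-terminating descending chain of unital C*-subalgebras of $C(K)$.

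Since $K$ is an infinite, closed, bounded subset of $\R$, the Bolzano--Weierstrass theorem yields pairwise distinct points $\lambda_1,\lambda_2,\dots\in K$ converging to some $\lambda_\infty\in K$ with $\lambda_\infty\neq\lambda_n$ for all $n$. Each finite set $\{\lambda_1,\dots,\lambda_n\}$ is closed, and each tail set $F_n:=\{\lambda_m:m\ge n\}\cup\{\lambda_\infty\}$ is compact, hence closed, in $K$, with $F_1\supsetneq F_2\supsetneq\cdots$ since $\lambda_n\in F_n\setminus F_{n+1}$. For the descending chain I would set $B_n=\{f\in C(K):f(\lambda_1)=\cdots=f(\lambda_n)\}$: each is a unital C*-subalgebra, $B_n\supseteq B_{n+1}$, and the inclusion is strict because Urysohn's lemma provides $f\in C(K)$ with $f\equiv0$ on $\{\lambda_1,\dots,\lambda_n\}$ and $f(\lambda_{n+1})=1$, so $f\in B_{n+1}\setminus B_n$. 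For the ascending chain I would set $C_n=\{f\in C(K):f\text{ is constant on }F_n\}$: each is a unital C*-subalgebra, $C_n\subseteq C_{n+1}$ because $F_{n+1}\subseteq F_n$, and the inclusion is strict because Urysohn's lemma provides $f\in C(K)$ with $f\equiv1$ on $F_{n+1}$ and $f(\lambda_n)=0$, so $f\in C_{n+1}\setminus C_n$. Transporting these chains through the isomorphism $\B\cong C(K)$ gives the required non-terminating descending chain $B_1\supsetneq B_2\supsetneq\cdots$ and ascending chain $C_1\subsetneq C_2\subsetneq\cdots$ in $\CCC(\A)$.

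The bookkeeping — that the displayed sets are norm-closed unital $*$-subalgebras containing $1_\A$, and the elementary Urysohn/Tietze separations on the metric space $K$ — is routine; the one substantive point, and the expected main obstacle, is the first step, namely extracting from mere infinite-dimensionality a self-adjoint element with infinite spectrum. One could instead invoke Gelfand duality to write $\A\cong C(X)$ with $X$ an infinite compact Hausdorff space, but then one still has to produce strictly nested closed sets, and a general infinite compact Hausdorff space need not contain a nontrivial convergent sequence, so reducing to the spectrum of a single element is the more economical route.
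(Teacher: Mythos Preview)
Your argument is correct, apart from a harmless slip: the Urysohn function you build for the descending chain lies in $B_n\setminus B_{n+1}$, not $B_{n+1}\setminus B_n$ as written.

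The paper takes a different route. It applies Gel'fand duality to all of $\A$ at once, obtaining $\A\cong C(X)$ with $X$ an arbitrary infinite compact Hausdorff space. The descending chain is then built just as yours is, using a countable set of points and the fact that $C(X)$ separates points. For the ascending chain, however, the paper cannot appeal to a convergent sequence---as you correctly observe at the end, a general infinite compact Hausdorff space need not have one---so it instead invokes the separate Lemma~\ref{lem:Noetheriancharacterization} that an infinite Hausdorff space is never Noetherian, extracts a non-stabilising chain $O_1\subseteq O_2\subseteq\cdots$ of open sets, and passes to the closed complements $F_i=X\setminus O_i$. Your reduction to a single self-adjoint element with infinite spectrum trades that topological lemma for the Boolean-algebra argument in your first paragraph; the payoff is that $K\subseteq\R$ is metrisable, so Bolzano--Weierstrass hands you the convergent sequence directly and both chains become completely explicit in terms of it. The paper's approach is shorter if Lemma~\ref{lem:Noetheriancharacterization} is already available, while yours is more self-contained and sidesteps any subtlety about sequences in non-metrisable compacta.
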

\begin{proof}
By Gel'fand duality, there is a compact Hausdorff space $X$ such that $\A\cong C(X)$. Then $X$ cannot have a finite number of points, since if $X$ would have a finite number of points, say $n$, then the Hausdorff property implies that $X$ must be discrete, so $\A\cong C(X)\cong\C^n$.

We construct a descending chain in $\CCC(A)$ as follows. By the Axiom of Dependent Choice, we can find $\{x_1,x_2,x_3,\ldots\}\subseteq X$. Let $C_n=\{f\in C(X):f(x_1)=\ldots=f(x_n)\}$ for each $n\in\N$. Since $C(X)$ seperates points of $X$, we find that $C_1\supseteq C_2\supseteq C_3\supseteq\ldots$, but $C_n\neq C_m$ if $n\neq m$. Hence $\CCC(C(X))$ contains a descending chain that does not stabilize.

We construct an ascending chain in $\CCC(\A)$ as follows. First we notice that since $X$ is infinite and Hausdorff, Lemma \ref{lem:Noetheriancharacterization} implies that $X$ is not Noetherian. So there is an ascending chain $O_1\subseteq O_2\subseteq\ldots$ in of open subsets of $X$ that does not stabilize. For each $i\in\N$, let $F_i=X\setminus O_i$. Then $F_1\supseteq F_2\supseteq\ldots$ is a descending chain of closed subsets of $X$, which does not stabilize. For each $i\in I$ define
 $$C_i=\{f\in C(X):f\ \mathrm{is\ constant\ on\ }F_i\}.$$ Then $C_i$ is a C*-subalgebra of $C(X)$ and if $i\leq j$, we have $F_i\supseteq F_j$, so $C_i\subseteq C_j$. Moreover, if $i<j$ and $F_i\neq F_j$, then there is some $x\in F_i$ such that $x\notin F_j$. By Urysohn's Lemma, there is an $f\in C(X)$ such that $f(x)=0$ and $f(y)=1$ for each $y\in F_j$. Hence $f\in C_j$, but $f\notin C_i$. It follows that $C_1\subseteq C_2\subseteq\ldots$ is an ascending chain that does not stabilize, since the descending chain of the $F_i$ does not stabilize.

So $\CCC(C(X)$ contains an ascending chain and a descending chain, which both do not stabilize. Since $\A\cong C(X)$, it follows by Lemma \ref{lem:CAisinvariantforA} that $\CCC(\A)$ as well contains an ascending chain and a descending chain, which do not stabilize.
\end{proof}

\begin{proposition}\label{prop:ArtinianandNoetherianisfinitedimensional}
 Let $\A$ be a C*-algebra. Then $\A$ is finite dimensional if and only if $\CCC(\A)$ is Artinian if and only if $\CCC(\A)$ is Noetherian.
\end{proposition}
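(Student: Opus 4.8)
The plan is to establish the three-way equivalence by proving two simultaneous implications and their contrapositives, leaning on the two structural lemmas above (Lemmas~\ref{lem:finitemaximalimpliesfinitedimensional} and~\ref{lem:infinitedimensionalcommutativealgebra}) together with the chain-condition characterisations of Lemmas~\ref{lem:equivalentdefinitionsArtinian} and~\ref{lem:equivalentdefinitionsNoetherian}.

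First I would handle ``$\A$ finite dimensional implies $\CCC(\A)$ is both Artinian and Noetherian''. If $\dim\A = n < \infty$, then each $C \in \CCC(\A)$ is a linear subspace of $\A$ with $1 \leq \dim C \leq n$, the lower bound holding because $1_\A \in C$. A strictly ascending chain in $\CCC(\A)$ therefore has strictly increasing dimensions and so has at most $n$ members and stabilises; a strictly descending chain has strictly decreasing dimensions bounded below by $1$ and likewise stabilises. By Lemmas~\ref{lem:equivalentdefinitionsNoetherian} and~\ref{lem:equivalentdefinitionsArtinian}, $\CCC(\A)$ is then Noetherian and Artinian.

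For the remaining implications I would argue by contraposition, showing that if $\A$ is infinite dimensional then $\CCC(\A)$ is neither Artinian nor Noetherian. By Lemma~\ref{lem:finitemaximalimpliesfinitedimensional} the poset $\CCC(\A)$ has a maximal element $M$, and since $\A$ is not finite dimensional, that lemma also forces $M$ to be infinite dimensional; note $M$ is a unital (with unit $1_\A$) commutative C*-algebra. Any $C \in \CCC(M)$, being a commutative C*-subalgebra of $M$ containing $1_M = 1_\A$, is also a commutative C*-subalgebra of $\A$ containing $1_\A$, so $\CCC(M)$ embeds into $\CCC(\A)$ as a sub-poset for the inclusion order, and a chain in $\CCC(M)$ that fails to stabilise fails to stabilise in $\CCC(\A)$ as well. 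Applying Lemma~\ref{lem:infinitedimensionalcommutativealgebra} to $M$ produces a non-terminating descending chain and a non-terminating ascending chain inside $\CCC(M) \subseteq \CCC(\A)$, which violate the descending and ascending chain conditions in $\CCC(\A)$; by Lemmas~\ref{lem:equivalentdefinitionsArtinian} and~\ref{lem:equivalentdefinitionsNoetherian} this means $\CCC(\A)$ is neither Artinian nor Noetherian, completing all required implications.

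I do not expect a serious obstacle: the substantive content is entirely inside the two lemmas already proved, and what remains is bookkeeping with chain conditions. The only points needing a word of care are that the inclusion $\CCC(M) \hookrightarrow \CCC(\A)$ preserves order (immediate) and that non-stabilisation of a chain is unaffected by passing to the larger poset (also immediate, since it is a statement about equalities of subalgebras). One could alternatively avoid the contrapositive and phrase everything as a cycle $\text{finite dim} \Rightarrow \text{Artinian} \Rightarrow \text{finite dim}$ and $\text{finite dim} \Rightarrow \text{Noetherian} \Rightarrow \text{finite dim}$, but the contrapositive route above is the most economical.
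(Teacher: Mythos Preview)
Your proposal is correct and follows essentially the same approach as the paper: a dimension-counting argument for the forward direction, and for the converse the contrapositive via a maximal commutative C*-subalgebra $M$ (from Lemma~\ref{lem:finitemaximalimpliesfinitedimensional}), to which Lemma~\ref{lem:infinitedimensionalcommutativealgebra} supplies non-stabilising chains that transfer to $\CCC(\A)$ via the inclusion $\CCC(M)\hookrightarrow\CCC(\A)$. The only cosmetic difference is that the paper invokes Lemma~\ref{lem:CAisinvariantforA} for this last inclusion whereas you verify it by hand.
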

\begin{proof}
 Assume that $\A$ is finite dimensional with dimension equal to $n$. Then each C*-subalgebra of $\A$ must be finite dimensional as well. Assume that $C_1\supseteq C_2\supseteq C_3\supseteq\cdots $ is a chain in $\CCC(\A)$ that does not stabilize. Then the dimension of each $C_n$ must be finite and $\dim(C_1)\geq\dim(C_2)\geq\dim(C_3)\geq\ldots$ must be a chain in $\N$, which does not stabilize. However, since $\N$ is Artinian, this is not possible, so $\CCC(\A)$ must be Artinian as well.

Since $\A$ has finite dimension $n$, the dimension of all its subalgebras is bounded by $n$. Assume that $C_1\subseteq C_2\subseteq C_3\subseteq\ldots$ is an ascending chain in $\CCC(\A)$. Then $\dim(C_1)\leq\dim(C_2)\leq\ldots$ must be an ascending chain in $\N$ bounded by $n$. So there must be a $k\leq n$ such that $\dim(C_i)=k$ for each $i\geq m$ for some $m\in\N$. From dimensional reasons it follows that $C_i=C_{i+1}$ for each $i\geq m$. We conclude that the chain stabilizes, so $\CCC(\A)$ must be Noetherian.

Assume that $\CCC(\A)$ is Noetherian or Artinian and that $\A$ has infinite dimension. By Lemma \ref{lem:finitemaximalimpliesfinitedimension}, $\A$ contains a maximal commutative C*-subalgebra $\M$, and we repeat that we do not have to use Zorn's Lemma if $\CCC(\A)$ is Noetherian. By the same lemma, $\M$ cannot have finite dimension. By Lemma \ref{lem:infinitedimensionalcommutativealgebra}, it follows that $\CCC(\M)$ contains a descending chain and an ascending chain that both do not stabilize. Since $\M\subseteq\A$, Lemma \ref{lem:CAisinvariantforA} assures that $\CCC(\M)\subseteq\CCC(\A)$. Hence $\CCC(\A)$ contains a descending chain and an ascending chain, which both do not stabilize. However, if $\CCC(\A)$ is Noetherian, the existence of the ascending chain yields a contradiction, whereas if $\CCC(\A)$ is Artinian, the existence of the descending chain yields a contradiction. So if $\CCC(\A)$ is either Noetherian or Artinian, $\A$ must be finite dimensional.
\end{proof}
Even in the case of $\CCC(\A)$ is Noetherian, we still have to use a weaker version of the Axiom of Choice, namely the Axiom of Dependent Choice. This is because we used different characterizations of Noetherian posets (see Lemma \ref{lem:equivalentdefinitionsNoetherian}), and in order the show that these characterizations are equivalent, the Axiom of Dependent Choice has to be used.

\section{Grothendieck topologies and Locale Theory}
In this section we aim to describe Grothendieck topologies on a small poset category $\P$ completely in terms of down-sets of $\P$. The set $\D(\P)$ of down-sets of $\P$ is a locale (see the Appendix), where the meets are intersections, and the joins are unions. Later on, we will describe Grothendieck topologies completely in terms of the locale $\D(\P)$. We begin by reviewing the localic concepts in the case of the locale $\D(\P)$, which will turn out to be equivalent to the notion of a Grothendieck topology. These results were first presented in \cite[Proposition III.4.2]{J&T}. We also refer to \cite[Chapter IX]{M&M} and \cite{EGP} for other sources of the material in this section.

\begin{warning}
Since the objects of the category of locales are frames (see Appendix \ref{Order Theory}), we will sometimes refer to locales as frames.
\end{warning}

\begin{definition}
 Let $\P$ be a poset. Then the collection $\D(\P)$ of down-sets of $\P$ is a topology on $\P$, called the \emph{Alexandrov topology}\index{Alexandrov topology} on $\P$.
\end{definition}
Since $\D(\P)$ is closed under arbitrary intersections, the closed sets, which are exactly the up-sets of $\P$, are closed under arbitrary unions. In other words, the up-sets also form a topology, which is sometimes called the upper Alexandrov topology, whereas one sometimes refers to the topology defined in the lemma as the lower Alexandrov topology. Since the collection of open subsets of a topological space is an example of a locale, it follows that $\D(\P)$ is indeed a locale.

A nice property of the Alexandrov topology is that order-theoretic notions can be translated into topological ones.
\begin{lemma}\label{lem:ordermorphismiscontinuity}
 Let $\P_1$ and $\P_2$ posets and $f:\P_1\to\P_2$ a function. Then $f$ is an order morphism if and only if $f$ is continuous with respect to the Alexandrov topology on both $\P_1$ and $\P_2$.
\end{lemma}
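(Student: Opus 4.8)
The statement to prove is Lemma~\ref{lem:ordermorphismiscontinuity}: a function $f:\P_1\to\P_2$ between posets is an order morphism if and only if it is continuous with respect to the Alexandrov (lower Alexandrov) topologies on both sides. The plan is to unwind the definition of continuity for this particular topology and reduce it to the single-element case, exploiting the fact that the open sets here are exactly the down-sets and that $\D(\P)$ is generated by the principal down-sets $\down x$.

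First I would prove the forward direction. Assume $f$ is an order morphism and let $U\in\D(\P_2)$ be open (a down-set). I claim $f^{-1}[U]$ is a down-set of $\P_1$, hence open. Indeed, if $x\in f^{-1}[U]$ and $y\leq x$, then $f(y)\leq f(x)$ since $f$ is order-preserving, and since $U$ is a down-set and $f(x)\in U$, we get $f(y)\in U$, so $y\in f^{-1}[U]$. Thus preimages of opens are open, i.e.\ $f$ is continuous.

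For the converse, assume $f$ is continuous. Let $x,y\in\P_1$ with $y\leq x$; I must show $f(y)\leq f(x)$. The key observation is that $\down f(x)$ is a down-set of $\P_2$, hence open, so $f^{-1}[\down f(x)]$ is open in $\P_1$, i.e.\ a down-set. Since $f(x)\leq f(x)$, we have $x\in f^{-1}[\down f(x)]$, and because this set is a down-set and $y\leq x$, also $y\in f^{-1}[\down f(x)]$. That means $f(y)\in\down f(x)$, i.e.\ $f(y)\leq f(x)$. Hence $f$ is an order morphism.

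I do not anticipate a genuine obstacle here; the only point requiring a little care is making explicit that for the Alexandrov topology ``open'' means precisely ``down-set'' (which is the definition just given in the excerpt), and that each principal down-set $\down f(x)$ is itself such an open set — this is what lets the converse direction test order-preservation pointwise. A slightly slicker alternative for the converse would be to note that $\down p = f^{-1}[\down f(p)]$ need not hold, so one should test membership rather than equality of sets, exactly as above; I would phrase it in terms of membership to avoid that trap.
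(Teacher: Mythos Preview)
Your proof is correct and is exactly the standard argument one would expect. The paper actually states this lemma without proof, so there is nothing to compare against; your write-up would serve perfectly well as the omitted proof.
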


\begin{lemma}
 Let $\P$ be a poset equipped with the Alexandrov topology. Then the closed sets are exactly the up-sets $\U(\P)$. Moreover, the closure of a subset $X\subseteq\P$ is exactly $\up X$.
\end{lemma}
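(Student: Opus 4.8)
The plan is to reduce everything to the elementary fact that the complement of a down-set is an up-set and vice versa, and then invoke the (dual of the) characterisation of $\up X$ as the smallest up-set containing $X$ from Lemma~\ref{cosievegen}.

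First I would show that for $M\subseteq\P$, the set $M$ is an up-set if and only if its complement $\P\setminus M$ is a down-set. For the forward direction, suppose $M\in\U(\P)$, take $x\in\P\setminus M$ and $y\leq x$; if $y\in M$ then $x\geq y$ together with $M$ being an up-set forces $x\in M$, a contradiction, so $y\in\P\setminus M$ and $\P\setminus M\in\D(\P)$. The converse is identical with the roles of "up" and "down" exchanged (equivalently, apply the forward direction in $\P^{\op}$). Since by definition of the Alexandrov topology the open sets of $\P$ are exactly the down-sets $\D(\P)$, the closed sets are exactly the complements of down-sets, which by the equivalence just proved are exactly the up-sets $\U(\P)$. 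This establishes the first assertion.

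For the second assertion, recall that the closure $\widebar{X}$ of $X\subseteq\P$ is the smallest closed set containing $X$, i.e.\ the intersection of all closed sets containing $X$; by the first part this is the smallest up-set containing $X$. Dualising Lemma~\ref{cosievegen} (apply it in $\P^{\op}$), $\up X$ is precisely the smallest up-set containing $X$: it is an up-set by definition, it contains $X$, and any up-set $M$ with $X\subseteq M$ satisfies $\up X\subseteq M$ because for $x\in\up X$ there is $m\in X\subseteq M$ with $m\leq x$, whence $x\in M$. Therefore $\widebar{X}=\up X$.

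There is no genuine obstacle here: the only point worth flagging is that "the intersection of all up-sets containing $X$ is again an up-set" is used implicitly, which holds because $\U(\P)$ is closed under arbitrary intersections — equivalently, $\D(\P)$ is closed under arbitrary unions, a fact already noted after the definition of the Alexandrov topology. If one prefers to avoid even this, one can bypass closures of arbitrary intersections entirely by verifying directly that $\up X$ is closed (it is an up-set), contains $X$, and is contained in every closed set containing $X$, which is exactly the argument in the previous paragraph.
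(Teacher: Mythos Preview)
Your proof is correct and follows essentially the same approach as the paper's own proof: both establish the complement relationship between up-sets and down-sets to identify closed sets with $\U(\P)$, and then verify directly that $\up X$ is an up-set containing $X$ that is contained in every up-set containing $X$. Your additional remark about arbitrary intersections of up-sets being up-sets is a nice point of care, though as you note it can be bypassed by the direct verification you (and the paper) already give.
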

\begin{proof}
 We observe that a subset $A$ is a down-set if and only if $A^c$ is an up-set. Let $A$ be a down-set and let $x\in A^c$. If $x\leq y$, we cannot have $y\in A$, since this would imply that $x\in A$, for $A$ is downwards closed. So $y\in A^c$, proving that $A^c$ an up-set. One shows in a similar way that $A$ must be a down-set if $A^c$ is an up-set. Now, let $X\subseteq\P$ be a subset. Let $C$ be a closed set containing $X$, so $C$ is upward closed. Let $y\in\up X$. Then there is an $x\in X$ such that $x\leq y$. Since $x\in C$, and $C$ upward closed, we find that $y\in C$. So $\up X\subseteq C$ for each closed set containing $X$. Clearly $\up X$ is upward closed and contains $X$, so $\up X$ is the smallest closed subset containing $X$, so it is the closure of $X$.
\end{proof}
Also, the property of a poset being Artinian can be described in topological terms. First we have to introduce more topological notions.

\begin{definition}
 Let $X$ be a topological space. Then a non-empty closed subset $F$ of $X$ is called \emph{irreducible}\index{irreducible closed subset} if for each closed sets $F_1$ and $F_2$ such that $F=F_1\cup F_2$, we have $F_1=F$ or $F_2=F$. Then $X$ is called \emph{sober}\index{sober} when each irreducible closed subset is the closure of a unique point of $X$.
\end{definition}
Sobriety turns out to be a property lying between the $T_0$ and Hausdorff separation axioms: each sober space is $T_0$, whereas each Hausdorff space $X$ is sober\cite[Theorem IV.3.3]{M&M}. 

\begin{proposition}\label{prop:equivalencesoberartinian}
Let $\P$ be a poset equipped with the Alexandrov topology. Then $\P$ is Artinian if and only if $\P$ is sober.
\end{proposition}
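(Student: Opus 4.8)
The plan is to translate sobriety into a purely order-theoretic statement about filters and then invoke Lemma~\ref{lem:equivalentdefinitionsArtinian}. First I would record that the Alexandrov topology is $T_0$: if $\up x=\up y$ then $x\le y$ and $y\le x$, so $x=y$; hence a point is determined by its closure $\up x$, and the uniqueness clause in the definition of sobriety is automatic. Consequently $\P$ is sober if and only if every irreducible closed subset of $\P$ equals $\up x$ for some $x\in\P$.

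The key step is to identify the irreducible closed subsets. By the previous lemma the closed sets are exactly the up-sets, and I claim that a non-empty up-set $F$ is irreducible precisely when it is downwards directed, i.e.\ precisely when $F$ is a filter. For the ``if'' direction, suppose $F$ is downwards directed and $F=F_1\cup F_2$ with $F_1,F_2$ up-sets that are both proper; pick $x\in F\setminus F_1$ and $y\in F\setminus F_2$ and a lower bound $z\in F$ of $x,y$. Then $z\in F_1$ would force $x\in F_1$ and $z\in F_2$ would force $y\in F_2$, since each $F_i$ is an up-set, a contradiction, so $F$ is irreducible. For the ``only if'' direction I argue contrapositively: if $F$ is not downwards directed, choose $x,y\in F$ having no common lower bound inside $F$, and set $F_1=\{w\in F: w\nleq x\}$ and $F_2=\{w\in F: w\nleq y\}$. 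Both are up-sets (if $w\nleq x$ and $w\le w'$ then $w'\nleq x$), both are proper ($x\notin F_1$, $y\notin F_2$), and $F_1\cup F_2=F$ because any $w\in F$ with $w\le x$ and $w\le y$ would be a common lower bound of $x,y$ in $F$; hence $F$ is reducible. Since $\up x$ is always downwards directed with least element $x$, it is in particular a filter, so the discussion above shows: $\P$ is sober if and only if every filter of $\P$ is principal.

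It then remains to prove that $\P$ is Artinian if and only if every filter is principal. If $\P$ is Artinian, a filter $F$ is a non-empty downwards directed set, hence has a least element $x$ by Lemma~\ref{lem:equivalentdefinitionsArtinian}; since $F$ is an up-set containing $x$ we get $\up x\subseteq F$, and $x\le y$ for all $y\in F$ gives $F\subseteq\up x$, so $F=\up x$. Conversely, suppose every filter is principal and let $D$ be a non-empty downwards directed subset of $\P$. Then $\up D$ is an up-set, and it is downwards directed: given $y_1,y_2\in\up D$ there are $d_1,d_2\in D$ with $d_i\le y_i$, and a lower bound $d\in D$ of $d_1,d_2$ lies in $\up D$ and satisfies $d\le y_1$ and $d\le y_2$. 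So $\up D$ is a filter, hence $\up D=\up x$ for some $x$. From $x\in\up D$ we obtain $d\le x$ for some $d\in D$, and from $D\subseteq\up D=\up x$ we get $x\le d'$ for all $d'\in D$; combining, $d\le d'$ for every $d'\in D$, so $d$ is the least element of $D$. By Lemma~\ref{lem:equivalentdefinitionsArtinian} this makes $\P$ Artinian. The main obstacle is the characterization of irreducible closed sets in the middle paragraph — in particular constructing the explicit decomposition $F=F_1\cup F_2$ when $F$ fails to be downwards directed; the remaining implications are direct applications of the equivalent forms of the Artinian condition already established.
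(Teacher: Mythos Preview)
Your argument is correct. It differs from the paper's proof in that you factor through an explicit characterisation ``irreducible closed set $=$ filter'' and then show ``every filter is principal $\Leftrightarrow$ Artinian'', whereas the paper proves each implication directly: for Artinian $\Rightarrow$ sober it picks a minimal element $p$ of $F$ and exhibits the decomposition $F=\up p\cup\big(F\cap(\down p)^c\big)$ to force $F=\up p$, and for sober $\Rightarrow$ Artinian it shows, for a given downwards directed $D$, that $\up D$ is irreducible by essentially the same contradiction you use in your ``if'' direction. So the paper implicitly uses one half of your characterisation but never needs (or states) the converse half that irreducible implies downwards directed. Your route buys a clean intermediate statement (sobriety $\Leftrightarrow$ all filters are principal) at the cost of proving that extra converse; the paper's route is slightly leaner in the forward direction, going straight from a minimal element to $F=\up p$ without passing through directedness.
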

\begin{proof}
Let $\P$ be Artinian and let $F\subseteq\P$ be an irreducible closed set. Since $F$ is non-empty and $\P$ is Artinian, it should contain a minimal element $p$, so $\up p\subseteq F$. Assume that $F\neq\up p$. Then $F\setminus\up p\neq\emptyset$, so assume $q\in F\setminus\up p$. We cannot have $q=p$, but if $q<p$, we obtain another contradiction since $p\in\min(F)$. Thus $q\in(\down p)^c$. We find that $F=\up p\cup (F\cap(\down p)^c)$, which contradicts the irreducibility of $F$. So we must have $F=\up p$. Since the assignment $\P\to\U(\P)$ given by $p\mapsto\up p$ is clearly injective, it follows that $p$ is unique.

Now assume that $\P$ is sober, so each irreducible closed subset equals $\up p$ for some $p\in\P$. Let $D\subseteq\P$ be non-empty and downwards directed and consider $F=\up D$. Clearly $F$ is closed, but it is also irreducible. Indeed, let $F=F_1\cup F_2$ with $F_1,F_2$ closed. If $F_1\neq F$ and $F_2\neq F$, then $F_1$ and $F_2$ cannot both be empty, and there must be $p_1,p_2\in F$ such that $p_1\notin F_1$ and $p_2\notin F_2$. Since $F=\up D$, there are $d_1,d_2\in D$ such that $p_i\geq d_i$ ($i=1,2$). Since $D$ is downwards directed there is a $d\in D$ such that $d\leq d_1,d_2$. Now, $D\subseteq F_1\cap F_2$, hence $d\in F_i$ for either $i=1$ or $i=2$. Without loss of generality, assume that $d\in F_1$. Since $F_1$ is closed, it is an up-set, so $d\in F_1$ and $d\leq d_1\leq p_1$ implies $p_1\in F_1$, which is a contradiction. Thus $F$ must be irreducible. Since $\P$ is sober, it follows that $F=\up p$ for some $p\in\P$, and since $F=\up D$, we must have that $p$ is the least element of $D$. So $\P$ is Artinian.
\end{proof}

The connection between Grothendieck topologies on a poset $\P$ and properties of the locale $\D(\P)$ is provided by the concept of a \emph{Lawvere-Tierney topology}\index{Lawvere-Tierney topology} on a topos. In case of the topos $\Sets^{\CC^\op}$, where $\CC$ is an arbitrary category, the notion of a Lawvere-Tierney topology turns out to be equivalent to that of a Grothendieck topology on $\CC$. For a definition, we refer to we refer to \cite[Chapter V.1]{M&M}, since it requires some notions from topos theory, which are not necessary for what follows. However, the definition resembles the definition of a nucleus on a locale $L$, which concept turns out to be interesting for us.

\begin{definition}
A {\it nucleus}\index{nucleus} on a locale (or more general, a meet-semilattice) $L$ is a
function $j : L \to L$ that satisfies, for all $a, b \in L$:
 \begin{enumerate}
  \item[(i)] $a \leq j(a)$;
 \item[(ii)] $j\circ j(a) = j(a)$
\item[(iii)]  $j(a \wedge b) = j(a) \wedge j(b)$.
 \end{enumerate}
If $j$ also satisfies the stronger condition
\begin{enumerate}
\item[(iii')]  $j\left(\bigwedge_{i\in I}a_i\right) = \bigwedge_{i\in I}j(a_i)$,
\end{enumerate}
for each family $\{a_i\}_{i\in I}\subseteq L$ with $I$ an index set, then we say that $j$ is \emph{complete}\index{complete nucleus}.
The set of all nuclei on a locale $L$ is denoted by $\Nuc(L)$, which can be ordered as follows. If $j,k\in\Nuc(L)$, then we define $j\leq k$ if $j(a)\leq k(a)$ for each $a\in L$.
\end{definition}
Notice that for each $j,k\in\Nuc(L)$ we have
\begin{equation}\label{eq:orderingNuc}
 j\leq k\implies k=k\circ j.
\end{equation}
Indeed, if $j\leq k$, then $$a\leq j(a)\leq k(a)$$ for each $a\in L$, so $$k(a)\leq k\circ j(a)\leq k\circ k(a),$$ whence $k(a)=k\circ j(a)$.

Since $j$ preverves meets, it is an order morphism.
\begin{lemma}\label{lem:jmonotone}
Let $j$ be a nucleus on a locale $L$. Then $a\leq b$ implies $j(a)\leq j(b)$ for each $a,b\in L$.
\end{lemma}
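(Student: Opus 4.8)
The plan is to reduce the monotonicity of $j$ to its meet-preservation property (iii), exactly as one does for any meet-preserving map on a meet-semilattice. The key observation, already recorded in the remark following the definition of frames in Section~\ref{Order Theory}, is that in a meet-semilattice $a \leq b$ holds if and only if $a \wedge b = a$. So the hypothesis $a \leq b$ is equivalent to the identity $a \wedge b = a$.

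Given that, I would simply compute: applying $j$ to both sides of $a = a \wedge b$ and using property (iii) of a nucleus yields
\begin{equation*}
j(a) = j(a \wedge b) = j(a) \wedge j(b).
\end{equation*}
By the same meet-semilattice characterization of the order, the equality $j(a) = j(a) \wedge j(b)$ says precisely that $j(a) \leq j(b)$, which is the claim. (Note this argument uses only axiom (iii), not (i) or (ii), so it applies verbatim to any meet-semilattice homomorphism.) There is no real obstacle here; the only thing to be careful about is invoking the correct direction of the equivalence $x \leq y \iff x \wedge y = x$ at both ends of the computation, and making sure $L$ is at least a meet-semilattice so that the binary meet $a\wedge b$ exists — which it is, by the standing hypothesis that $L$ is a locale.
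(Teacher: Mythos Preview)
Your argument is correct and matches the paper's own justification: the line immediately preceding the lemma states ``Since $j$ preserves meets, it is an order morphism,'' which is exactly the meet-semilattice argument you spell out. The paper offers no further proof, so your write-up is a faithful expansion of the intended reasoning.
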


Just as for each category $\CC$ a Lawvere-Tierney topology on $\Sets^{\CC^\op}$ corresponds bijectively to a Grothendieck topology on $\CC$, Grothendieck topologies on a poset $\P$ are equivalent to nuclei on $\D(\P)$. More general, Lawvere-Tierney topologies on arbitrary toposes are actually nuclei on the subobject classifier $\Omega$ of the topos regarded as frame \cite[p. 481]{Elephant2}. The next proposition is the posetal version of \cite[Theorem V.4.1]{M&M}.

\begin{proposition}\label{prop:GT&Nuc}
 Let $\P$ be a poset and $J$ a Grothendieck topology on $\P$.  Define the function $j_J :
\D(\P) \to \D(\P)$ by
\begin{equation}
j_J(U) := \{p\in \P  : U \cap \down p \in J(p)\}.
\end{equation}
Then the map $J\mapsto j_J$ defines an order isomorphism $\G(\P)\to\Nuc(\D(\P))$, where the inverse $j\mapsto J_j$ is defined pointwise for each $p\in\P$ by
\begin{equation}
J_j(p) := \{S \in \D(\down p): p \in j(S)\}.
\end{equation}
\end{proposition}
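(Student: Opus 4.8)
The plan is to verify directly that the two assignments $J \mapsto j_J$ and $j \mapsto J_j$ are well-defined, monotone, and mutually inverse, splitting the work into four pieces. First I would check that $j_J$ is a nucleus whenever $J$ is a Grothendieck topology. That $j_J(U)$ is a down-set follows from the stability axiom: if $p \in j_J(U)$ and $q \le p$, then $U \cap \down p \in J(p)$, so $(U \cap \down p) \cap \down q = U \cap \down q \in J(q)$, i.e. $q \in j_J(U)$. Property (i), $U \subseteq j_J(U)$, holds because $p \in U$ forces $U \cap \down p = \down p \in J(p)$ by the maximal-sieve axiom. Property (iii), $j_J(U \cap V) = j_J(U) \cap j_J(V)$, follows from Lemma~\ref{lem:filter}: the inclusion $\subseteq$ is monotonicity, and for $\supseteq$, if $U \cap \down p \in J(p)$ and $V \cap \down p \in J(p)$, then their intersection $U \cap V \cap \down p$ lies in $J(p)$ since $J(p)$ is closed under finite intersections. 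Property (ii), idempotence $j_J(j_J(U)) = j_J(U)$, is the one place the transitivity axiom is needed: by (i) it suffices to show $j_J(j_J(U)) \subseteq j_J(U)$; given $p$ with $j_J(U) \cap \down p \in J(p)$, for each $q$ in this sieve we have $U \cap \down q \in J(q)$, which one rewrites as $(U \cap \down p) \cap \down q \in J(q)$, so transitivity gives $U \cap \down p \in J(p)$, i.e. $p \in j_J(U)$.

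Second, I would check that $J_j$ is a Grothendieck topology whenever $j$ is a nucleus, which is the dual computation. Each $J_j(p)$ consists of sieves on $p$ by construction. The maximal sieve $\down p$ lies in $J_j(p)$ because $p \in \down p \subseteq j(\down p)$ by axiom (i) for $j$. Stability: if $S \in J_j(p)$, so $p \in j(S)$, and $q \le p$, then using Lemma~\ref{lem:jmonotone} and axiom (iii), one computes $j(S \cap \down q) = j(S) \wedge j(\down q) \ni q$ (since $q \in j(S)$ as $j(S)$ is a down-set containing $p$, and $q \in \down q \subseteq j(\down q)$), hence $S \cap \down q \in J_j(q)$. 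Transitivity: if $S \in J_j(p)$ and $R$ is a sieve on $p$ with $R \cap \down q \in J_j(q)$, i.e. $q \in j(R \cap \down q) \subseteq j(R)$, for every $q \in S$, then $S \subseteq j(R)$; combined with $\down p \cap S \subseteq S$ this gives $j(S) \subseteq j(j(R)) = j(R)$ using monotonicity and idempotence, and since $p \in j(S)$ we get $p \in j(R)$, so $R \in J_j(p)$.

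Third, I would verify the two round trips. For $J_{j_J} = J$: unwinding definitions, $S \in J_{j_J}(p)$ iff $p \in j_J(S)$ iff $S \cap \down p \in J(p)$, and since $S$ is already a sieve on $p$ we have $S \cap \down p = S$, so this says $S \in J(p)$. For $j_{J_j} = j$: for a down-set $U$, $p \in j_{J_j}(U)$ iff $U \cap \down p \in J_j(p)$ iff $p \in j(U \cap \down p)$; using $j(U \cap \down p) = j(U) \wedge j(\down p)$ and the fact that $p \in j(\down p)$ always, this is equivalent to $p \in j(U)$. Both identities are thus short once the defining formulas are in hand. Finally, monotonicity in both directions is essentially immediate: $J \le K$ means $J(p) \subseteq K(p)$ for all $p$, which directly gives $j_J(U) \subseteq j_K(U)$; and $j \le k$ gives $J_j(p) \subseteq J_k(p)$ since $p \in j(S)$ implies $p \in k(S)$. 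A monotone bijection between posets whose inverse is also monotone is an order isomorphism, completing the argument.

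The main obstacle, and the step deserving the most care, is the idempotence/transitivity interplay — showing $j_J \circ j_J = j_J$ from the transitivity axiom, and dually deriving the transitivity axiom for $J_j$ from idempotence of $j$. These are the only non-formal steps; everything else is bookkeeping with down-sets and the maximal-sieve and stability axioms. I would also take a moment to make explicit the repeatedly-used identity $S \cap \down p = S$ for $S$ a sieve on $p$, and the identity $j(\down p) \ni p$, since both are invoked silently at several points.
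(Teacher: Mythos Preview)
Your proposal is correct and follows essentially the same approach as the paper's proof: both verify that $j_J$ is a nucleus (down-set via stability, inflationarity via the maximal-sieve axiom, meet-preservation via Lemma~\ref{lem:filter}, idempotence via transitivity), that $J_j$ is a Grothendieck topology (with transitivity derived from $S \subseteq j(R)$ hence $j(S) \subseteq j(j(R)) = j(R)$), and that the two round trips collapse using $S \cap \down p = S$ and $j(U \cap \down p) = j(U) \cap j(\down p)$ together with $p \in j(\down p)$. The only cosmetic difference is that the paper interleaves the monotonicity checks with the well-definedness checks, whereas you defer them to the end.
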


\begin{proof}
Let $J$ be a Grothendieck topology on $\P$. We shall show that $j_J$ is a nucleus on $\D(\P)$. It follows from the stability property of the Grothendieck topology
that $j_J(U)$ is a down-set. If $p\in U$, then $\down p\subseteq
U$, so $\down p\cap U=\down p\in J(p)$, whence $p\in j_J(U)$. So
property (i) of a nucleus holds. It directly follows that
$j_J(U)\subseteq j_J\circ j_J(U)$. To show the other inclusion, let $p\in
j_J\circ j_J(U)$, so $j_J(U)\cap\down p\in J(p)$. Now, since
\begin{eqnarray*}
 j_J(U)\cap\down p\ & = & \{q\leq p: q\in j_J(U)\}\\
 & = & \{q\leq p:U\cap\down q\in J(q)\}\\
 & = & \{q\leq p:(U\cap\down p)\cap\down q\in J(q)\},
\end{eqnarray*}
we find by the transitivity property of Grothendieck topologies
(take $S=j_J(U)\cap \down p$ and $R=U\cap\down p$) that $U\cap\down
p\in J(p)$, so $p\in j_J(U)$. Hence $j_J$ satisfies property (ii) of a
nucleus. It follows from Lemma \ref{lem:filter} that $j_J$ satisfies
property (iii) of a nucleus. Indeed, since $J(p)$ is closed under
finite intersections, $U\cap\down p\in J(p)$ and $V\cap\down
p\in J(p)$ imply $U\cap V\cap\down p\in J(p)$, and conversely, since
$U\cap\down p$ and $V\cap\down p$ clearly contain $U\cap V\cap\down
p$, it follows that $U\cap V\cap\down p\in J(p)$ implies that
$U\cap\down p\in J(p)$ and $V\cap\down p\in J(p)$. So we have $p\in
j_J(U)\cap j_J(V)$ if and only if $p\in j_J(U\cap V)$. The assignment $J\mapsto j_J$ is an order morphism: let $K$ be another Grothendieck topology on $\P$ such that $J\leq K$. Thus $J(p)\subseteq K(p)$ for each $p\in\P$. Then for each $A\in\D(\P)$ we find
$$j_J(A)=\{p\in P:A\cap\down p\in J(p)\}\subseteq\{p\in P:A\cap\down p\in K(p)\}=j_K(A),$$
so $j_J\leq j_K$.

Now, let $j$ be a nucleus on $\D(\P)$. We check that $J_j$ is a Grothendieck topology. First note that $p \in
{\down}p \subseteq j({\down} p)$ by property (i) of a nucleus, so
${\down} p \in J_j(p)$. If $S \in J_j(p)$ and $q \leq p$, we have $p \in
j(S)$, so $q \in j(S)$, since $j(S)$ is a down-set. Since $\down q\subseteq j(\down q)$, we find $$q\in j(S)\cap j(\down q)=j(S\cap\down q),$$ so $S\cap\down q\in J_j(q)$, which shows that the stability axiom holds. For
transitivity, suppose that $S \in J_j(p)$, and $R \in \D({\down}p)$ is
such that for all $q \in S$, $R \cap \down q \in J_j(q)$. By
definition of $J_j$, we obtain $p \in j(S)$, and also find that for all $q \in
S$ we have $q \in j(R \cap {\down} q)$. So in particular $q \in
j(R)$, since $j$ is order-preserving (for $j$ preserves meets).
Hence $S \subseteq j(R)$, so $$p \in j(S) \subseteq j(j(R)) = j(R).$$
This shows that $R \in J_j(p)$. Hence, $J_j$ is a well-defined
Grothendieck topology on $\P$. The assignment $j\mapsto J_j$ is an order morphism. Let $k$ be another nucleus on $\D(\P)$ such that $j\leq k$. Thus $j(A)\subseteq k(A)$ for each $A\in\D(\P)$. Then for each $p\in\P$ we obtain
$$J_j(p)=\{S\in\D(\down p):p\in j(S)\}\subseteq\{S\in\D(\down p):p\in k(S)\}=J_k(p),$$
hence $J_j\leq J_k$.

Now, if $J$ is a Grothendieck topology on $\P$, let $J_{j_J}$ be the
Grothendieck topology associated to the nucleus $j_J$. Then, for $S \in \D({\down}p)$, we have $S \in J_{j_J}(p)$ if and
only if $p \in j_J(S)$ if and only if $S \cap {\down} p \in J(p)$ if
and only if $S \in J(p)$.

For $j$ a nucleus on $\D(\P)$, let $j_{J_j}$ the nucleus associated to
the Grothendieck topology $J_j$. Then $p \in
j_{J_j}(U)$ if and only if $U \cap {\down}p \in J_j(p)$ if and only if $$p
\in j(U \cap {\down}p) = j(U) \cap j({\down} p).$$ Since we always
have $p \in j({\down}p)$, this is in turn equivalent to $p \in
j(U)$.
\end{proof}

Under this bijection, the following nuclei on $\D(\P)$ turn out to be equivalent to the Grothendieck topologies in Example \ref{ex:examplesGT}.
\begin{example}
Let $\P$ be a poset. If, for $A\in\D(\P)$, we define
\begin{itemize}
 \item $j_{\mathrm{ind}}(A)=A$;
 \item $j_{\mathrm{dis}}(A)=\P$;
\item $j_{\mathrm{atom}}(A)= \left\{
       \begin{array}{ll}
         \emptyset, & A=\emptyset; \\
         \P, &  A\neq\emptyset,
       \end{array}
     \right.$
\end{itemize}
then $j_{\mathrm{ind}},j_{\mathrm{dis}}:\D(\P)\to\D(\P)$ are nuclei
on $\D(\P)$ and $j_{\mathrm{atom}}:\D(\P)\to\D(\P)$ is a nucleus on
$\D(\P)$ if $\P$ is downwards directed. If we consider the poset
$\P_3$ from the same example, we see that $j_{\mathrm{atom}}$ is not
a nucleus, since
\begin{equation*}
 j_{\mathrm{atom}}(\down y\cap\down z)=j_{\mathrm{atom}}(\emptyset)=\emptyset\neq\P=j_{\mathrm{atom}}(\down y)\cap j_{\mathrm{atom}}(\down z).
\end{equation*}
On the other hand, it is easy to see that $j_{\mathrm{atom}}(A\cap B)=j_{\mathrm{atom}}(A)\cap j_{\mathrm{atom}}(B)$ for each $A,B\in\D(\P)$ if $\P$ is downwards directed.
\end{example}

Lawvere-Tierney topologies are closely related to subtopoi (see \cite[Corollary VII.4.7]{M&M}). In the same way, nuclei on a locale turn out to correspond to sublocales of that locale. Since Grothendieck topologies on $\P$ and nuclei on $\D(\P)$ are equivalent notions, it might as well be interesting to explore these sublocales. Since $\mathbf{Loc}$, the category of locales, is dual to $\mathbf{Frm}$, the category of frames, sublocales correspond to frame quotients. Hence, the duality $\mathbf{Loc}^\op=\mathbf{Frm}$ yields another notion, equivalent to Grothendieck topologies on posets.

For a justification of the definition of sublocales, it is useful to work with frames rather than locales for the following reason. $\mathbf{Frm}$ is a category, which can be defined by a \emph{variety of algebras} \cite[p. 124]{Mac Lane}. Roughly speaking, this means that $\mathbf{Frm}$ is a category that can be described in terms of Universal Algebra, where quotients are quite easy to characterize, namely in terms of congruences, kernels and surjective morphisms. For a treatment of these concepts in the setting of Universal Algebra, we refer to \cite[Chapter II]{BS}. We shall see that the definition of sublocales that we will give below corresponds with these concepts.

There is also a categorical justification for using these concepts. A morphism $m:B\to A$ in a category is called a \emph{monomorphism} if $m\circ f=m\circ g$ implies $f=g$ for any two morphism $f,g:C\to B$. Dually, a morphism $e:A\to B$ is called an \emph{epimorphism} if $f\circ e=g\circ e$ implies $f=g$ for each $f,g:B\to C$. A monomorphism $m$ is called \emph{extremal} if $m=m'\circ e$ with $e$ an epimorphism implies that $e$ is an isomorphism. Dually, an epimorphism $e$ is called extremal if $e=m\circ e'$ with $m$ a monomorphism implies that $m$ is an isomorphism. We call a category \emph{balanced} if every morphism that is both a monomorphism and an epimorphism is an isomorphism. In balanced categories one usually describes subobjects and quotient objects in terms of monomorphisms and epimorphisms, respectively. It is easy to see that if either all monomorphisms or all epimorphisms are extremal, then the category is balanced.

One easily shows that if the category is concrete, i.e., a category equipped with a faithful functor to the category $\Sets$, a morphism that is injective regarded as function in $\Sets$ is automatically a monomorphism, and likewise all surjective morphisms are epimorphisms. Indeed, if $e:A\to B$ is a surjection, and $f,g:B\to C$ are morphisms such that $f\circ e=g\circ e$, then each $b\in B$ is equal to $e(a)$ for some $a\in A$. Hence $$f(b)=f\circ e(a)=g\circ e(a)=g(b),$$ and it follows that $f=g$.

 In $\mathbf{Frm}$, the converse is true: all monomorphisms are injective. However, only the extremal epimorphisms are surjective \cite[Chapter III.1]{PicPul}. Since $\mathbf{Frm}$ is a category, which can be described in terms of Universal Algebra, the isomorphism are exactly the morphisms that are both injective and surjective. Hence a morphism that is both a monomorphism and an epimorphism but not a surjection fails to be an isomorphism. Since there are actually epimorphism that are not extremal \cite[Example IV.6.1.1]{PicPul}, it follows that $\mathbf{Frm}$ is not balanced. For this reason it is natural to restrict to extremal frame epimorphisms (so surjective frame morphisms) in order to describe frame quotients.

The next definition makes use of the Heyting implication of a frame, which is denoted by $\to$, and whose definition can be found in the appendix.
\begin{definition}\label{def:sublocale}
 Let $L$ be a locale. A subset $M\subseteq L$ is called a \emph{sublocale}\index{sublocale} if it is closed under all meets and if $x\to m\in M$ for each $x\in L$ and each $m\in M$\footnote{If we had defined sublocales of a locale $L$ to be subsets that are closed under finite meets and arbitrary joins, we would have obtained the wrong definition, since these subsets describe subframes rather than sublocales.}. We denote the set of all sublocales a locale $L$ by $\Sub(L)$, which can be ordered by inclusion.
\end{definition}
Notice that $1\in M$ if $M$ is a sublocale of $L$, since $1$ is equal to the empty meet. Since $M$ inherits the meet operation from $L$, it also inherits the order from $L$. This follows from $x\leq y$ if and only if $x=x\wedge y$.

\begin{warning}
 We denote sublocales of $\D(\P)$ by the symbol $\M$ instead of the symbol $M$, since we already use capitals in order to denote elements of $\D(\P)$.
\end{warning}

\begin{lemma}
 Let $L$ be a locale and $M\subseteq L$ a sublocale of $L$. Then $M$ becomes a locale if we equip it with the meet operation inherited from $L$ and the join operation $\bigsqcup$ defined by
\begin{equation*}
 \bigsqcup_{i\in I}a_i=\bigwedge\{m\in M:a_i\leq m\ \forall i\in I\},
\end{equation*}
where $\{a_i\}_{i\in I}$ is a family of elements of $M$, with $I$ some index set.
\end{lemma}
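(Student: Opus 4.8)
The plan is to deduce the statement from two facts already in the paper: the dual half of Lemma~\ref{lem:completelattice} and the adjunction (\ref{eq:HeyImp}) characterising the Heyting implication. First I would note that, since $M$ is closed under all meets taken in $L$, for every $S\subseteq M$ the element $\bigwedge^{L}S$ lies in $M$ and is still its greatest lower bound there; hence the order on $M$ is just the restriction of the order on $L$, and $M$ has all meets. By the dual part of Lemma~\ref{lem:completelattice}, $M$ is therefore automatically a complete lattice whose join operation is exactly the $\bigsqcup$ described in the statement. In particular $M$ has all joins and all finite meets (the empty meet $1_{L}$ belongs to $M$), so the only thing still to verify is the infinite distributive law.

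To get distributivity I would introduce the \emph{closure nucleus} $c:L\to M$ defined by $c(a)=\bigwedge\{m\in M: a\leq m\}$; this is well defined because $M$ is closed under meets, and clearly $a\leq c(a)$ for all $a\in L$, while $c(a)=a$ whenever $a\in M$. Monotonicity of $c$ is immediate. The key step is to show $c(a\wedge b)=c(a)\wedge c(b)$. One inequality follows from monotonicity. For the other, put $n=c(a\wedge b)\in M$: from $a\wedge b\leq n$ and (\ref{eq:HeyImp}) we get $b\leq (a\to n)$, and $a\to n\in M$ because $M$ is a sublocale, hence $c(b)\leq a\to n$, i.e.\ $a\wedge c(b)\leq n$; applying (\ref{eq:HeyImp}) again gives $a\leq (c(b)\to n)$ with $c(b)\to n\in M$, so $c(a)\leq c(b)\to n$, which is $c(a)\wedge c(b)\leq n=c(a\wedge b)$. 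Thus $c$ preserves binary meets and is a nucleus.

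Next I would record that for any family $\{a_i\}_{i\in I}\subseteq M$ one has $\bigsqcup_{i\in I}a_i=c\big(\bigvee_{i\in I}a_i\big)$, where $\bigvee_{i\in I}a_i$ is the join computed in $L$, since $\{m\in M: a_i\leq m\ \forall i\}=\{m\in M:\bigvee_{i\in I}a_i\leq m\}$. Then, for $a\in M$ and $b_i\in M$ (so each $a\wedge b_i\in M$), using $c(a)=a$, the fact that $c$ preserves binary meets, and the infinite distributive law in the frame $L$,
\begin{equation*}
a\wedge\bigsqcup_{i\in I}b_i = c(a)\wedge c\Big(\bigvee_{i\in I}b_i\Big) = c\Big(a\wedge\bigvee_{i\in I}b_i\Big) = c\Big(\bigvee_{i\in I}(a\wedge b_i)\Big) = \bigsqcup_{i\in I}(a\wedge b_i),
\end{equation*}
which is the infinite distributive law in $M$; hence $M$ is a frame, i.e.\ a locale. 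The only genuinely nontrivial point is the Heyting-implication argument showing that $c$ preserves binary meets; everything else is a routine application of the results already established.
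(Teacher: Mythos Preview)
Your proof is correct, but it takes a somewhat different route from the paper. The paper verifies the infinite distributive law directly: one inequality is elementary, and for the other it uses a single application of the Heyting adjunction, observing that $a_k\wedge b\leq\bigsqcup_i(a_i\wedge b)$ is equivalent to $a_k\leq b\to\bigsqcup_i(a_i\wedge b)$, where the right-hand side lies in $M$ by the sublocale axiom, so one may pass to $\bigsqcup_i a_i$ on the left. Your approach instead introduces the closure operator $c$ and proves it is a nucleus (in particular, that it preserves binary meets) via a two-step Heyting argument, then reduces distributivity in $M$ to distributivity in $L$ through the identity $a\wedge\bigsqcup_i b_i=c(a\wedge\bigvee_i b_i)$. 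Your detour has the benefit of establishing that $c$ is a nucleus---a fact the paper proves independently later as part of Proposition~\ref{prop:nucleiHI} (where your $c$ is called $j_M$)---so your argument effectively anticipates and reuses that result. The paper's direct argument is marginally shorter for this lemma in isolation, but yours integrates more smoothly with the nucleus machinery developed afterwards.
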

\begin{proof}
Since $M$ is closed under arbitrary meets, the meet operation inherited from $L$ is well defined on $M$. By Lemma \ref{lem:completelattice}, the operation $\bigsqcup$ is the join on $M$. We will use the fact that $M$ is closed under the Heyting implication to show that the distributivity law holds. So let $\{a_i\}_{i\in I}\subseteq M$ and $b\in M$. First we observe that for each $i\in I$, $$\{m\in M:a_i\wedge b\leq m\}\supseteq\{m\in M:a_i\leq m\}.$$ Thus if we take the meet of both sets, we obtain
\begin{equation}\label{eq:inequality}
\bigsqcup_{i\in I}(a_i\wedge b)\leq\bigsqcup_{i\in I}a_i.
\end{equation}
Since $a_i\wedge b\leq b$ for each $i\in I$, we also find $\bigsqcup_{i\in I}(a_i\wedge b)\leq b$. Combining this inequality with (\ref{eq:inequality}) gives $$\bigsqcup_{i\in I}(a_i\wedge b)\leq\left(\bigsqcup_{i\in I}a_i\right)\wedge b.$$
For the inequality in the other direction, we first notice that for each $k\in I$ we have $a_k\wedge b\leq\bigsqcup_{i\in I}(a_i\wedge b)$. By definition of the Heyting implication, this is equivalent to $a_k\leq b\to\bigsqcup_{i\in I}(a_i\wedge b)$ for each $k\in I$. Therefore, we obtain $\bigsqcup_{i\in I}a_i\leq b\to\bigsqcup_{i\in I}(a_i\wedge b)$, which is equivalent to $$\left(\bigsqcup_{i\in I}a_i\right)\wedge b\leq\bigsqcup_{i\in I}(a_i\wedge b).$$ We conclude that the distributive law holds, so $M$ is a locale.
\end{proof}

The next proposition connects the notion of a sublocale of a locale $L$ with that of a nucleus on $L$. The proof may be found in \cite[Prop. III.5.3.2]{PicPul}.

\begin{proposition}\label{prop:nucleiHI}
 Let $L$ be a locale. For each nucleus $j$ on $L$, we define the subset $M_j$ of $L$ by
\begin{equation*}
 M_j  =  \{x\in L:j(x)=x\}.
\end{equation*}
Then $M_j=j[L]$. Moreover, the map $\Nuc(L)\to\Sub(L)^\op$ given by $j\mapsto M_j$ is an order isomorphism with inverse $M\mapsto j_M$, where $j_M(a)$ is defined by
\begin{equation*}
 j_M(a)  =  \bigwedge\{m\in M:a\leq m\},
\end{equation*}
where $a\in L$.
\end{proposition}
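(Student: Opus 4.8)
The plan is to exhibit the two assignments $j\mapsto M_j$ and $M\mapsto j_M$, to verify each is well-defined (a nucleus gives a sublocale and vice versa), and then to check they are mutually inverse and order-reversing. The identity $M_j=j[L]$ is immediate from idempotency: if $y=j(x)$ then $j(y)=j(j(x))=j(x)=y$, so $y\in M_j$, while conversely $x\in M_j$ is trivially in $j[L]$.

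First I would show $M_j$ is a sublocale. For closure under arbitrary meets, given $\{m_i\}_{i\in I}\subseteq M_j$, property (i) of a nucleus gives $\bigwedge_i m_i\leq j(\bigwedge_i m_i)$, while Lemma \ref{lem:jmonotone} gives $j(\bigwedge_i m_i)\leq j(m_i)=m_i$ for each $i$, hence $j(\bigwedge_i m_i)\leq\bigwedge_i m_i$; in particular the empty meet $1$ lies in $M_j$. For closure under the Heyting implication, fix $x\in L$ and $m\in M_j$: by Lemma \ref{lem:negationidentities}(i) we have $x\wedge(x\to m)=x\wedge m\leq m$, so applying $j$ and using that it preserves meets, $x\wedge j(x\to m)\leq j(x)\wedge j(x\to m)=j(x\wedge(x\to m))\leq j(m)=m$; by the adjunction (\ref{eq:HeyImp}) this says $j(x\to m)\leq(x\to m)$, and property (i) gives the reverse inequality, so $x\to m\in M_j$.

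Next I would check that $j_M$ is a nucleus. Property (i) holds because $a$ is a lower bound of $\{m\in M:a\leq m\}$. For (ii), $j_M(a)\in M$ since $M$ is meet-closed, so $j_M(a)$ lies in $\{m\in M:j_M(a)\leq m\}$, giving $j_M(j_M(a))\leq j_M(a)$, with the reverse from (i). Property (iii) is the heart of the matter. The inequality $j_M(a\wedge b)\leq j_M(a)\wedge j_M(b)$ is easy: $j_M(a)\wedge j_M(b)\in M$ and $a\wedge b\leq j_M(a)\wedge j_M(b)$. For the reverse, fix any $m\in M$ with $a\wedge b\leq m$; then $a\leq(b\to m)$ and $b\to m\in M$, so $j_M(a)\leq(b\to m)$, i.e.\ $j_M(a)\wedge b\leq m$; hence $b\leq(j_M(a)\to m)$ and $j_M(a)\to m\in M$, so $j_M(b)\leq(j_M(a)\to m)$, i.e.\ $j_M(a)\wedge j_M(b)\leq m$; taking the meet over all such $m$ yields $j_M(a)\wedge j_M(b)\leq j_M(a\wedge b)$.

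Finally, the maps are mutually inverse and order-reversing. For $j_{M_j}=j$: given $a$, $j(a)\in M_j$ and $a\leq j(a)$ so $j_{M_j}(a)\leq j(a)$, while any $m\in M_j$ with $a\leq m$ satisfies $j(a)\leq j(m)=m$ by Lemma \ref{lem:jmonotone}, so $j(a)\leq j_{M_j}(a)$. For $M_{j_M}=M$: if $x\in M$ then $x\in\{m\in M:x\leq m\}$ forces $j_M(x)=x$; if $j_M(x)=x$ then $x=\bigwedge\{m\in M:x\leq m\}\in M$. Order-reversal: if $j\leq k$ and $x\in M_k$ then $x\leq j(x)\leq k(x)=x$, so $M_k\subseteq M_j$; conversely if $M_k\subseteq M_j$ then for every $a$, $k(a)\in M_k\subseteq M_j$ so $j(k(a))=k(a)$, whence $j(a)\leq j(k(a))=k(a)$. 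Thus $j\leq k$ iff $M_k\subseteq M_j$, so $j\mapsto M_j$ is an order isomorphism $\Nuc(L)\to\Sub(L)^\op$ with inverse $M\mapsto j_M$. The main obstacle is property (iii) for $j_M$: the inequality $j_M(a)\wedge j_M(b)\leq j_M(a\wedge b)$ is the one place where being a sublocale (closed under Heyting implication), rather than merely a meet-subsemilattice, is used essentially, via the iterated adjunction argument above.
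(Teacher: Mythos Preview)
Your proof is correct and follows essentially the same approach as the paper's own proof: both verify that $M_j$ is a sublocale via monotonicity for meets and the Heyting adjunction for $x\to m$, both establish property (iii) of $j_M$ via the iterated adjunction trick (the paper applies it with the single element $j_M(a\wedge b)$ while you quantify over all $m\in M$ above $a\wedge b$, but this is the same argument), and both check the mutual inverses identically. The only cosmetic difference is that you prove the bi-implication $j\leq k\Leftrightarrow M_k\subseteq M_j$ directly, whereas the paper shows each map is order-reversing separately; either route suffices once the maps are known to be mutual inverses.
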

\begin{proof}
Let $j:L\to L$ be a nucleus. First notice that if $a\in j[L]$, then $a=j(b)$ for some $b\in L$, so $$j(a)=j\circ j(b)=j(b)=a,$$ hence $a\in M_j$. Conversely, if $a\in M_j$, then $a=j(a)$, so $a\in j[L]$. In order to show that $M_j$ is closed under arbitrary meet, let $a=\bigwedge_{i\in I}a_i$, where $a_i\in M_j$ for some index set $I$. By Lemma \ref{lem:jmonotone}, it follows from $a\leq a_k$ for each $k\in I$ that $j(a)\leq j(a_k)$ for each $k\in I$. So $j(a)\leq\bigwedge_{i\in I}j(a_i)$ and since the $a_i\in M_j$, we obtain $$j(a)\leq\bigwedge_{i\in I}a_i=a.$$ The inequality in the other direction always hold by definition of a nucleus, whence $j(a)=a$. We conclude that $a\in M_j$.

Let $b\in M_j$ and $a\in L$. Then $x\leq(a\to b)$ if and only if $x\wedge a\leq b$. By definition of a nucleus, $j(x\wedge a)\leq j(b)$, so $j(x)\wedge j(a)\leq j(b)$. Now, $j(b)=b$, and $a\leq j(a)$, so $j(x)\wedge a\leq b$, which is equivalent with $j(x)\leq(a\to b)$. So if we choose $x=a\to b$, we find $j(x)\leq x$. Since the inequality in the other direction automatically holds by definition of a nucleus, we find $j(x)=x$, so $(a\to b)\in M_j$. To show that the assignment $j\mapsto M_j$ is an order morphism, let $k$ be another nucleus on $L$ such that $j\leq k$, i.e. $j(a)\subseteq k(a)$ for each $a\in L$. If $a\in M_k$, we have $k(a)=a$. Thus $$a\leq j(a)\leq k(a)=a,$$
whence $a=j(a)$. So $M_k\subseteq M_j$.

Let $M$ be a sublocale of $L$. Notice that $M\neq\emptyset$, since the meet of the empty family equals $1$, so $1\in M$. Thus the set $\{m\in M:a\leq m\}$ is non-empty, for it always contain $1$, and since all elements in this set are greater than $a$, its meet is greater than $a$. Hence $a\leq j_M(a)$. If $x\in M$ such that $a\leq x$, then $$\bigwedge\{m\in M:a\leq m\}\leq x,$$ so $a\leq x$ implies $j_M(a)\leq x$. Thus $$\{m\in M:a\leq m\}\subseteq\{m\in M:j_M(a)\leq m\},$$ whence $j_M\circ j_M(a)\leq j_M(a)$. The inclusion in the other direction follows by definition of a nucleus, so $j_M\circ j_M=j_M$.

Now, let $a,b\in L$, then $$\{m\in M:a\leq m\}\subseteq\{m\in M:a\wedge b\leq m\},$$ so we find that $j_M(a\wedge b)\leq j_M(a)$, and similarly, $j_M(a\wedge b)\leq j_M(b)$. Thus $j_M(a\wedge b)\leq j_M(a)\wedge j_M(b)$. For the inequality in the other direction, we remark that the image of $j_M$ is in $M$, since $M$ is closed under intersections. Then $a\wedge b\leq j_M(x)$ for some $x\in L$ is equivalent to $a\leq\big(b\to j_M(x)\big)$. Now $$j_M(a)=\bigwedge\{m\in M: a\leq m\},$$ so $j_M(a)\leq \big(b\to j(x)\big)$, since $b\to j_M(x)\in M$ by the fact that $j_M(x)\in M$ and $M$ is a sublocale. We conclude that $j_M(a)\wedge b\leq j_M(x)$, and since $j_M(a)\wedge b=b\wedge j_M(a)$, we can repeat this argument in order to find $j_M(a)\wedge j_M(b)\leq j_M(x)$. Thus $a\wedge b\leq j_M(a\wedge b)$ implies $j_M(a)\wedge j_M(b)\leq j_M(a\wedge b)$. We show that the assignment $M\mapsto j_M$ defines an order morhism. Let $N$ be another sublocale of $L$ such that $N\subseteq M$. Then for each $a\in L$ we have
$$j_M(a)=\bigwedge\{m\in M:a\leq m\}\leq \bigwedge\{n\in N:a\leq n\}=j_N(a).$$
We conclude that $j_M\leq j_N$.

Finally, we show that $M\mapsto j_M$ and $j\mapsto M_j$ are mutual inverses. We already remarked that the image of $j_M$ is in $M$. So if $x\in M_j$, then by $x=j(x)$, we find $x\in M$.  If $x\in M$, then $$j_M(x)=\bigwedge\{m\in M:m\leq x\}=x,$$ so $x\in M_j$, whence $M_{j_M}=M$. If $a\in L$, then $$j_{M_j}(a)=\bigwedge\{m\in M_j:a\leq m\}=\bigwedge\{m:a\leq m\mathrm{\ and\ }j(m)=m\}.$$ In other words, $j_{M_j}(a)$ is the smallest element which is both a fixed point of $j$ and a upper bound of $a$. Let $a\leq b$ with $b=j(b)$. Then $j(a)\leq j(b)=b$, and since both $a\leq j(a)$ and $j(a)=j\circ j(a)$, we find that $j(a)$ is this smallest fixed point of $j$ which is also an upper bound of $a$. In other words, $j_{M_j}(a)=j(a)$.
\end{proof}

Using this proposition, we can find the sublocales on $\D(\P)$ corresponding to the Grothendieck topologies in Example \ref{ex:examplesGT}.

\begin{example}
Let $\P$ be a poset. Then the following subsets of $\D(\P)$ are sublocales.
\begin{itemize}
 \item $\M_{\mathrm{ind}}=\D(\P)$;
 \item $\M_{\mathrm{dis}}=\{\P\}$;
\item $\M_{\mathrm{atom}}=\{\emptyset,\P\}$, which is only defined if $\P$ is downwards directed.
\end{itemize}
If we consider the poset $\P_3$ from the same example, we see that
$\M_{\mathrm{atom}}$ is not a sublocale. Indeed, if it were, we should have
$A\to\emptyset\in\M_{\mathrm{atom}}$ for each $A\in\D(\P_3)$.
However, we have $\down y\cap\down z\subseteq\emptyset$, so
$y\in\down z\to\emptyset$, whereas $\down y\cap\down
x\nsubseteq\emptyset$, so $x\notin\down z\to\emptyset$. In other
words, $\P\neq\down z\to\emptyset\neq\emptyset$, so
$\M_{\mathrm{atom}}$ cannot be a sublocale. On the other hand,
if $\P$ is downwards directed, it is easy to see that
$A\to\emptyset$ equals $\emptyset$ if $A$ is non-empty and equals $\P$ if
$A$ is empty, so $A\to\emptyset\in\M_{\mathrm{atom}}$ for each
$A\in\D(\P)$.
\end{example}

In Proposition \ref{prop:GTisCompleteLattice} we have seen that the set $\G(\P)$ of Grothendieck topologies on a poset $\P$ is a complete lattice. Since we found an order isomorphism between $\G(\P)$ and $\Nuc(\D(\P))$, we find that the latter set is also a complete lattice. More generally, it turns out that $\Nuc(L)$ is a complete lattice for each locale $L$. We shall show this by proving that $\Sub(L)$ is a complete lattice for each sublocale $L$. Then $\Sub(L)^\op$ is a complete lattice by the remark below Lemma \ref{lem:completelattice}, so $\Nuc(L)$ is complete by the order isomorphism between $\Nuc(L)$ and $\Sub(L)^\op$.

The next proposition is the sublocale analogue of \cite[Proposition II.2.5]{Stone}.
\begin{proposition}\label{prop:subLiscompletelattice}
 Let $L$ be a locale. Then the set $\Sub(L)$ of sublocales of $L$ is a complete lattice with meet operation equal to the intersection operator.
\end{proposition}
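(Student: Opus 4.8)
The plan is to show that an arbitrary intersection of sublocales is again a sublocale, and that this intersection serves as the meet in the poset $(\Sub(L),\subseteq)$; then completeness follows from Lemma~\ref{lem:completelattice}. First I would take a family $\{\M_i\}_{i\in I}$ of sublocales of $L$ and set $\M=\bigcap_{i\in I}\M_i$. To see that $\M$ is a sublocale, I need to verify the two closure conditions from Definition~\ref{def:sublocale}. For closure under arbitrary meets: if $\{a_k\}_{k\in K}\subseteq\M$, then $\{a_k\}_{k\in K}\subseteq\M_i$ for every $i$, so $\bigwedge_k a_k\in\M_i$ for every $i$ since each $\M_i$ is closed under meets, whence $\bigwedge_k a_k\in\M$. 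For closure under the Heyting implication: if $x\in L$ and $m\in\M$, then $m\in\M_i$ for every $i$, so $x\to m\in\M_i$ for every $i$ since each $\M_i$ is a sublocale, whence $x\to m\in\M$. (One should note in passing that $\M$ is non-empty, since $1$, being the empty meet, lies in every $\M_i$ and hence in $\M$; this also handles the degenerate case $I=\emptyset$, where $\M=L$.)

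Next I would check that $\M=\bigcap_{i\in I}\M_i$ is the greatest lower bound of $\{\M_i\}_{i\in I}$ in $\Sub(L)$ ordered by inclusion. Clearly $\M\subseteq\M_i$ for each $i$, so $\M$ is a lower bound. If $\mathcal{N}$ is any sublocale with $\mathcal{N}\subseteq\M_i$ for all $i$, then $\mathcal{N}\subseteq\bigcap_{i\in I}\M_i=\M$, so $\M$ is the greatest such. Hence the intersection operator is a genuine meet operation on $\Sub(L)$, and by Lemma~\ref{lem:completelattice} the existence of all meets makes $\Sub(L)$ a complete lattice, with joins given by the formula $\bigsqcup_{i\in I}\M_i=\bigwedge\{\mathcal{N}\in\Sub(L): \M_i\subseteq\mathcal{N}\ \forall i\in I\}$.

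I do not expect a serious obstacle here: the argument is essentially the standard "intersection of substructures is a substructure" pattern, and the only mild subtlety is remembering that closure under arbitrary meets already forces $1\in\M$, so one never has to worry about emptiness. The one place where a little care is warranted is the interaction with the Heyting implication $x\to m$: one must use that $m$ lies in \emph{every} $\M_i$ and that each $\M_i$ is closed under $x\to(-)$ for the \emph{same} $x\in L$ — but since $x$ ranges over all of $L$ regardless of which $\M_i$ we look at, this causes no difficulty. The remark following the statement, that this complete-lattice structure transports along the order anti-isomorphisms $\Sub(L)^{\op}\cong\Nuc(L)$ (Proposition~\ref{prop:nucleiHI}) and, in the case $L=\D(\P)$, along $\Nuc(\D(\P))\cong\G(\P)$ (Proposition~\ref{prop:GT&Nuc}), is then immediate from the remark below Lemma~\ref{lem:completelattice} that the opposite of a complete lattice is a complete lattice.
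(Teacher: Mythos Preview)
Your proposal is correct and follows essentially the same approach as the paper's proof: verify that an arbitrary intersection of sublocales is again a sublocale by checking closure under meets and the Heyting implication coordinatewise, observe that intersection is the greatest lower bound, and then appeal to Lemma~\ref{lem:completelattice}. Your additional remarks on the $I=\emptyset$ case, the explicit join formula, and the transport along $\Sub(L)^\op\cong\Nuc(L)$ are all sound but go slightly beyond what the paper records.
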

\begin{proof}
Let $\{M_i\}_{i\in I}$ be a collection of sublocales of $L$. We show that $M=\bigcap_{i\in I}M_i$ is a sublocale of $L$. Firstly, let $\{a_k\}_{k\in K}$ be a collection of elements of $M$. So $a_k\in M_i$ for each $k\in K$ and each $i\in I$. But then $\bigwedge_{k\in K}a_k\in M_i$ for each $i\in I$. So $\bigwedge_{k\in K}a_k\in M$. Now, let $x\in L$ and $m\in M$. So $m\in M_i$ for each $i\in I$, whence $x\to m\in M_i$ for each $i\in I$. We conclude that $x\to m\in M$, so $M$ is indeed a sublocale of $L$. Now, the intersection indeed defines a meet operation on $\Sub(L)$. Clearly $\bigcap_{i\in I}M_i\subset M_k$ for each $k\in I$, and if $N$ is a sublocale such that $N\subseteq M_i$ for each $i\in I$, then $N\subseteq\bigcap_{i\in I}M_i$, so the intersection is indeed a meet operation on $\Sub(L)$.
\end{proof}

We proceed by introducting the frame-theoretical concept that corresponds with nuclei and sublocales.

\begin{definition}
 Let $F$ be a frame. A \emph{congruence}\index{congruence} $\theta$ on $F$ is an equivalence
relation on $F$ that preserves finite meets and arbitrary joins. That is, if $a_1,a_2,b_1,b_2\in F$ such that $a_1\theta b_1$ and $a_2\theta b_2$, then
$(a_1\wedge a_2)\theta (b_1\wedge b_2)$, and if $\{a_i\}_{i\in I},\{b_i\}_{i\in I}$ are families in $F$ for some index set $I$ such that $a_i\theta b_i$
for each $i\in I$, then we have $\left(\bigvee_{i\in I}
a_i\right)\theta\left(\bigvee_{i\in I}b_i\right)$.

If $\theta$ also preserves arbitrary meets, i.e. if given the same families $\{a_i\}$ and $\{b_i\}$ as above, then
$\left(\bigwedge_{i\in I} a_i\right)\theta\left(\bigwedge_{i\in
I}b_i\right)$, we say that $\theta$ is a \emph{complete frame
congruence}\index{congruence:complete}. The set of all congruences on a frame $F$ is denoted by $\Con(F)$, which can be ordered by inclusion: $\theta\leq\phi$ if and only if $\theta\subseteq\phi$.
\end{definition}

A more general version of the next lemma may be found in \cite[Theorem II.6.8, Theorem II.6.10]{BS}.
\begin{lemma}\label{lem:prehomomorphismtheorem}
Let $g:F\to G$ be a surjective frame morphism. Then $$\ker g=\{(a,b)\in F^2:g(a)=g(b)\}$$ is a congruence. If $\theta$ is a congruence on a frame $F$, then $F/\theta$ is a frame with meet defined by $[a_1]_\theta\wedge[a_2]_\theta:=[a_1\wedge a_2]_\theta$ and join defined by $\bigvee_{i\in I}[a_i]=\left[\bigvee_{i\in I}a_i\right]$, where $a_1,a_2\in F$ and $\{a_i\}_{i\in I}$ is a family of elements of $F$ for some index set $I$. Moreover, the quotient map $q_\theta:F\to F/\theta$ is a surjective frame morphism with kernel $\theta$.
\end{lemma}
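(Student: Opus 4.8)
The plan is to treat the two halves separately; the first is essentially immediate, and the second is a careful but routine transport of the frame axioms from $F$ to $F/\theta$.

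For the first half, I would observe that $\ker g$ is reflexive, symmetric and transitive because equality in $G$ is, and that it respects finite meets and arbitrary joins precisely because $g$ does: if $g(a_i)=g(b_i)$ for $i$ ranging over a (possibly finite) index set, then applying the frame morphism $g$ gives $g(\bigvee_i a_i)=\bigvee_i g(a_i)=\bigvee_i g(b_i)=g(\bigvee_i b_i)$, and likewise $g(a_1\wedge a_2)=g(a_1)\wedge g(a_2)=g(b_1)\wedge g(b_2)=g(b_1\wedge b_2)$. Hence $\ker g$ is a congruence, and surjectivity of $g$ is not needed here.

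For the second half, fix a congruence $\theta$ and write $[a]$ for the class of $a$. First I would note that the proposed meet $[a_1]\wedge[a_2]:=[a_1\wedge a_2]$ and join $\bigvee_i[a_i]:=[\bigvee_i a_i]$ are well defined: this is exactly the assertion that $\theta$ respects binary meets (a special case of respecting finite meets) and arbitrary joins. Next I would equip $F/\theta$ with the relation $[a]\preceq[b]:\iff[a\vee b]=[b]$; well-definedness of $\vee$ makes this meaningful, and reflexivity, antisymmetry and transitivity follow from short computations in $F$ using idempotency, commutativity and associativity of $\vee$ together with well-definedness of $\vee$ on classes. Then $q_\theta$ is order-preserving ($a\le b$ in $F$ means $a\vee b=b$, hence $[a]\preceq[b]$), and, crucially, $[\bigvee_i a_i]$ is the join of $\{[a_i]\}_i$ in $(F/\theta,\preceq)$: it is an upper bound because $a_j\vee\bigvee_i a_i=\bigvee_i a_i$, and if $[b]$ dominates every $[a_i]$, i.e. $[a_i\vee b]=[b]$ for all $i$, then, using that $\theta$ respects arbitrary joins, $[(\bigvee_i a_i)\vee b]=[\bigvee_i(a_i\vee b)]=[\bigvee_i b]=[b]$, so $[\bigvee_i a_i]\preceq[b]$ (the empty family handled directly, yielding the least element $[0]$). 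Thus $F/\theta$ has all joins, so by Lemma \ref{lem:completelattice} it is a complete lattice.

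It remains to identify the meet and check distributivity. I would show $[a_1\wedge a_2]$ is the greatest lower bound of $[a_1]$ and $[a_2]$: it is a lower bound since $(a_1\wedge a_2)\vee a_i=a_i$, and if $[c]\preceq[a_1]$ and $[c]\preceq[a_2]$ then, by distributivity in $F$, $c\vee(a_1\wedge a_2)=(c\vee a_1)\wedge(c\vee a_2)$, hence $[c\vee(a_1\wedge a_2)]=[c\vee a_1]\wedge[c\vee a_2]=[a_1]\wedge[a_2]=[a_1\wedge a_2]$, i.e. $[c]\preceq[a_1\wedge a_2]$. The infinite distributive law then transports from $F$: $[x]\wedge\bigvee_i[y_i]=[x\wedge\bigvee_i y_i]=[\bigvee_i(x\wedge y_i)]=\bigvee_i([x]\wedge[y_i])$, so $F/\theta$ is a frame. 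Finally $q_\theta$ is surjective by construction and, by the very definition of the operations on $F/\theta$, sends finite meets to finite meets and arbitrary joins to arbitrary joins (in particular $[1_F]$ and $[0_F]$ are top and bottom), so it is a surjective frame morphism; and $(a,b)\in\ker q_\theta\iff[a]=[b]\iff a\,\theta\,b$, giving $\ker q_\theta=\theta$. The one point requiring genuine care is to define the order on $F/\theta$ purely via $\vee$ and not presuppose that $[\cdot\wedge\cdot]$ is the lattice meet until it has actually been identified as such; everything else is bookkeeping with the congruence properties and the distributive laws of $F$.
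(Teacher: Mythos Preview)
Your proof is correct. The first half is identical to the paper's argument. For the second half you take a genuinely more careful, order-theoretic route than the paper does: the paper simply defines $[a_1]_\theta\wedge[a_2]_\theta$ and $\bigvee_i[a_i]_\theta$ via representatives, checks well-definedness, and then verifies the infinite distributive law by a one-line transport from $F$, implicitly relying on the universal-algebra principle that all equational identities (idempotence, absorption, associativity, etc.) pass to quotients by congruences. You instead build the partial order $\preceq$ on $F/\theta$ explicitly from $\vee$, verify it is a partial order, and then check that the defined operations really are the categorical join and meet for $\preceq$ before invoking distributivity. Your approach is longer but fully self-contained and makes no tacit appeal to universal algebra; the paper's is terser but leaves the lattice axioms to the reader's knowledge of how congruences behave. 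Both reach the same conclusion, and your remark that one must not presuppose $[a_1\wedge a_2]$ is the order-theoretic meet until it has been identified as such is exactly the point the paper elides.
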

\begin{proof}
Let $g$ be a surjective frame morphism and write $\theta_g=\ker g$. Let $a_1,a_2,b_1,b_2\in F$ such that $a_i\theta_g b_i$ for $i=1,2$, i.e., $g(a_i)=g(b_i)$. Then $$g(a_1\wedge a_2)=g(a_1)\wedge g(a_2)=g(b_1)\wedge g(b_2)=g(b_1\wedge b_2),$$
so $(a_1\wedge a_2)\theta_g(b_1\wedge b_2)$. Now let $\{a_i\}_{i\in I},\{b_i\}_{i\in I}$ families of elements of $F$ for some index set $I$. If $a_i\theta_g b_i$, i.e., $g(a_i)=g(b_i)$ for each $i\in I$, we find
$$g\left(\bigvee_{i\in I}a_i\right)=\bigvee_{i\in I}g(a_i)=\bigvee_{i\in I}g(b_i)=g\left(\bigvee_{i\in I}b_i\right),$$
so $\left(\bigvee_{i\in I}a_i\right)\theta_g\left(\bigvee_{i\in I}b_i\right)$. So $\theta_g$ is indeed a congruence. Now let $\theta$ be a congruence on $F$.
 First we check whether the frame operations on $F/\theta$ are well defined. Assume $a_1\theta b_1$ and $a_2\theta b_2$ for some $b_1,b_2\in F$. Then $(a_1\wedge a_2)\theta(b_1\wedge b_2)$, hence
\begin{equation*}
[b_1]_\theta\wedge[b_2]_\theta=[b_1\wedge b_2]_\theta=[a_1\wedge a_2]\theta=[a_1]_\theta\wedge[a_2]_\theta.
\end{equation*}
Now assume that $\{b_i\}_{i\in I}$ is a family in $F$ such that $a_i\theta b_i$ for each $i\in I$. Hence $\left(\bigvee a_i\right)\theta\left(\bigvee b_i\right)$. Hence
\begin{equation*}
\bigvee_{i\in I}[b_i]_\theta=\left[\bigvee_{i\in I}b_i\right]_\theta=\left[\bigvee_{i\in I}a_i\right]_\theta=\bigvee_{\in I}[a_i]_\theta.
\end{equation*}
We conclude that the meet and join operations are well defined on $F/\theta$. We have to check whether they are distributive. So let $b\in F$ and $\{a_i\}_{i\in I}$ a family of elements of $F$ for some index set $I$. Then
\begin{equation*}
 [b]_\theta\wedge\bigvee_{i\in I}[a_i]_\theta=[b]_\theta\wedge\left[\bigvee_{i\in I} a_i\right]_\theta=\left[b\wedge\bigvee_{i\in I} a_i\right]_\theta=\left[\bigvee_{i\in I}(b\wedge a_i)\right]_\theta=\bigvee_{i\in I}[b\wedge a_i]_\theta.
\end{equation*}
We show that the quotient map $q_\theta:F\to F/\theta$ is a frame morphism. Let $a_1,a_2\in F$ and $\{a_i\}_{i\in I}$ a family of elements of $F$. Then
\begin{eqnarray*}
 q_\theta(a_1\wedge a_2) & = & [a_1\wedge a_2]_\theta=[a_1]_\theta\wedge[a_2]_\theta=q_\theta(a_1)\wedge q_\theta(a_2);\\
q_\theta\left(\bigvee_{i\in I}a_i\right) & = & \left[\bigvee_{i\in I} a_i\right]_\theta = \bigvee_{i\in I}[a_i]_\theta=\bigvee_{i\in I}q_\theta(a_i).
\end{eqnarray*}
The quotient map is clearly surjective and since $q_\theta(a)=q_\theta(b)$ if and only if $a\theta b$, we find its kernel is $\theta$.
\end{proof}

Different surjective morphisms can induce the same congruence, therefore we define an equivalence relation on the set of surjective morphisms with domain $F$.
\begin{definition}
 Let $F$ be a frame. Then we say that two surjective frame morphisms $g:F\to G$ and $h:F\to H$ are equivalent if and only if there is an isomorphism $k:G\to H$ such that $k\circ g=h$. We denote the equivalence class of a surjection $g$ by $[g]_E$, and the set of equivalence classes of surjective frame morphisms with domain $F$ by $\E(F)$. The latter can be ordered in a following way. If $g,h\in\E(F)$, we say that $[g]_E\leq [h]_E$ if there is a frame morphism $k$ such that $h=k\circ g$.
\end{definition}
We note that this definition is the opposite of the one that is usual within category theory. Notice that $[g]_E\leq [h]_E$ and $[h]_E\leq[g]_E$ imply that $[g]_E=[h]_E$. Indeed, if there are frame morphisms $k_1,k_2$ such that $h=k_1\circ g$ and $g=k_2\circ h$, we have $$k_1\circ k_2\circ h=k_1\circ g=h,$$
so $(k_1\circ k_2)\circ h=1\circ h$. But since $h$ is a surjection, so an epimorphism, we find that $k_1\circ k_2=1$. In the same way, $k_2\circ k_1=1$. Hence $k_1$ and $k_2$ are each other's inverses, so frame isomorphisms. Thus $[g]_E=[h]_E$.

\begin{lemma}\label{lem:qisordermorphism}
 Let $\theta$ and $\phi$ be congruences on a frame $F$ such that $\theta\leq\phi$. Then $[q_\theta]_E\leq[q_\phi]_E$.
\end{lemma}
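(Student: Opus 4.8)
The plan is to exhibit an explicit frame morphism $k : F/\theta \to F/\phi$ witnessing $q_\phi = k \circ q_\theta$; by the definition of the order on $\E(F)$ given just before the lemma, this is precisely what $[q_\theta]_E \leq [q_\phi]_E$ means.

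First I would define $k$ on equivalence classes by $k([a]_\theta) = [a]_\phi$ for each $a \in F$. The one substantive point is well-definedness: if $[a]_\theta = [b]_\theta$, that is $a\,\theta\,b$, then since $\theta \leq \phi$ means $\theta \subseteq \phi$ as subsets of $F^2$, we have $a\,\phi\,b$, hence $[a]_\phi = [b]_\phi$. This is the only place the hypothesis is used, and it is immediate.

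Next I would check that $k$ is a frame morphism, using the description of the frame operations on a quotient from Lemma \ref{lem:prehomomorphismtheorem}. For $a_1, a_2 \in F$,
\begin{equation*}
k([a_1]_\theta \wedge [a_2]_\theta) = k([a_1 \wedge a_2]_\theta) = [a_1 \wedge a_2]_\phi = [a_1]_\phi \wedge [a_2]_\phi = k([a_1]_\theta) \wedge k([a_2]_\theta),
\end{equation*}
and for a family $\{a_i\}_{i\in I}$ of elements of $F$,
\begin{equation*}
k\left(\bigvee_{i\in I}[a_i]_\theta\right) = k\left(\left[\bigvee_{i\in I}a_i\right]_\theta\right) = \left[\bigvee_{i\in I}a_i\right]_\phi = \bigvee_{i\in I}[a_i]_\phi = \bigvee_{i\in I}k([a_i]_\theta).
\end{equation*}
Finally $k \circ q_\theta = q_\phi$ holds by construction, since $(k\circ q_\theta)(a) = k([a]_\theta) = [a]_\phi = q_\phi(a)$ for every $a \in F$.

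There is essentially no obstacle here: the statement is a routine compatibility of the kind familiar from the correspondence between congruences and quotients in universal algebra, and all the content sits in the one-line well-definedness check. The only thing to keep in mind is the nonstandard (opposite to the usual categorical) convention for $\leq$ on $\E(F)$ adopted in the paper, but the morphism $k$ with $q_\phi = k \circ q_\theta$ is exactly what that convention asks for.
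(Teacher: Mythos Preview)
Your proof is correct and follows essentially the same approach as the paper: both define $k:F/\theta\to F/\phi$ by $[a]_\theta\mapsto[a]_\phi$, check well-definedness via $\theta\subseteq\phi$, verify that $k$ preserves finite meets and arbitrary joins using the quotient operations from Lemma~\ref{lem:prehomomorphismtheorem}, and conclude $q_\phi=k\circ q_\theta$.
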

\begin{proof}
 Define $k:F/\theta\to F/\phi$ by $[a]_\theta\mapsto[a]_\phi$. This map is well defined, since if $[a]_\theta=[b]_\theta$, then $(a,b)\in\theta\subseteq\phi$, so $[a]_\phi=[b]_\phi$. Let $a,b\in F$. Then $$k([a]_\theta\wedge[b]_\theta)=k([a\wedge b]_\theta)=[a\wedge b]_\phi=[a]_\phi\wedge[b]_\phi=k([a]_\theta)\wedge k([b]_\theta),$$ so $k$ preserves finite meets. If $\{a_i\}_{i\in I}$ is a family of elements of $F$, we find $$k\left(\bigvee_{i\in I}[a_i]\right)=k\left(\left[\bigvee_{i\in I}a_i\right]\right)=\left[\bigvee_{i\in I}a_i\right]_\phi=\bigvee_{i\in I}[a_i]_\phi=\bigvee_{i\in I}k([a_i]_\theta),$$ so $k$ is a frame morphism. By definition of $k$, we have $q_\phi=k\circ q_\theta$, so $[q_\theta]_E\leq[q_\phi]_E$.
\end{proof}

It follows from the First Homomorphism Theorem for frames that two surjections are equivalent if and only if they define the same congruence. We shall state this theorem and give a proof. For a more general version of this theorem in the setting of Universal Algebra we refer to \cite[Theorem II.6.12]{BS}. The proof of the Homomorphism Theorem for lattices is very similar to the Homomorphism Theorem for frames and can be found in \cite[Theorem 6.9]{DP}.

\begin{theorem}[First Homomorphism Theorem for Frames]\label{thm:homomorphismtheoremframes}
 Let $g:F\to G$ be a surjective frame morphism with kernel $\theta$. Then there is an isomorphism of frames $k_g:F/\theta\to G$ such that $g=k_g\circ q_\theta$.
\end{theorem}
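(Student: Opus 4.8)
The plan is to construct the isomorphism in the obvious way, by setting $k_g([a]_\theta) := g(a)$ for $a \in F$, and then to check the four things this requires: that $k_g$ is well defined, that it is a frame morphism, that it is a bijection, and that a bijective frame morphism is automatically a frame isomorphism. Well-definedness is immediate from the very definition of the kernel: if $[a]_\theta = [b]_\theta$ then $(a,b) \in \theta = \ker g$, which by Lemma \ref{lem:prehomomorphismtheorem} means exactly $g(a) = g(b)$, so the value of $k_g$ does not depend on the chosen representative.

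For the frame-morphism property I would use the explicit description of the meet and join on $F/\theta$ given in Lemma \ref{lem:prehomomorphismtheorem}, so that, for instance, $k_g([a_1]_\theta \wedge [a_2]_\theta) = k_g([a_1 \wedge a_2]_\theta) = g(a_1 \wedge a_2) = g(a_1) \wedge g(a_2) = k_g([a_1]_\theta) \wedge k_g([a_2]_\theta)$, where the third equality is that $g$ is a frame morphism, and likewise $k_g$ commutes with arbitrary joins. Surjectivity of $k_g$ is just surjectivity of $g$ (every $y \in G$ is $g(a) = k_g([a]_\theta)$ for some $a$), and injectivity is the reverse reading of the well-definedness argument: $k_g([a]_\theta) = k_g([b]_\theta)$ forces $g(a) = g(b)$, i.e. $(a,b) \in \ker g = \theta$, i.e. $[a]_\theta = [b]_\theta$. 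Finally $g = k_g \circ q_\theta$ holds by construction, since $k_g \circ q_\theta(a) = k_g([a]_\theta) = g(a)$ for all $a \in F$.

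The only step that is more than a one-line verification is showing that the bijective frame morphism $k_g$ is a frame \emph{isomorphism}, i.e. that $k_g^{-1}$ again preserves finite meets and arbitrary joins. I would do this either by a direct transport of structure (write $a = k_g^{-1}(x)$, $b = k_g^{-1}(y)$; then $k_g(a \wedge b) = x \wedge y$, so $k_g^{-1}(x \wedge y) = a \wedge b = k_g^{-1}(x) \wedge k_g^{-1}(y)$, and similarly for joins), or by first observing that $k_g$ is an order embedding --- $k_g(u) \le k_g(v)$ iff $k_g(u \wedge v) = k_g(u)$ iff $u \wedge v = u$ iff $u \le v$ --- hence a surjective order embedding and so an order isomorphism, at which point the lemma on order isomorphisms between frames from Section \ref{Order Theory} finishes the job. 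No genuine obstacle is expected: this is the standard first isomorphism theorem specialised to $\mathbf{Frm}$, and all the algebraic bookkeeping about $F/\theta$ has already been set up in Lemma \ref{lem:prehomomorphismtheorem}.
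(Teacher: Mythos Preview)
Your proposal is correct and follows essentially the same route as the paper: define $k_g([a]_\theta)=g(a)$, check well-definedness via $\theta=\ker g$, verify the frame-morphism identities using the explicit meet and join on $F/\theta$ from Lemma~\ref{lem:prehomomorphismtheorem}, and establish bijectivity exactly as you describe. The only difference is that the paper stops at ``$k_g$ is bijective'' without spelling out why the inverse is again a frame morphism, relying instead on the earlier remark that in $\mathbf{Frm}$ (as in any variety of algebras) bijective morphisms are isomorphisms; your explicit argument via order-embedding or transport of structure is a harmless elaboration of that point.
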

\begin{proof}
 We define $k_g$ by $k_g([a]_\theta]=g(a)$ for each $[a]_\theta$ and show that this definition is independent of the choice of the representative of $[a]_\theta$. So assume that $a\theta b$ for some $b\in F$. Since $\theta$ is the kernel of $g$, this is $g(a)=g(b)$, so $k_g([b]_\theta)=g(a)=g(b)=k_g([b]_\theta)$. Moreover, $k_g$ is a frame morphism; let $[a_1]_\theta,[a_2]_\theta\in F/\theta$ and $\{[a_i]_\theta\}_{i\in I}$ a family of elements in $F/\theta$ with index set $I$. Then
\begin{eqnarray*}
 k_g([a_1]_\theta\wedge[a_2]_\theta) & = & k_g([a_1\wedge a_2]_\theta)=g(a_1\wedge a_2)=g(a_1)\wedge g(a_2)=k_g([a_1]_\theta)\wedge k_g([a_2]_\theta);\\
k_g\left(\bigvee_{i\in I}[a_i]_\theta\right) & = & k_g\left(\left[\bigvee_{i\in I}a_i\right]_\theta\right)=g\left(\bigvee_{i\in I}a_i\right)=\bigvee_{i\in I}g(a_i)=\bigvee_{i\in I}k_g([a_i]_\theta).
\end{eqnarray*}
Finally, $k_g$ is bijective. Let $c\in G$, then there is some $a\in F$ such that $c=g(a)$, hence $c=k_g([a]_\theta)$, so $k_g$ is surjective. Furthermore, by definition of $k_g$ and $\theta$ we have $k_g([a]_\theta)=k_g([b]_\theta)$ if and only if $g(a)=g(b)$ if and only if $a\theta b$, which exactly says that $[a]_\theta=[b]_\theta$. Thus $k_g$ is injective.
\end{proof}

\begin{corollary}\label{cor:bijectieEFConF}
Let $F$ be a frame. The map $\E(F)\to\Con(F)$ given by $[g]_E\mapsto\ker g$ is an order isomorphism with inverse $\theta\mapsto[q_\theta]_E$.
\end{corollary}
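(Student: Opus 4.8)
The plan is to exhibit the two assignments $\Phi\colon\E(F)\to\Con(F)$, $[g]_E\mapsto\ker g$, and $\Psi\colon\Con(F)\to\E(F)$, $\theta\mapsto[q_\theta]_E$, as mutually inverse order morphisms. First I would check that $\Phi$ is well defined: if $[g]_E=[h]_E$, there is a frame isomorphism $k$ with $h=k\circ g$, and since $k$ is in particular injective, $g(a)=g(b)$ holds if and only if $h(a)=h(b)$, so $\ker g=\ker h$. That $\Psi$ is well defined is immediate, since by Lemma \ref{lem:prehomomorphismtheorem} every congruence $\theta$ comes with a genuine surjective frame morphism $q_\theta$, so $[q_\theta]_E$ is a well-defined element of $\E(F)$.

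Next I would verify that the two composites are identities. For $\Phi\circ\Psi$, Lemma \ref{lem:prehomomorphismtheorem} already tells us that $\ker q_\theta=\theta$, so $\Phi(\Psi(\theta))=\theta$. For $\Psi\circ\Phi$, start with a surjective frame morphism $g\colon F\to G$ and set $\theta=\ker g$; the First Homomorphism Theorem for Frames (Theorem \ref{thm:homomorphismtheoremframes}) produces a frame isomorphism $k_g\colon F/\theta\to G$ with $g=k_g\circ q_\theta$, which is precisely the statement that $g$ and $q_\theta$ represent the same element of $\E(F)$, i.e. $[q_{\ker g}]_E=[g]_E$. Hence $\Psi\circ\Phi=1_{\E(F)}$.

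It remains to check that both maps respect the orders. For $\Phi$: if $[g]_E\leq[h]_E$, pick a frame morphism $k$ with $h=k\circ g$; then $g(a)=g(b)$ forces $h(a)=k(g(a))=k(g(b))=h(b)$, so $\ker g\subseteq\ker h$, i.e. $\ker g\leq\ker h$ in $\Con(F)$. For $\Psi$: this is exactly Lemma \ref{lem:qisordermorphism}. Combining these four facts, $\Phi$ is a bijection that is order-preserving with order-preserving inverse $\Psi$, hence an order isomorphism in the sense of the definition of order isomorphism given in Section \ref{Order Theory}.

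There is no serious obstacle here: all the heavy lifting — that kernels of surjective frame morphisms are congruences, that quotients by congruences are frames with surjective quotient maps of the expected kernel, and that a surjective frame morphism factors isomorphically through the quotient by its kernel — has already been carried out in Lemma \ref{lem:prehomomorphismtheorem} and Theorem \ref{thm:homomorphismtheoremframes}. The only mild point to keep in mind is the bookkeeping with the (categorically nonstandard) direction of the order on $\E(F)$, together with the elementary observation that the mediating morphism $k$ in $h=k\circ g$ is automatically surjective because $h$ is; neither causes any difficulty.
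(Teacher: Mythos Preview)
Your proof is correct and follows essentially the same approach as the paper: both verify well-definedness of $[g]_E\mapsto\ker g$ via the defining isomorphism, use Lemma \ref{lem:prehomomorphismtheorem} for $\ker q_\theta=\theta$, invoke the First Homomorphism Theorem for $[q_{\ker g}]_E=[g]_E$, and check the two order-preservation conditions exactly as you do (the paper likewise cites Lemma \ref{lem:qisordermorphism} for one direction). The organization differs only cosmetically.
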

\begin{proof}
Lemma \ref{lem:qisordermorphism} exactly expresses that $\theta\mapsto[q_\theta]_E$ is an order morphism.
We have to show that $[g]_E\mapsto\ker g$ is well defined. So assume $g:F\to G$ and $h:F\to H$ are equivalent frame morphism. Thus there is an isomorphism $k:G\to H$ such that $k\circ g=h$. Then $g(a)=g(b)$ if and only if $k\circ g(a)=k\circ g(b)$, which is exactly $h(a)=h(b)$. Hence $\ker g=\ker h$. Now, the assignment $[g]_E\mapsto \ker g$ is also order-preserving. Let $[g]_E\leq[h]_E$, so there is a frame morphism $k$ such that $k\circ g=h$. Then $g(a)=g(b)$ implies $h(a)=h(b)$ for each $(a,b)\in F^2$, so $\ker g\subseteq\ker h$.

Let $[g]_E\in\E(F)$. By the Homomorphism Theorem, there is an isomorphism $k_g:F\to F/\ker g$ such that $g=k_g\circ q_{\ker g}$, hence $[g]_E=[q_{\ker g}]_E$. Conversely, let $\theta\in\Con(F)$. Lemma \ref{lem:prehomomorphismtheorem} exactly expresses that $\ker q_\theta=\theta$, which concludes the proof that both maps are each other's inverses.
\end{proof}

The next propositions connect the notions of a frame quotient and a sublocale, and hence they justify the definition of a sublocale. The first step is showing that every sublocale, or equivalently, every nucleus, induces a surjective frame morphism. It turns out that every nucleus is, in fact, already a surjective frame morphism.

\begin{proposition}\cite[Proposition IX.4.3]{M&M}\label{prop:nucleusisframesurjection}
Let $j:L\to L$ a nucleus. If we restrict the codomain of $j$ to $M_j$ and regard $L$ and $M_j$ as frames, then $j:L\to M_j$ becomes a surjective frame morphism.
\end{proposition}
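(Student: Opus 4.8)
The plan is to check directly that $j\colon L\to M_j$ is surjective and obeys the two defining clauses of a frame morphism (preservation of finite meets, including the top, and of arbitrary joins). The only facts needed are the nucleus axioms, the identification (from Proposition \ref{prop:nucleiHI}) $M_j=j[L]=\{x\in L:j(x)=x\}$ together with $j_{M_j}=j$, and the description of $M_j$ as a locale whose meet is the one inherited from $L$ and whose join is $\bigsqcup_{i\in I}a_i=\bigwedge\{m\in M_j:a_i\le m\ \forall i\in I\}$ (the lemma preceding Proposition \ref{prop:nucleiHI}). Monotonicity of $j$ (Lemma \ref{lem:jmonotone}) will also be used repeatedly.

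Surjectivity is immediate: if $m\in M_j$ then $j(m)=m$, so $m$ lies in $j[L]=M_j$ and is its own preimage. For the top element, axiom (i) gives $1\le j(1)$ while $j(1)\le 1$ trivially, so $j(1)=1$, and $1$ is indeed the top of $M_j$. For finite meets, axiom (iii) reads $j(x\wedge y)=j(x)\wedge j(y)$ for $x,y\in L$; since the meet of $M_j$ is the restriction of the meet of $L$, the right-hand side is precisely the meet of $j(x)$ and $j(y)$ computed in $M_j$, so $j$ preserves binary meets.

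The one point deserving an actual argument is preservation of arbitrary joins, because the join $\bigsqcup$ of $M_j$ is in general \emph{not} the join $\bigvee$ of $L$. Fix a family $\{a_i\}_{i\in I}\subseteq L$ with $L$-join $\bigvee_i a_i$. Since $a_i\le\bigvee_i a_i$, monotonicity gives $j(a_i)\le j(\bigvee_i a_i)$ for all $i$, and $j(\bigvee_i a_i)\in M_j$, so $j(\bigvee_i a_i)$ is an upper bound in $M_j$ of the family $\{j(a_i)\}$; hence $\bigsqcup_i j(a_i)\le j(\bigvee_i a_i)$. For the reverse inequality, put $m=\bigsqcup_i j(a_i)\in M_j$. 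By axiom (i), $a_i\le j(a_i)\le m$ for every $i$, so $m$ is an $L$-upper bound of $\{a_i\}$ and therefore $\bigvee_i a_i\le m$; applying $j$, using monotonicity, and using $j(m)=m$ (as $m\in M_j$) yields $j(\bigvee_i a_i)\le m=\bigsqcup_i j(a_i)$. Combining the two inequalities gives $j(\bigvee_i a_i)=\bigsqcup_i j(a_i)$, so $j$ preserves all joins, and the proof is complete.

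I do not expect a genuine obstacle: the argument is short and purely order-theoretic. The only thing to be careful about is never conflating the $L$-join with the $M_j$-join, and remembering that $j$ acts as the identity on $M_j$; once these are kept straight, the join-preservation step — the mildly delicate one — goes through as above.
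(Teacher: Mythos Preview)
Your proof is correct and follows essentially the same approach as the paper: surjectivity from $M_j=j[L]$, finite-meet preservation from nucleus axiom (iii), and join preservation via the two-inequality argument using monotonicity and $j(m)=m$ for $m\in M_j$. Your presentation is in fact slightly more direct than the paper's, which first establishes $j\bigl(\bigvee_i a_i\bigr)=\bigsqcup_i a_i$ for families in $M_j$ and then reduces the general case to this via $j\bigl(\bigvee_i a_i\bigr)=j\bigl(\bigvee_i j(a_i)\bigr)$; you bypass this intermediate step by arguing both inequalities directly for an arbitrary family in $L$.
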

\begin{proof}
Since $M_j=j[L]$, it follows immediately that $j$ is surjective.
The meet operation on $M_j$ is inherited from the meet operation on $L$, and $j$ satisfies $j(a\wedge b)=j(a)\wedge j(b)$, so we only have to show that $j$ preserves arbitrary joins. This is done by introducing another join operation on $M_j$, which should be equal to $\bigsqcup$, since joins on frames are unique.
Let $\{a_i\}_{i\in I}$ be a family of elements of $M_j$ with $I$ some index set. Then $j\left(\bigvee_{i\in I}a_i\right)$ is the join of $\{a_i\}_{i\in I}$. Indeed, since it lies in the image of $j$, it is an element of $M_j$. By Lemma \ref{lem:jmonotone} we have for each $k\in I$ $$a_k\leq j(a_k)\leq j\left(\bigvee_{i\in I}a_i\right),$$ whereas if $b\in M_j$ such that $a_k\leq b$ for each $k\in I$, we obtain $\bigvee_{i\in I}a_i\leq b$, so by the same lemma, we obtain
\begin{equation*}
j\left(\bigvee_{i\in I}a_i\right)\leq j(b)=b,
\end{equation*}
Thus for each $\{a_i\}_{i\in I}\subseteq M_j$ we obtain
\begin{equation}\label{eq:multiplejoinsonM}
j\left(\bigvee_{i\in I}a_i\right)=\bigsqcup_{i\in I}a_i.
\end{equation}
Now let $\{a_i\}_{\in I}$ be a family of elements of $L$ for some index set $I$. Then we have $a_k\leq\bigvee_{i\in I} a_i$ for each $k\in I$, hence by Lemma \ref{lem:jmonotone} it follows that
\begin{equation}\label{eq:monotonejoin}
j(a_k)\leq j\left(\bigvee_{i\in I} a_i\right).
\end{equation}
Thus
\begin{equation*}
 \bigvee_{i\in I}a_i\leq\bigvee_{i\in I}j(a_i)\leq j\left(\bigvee_{i\in I} a_i\right),
\end{equation*}
where the second inequality follows from (\ref{eq:monotonejoin}) and the first since $a_k\leq j(a_k)$ for each $k\in I$. Again by Lemma \ref{lem:jmonotone}, the order is preserved if we let act $j$ on the string of inequalities above. Furthermore, $j\circ j=j$, hence we obtain
\begin{equation*}
 j\left(\bigvee_{i\in I}a_i\right)=j\left(\bigvee_{i\in I} j(a_i)\right)=\bigsqcup_{i\in I}j(a_i),
\end{equation*}
where we used (\ref{eq:multiplejoinsonM}) in the last equality. Thus $j$ preserves joins, so it is a frame morphism.
\end{proof}

\begin{proposition}\label{prop:bijectieEFNucF}
 Let $L$ be a locale. If we regard $L$ as a frame and $g:L\to G$ is a surjective frame morphism, we define $j_g: L\to L$ by $j_g=g_*\circ g$. Then the map $\E(L)\to\Nuc(L)$ given by $[g]_E\mapsto j_g$  is an order isomorphism with inverse $j\mapsto[j]_E$. In particular, for each nucleus $j\in\Nuc(L)$ we have $j_*j=j$.
\end{proposition}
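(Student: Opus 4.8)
The plan is to work with the upper adjoint $g_*$ of a surjective frame morphism $g\colon L\to G$, which exists by Lemma~\ref{lem:adjoint} (equivalently Lemma~\ref{lem:frameadjoint}) since $g$ preserves all joins, and to exploit the adjunction identities of Lemma~\ref{lem:adjointequivalent}: $1_L\le g_*\circ g$, and, since $g$ is surjective, $g\circ g_*=1_G$, whence also $g\circ g_*\circ g=g$ and $g_*\circ g\circ g_*=g_*$. First I would check that $j_g:=g_*\circ g$ is a nucleus on $L$: property (i) is $1_L\le g_*\circ g$; property (ii) follows from $j_g\circ j_g=g_*\circ(g\circ g_*)\circ g=g_*\circ g=j_g$; and property (iii) holds because $g$ preserves finite meets while $g_*$, being an upper adjoint, preserves all meets by Lemma~\ref{lem:adjoint}, so $j_g(a\wedge b)=g_*(g(a)\wedge g(b))=g_*g(a)\wedge g_*g(b)$. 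Next the assignment is well defined on equivalence classes: if $h=k\circ g$ with $k$ a frame isomorphism, then by Lemma~\ref{lem:isomorphismisadjoint} the upper adjoint of $h$ is $h_*=g_*\circ k_*=g_*\circ k^{-1}$, so $j_h=g_*\circ k^{-1}\circ k\circ g=g_*\circ g=j_g$. The same lemma also gives monotonicity: if $[g]_E\le[h]_E$, say $h=k\circ g$ for a frame morphism $k$ (which has an upper adjoint $k_*$), then $j_h=(k\circ g)_*\circ(k\circ g)=g_*\circ(k_*\circ k)\circ g\ge g_*\circ g=j_g$, using $k_*\circ k\ge 1_G$ and that $g_*$ and $g$ are monotone.

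For the inverse, given a nucleus $j$ on $L$, Proposition~\ref{prop:nucleusisframesurjection} exhibits $j\colon L\to M_j$ as a surjective frame morphism, so $[j]_E\in\E(L)$; I then verify that the two composites are identities. For $\Nuc(L)\to\E(L)\to\Nuc(L)$ one computes the upper adjoint $j_*\colon M_j\to L$ of $j\colon L\to M_j$: for $m\in M_j$ every $x$ with $j(x)\le m$ satisfies $x\le j(x)\le m$, and $m$ itself has $j(m)=m$, so $j_*(m)=\bigvee\{x: j(x)\le m\}=m$; hence $j_*\circ j(x)=j_*(j(x))=j(x)$, i.e.\ $j_*\circ j=j$. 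This is simultaneously the ``in particular'' claim. For $\E(L)\to\Nuc(L)\to\E(L)$, with $g\colon L\to G$ surjective, we have $M_{j_g}=j_g[L]=g_*[g[L]]=g_*[G]$ by Proposition~\ref{prop:nucleiHI}, and I claim the corestriction $k:=g|_{M_{j_g}}\colon M_{j_g}\to G$ is a frame isomorphism with $k\circ j_g=g$: it is surjective since $g(g_*(y))=y$ for $y\in G$; injective on $M_{j_g}$ since $g(m_1)=g(m_2)$ forces $g_*g(m_1)=g_*g(m_2)$, i.e.\ $m_1=m_2$; it preserves the meets inherited from $L$, and it preserves joins because, using $g\circ g_*\circ g=g$ and the formula $j_g(\bigvee_i a_i)=\bigsqcup_i a_i$ for the join in $M_{j_g}$ (equation~(\ref{eq:multiplejoinsonM})), $g(\bigsqcup_i a_i)=g(j_g(\bigvee_i a_i))=g(\bigvee_i a_i)=\bigvee_i g(a_i)$; a bijective frame morphism between frames is a frame isomorphism, its inverse being order-preserving since $x\le y\iff x=x\wedge y$. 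Finally $k\circ j_g(x)=g(g_*g(x))=g(x)$, so $k\circ j_g=g$ and $[j_g]_E=[g]_E$; thus the two maps are mutually inverse bijections.

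It then remains to check that the inverse $j\mapsto[j]_E$ is monotone. If $j\le j'$, then $j'=j'\circ j$ by (\ref{eq:orderingNuc}), and the map $k\colon M_j\to M_{j'}$ defined by $k(m)=j'(m)$ is well defined, preserves the inherited meets, and preserves joins since for $\{a_i\}\subseteq M_j$ one has $j'(\bigvee_i j'(a_i))=j'(\bigvee_i a_i)$ (apply $j'$ to $\bigvee_i a_i\le\bigvee_i j'(a_i)\le j'(\bigvee_i a_i)$); moreover $k\circ j=j'\circ j=j'$, so $[j]_E\le[j']_E$. Combined with the monotonicity established above, this shows $[g]_E\mapsto j_g$ is an order isomorphism.

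The only genuinely delicate point is the middle of the second paragraph: recognising $G$ as the sublocale $M_{j_g}$ through the corestriction of $g$ and verifying this is a frame isomorphism. This is a First--Homomorphism--Theorem--type argument and runs smoothly once the identities $g\circ g_*=1_G$ and $g\circ g_*\circ g=g$ are in hand; alternatively one may shortcut it by invoking Proposition~\ref{prop:nucleiHI} together with Theorem~\ref{thm:homomorphismtheoremframes}. Everything else is routine manipulation of adjunctions.
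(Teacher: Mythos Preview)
Your proof is correct and follows essentially the same skeleton as the paper's: both verify the nucleus axioms for $j_g=g_*\circ g$ via the adjunction identities of Lemma~\ref{lem:adjointequivalent}, both check well-definedness on $[g]_E$ via Lemma~\ref{lem:isomorphismisadjoint}, and both compute $j_*$ for a nucleus $j$ by recognising the upper adjoint of $j:L\to M_j$ as the inclusion of $M_j$ in $L$.

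The one substantive divergence is in establishing $[j_g]_E=[g]_E$. The paper argues indirectly: since $g_*$ is injective (Lemma~\ref{lem:adjointequivalent}(ii)), one has $\ker j_g=\ker g$, and then the First Homomorphism Theorem (Theorem~\ref{thm:homomorphismtheoremframes}) yields isomorphisms $k_g$, $k_{j_g}$ with $g=k_g\circ q_\theta$ and $j_g=k_{j_g}\circ q_\theta$, whence $[g]_E=[j_g]_E$. You instead build the witnessing isomorphism directly as the corestriction $g|_{M_{j_g}}:M_{j_g}\to G$, checking bijectivity from $g\circ g_*=1_G$ and join-preservation from $g\circ j_g=g$ together with the formula~(\ref{eq:multiplejoinsonM}) for the join in $M_{j_g}$. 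Your route is a touch more hands-on but avoids the detour through quotient frames; the paper's route is shorter once Theorem~\ref{thm:homomorphismtheoremframes} is available, and indeed you note this alternative yourself. A second, minor difference: for monotonicity of $j\mapsto[j]_E$ the paper simply invokes~(\ref{eq:orderingNuc}) without spelling out that the factoring map $j'|_{M_j}:M_j\to M_{j'}$ is a frame morphism, whereas you verify the join-preservation explicitly via $j'(\bigvee_i j'(a_i))=j'(\bigvee_i a_i)$. Your version is more complete on this point.
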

\begin{proof}
By Proposition \ref{prop:nucleusisframesurjection}, $j$ is a frame surjection with domain $L$, so $[j]_E\in\E(L)$. Moreover, (\ref{eq:orderingNuc}) exactly expresses that $j\mapsto [j]_E$ is an order morphism.
In order to show that $[g]_E\mapsto g_*\circ g$ is well defined, let $h$ a frame surjection with domain $L$ such that $[g]_E=[h]_E$. So there is an isomorphism $k$ such that $k\circ g=h$. Then by Lemma \ref{lem:isomorphismisadjoint}, we find $$h_*\circ h=(k\circ g)_*\circ k\circ g=g_*\circ k^{-1}\circ k\circ g=g_*\circ g.$$ Let $g:L\to G$ be a frame morphism (not necessarily surjective). We show that $j_g=g_*\circ g$ is a nucleus. Since $g$ as a frame morphism preserves all joins, it has an upper adjoint by Lemma \ref{lem:adjoint}. By the same lemma, $g_*$ preserves all meets. Since $g$ preserves as a frame morphism finite meets, we obtain $$j_g(a\wedge b)=j_g(a)\wedge j_g(b),$$ for each $a,b\in F$. By Lemma \ref{lem:adjointequivalent}, we find that $$a\leq g_*\circ g=j_g(a),$$ for each $a\in F$. Moreover, by the same lemma, we find
\begin{equation*}
j_g\circ j_g=g_*\circ g\circ g_*\circ g=g_*\circ g=j_g,
\end{equation*}
so $j_g$ is a nucleus. We conclude that the map $[g]_E\mapsto j_g$ is well defined. Now let $[h]_E\in\E(L)$ be such that $[g]_E\leq[h]_E$. So $g:L\to G$ and $h:L\to H$ are frame surjections and there is a frame morphism $k:G\to H$ such that $h=k\circ g$. Then $k_*\circ k:G\to G$ is a nucleus, so for each $a\in L$ we have $g(a)\leq k_*\circ k(g(a)).$ By Lemma \ref{lem:adjointequivalent}, we have $g\circ g_*\circ g=g$, so we obtain $g\circ g_*\circ g(a)\leq k_*\circ k(g(a))$ for each $a\in L$. Then, since $g_*$ is the upper adjoint of $g$, this is equivalent to $g_*\circ g(a)\leq g_*\circ k_*\circ k\circ g(a)$. Hence $j_g(a)\leq j_{k\circ g}(a)$, so $j_g\leq j_h$ and we conclude that the map $[g]_E\mapsto j_g$ is an order morphism.

Finally, we have to show that $[g]_E\mapsto j_g$ and $j\mapsto [j]_E$ are each other's inverses. Let $g:L\to G$ be a frame surjection, which is a representative of $[g]_E$, then by Lemma \ref{lem:adjointequivalent} we find that that $g_*$ is an injection. Hence $(a,b)\in\ker j_g$ if and only if $j_g(a)=j_g(b)$ if and only if $g_*\circ g(a)=g_*\circ g(b)$ if and only if $g(a)=g(b)$ if and only $(a,b)\in\ker g$. So $\ker g=\ker j$, which we will denote by $\theta$. It follows now by the First Homomorphism Theorem that there are isomorphisms $k_g$ and $k_{j_g}$ such that $g=k_g\circ q_\theta$ and $j=k_{j_g}\circ q_\theta$. Then $k=k_{j_g}\circ k_g^{-1}$ is an isomorphism such that $k\circ g=j$, hence $[g]_E=[j_g]_E$.

For the converse, let $j$ be a nucleus. Then $j:M_j\to L$, the restriction of $j$ to $M_j$, is the upper adjoint of $j:L\mapsto M_j$. Indeed, if $a\in L$ and $b\in M_j$, we find that by Lemma \ref{lem:jmonotone} that $a\leq j(b)$ implies $$j(a)\leq j\circ j(b)=j(b)=b.$$ On the other hand, if $j(a)\leq b$, then $a\leq j(b)$, since $a\leq j(a)$ and $b\in M_j$. Thus for each $a\in L$ we find $$j_*\circ j(a)=j\circ j(a)=j(a),$$ hence $j_*j=j$ and the map $j\mapsto[j]_E\mapsto j_*\circ j$ is the identity.
\end{proof}

So we see that different nuclei represent different equivalent classes of $\Eq(L)$. The next proposition assures that each equivalent class can be represented by a unique nucleus.
\begin{proposition}\cite[Chapter III.5]{PicPul}\label{prop:bijectieNucFConF}
 Let $L$ be a locale. Then the map $\Nuc(L)\to\Con(L)$ given by
\begin{equation*}
j\mapsto\theta_j:=\{(a,b)\in L^2:j(a)=j(b)\},
\end{equation*}
is an order isomorphism with inverse $\theta\mapsto j_\theta$ given by
\begin{equation*}
j_\theta(a)=\bigvee\{b\in L:a\theta b\}.
\end{equation*}
\end{proposition}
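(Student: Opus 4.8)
The plan is to deduce this proposition from the two order isomorphisms already obtained, namely $\E(L)\cong\Con(L)$ (Corollary~\ref{cor:bijectieEFConF}) and $\E(L)\cong\Nuc(L)$ (Proposition~\ref{prop:bijectieEFNucF}), so that bijectivity and monotonicity come for free, and then to identify the inverse formula by a short direct computation.

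First I would observe that by Proposition~\ref{prop:nucleusisframesurjection} every nucleus $j$ on $L$, with codomain restricted to $M_j$, is a surjective frame morphism $j:L\to M_j$; since $M_j$ inherits its order from $L$, its kernel is $\ker j=\{(a,b)\in L^2:j(a)=j(b)\}=\theta_j$. Hence the assignment $j\mapsto\theta_j$ is exactly the composite $\Nuc(L)\to\E(L)\to\Con(L)$ of the order isomorphism $j\mapsto[j]_E$ (Proposition~\ref{prop:bijectieEFNucF}) followed by the order isomorphism $[g]_E\mapsto\ker g$ (Corollary~\ref{cor:bijectieEFConF}). Being a composite of order isomorphisms, $j\mapsto\theta_j$ is itself an order isomorphism.

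Its inverse is the composite of the two inverse maps taken in the opposite order, that is $\theta\mapsto[q_\theta]_E\mapsto j_{q_\theta}$, where $q_\theta:L\to L/\theta$ is the quotient surjection of Lemma~\ref{lem:prehomomorphismtheorem} and $j_{q_\theta}=(q_\theta)_*\circ q_\theta$ as in Proposition~\ref{prop:bijectieEFNucF}. So it remains only to check that $(q_\theta)_*\circ q_\theta(a)=\bigvee\{b\in L:a\,\theta\,b\}$ for each $a\in L$; once this is done we conclude $j_{q_\theta}=j_\theta$ and the proof is complete. For this, by Lemma~\ref{lem:frameadjoint} the upper adjoint is $(q_\theta)_*(y)=\bigvee\{x\in L:q_\theta(x)\leq y\}$, so $(q_\theta)_*\circ q_\theta(a)=\bigvee\{x\in L:[x]_\theta\leq[a]_\theta\}$; write $c$ for this join. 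Since $\{b:a\,\theta\,b\}\subseteq\{x:[x]_\theta\leq[a]_\theta\}$ we get $\bigvee\{b:a\,\theta\,b\}\leq c$. Conversely, because $q_\theta$ preserves arbitrary joins we have $q_\theta(c)=\bigvee\{[x]_\theta:[x]_\theta\leq[a]_\theta\}\leq[a]_\theta$, while $a$ lies in the indexing set so $a\leq c$ and hence $[a]_\theta\leq q_\theta(c)$; thus $q_\theta(c)=[a]_\theta$, i.e.\ $c\,\theta\,a$, so $c\leq\bigvee\{b:a\,\theta\,b\}$. This gives $c=\bigvee\{b:a\,\theta\,b\}=j_\theta(a)$.

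The only step that is not pure bookkeeping is the last computation identifying the inverse, and even there the work reduces to the fact that $q_\theta$ preserves joins; I do not expect any real obstacle. Alternatively one could give a direct proof --- verifying that $\theta_j$ is a congruence, that $j_\theta$ is a nucleus, and that the two assignments are mutually inverse and monotone --- but this is strictly more work than composing the isomorphisms already in hand.
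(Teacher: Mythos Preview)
Your proof is correct and follows essentially the same route as the paper: compose the order isomorphisms $\Nuc(L)\cong\E(L)$ and $\E(L)\cong\Con(L)$ to get bijectivity and monotonicity for free, then identify the inverse as $(q_\theta)_*\circ q_\theta$ and compute it explicitly via Lemma~\ref{lem:frameadjoint}. The only cosmetic difference is that the paper first rewrites $(q_\theta)_*\circ q_\theta(a)$ as $\bigvee\{c:(a\wedge c)\,\theta\,c\}$ before simplifying, whereas you work directly with $\bigvee\{x:[x]_\theta\leq[a]_\theta\}$; the two-sided estimate showing this equals $\bigvee\{b:a\,\theta\,b\}$ is the same in spirit (both hinge on $q_\theta$ preserving joins).
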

\begin{proof}
If we compose the map $j\mapsto[j]_E$ of Proposition \ref{prop:bijectieEFNucF} with the map $[g]_E\mapsto \ker g$ of Corollary \ref{cor:bijectieEFConF} we obtain an order isomorhism $\Nuc(L)\to\Con(L)$ by $j\mapsto\ker j$. Clearly, $\ker j$ is equal to $\theta_j$. For its inverse, we compose the inverses of the maps in Proposition \ref{prop:bijectieEFNucF} and Corollary \ref{cor:bijectieEFConF}, which are the maps $\theta\mapsto[q_\theta]_E$ and $[g]_E\mapsto g_*\circ g$. The inverse of $j\mapsto\ker j$ is given by $\theta\mapsto j_\theta$, where $j_\theta=(q_\theta)_*\circ q_\theta$.
We will explicitly calculate $j_\theta$. By Lemma \ref{lem:frameadjoint}, we have for each $[b]_\theta\in L/\theta$
\begin{equation*}
(q_\theta)_*([b]_\theta)=\bigvee\{c\in L:q_\theta(c)\leq[b]_\theta\},
\end{equation*}
hence for each $a\in L$
\begin{eqnarray*}
 j_\theta(a) & = & (q_\theta)_*\circ q_\theta(a)\\
 & = & \bigvee\{c\in L:q_\theta(c)\leq q_\theta(a)\}\\
 & = & \bigvee\{c\in L:q_\theta(a)\wedge q_\theta(c)=q_\theta(c)\}\\
 & = & \bigvee\{c\in L: (a\wedge c)\theta c\},
\end{eqnarray*}
where the last equality follows since $q_\theta$ is a frame morphism, so $q_\theta(a)\wedge q_\theta(c)=q_\theta(a\wedge c)$.
This can be simplified, hence let $x=\bigvee\{b\in L:a\theta b\}$. Since $a\theta b$
implies $a=(a\wedge a)\theta( a\wedge b)$, we find $\{b:a\theta
b\}\subseteq\{b:b\theta(a\wedge b)\}$, so $x\leq j_\theta(a)$. On
the other hand, we have
\begin{equation*}
j_\theta(a)=\bigvee\{b\in L:b\wedge(a\wedge
b)\}\theta\bigvee\{a\wedge b:b\wedge(a\wedge b)\}=a\wedge
j_\theta(a)=a,
\end{equation*}
since $a\leq j_\theta(a)$. But this implies that
$j_\theta(a)\in\{b\in L:a\theta b\}$, so $j_\theta(a)\leq x$. We
conclude that
\begin{equation*}
j_\theta(a)=\bigvee\{b\in L:a\theta b\}.
\end{equation*}
\end{proof}

Since we have obtained an equivalence between congruences and nuclei, we can calculate the congruences on $\D(\P)$, which in turn correspond to the Grothendieck topologies of Example \ref{ex:examplesGT}.
\begin{example}
Let $\P$ be a poset. If we define
\begin{itemize}
 \item $\theta_{\mathrm{ind}}=\{(A,A):A\in\D(\P)\}$;
 \item $\theta_{\mathrm{dis}}=\D(\P)^2$;
\item $\theta_{\mathrm{atom}}=\{(\emptyset,\emptyset)\}\cup(\D(\P)\setminus\{\emptyset\})^2$,
\end{itemize}
then $\theta_{\mathrm{ind}}$ and $\theta_{\mathrm{dis}}$ are frame
congruences on $\D(\P)$, and $\theta_{\mathrm{atom}}$ is a frame
congruence if $\P$ is downwards directed. If we consider $\P_3$ from
Example \ref{ex:examplesGT}, which is not downwards directed, we
see that $\down y\theta_{\mathrm{atom}}\P_3$ and $\down
z\theta_{\mathrm{atom}}\P_3$, which should imply that
$$\emptyset=(\down y\cap\down z)\theta_{\mathrm{atom}}\P_3,$$ which is
not true, so $\theta_{\mathrm{atom}}$ cannot be a frame congruence on $\D(\P_3)$.
\end{example}

Finally, we collect our results and connect the various frame-theoretical notions with Grothendieck topologies.
\begin{theorem}\label{thm:mainthm}
Let $\P$ be a poset. Then
\begin{align*}
J_\M(p) & = \{S\in\D(\down p):\forall M\in\M(S\subseteq M\implies
p\in M)\}; & \M\in\Sub(\D(\P))\\
J_j(p) & = \{S\in\D(\down p):p\in j(S)\}; & j\in\Nuc(\D(\P))\\
J_\theta(p) & = \{S\in\D(\down p):S\theta\down p\}; & \theta\in\Con(\D(\P))
\end{align*}
with $p\in\P$ are well-defined Grothendieck topologies on $\P$.
Also,
\begin{align*}
j_J(A) & =  \{p\in\P:A\cap\down p\in J(p)\}; &J\in\G(\P)\\
j_\theta(A)  & =  \bigcup\{B\in\D(\P):B\theta A\}; & \theta\in\Con(\D(\P))\\
j_\M(A) & =  \bigcap\{A\in\M:A\subseteq M\}; & \M\in\Sub(\D(\P))
\end{align*}
with $A\in\D(\P)$ are well-defined nuclei on $\D(\P)$.
Moreover,
\begin{align*}
\theta_j& =  \ker j=\{(A,B)\in\D(\P)^2:j(A)=j(B)\}; & j\in\Nuc(\D(\P))\\
\theta_J & = \{(A,B)\in\D(\P)^2:\forall p\in\P(A\cap\down p\in J(p)\Longleftrightarrow B\cap\down p\in J(p))\}; &J\in\G(\P)
\end{align*}
are well-defined congruences on $\D(\P)$. Furthermore,
\begin{align*}
\M_j & =  \{A\in\D(\P):j(A)=A\}=j[\D(\P)]; & j\in\Nuc(\D(\P))\\
\M_J & =  \{A\in\D(\P):\forall p\in\P(A\cap\down p\in J(p)\implies
p\in A)\}; &J\in\G(\P)
\end{align*}
are well-defined sublocales of $\D(\P)$.

Finally, the diagram
\begin{equation*}
\xymatrix{\Nuc(\D(\P))\ar[rr]^{j\mapsto\M_j}\ar[dd]_{j\mapsto \theta_j}\ar[ddrr]^{j\mapsto J_j} && \Sub(\D(\P))^\op\ar[dd]^{\M\mapsto J_\M}\\
\\
\Con(\D(\P))\ar[rr]_{\theta\mapsto J_\theta} && \G(\P)}
\end{equation*}
commutes, and all maps in the diagram are order isomorphisms,
with inverses
\begin{eqnarray*}
(j\mapsto J_j)^{-1} & = & J\mapsto j_J;\\
(j\mapsto \theta_j)^{-1} & = & \theta\mapsto j_\theta;\\
(j\mapsto\M_j)^{-1} & = & \M\mapsto j_\M;\\
(\theta\mapsto J_\theta)^{-1} & = & J\mapsto \theta_J;\\
(\M\mapsto J_\M)^{-1} & = & J\mapsto \M_J.
\end{eqnarray*}
\end{theorem}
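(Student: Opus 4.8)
The plan is to assemble the theorem from the various bijections already established in the appendix, treating it as a bookkeeping exercise rather than a fresh argument. First I would dispose of well-definedness: the formulas for $j_J$, $J_j$ appearing in the statement are exactly those of Proposition~\ref{prop:GT&Nuc}, the maps $j\mapsto\M_j$, $\M\mapsto j_\M$ are those of Proposition~\ref{prop:nucleiHI}, the maps $j\mapsto\theta_j$, $\theta\mapsto j_\theta$ are those of Proposition~\ref{prop:bijectieNucFConF}, so in each of those three cases both the claimed well-definedness and the claimed inverse relationship are already proved. That leaves only the maps $\theta\mapsto J_\theta$ and $\M\mapsto J_\M$ (together with their stated inverses $J\mapsto\theta_J$ and $J\mapsto\M_J$) to handle directly, and these I would obtain by \emph{composition}: define $J_\theta:=J_{j_\theta}$ and $J_\M:=J_{j_\M}$, and $\theta_J:=\theta_{j_J}$, $\M_J:=\M_{j_J}$. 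Since a composite of order isomorphisms is an order isomorphism (and the inverse of a composite is the composite of inverses in reverse order), this immediately gives that all five maps in the statement are order isomorphisms with the inverses as listed, \emph{provided} the closed-form expressions I would write down agree with these composites.

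So the real content of the proof is a sequence of formula verifications. I would check, for $\theta\in\Con(\D(\P))$ and $p\in\P$, that
\[
\{S\in\D(\down p): p\in j_\theta(S)\}=\{S\in\D(\down p): S\,\theta\,\down p\},
\]
using $j_\theta(S)=\bigvee\{B\in\D(\P):B\,\theta\,S\}$ together with the facts that $S\subseteq\down p$ and that $\down p=\down p\cap j_\theta(\down p)$; here the key point is that $p\in j_\theta(S)$ iff $\down p\subseteq j_\theta(S)$ iff $j_\theta(\down p)\le j_\theta(S)$ (monotonicity and idempotence of a nucleus) iff $\down p\,\theta\,(\down p\wedge S)=\down p\,\theta\,S$, and conversely. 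Similarly for $\M\in\Sub(\D(\P))$ I would verify
\[
\{S\in\D(\down p): p\in j_\M(S)\}=\{S\in\D(\down p):\forall M\in\M\,(S\subseteq M\Rightarrow p\in M)\},
\]
directly from $j_\M(S)=\bigcap\{M\in\M:S\subseteq M\}$. The expressions for $j_\theta(A)$, $j_\M(A)$, $\theta_j$, $\M_j$ are already verbatim in Propositions~\ref{prop:bijectieNucFConF} and~\ref{prop:nucleiHI}, and the expressions for $\theta_J$ and $\M_J$ follow by substituting the formula for $j_J$ into those for $\theta_j$ and $\M_j$: $(A,B)\in\theta_{j_J}$ iff $j_J(A)=j_J(B)$ iff for all $p$, $A\cap\down p\in J(p)\Leftrightarrow B\cap\down p\in J(p)$, and $A\in\M_{j_J}$ iff $j_J(A)=A$ iff for all $p$, $A\cap\down p\in J(p)\Rightarrow p\in A$ (one inclusion being automatic).

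Finally, commutativity of the square (and of the two diagonal triangles) amounts to four identities among composites. The triangle $J_j = J_{\M_j}$ is the composition of $j\mapsto\M_j$ (Prop.~\ref{prop:nucleiHI}) with $\M\mapsto J_\M$ defined as $J_{j_\M}$, so it reduces to $j=j_{\M_j}$, which is part of Proposition~\ref{prop:nucleiHI}; the triangle $J_j=J_{\theta_j}$ reduces to $j=j_{\theta_j}$, part of Proposition~\ref{prop:bijectieNucFConF}. The remaining face $J_{\M_j}=J_{\theta_j}$, i.e. $j_{\M_j}=j_{\theta_j}$, then follows since both equal $j$. I expect the main obstacle to be purely organizational: keeping straight which of the many $j$-, $\theta$-, $\M$- and $J$-indexed maps is defined as a primitive and which as a composite, and making sure no circularity creeps in — in particular one should define $J_\theta$, $J_\M$, $\theta_J$, $\M_J$ through the nucleus $\Nuc(\D(\P))$ as the ``hub'' of the diagram, and only afterwards check that the resulting composites have the advertised closed forms. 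No genuinely new order-theoretic or sheaf-theoretic difficulty arises.
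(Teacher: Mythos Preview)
Your proposal is correct and follows essentially the same route as the paper's proof: both use $\Nuc(\D(\P))$ as the hub, invoke Propositions~\ref{prop:GT&Nuc}, \ref{prop:nucleiHI}, and \ref{prop:bijectieNucFConF} for the three known bijections, and then reduce everything else to verifying that the composites $J_{j_\theta}$, $J_{j_\M}$, $\theta_{j_J}$, $\M_{j_J}$ agree with the stated closed-form expressions for $J_\theta$, $J_\M$, $\theta_J$, $\M_J$. The paper organises the verification into an ``upper triangle'' and a ``lower triangle'' of the square, while you frame it as checking that the composite definitions match the advertised formulas, but the actual equalities checked and the arguments used (idempotence and monotonicity of nuclei for $J_{j_\theta}=J_\theta$, direct unpacking of $j_\M$ for $J_{j_\M}=J_\M$, and substituting $j_J$ into $\theta_j$ and $\M_j$) are the same.
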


\begin{proof}
In Proposition \ref{prop:GT&Nuc}, we found that the assignments $j\mapsto J_j$ and $J\mapsto j_J$ are well defined and constitute an order isomorphism. By Proposition \ref{prop:bijectieNucFConF}, we know that the assignments $\theta\mapsto j_\theta$ and $j\mapsto \theta_j$ are well-defined order morphisms, which are each other's inverses.
Hence if we show that $J_{j_\theta}=J_\theta$ for each congruence $\theta$ and $\theta_{j_J}=\theta_J$ for each Grothendieck topology $J$, we prove that the lower triangle of the diagram commutes and consists of order isomorphisms. Moreover, we automatically obtain that $\theta_J$ is a well-defined congruence for each Grothendieck topology and $J_\theta$ is a well-defined Grothendieck topology for each congruence $\theta$.

So let $S\in\D(\down p)$ for $p\in\P$. Then $S\in J_{j_\theta}$ if and only if $p\in j_\theta(S)$ if and only if $\down p\subseteq j_\theta(S)$. Now, this implies $$j_\theta(\down p)\subseteq j_\theta\circ j_\theta(S)=j_\theta(S).$$ Conversely, $\down p\subseteq j_\theta(S)$ implies $j_\theta(\down p)\subseteq j_\theta(S)$, since $\down p\subseteq j_\theta(\down p)$ and $j_\theta\circ j_\theta=j_\theta$. Hence, $S\in J_{j_\theta}(p)$ is equivalent to $j(\down p)\subseteq j(S)$. Since $S\subseteq\down p$, we always have $j(S)\subseteq j(\down p)$, hence $S\in J_{j_\theta}(p)$ if and only if $j_\theta(S)=j_\theta(\down p)$. But this last equality is equivalent with $(S,\down p)\in\theta_{j_\theta}=\theta$, which says that $S\in J_\theta(p)$ by definition of $J_\theta$. Thus $J_{j_\theta}= J_\theta$.

Now let $A,B\in\D(\P)$. Then $(A,B)\in \theta_{j_J}$ if and only if $j_J(A)=j_J(B)$. By definition of $j_J$, we find that the last equality holds if and only if $A\cap\down p\in J(p)$ if and only if $B\cap\down p\in J(p)$ for each $p\in\P$. But this says exactly that $(A,B)\in\theta_J$. So $\theta_{j_J}=\theta_J$.

Proposition \ref{prop:nucleiHI} assures that the maps $j\mapsto \M_j$ and $\M\mapsto j_\M$ are well-defined order morphisms, and are inverses of each other. So we show that the upper triangle of the diagram commutes and consists only of bijections if we show that $J_{j_\M}=J_\M$ for each sublocale $\M$ of $\D(\P)$, which also proves that $J_\M$ is a well-defined topology for each sublocale $\M$. Moreover, if we show that $\M_{j_J}=\M_J$ for each Grothendieck topology $J$, we prove that $\M_J$ is a well-defined sublocale for each Grothendieck topology $J$.

Thus let $S\in\D(\down p)$ for $p\in\P$. Then $S\in J_{j_\M}(p)$ if and only if $$p\in j_\M(S)=\bigcap\{M\in\M:S\subseteq M\}.$$ So $S\in J_{j_\M}(p)$ if and only if $p\in M$ for each $M\in\M$ such that $S\subseteq M$. But this says exactly that $S\in J_\M(p)$, hence $J_{j_\M}=J_\M$.

Finally, let $M\in\D(\P)$. Then $M\in\M_{j_J}$ if and only if $$M=j_J(M)=\{p\in P:M\cap\down p\in J(p)\}.$$ Since we automatically have $p\in M\implies M\cap\down p\in J(p)$ for $p\in M$ implies $M\cap\down p=\down p$, we find $M\in\M_{j_J}$ if and only if $M\cap\down p\in J(p)\Longleftrightarrow p\in M$ for each $p\in\P$. But this exactly says that $\M_{j_J}=\M_J$.
\end{proof}

\begin{corollary}\label{cor:completestructures}
 Let $\P$ be a poset and $J$ a Grothendieck topology on $\P$. If $\theta$ is a congruence on $\D(\P)$ and $j:\D(\P)\to\D(\P)$ a nucleus on $\D(\P)$ which are related to $J$ under the bijections of Theorem \ref{thm:mainthm}, then $J$ is complete as a Grothendieck topology if and only if $j$ is complete as a nucleus if and only if $\theta$ is a complete congruence.
\end{corollary}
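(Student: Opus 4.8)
The plan is to work entirely through the order isomorphisms assembled in Theorem~\ref{thm:mainthm}. Since those maps are mutually inverse bijections carrying a Grothendieck topology $J$ to the nucleus $j=j_J$ and then to the congruence $\theta=\theta_j$, it suffices to establish two separate equivalences: \emph{(a)} a Grothendieck topology $J$ on $\P$ is complete if and only if the nucleus $j_J$ is complete; and \emph{(b)} a nucleus $j$ on $\D(\P)$ is complete if and only if the congruence $\theta_j$ is complete. Chaining these gives the statement. Throughout I would use the elementary fact that in $\D(\P)$ the meet operation is just intersection of down-sets, so that all three completeness conditions are literally assertions that certain intersections are preserved, and I would also record once and for all that one inclusion is always free: $j_J$ and $j$ are monotone by Lemma~\ref{lem:jmonotone}, so $j\bigl(\bigcap_{i\in I}A_i\bigr)\subseteq\bigcap_{i\in I}j(A_i)$ automatically.

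For \emph{(a)}, recall $j_J(A)=\{p\in\P:A\cap\down p\in J(p)\}$. If $J$ is complete and $p\in\bigcap_{i}j_J(A_i)$, then $A_i\cap\down p\in J(p)$ for every $i$, so $\bigl(\bigcap_i A_i\bigr)\cap\down p=\bigcap_i(A_i\cap\down p)\in J(p)$ by completeness of $J$, i.e.\ $p\in j_J\bigl(\bigcap_i A_i\bigr)$; together with monotonicity this shows $j_J$ is complete. Conversely, if $j_J$ is complete and $\{S_i\}_{i\in I}\subseteq J(p)$, then $S_i\cap\down p=S_i$, so $p\in j_J(S_i)$ for each $i$, hence $p\in\bigcap_i j_J(S_i)=j_J\bigl(\bigcap_i S_i\bigr)$, which unwinds (using $\bigcap_i S_i\subseteq\down p$) to $\bigcap_i S_i\in J(p)$, so $J$ is complete.

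For \emph{(b)}, use $\theta_j=\{(A,B):j(A)=j(B)\}$ together with Proposition~\ref{prop:nucleiHI}, which tells us $\M_j=j[\D(\P)]$ is a sublocale of $\D(\P)$ and hence closed under arbitrary meets. If $j$ is complete and $A_i\,\theta_j\,B_i$ for all $i$, then $j(A_i)=j(B_i)$, so $j\bigl(\bigcap_i A_i\bigr)=\bigcap_i j(A_i)=\bigcap_i j(B_i)=j\bigl(\bigcap_i B_i\bigr)$, i.e.\ $\bigl(\bigcap_i A_i\bigr)\,\theta_j\,\bigl(\bigcap_i B_i\bigr)$; since $\theta_j$ is already a congruence (Theorem~\ref{thm:mainthm}), this is exactly completeness of $\theta_j$. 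Conversely, assume $\theta_j$ is complete. Since $j\circ j=j$, each pair $(A_i,j(A_i))$ lies in $\theta_j$, so completeness of $\theta_j$ gives $\bigl(\bigcap_i A_i\bigr)\,\theta_j\,\bigl(\bigcap_i j(A_i)\bigr)$, whence $j\bigl(\bigcap_i A_i\bigr)=j\bigl(\bigcap_i j(A_i)\bigr)$. But $\bigcap_i j(A_i)\in\M_j$, so $j$ fixes it, giving $j\bigl(\bigcap_i A_i\bigr)=\bigcap_i j(A_i)$; combined with the free inclusion this shows $j$ is complete.

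I do not expect a genuine obstacle: every implication is a short unwinding of definitions, and the two nontrivial ingredients — that monotonicity makes one inclusion automatic, and that the fixed-point set $\M_j$ of a nucleus is closed under arbitrary meets — are already supplied by Lemma~\ref{lem:jmonotone} and Proposition~\ref{prop:nucleiHI}. The only point that needs a little care is bookkeeping: one must keep $J$, $j$ and $\theta$ tied to a single datum (i.e.\ $j=j_J$ and $\theta=\theta_j$) so that the two equivalences compose, and one must remember that $\bigwedge$ in $\D(\P)$ is $\bigcap$ and that a \emph{complete} congruence on $\D(\P)$ is by definition one preserving such intersections.
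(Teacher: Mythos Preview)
Your proof is correct. The arguments themselves are the same definition-unwinding as in the paper, but the organization differs: the paper proves a single cycle of three implications ($\theta$ complete $\Rightarrow$ $J_\theta$ complete $\Rightarrow$ $j_J$ complete $\Rightarrow$ $\theta_j$ complete), whereas you establish two separate biconditionals. Two of your four implications coincide verbatim with legs of the paper's cycle; the other two (your converse directions) are new. In particular, for $\theta_j$ complete $\Rightarrow$ $j$ complete you invoke the fact that $\M_j$ is closed under arbitrary meets (Proposition~\ref{prop:nucleiHI}), which the paper's cyclic route avoids by passing through $J$ instead. Both approaches are short and elementary; the paper's cycle is slightly more economical (three implications rather than four, and no need for the sublocale fact), while your decomposition has the minor advantage of yielding each pairwise equivalence directly without routing through the third structure.
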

\begin{proof}
 We have $J=J_\theta$, $\theta=\theta_j$ and $j=j_J$. Assume $\theta$ is complete and let $p\in\P$ and $\{S_i\}_{i\in I}\subseteq J_{\theta}(p)$ a family of covers of $p$. This means that $S_i\theta\down p$ for each $i\in I$, so $\left(\bigcap_{i\in I}S_i\right)\theta\down p$, since $\theta$ is complete. Thus $\bigcap_{i\in I}S_i\in J_\theta(p)$, so $J$ is complete. Now assume $J$ is complete, and let $\{A_i\}_{i\in I}\subseteq\D(\P)$. We obtain
\begin{equation*}
 j_J\left(\bigcap_{i\in I}A_i\right)=\left\{p\in\P:\bigcap_{i\in I}A_i\cap\down p\in J(p)\right\}.
\end{equation*}
By Lemma \ref{lem:filter}, we have $A_i\cap\down p\in J(p)$ for each $i\in I$ if $\bigcap_{i\in I}A_i\cap\down p\in J(p)$. On the other hand, we have $\bigcap_{i\in I}A_i\cap\down p\in J(p)$ if $A_i\cap\down p\in J(p)$ for each $i\in I$, since $J$ is complete. Hence
\begin{eqnarray*}
 j_J\left(\bigcap_{i\in I}A_i\right) & = & \left\{p\in\P:A_i\cap\down p\in J(p)\ \mathrm{for\ each\ }i\in I\right\}\\
 & = & \bigcap_{i\in I}\{p\in\P:A_i\cap\down p\in J(p)\}\\
 & = & \bigcap_{i\in I}j_J(A_i),
\end{eqnarray*}
and we conclude that $j=j_J$ is a complete nucleus. Finally, let $j$ be a complete nucleus. Assume that $I$ is an index set and $\{A_i\}_{i\in I},\{B_i\}_{i\in I}\subseteq\D(\P)$ such that $A_i\theta_j B_i$ for each $i\in I$. This means that $j(A_i)=j(B_i)$ for each $i\in I$, and since $j$ is complete, we find
\begin{equation*}
 j\left(\bigcap_{i\in I}A_i\right)=\bigcap_{i\in I}j(A_i)=\bigcap_{i\in I}j(B_i)=j\left(\bigcap_{i\in I}B_i\right).
\end{equation*}
In other words, $\left(\bigcap_{i\in I}A_i\right)\theta_j\left(\bigcap_{i\in I}B_i\right)$, so $\theta=\theta_j$ is complete.
\end{proof}

As an example we explore how the dense Grothendieck topology translates to under the order isomorphisms of Theorem \ref{thm:mainthm}.
\begin{proposition}\cite[Corollary VI.1.5]{M&M}
 Let $\P$ be a poset, then $j_{\neg\neg}:\D(\P)\to\D(\P)$ defined by $A\mapsto\neg\neg A$ is a nucleus. Moreover, $J_{j_{\neg\neg}}=J_\dense$.
\end{proposition}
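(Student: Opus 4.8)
The plan is to prove the two assertions separately: first that $j_{\neg\neg}$ is a nucleus on $\D(\P)$, and then that the associated Grothendieck topology coincides pointwise with the dense topology $J_\dense$. Recall that $\D(\P)$ is a frame in which meets are intersections, joins are unions, the bottom is $\emptyset$ and the top is $\P$, so all the identities of Lemma \ref{lem:negationidentities} are available.

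For the first assertion I would simply verify the three axioms of a nucleus directly. Axiom (i), $A\subseteq\neg\neg A$, is exactly Lemma \ref{lem:negationidentities}(v). Axiom (ii), $\neg\neg(\neg\neg A)=\neg\neg A$, follows from Lemma \ref{lem:negationidentities}(vi) applied with $\neg A$ in place of $x$: since $\neg(\neg A)=\neg\neg\neg(\neg A)$ we get $\neg\neg A=\neg\neg\neg\neg A$. Axiom (iii), $\neg\neg(A\cap B)=\neg\neg A\cap\neg\neg B$, is precisely Lemma \ref{lem:negationidentities}(vii). Hence $j_{\neg\neg}\in\Nuc(\D(\P))$, and Proposition \ref{prop:GT&Nuc} guarantees that $J_{j_{\neg\neg}}$ is a well-defined Grothendieck topology with $J_{j_{\neg\neg}}(p)=\{S\in\D(\down p):p\in\neg\neg S\}$.

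The key step for the second assertion is an explicit description of negation in $\D(\P)$: for every $A\in\D(\P)$ one has
\begin{equation*}
\neg A=\{q\in\P:\down q\cap A=\emptyset\}.
\end{equation*}
This can be obtained either straight from Definition \ref{def:heyimplicationinframes} — writing $\neg A=\bigcup\{C\in\D(\P):C\cap A=\emptyset\}$ and checking that the right-hand side above is the largest down-set disjoint from $A$ — or from Lemma \ref{lem:negationidentities}(iii), noting that $q\in\neg A$ iff $\down q\subseteq\neg A$ iff $A\cap\down q=\emptyset$. Given a sieve $S\in\D(\down p)$, which is in particular an element of $\D(\P)$, it then follows that $p\in\neg\neg S=\neg(\neg S)$ iff $\down p\cap\neg S=\emptyset$, i.e. iff for every $q\leq p$ we have $q\notin\neg S$, i.e. iff for every $q\leq p$ there is an $r\in S$ with $r\leq q$ — in other words, iff $\down p\subseteq\up S$. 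Since by definition $S\in J_\dense(p)$ iff $\down p\subseteq\up S$, we conclude $J_{j_{\neg\neg}}(p)=J_\dense(p)$ for all $p\in\P$, hence $J_{j_{\neg\neg}}=J_\dense$.

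There is no serious obstacle here: the only points requiring care are getting the formula for $\neg$ in $\D(\P)$ right and observing that a sieve on $p$ is a genuine down-set of $\P$, so that the frame identities and this formula apply to it.
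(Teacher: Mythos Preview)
Your proof is correct and follows essentially the same approach as the paper: both verify the nucleus axioms directly from Lemma~\ref{lem:negationidentities}, and both establish $J_{j_{\neg\neg}}=J_\dense$ by unwinding the condition $p\in\neg\neg S$ via the Heyting implication to reach $\down p\subseteq\up S$. Your presentation is slightly more streamlined---you first isolate the explicit formula $\neg A=\{q\in\P:\down q\cap A=\emptyset\}$ and then argue by a direct chain of equivalences, whereas the paper proves each inclusion separately by contradiction---but the underlying argument is the same.
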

\begin{proof}
It follows directly from Lemma \ref{lem:negationidentities} that $j_{\neg\neg}$ is a nucleus.  Recall that $J_j(p)=\{S\in\D(\down p):p\in j(S)\}$ given a nucleus $j:\D(\P)\to\D(\P)$. So assume that $S\in J_{j_{\neg\neg}}(p)$, i.e. $p\in\neg\neg S$. Then $\down p\subseteq (\neg S\to\emptyset)$, so $\down p\cap\neg S=\emptyset$. We want to show that $\down p\subseteq\up S$. So let $q\in\down p$ and assume that $q\notin\up S$. Then $S\cap\down q=\emptyset$, so $q\in\down q\subseteq S\to\emptyset=\neg S$. But this contradicts $\down p\cap\neg S=\emptyset$. So indeed $\down p\subseteq\up S$, hence $S\in J_\dense(p)$. Now assume that $S\in J_\dense(p)$. We have to show that $p\in\neg\neg S$. Assume the converse, then $\down p\nsubseteq(\neg S\to\emptyset)$, so $\down p\cap\neg S\neq\emptyset$. Hence, there is a $q\leq p$ such that $q\in\neg S=S\to\emptyset$. So $\down q\cap S=\emptyset$. But this implies that $q\notin\up S$ contradicting $\down p\subseteq\up S$. So we must have $p\in\neg\neg S$, or equivalently, $S\in J_{j_{\neg\neg}}(p)$.
\end{proof}

\begin{corollary}
Let $\P$ a poset. Then the sublocale corresponding to $J_\dense$ is given by $\M_\dense=\{A\in\D(\P):\neg\neg A=A\}$. Moreover, the corresponding congruence is given by $\theta_\dense=\{(A,B)\in\D(\P)^2:\neg\neg A=\neg\neg B\}$.
\end{corollary}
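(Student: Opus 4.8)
The plan is to reduce everything to the proposition immediately preceding this corollary together with the order isomorphisms of Theorem \ref{thm:mainthm}. That proposition identifies the nucleus on $\D(\P)$ associated to $J_\dense$ under the isomorphism $\Nuc(\D(\P))\to\G(\P)$, $j\mapsto J_j$: it shows $J_{j_{\neg\neg}}=J_\dense$, where $j_{\neg\neg}(A)=\neg\neg A$. So the nucleus corresponding to $J_\dense$ is $j_{\neg\neg}$, and it remains only to transport this through the remaining edges of the commutative diagram.

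First I would invoke commutativity of the upper triangle of the diagram in Theorem \ref{thm:mainthm}, which says that $\M\mapsto J_\M$ precomposed with $j\mapsto\M_j$ equals $j\mapsto J_j$. Since $j\mapsto J_j$ and $\M\mapsto J_\M$ are order isomorphisms with inverses $J\mapsto j_J$ and $J\mapsto\M_J$ respectively, the sublocale corresponding to $J_\dense$, namely $\M_{J_\dense}$, equals $\M_{j_{\neg\neg}}$, the image of $j_{\neg\neg}$ under $j\mapsto\M_j$. By the formula in Theorem \ref{thm:mainthm} (equivalently Proposition \ref{prop:nucleiHI}), $\M_{j_{\neg\neg}}=\{A\in\D(\P):j_{\neg\neg}(A)=A\}=\{A\in\D(\P):\neg\neg A=A\}$, which is exactly $\M_\dense$. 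The same argument along the left edge of the diagram handles the congruence: $j\mapsto\theta_j$ is an order isomorphism $\Nuc(\D(\P))\to\Con(\D(\P))$ and the lower-left triangle gives that $\theta\mapsto J_\theta$ precomposed with $j\mapsto\theta_j$ equals $j\mapsto J_j$, so the congruence corresponding to $J_\dense$ is $\theta_{j_{\neg\neg}}=\ker j_{\neg\neg}=\{(A,B)\in\D(\P)^2:\neg\neg A=\neg\neg B\}$ by the formula for $\theta_j$ in Theorem \ref{thm:mainthm}. This is $\theta_\dense$.

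There is essentially no obstacle: both claims are direct substitutions into results already proved, and the only point requiring care is to quote the commutativity of the diagram correctly, so that ``the sublocale (resp.\ congruence) corresponding to $J_\dense$'' is unambiguously the image under the inverse isomorphism $J\mapsto\M_J$ (resp.\ $J\mapsto\theta_J$). If desired I would add, as a remark rather than a necessary step, that $\M_\dense$ is visibly a sublocale because it is the fixed-point set of the nucleus $j_{\neg\neg}$ (Proposition \ref{prop:nucleiHI}), using $\neg\neg(A\wedge B)=(\neg\neg A)\wedge(\neg\neg B)$ from Lemma \ref{lem:negationidentities}(vii), but this is already subsumed by the appeal to Theorem \ref{thm:mainthm}.
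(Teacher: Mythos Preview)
Your proposal is correct and follows exactly the same approach as the paper: the paper's proof is the single line ``This follows directly from Theorem \ref{thm:mainthm}'', and you have simply spelled out in detail how that theorem, together with the preceding proposition identifying $j_{\neg\neg}$ as the nucleus corresponding to $J_\dense$, yields the stated sublocale and congruence via the formulas $\M_j=\{A:j(A)=A\}$ and $\theta_j=\ker j$.
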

\begin{proof}
This follows directly from Theorem \ref{thm:mainthm}.
\end{proof}

As an application, we consider the case when $\P$ is a linearly ordered poset. We shall see that the completion of $\P$ corresponds with a Grothendieck topology, and moreover, if $\P=\Z$, we can find all Grothendieck topologies via the route of sublocales. The crucial fact is that $\D(\P)$ is also linearly ordered if $\P$ is linearly ordered. Indeed, if $A\nsubseteq B$, there is a $p\in A$ such that $p\notin B$. Since $B$ is a down-set, we cannot have $p\leq q$ for each $q\in B$. Since $\leq$ is a linear order, we must have $q<p$ for each $q\in B$, so $B\subseteq\down p\subseteq A$.

Knowing that $\D(\P)$ is linearly ordered makes it quite easy to compute the Heyting implication between different down-sets.
\begin{lemma}\label{lem:linordersublocale}
 Let $\P$ be a linearly ordered set. Then a subset $\M\subseteq\D(\P)$ is a sublocale of $\D(\P)$ if and only if it is closed under intersections.
\end{lemma}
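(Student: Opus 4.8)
The plan is to note that one implication is essentially the definition and to reduce the other to a short computation of the Heyting implication in the linearly ordered frame $\D(\P)$. For the ``only if'' direction there is nothing to do: by Definition~\ref{def:sublocale} a sublocale of $\D(\P)$ is closed under all meets, and by (the proof of) Proposition~\ref{prop:subLiscompletelattice} the meet in $\D(\P)$ is intersection, so a sublocale is in particular closed under intersections (the empty intersection being the top element $\P$, as recorded right after Definition~\ref{def:sublocale}).

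For the ``if'' direction I would argue as follows. Suppose $\M\subseteq\D(\P)$ is closed under intersections; in particular $\P=\bigcap\emptyset\in\M$. By Definition~\ref{def:sublocale} it remains to verify that $A\to m\in\M$ for all $A\in\D(\P)$ and $m\in\M$. Using the fact noted just before the lemma that $\D(\P)$ is linearly ordered, $A$ and $m$ are comparable. If $A\subseteq m$, then $X\cap A\subseteq A\subseteq m$ for every $X\in\D(\P)$, so by the defining adjunction~\eqref{eq:HeyImp} every $X$ satisfies $X\leq(A\to m)$, i.e.\ $A\to m=\P\in\M$. If instead $m\subsetneq A$, I claim $A\to m=m$: from $m\cap A=m\subseteq m$ and~\eqref{eq:HeyImp} we get $m\leq(A\to m)$, and conversely if $X\cap A\subseteq m$ then by linearity $X\subseteq A$ or $A\subseteq X$, the latter giving $A=X\cap A\subseteq m$ which contradicts $m\subsetneq A$; hence $X\subseteq A$, so $X=X\cap A\subseteq m$. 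Thus $A\to m=m\in\M$. In both cases $A\to m\in\M$, and therefore $\M$ is a sublocale.

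There is no real obstacle here: the only content is the two-case computation of $A\to m$, which is forced by the linearity of $\D(\P)$, and the single bookkeeping point that ``closed under intersections'' must be understood to include the empty intersection, so that $\P\in\M$ — consistent with the convention already in force that every sublocale contains $1$. I would present the computation of $A\to m$ as a displayed observation and then conclude in one line.
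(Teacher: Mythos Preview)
Your proof is correct and follows essentially the same approach as the paper: both directions are handled identically, and for the nontrivial ``if'' direction both you and the paper reduce to the two-case computation showing $A\to m=\P$ when $A\subseteq m$ and $A\to m=m$ when $A\nsubseteq m$. The only cosmetic difference is that the paper verifies the second case pointwise (checking whether $\down p\cap A\subseteq M$ for each $p$), whereas you argue directly with the adjunction~\eqref{eq:HeyImp} at the level of down-sets; the content is the same.
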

\begin{proof}
Let $\M$ be a sublocale of $\D(\P)$. Then by definition of a sublocale it is closed under intersections. Conversely, assume that $\M$ is closed under intersections, let $M\in\M$, and $A\in\D(\P)$. First assume that $A\subseteq M$. Then $p\in A\to M$ if and only if $\down p\subseteq A\to M$ if and only if $\down p\cap A\subseteq M$. Hence
$A\to M=\P$, which is an element of $\M$ by the remark below Definition \ref{def:sublocale}. Now assume that $A\nsubseteq M$. We always have $A\cap M\subseteq M$, so $M\subseteq A\to M$. Assume $p\notin M$. If $p\in A$, then $\down p\subseteq A$, hence $\down p\cap A=\down p$. But $p\in\down p$, so $\down p\cap A\nsubseteq M$. On the other hand, if $p\notin A$, then we have $q<p$ for each $q\in A$, so $A\subseteq\down p$. Thus $\down p\cap A=A\nsubseteq M$. So if $p\notin M$, then $p\notin A\to M$. We conclude that $A\to M=M\in\M$.
\end{proof}

A well-known construction is the \emph{Dedekind-MacNeille completion}\index{Dedekind-MacNeille completion} of a poset $\P$, which is defined as the set $\M_{dm}$ of fixed points of the function $c:\D(\P)\to\D(\P)$, which is defined as follows. If $A\in\D(\P)$, we define $A^u=\{p\in P:a\leq p\ \forall a\in A\}$, the set of upper bounds of $A$, whereas $A^l=\{p\in\P:p\leq a\ \forall a\in A\}$, the set of all lower bounds of $A$. Then we define $c(A)=A^{ul}$. One can show that $(\M_\mathrm{dm},\subseteq)$ is a complete lattice, and the map $\phi:\P\to\M_\mathrm{dm}$ given by $p\mapsto\down p$ is an embedding that preserves all existing joins and meets in $\P$. For details, we refer to \cite[7.38]{DP}.
One can easily show that $c$ is a so-called \emph{closure operator}\index{closure operator} on $\D(\P)$, which means that for each $A,B\in\D(\P)$
\begin{enumerate}
  \item[(i)]  $A\subseteq c(A)$;
 \item[(ii)] $c\circ c(A)=c(A)$;
 \item [(iii)] $A\subseteq B$ implies $c(A)\subseteq c(B)$.
\end{enumerate}
So $c$ is `almost' a nucleus. We note that literature nonetheless refers to nuclei as closure operators. Now, if $\P$ is linearly ordered, $c$ is a nucleus, since we found that $\D(\P)$ is also linearly ordered. Then, if $A,B\in\D(\P)$, we have either $A\subseteq B$ or $B\subseteq A$. Assume without loss of generality that $A\subseteq B$. Then $A=A\cap B$, but from (iii) in the definition of a closure operator, we also have $c(A)=c(A)\cap c(B)$. It follows that $c(A\cap B)=c(A)=c(A)\cap c(B)$.

Since $\M_\mathrm{dm}=\{A\in\D(\P):c(A)=A\}$, and $c$ is a nucleus, we see
that the Dedekind-MacNeille completion is a sublocale of
$\D(\P)$. Under the order isomorphisms of Theorem \ref{thm:mainthm}, we see that the nucleus $c$ and the sublocale $\M_\mathrm{dm}$ correspond with the Grothendieck topology $$J_c(p)=\{S\in\D(\down
p):p\in S^{ul}\}$$ and the congruence $$\theta_c=\{(A,B)\in\D(\P)^2:A^{ul}=B^{ul}\}.$$

If we consider $\P=\Z$ with the usual order, notice that $\D(\Z)=\{\Z,\emptyset\}\cup\{\down n:n\in\Z\}$, which is ordered by inclusion. Moreover, we have $n\leq m$ if and only if $\down n\subseteq\down m$ if and only if $\down n\cap\down m=\down n$. Let $Y$ be a non-empty subset of $\Z$, then $Y$ contains a minimum if and only if it is bounded from below. Hence
\begin{equation}\label{eq:intersectionsubsofDZ}
 \bigcap\{\down y:y\in Y\} =  \left\{
       \begin{array}{ll}
         \down\min(Y), & Y\ $is\ bounded\ from\ below$; \\
         \emptyset, &  $otherwise$.
       \end{array}
     \right.
\end{equation}

\begin{lemma}
Let $Y$ be a subset of $\Z$ and define
\begin{eqnarray*}
 \M_Y & = &\{\Z,\emptyset\}\cup\{\down p:p\in Y\}\\
\mathcal{N}_Y & = & \{\Z\}\cup\{\down y:y\in Y\}.
\end{eqnarray*}
Then $\M_Y$ is a sublocale of $\D(\P)$ and $\mathcal{N}_Y$ is a sublocale if and only if $Y$ is bounded from below. Conversely, every sublocale of $\D(\P)$ is either equal to $\M_Y$ or to $\mathcal{N}_Y$ for some subset $Y$ of $\Z$.
\end{lemma}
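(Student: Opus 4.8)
The plan is to verify that each of $\M_Y$ and $\N_Y$ is closed under intersections (so by Lemma \ref{lem:linordersublocale} it is a sublocale precisely when it is so closed), and then to show that an arbitrary sublocale $\M$ of $\D(\Z)$ is forced into one of these two shapes. Throughout I will use the explicit description $\D(\Z)=\{\Z,\emptyset\}\cup\{\down n:n\in\Z\}$ and the intersection formula (\ref{eq:intersectionsubsofDZ}).

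\smallskip

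First I would treat $\M_Y$. Let $\{A_i\}_{i\in I}\subseteq\M_Y$; I may assume each $A_i\neq\Z$ (dropping copies of $\Z$ does not change the intersection unless all of them are $\Z$, in which case the intersection is $\Z\in\M_Y$), and if some $A_i=\emptyset$ the intersection is $\emptyset\in\M_Y$. So assume $A_i=\down p_i$ with $p_i\in Y$ for each $i$. By (\ref{eq:intersectionsubsofDZ}), $\bigcap_i\down p_i$ equals $\down\min\{p_i\}$ if $\{p_i\}$ is bounded from below — and then $\min\{p_i\}=p_j$ for some $j\in I$, so it lies in $Y$ and the intersection is in $\M_Y$ — and equals $\emptyset\in\M_Y$ otherwise. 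Hence $\M_Y$ is closed under intersections, so it is a sublocale by Lemma \ref{lem:linordersublocale}, for every $Y\subseteq\Z$. For $\N_Y=\{\Z\}\cup\{\down y:y\in Y\}$ the same computation applies, except that now $\emptyset$ is not available as a fallback: the intersection of a subfamily $\{\down p_i\}$ with $\{p_i\}$ \emph{not} bounded below is $\emptyset\notin\N_Y$. So $\N_Y$ is closed under intersections iff no such subfamily exists, i.e.\ iff every subset of $Y$ that is used — equivalently $Y$ itself — is bounded from below. (If $Y$ is bounded below, then $\min$ of any nonempty subfamily lies in $Y$, and $\Z$ handles the empty family, so $\N_Y$ is a sublocale.) This gives the "if and only if $Y$ is bounded from below" clause.

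\smallskip

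For the converse, let $\M$ be any sublocale of $\D(\Z)$. By the remark below Definition \ref{def:sublocale}, $\Z\in\M$ (it is the empty meet). Put $Y=\{p\in\Z:\down p\in\M\}$. Then certainly $\{\down p:p\in Y\}\cup\{\Z\}\subseteq\M$, and the only element of $\D(\Z)$ not yet accounted for is $\emptyset$. So $\M=\N_Y$ if $\emptyset\notin\M$, and $\M=\M_Y$ if $\emptyset\in\M$: in either case $\M$ has the asserted form. I should also check consistency, i.e.\ that when $\emptyset\notin\M$ the set $Y$ is automatically bounded from below — but this follows from the first part, since $\M$ is a sublocale hence closed under intersections, and if $Y$ were unbounded below then $\bigcap_{p\in Y}\down p=\emptyset$ would have to lie in $\M$, a contradiction. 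So the trichotomy is clean: every sublocale is $\M_Y$ for some $Y$, or $\N_Y$ for some $Y$ bounded from below.

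\smallskip

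I do not expect a serious obstacle here; the only mild subtlety is bookkeeping about the empty and total down-sets as degenerate cases of the intersection formula (\ref{eq:intersectionsubsofDZ}), and making sure the two families $\M_Y$, $\N_Y$ genuinely exhaust $\Sub(\D(\Z))$ rather than merely being contained in it — which is handled by observing that $\{\emptyset,\Z\}\cup\{\down n:n\in\Z\}$ is \emph{all} of $\D(\Z)$, so a sublocale is completely determined by which $\down n$'s and whether $\emptyset$ it contains. The one place to be careful is not to claim $\N_Y$ is a sublocale for unbounded $Y$; the statement deliberately restricts that clause, and the proof of closure under intersections is exactly where the restriction enters.
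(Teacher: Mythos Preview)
Your proof is correct and follows essentially the same approach as the paper: both invoke Lemma \ref{lem:linordersublocale} to reduce the sublocale condition to closure under intersections, then use the explicit description of $\D(\Z)$ and formula (\ref{eq:intersectionsubsofDZ}) to verify closure for $\M_Y$ and characterize when it holds for $\N_Y$, and finally observe that any sublocale must contain $\Z$ and is then determined by which $\down n$'s and whether $\emptyset$ it contains. Your explicit consistency check (that $\emptyset\notin\M$ forces $Y$ to be bounded below) is a nice touch that the paper leaves implicit.
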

\begin{proof}
 Clearly, each sublocale $\M$ of $\D(\Z)$ shoud be of the form $\M_Y$ or $\mathcal N_Y$ for some subset $Y$ of $\Z$, since every sublocale always contains $\Z$ by the remark below Definition \ref{def:sublocale} and by Lemma \ref{lem:linordersublocale}, we only have to check that $\bigcap\mathcal A\in\M$ for each subset $\mathcal A$ of $\M$. Consider $\M_Y$. If $\mathcal A\subseteq\M_Y$ contains the empty set, then clearly $\bigcap\mathcal A=\emptyset\in\M_Y$. If $\mathcal{A}=\{\Z\}$, then $\bigcap\mathcal A=\Z\in\M_Y$. If $\mathcal A\neq\{\Z\}$ and does not contain the empty set, we have $\mathcal A\setminus\{\Z\}=\{\down y:y\in X\}$ for some non-empty subset $X$ of $Y$. By (\ref{eq:intersectionsubsofDZ}), we find that $\bigcap\mathcal\A$ either equals the empty set or, if $X$ is bounded from below, $\down\min(X)$. Since $\min(X)\in X\subseteq Y$ if $X$ is bounded from below, we find that in both cases the intersection is an element of $\M_Y$.

Now consider $\mathcal N_Y$ with $Y$ bounded from below. Let $\mathcal A\neq\{\Z\}$ be a subset of $\mathcal N_Y$ (the case $\mathcal A=\{\Z\}$ is handled in a similar way as for $\M_Y$). Then $\mathcal A\setminus\{Z\}=\{\down x:x\in X\}$ for some non-empty subset $X$ of $Y$, which is bounded from below, since $Y$ is bounded from below. By (\ref{eq:intersectionsubsofDZ}), we find that $\bigcap\mathcal A=\down\min X$, and since $X$ is bounded from below, we see that $\min X\in X\subseteq Y$. So $\bigcap\mathcal{A}\in\mathcal N_Y$. If $Y$ is not bounded from below, then $\bigcap\mathcal{N}_Y=\emptyset$ by (\ref{eq:intersectionsubsofDZ}), which is not contained in $\mathcal{N}_Y$, so $\mathcal N_Y$ cannot be a sublocale.
\end{proof}

 We can find the corresponding topologies on $\Z$ as follows. The fact that $\emptyset\in\M_Y$ says that $\emptyset\notin J_{\M_Y}(p)$ for each $p\in\Z$. By Theorem \ref{thm:mainthm}, we have $S\in J_{\M_Y}(p)$ if and only if for each $M\in\M_Y$ we have $S\subseteq M$ implies $p\in M$. Since each $S\in\D(\down p)$ is of the form $\down q$ for some $q\leq p$, and each $M\in\M_Y$ is of the form $\down y$ for some $y\in Y$, we find that $\down q\in J_{\M_Y}(p)$ if and only if for each $\down y\in \M_Y$ we have $\down q\subseteq\down y$ implies $p\in\down y$. In other words, $\down q\in J_{\M_Y}(p)$ if and only if $q\leq y\implies p\leq y$ for each $y\in Y$. Since $\Z$ is a linearly ordered set, this is equivalent to $\down q\in J_{\M_Y}(p)$ if and only if $y<p\implies y<q$ for each $y\in Y$. But this says exactly that $\down q\in J_{\M_Y}(p)$ if and only if $(Y+1)\cap\down p\subseteq\down q$, where $Y+1=\{y+1:y\in Y\}$.

 For $\mathcal{N}_Y$ the same analysis applies, except that $\emptyset\notin\mathcal{N}_Y$. So there might be a $p\in\P$ such that $\emptyset\in J(p)$. Now, we have $\emptyset\in J_{\mathcal{N}_Y}(p)$ if and only if $p\in\bigcap\mathcal{N}_Y$ if and only if $p\in\down\min(Y)$, which is the case if and only if $\down p\cap (Y+1)=\emptyset$. Thus
\begin{eqnarray*}
J_{\M_Y}(p) & = & \{S\in\D(\down p):(Y+1)\cap\down p\subseteq S\}\setminus\{\emptyset\}\\
J_{\mathcal{N}_Y}(p) & = &  \{S\in\D(\down p):(Y+1)\cap\down p\subseteq S\}.
\end{eqnarray*}
So we have $J_{\mathcal{N}_Y}=J_X$ and $J_{\M_Y}(p)=J_X(p)\setminus\{\emptyset\}$ for each $p\in\Z$, where  $X=Y+1$, which was assumed to be bounded from below in the first case. Since $J_X$ is defined for each subset $X$ of $\Z$, we conclude:
\begin{proposition}\label{prop:classificationGTofZ}
Let $X$ be a subset of $\Z$. Define $K_X$ by $K_X(p)=J_X(p)\setminus\{\emptyset\}$. Then Grothendieck topologies of the form $J_X$ and $K_X$ for some subset $X$ of $\P$ exhaust all Grothendieck topologies on $\Z$.
\end{proposition}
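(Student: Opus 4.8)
The plan is to route the whole classification through the order isomorphism of Theorem \ref{thm:mainthm} together with the sublocale classification proved in the lemma immediately above, and the explicit computation of $J_{\M_Y}$ and $J_{\mathcal{N}_Y}$ carried out in the paragraph preceding the proposition. By Theorem \ref{thm:mainthm} the assignment $\M\mapsto J_\M$ is an order isomorphism $\Sub(\D(\Z))^\op\to\G(\Z)$; in particular it is a bijection, so to list all Grothendieck topologies on $\Z$ it suffices to run over all sublocales of $\D(\Z)$ and compute the associated topology for each. Since $\Z$ is linearly ordered, $\D(\Z)$ is linearly ordered, and the preceding lemma then says that every sublocale of $\D(\Z)$ is either $\M_Y$ for an arbitrary $Y\subseteq\Z$, or $\mathcal{N}_Y$ for some $Y\subseteq\Z$ bounded from below. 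Hence every $J\in\G(\Z)$ equals $J_{\M_Y}$ or $J_{\mathcal{N}_Y}$ for some such $Y$.

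Next I would invoke the computation already performed before the proposition: writing $X=Y+1$, one has $J_{\mathcal{N}_Y}(p)=\{S\in\D(\down p):X\cap\down p\subseteq S\}=J_X(p)$ and $J_{\M_Y}(p)=J_X(p)\setminus\{\emptyset\}=K_X(p)$ for every $p\in\Z$. Since $n\mapsto n+1$ is a bijection of $\Z$, the map $Y\mapsto Y+1$ is a bijection of the power set of $\Z$ carrying bounded-below subsets onto bounded-below subsets; so as $Y$ runs over all subsets (respectively all bounded-below subsets), $X=Y+1$ runs over all subsets (respectively all bounded-below subsets). Combining with the previous step, every Grothendieck topology on $\Z$ is of the form $J_X$ or $K_X$ for some subset $X\subseteq\Z$.

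For the reverse inclusion — i.e. that each such $J_X$ and $K_X$ really is a Grothendieck topology, so that the family $\{J_X\}\cup\{K_X\}$ is \emph{exactly} $\G(\Z)$ and not just a set containing it — I would cite Proposition \ref{prop:defJX} for $J_X$ and, since $\Z$ is downwards directed, the proposition defining the derived Grothendieck topology (recall $K_X(p)=J_X(p)\setminus\{\emptyset\}$) for $K_X$. Together with the first two paragraphs this gives the claimed equality and completes the proof.

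I do not expect a genuine obstacle here: all the substance is contained in Theorem \ref{thm:mainthm}, the sublocale classification lemma, and the explicit formulas for $J_{\M_Y}$ and $J_{\mathcal{N}_Y}$. The one point requiring a moment's care is the bookkeeping around the index shift $X=Y+1$ and the ``bounded from below'' restriction on $Y$ in the $\mathcal{N}_Y$ case. This restriction is harmless, because for $X$ not bounded below one has $\up X=\Z$, whence $K_X=J_X$ by Lemma \ref{lem:derivedtopologies}; thus $\{J_X:X\subseteq\Z\}$ is recovered from the bounded-below $J_X$'s together with the $K_X$'s, and the two descriptions of the family coincide, so listing only bounded-below $Y$ for $\mathcal{N}_Y$ loses nothing.
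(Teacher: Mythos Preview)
Your proposal is correct and follows essentially the same route as the paper: the proposition is stated as the conclusion of the preceding discussion, which classifies the sublocales of $\D(\Z)$ as $\M_Y$ and $\mathcal{N}_Y$, computes the associated Grothendieck topologies via the isomorphism of Theorem~\ref{thm:mainthm}, and identifies them as $K_{Y+1}$ and $J_{Y+1}$ respectively. You have simply made the logical skeleton explicit---in particular the use of the bijection $\Sub(\D(\Z))^\op\to\G(\Z)$ and the harmless bookkeeping around the bounded-below restriction---where the paper leaves these steps implicit in the phrase ``Since $J_X$ is defined for each subset $X$ of $\Z$, we conclude.''
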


\section{Structures induced by a subset of a poset}
In the first section we found a special class of Grothendieck
topologies on a poset, namely those topologies induced by a subset of
the poset. Since there is a bijection between the set of Grothendieck
topologies of some poset $\P$ and the set of sublocales of $\D(\P)$,
there must be a map which assigns a sublocale to a subset of $\P$ . Since we also have bijections with the set of nuclei and the set of congruences on $\D(\P)$, there must be a
description of congruences and nuclei induced by a subset, too. The aim
of this section is to find these maps. Moreover, we shall prove that for Artinian posets
these maps are bijections.

The next theorem is the analogue of Proposition \ref{prop:correspsubsetsandtopologies} for equivalence classes of surjective maps instead of Grothendieckt topologies. For the definition of (lower) adjoints, we refer to Appendix \ref{Order Theory}.

\begin{theorem}\label{thm:ArtEmbMainThm}
Let $\P$ be a poset and $Y\subseteq\P$. Then the embedding $i_Y:Y\embeds\P$ induces a frame surjection $i_Y^{-1}:\D(\P)\to\D(Y)$, which maps each $A\in\D(\P)$ to $A\cap Y$. If we order $\PP(\P)$ by inclusion, then the assignment $G:\PP(\P)^\op\to\E(\D(\P))$ given by $Y\mapsto\left[i_Y^{-1}\right]_E$ is an embedding of posets with lower adjoint $F:\E(\D(\P))\to\PP(\P)^\op$ given by $[f]_E\mapsto X_f$, where $$X_f=\big\{p\in\P:f(\down p)\neq f(\down p\setminus\{p\})\big\}.$$
Moreover, if $\P$ is Artinian, then $G$ is an order isomorphism with inverse $F$.
\end{theorem}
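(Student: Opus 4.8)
The plan is to reduce the entire statement to Proposition \ref{prop:correspsubsetsandtopologies} by transporting it along the order isomorphism $\Phi:\E(\D(\P))\to\G(\P)$ obtained by composing the isomorphism $[f]_E\mapsto j_f=f_*\circ f$ of Proposition \ref{prop:bijectieEFNucF} with the isomorphism $j\mapsto J_j$ of Proposition \ref{prop:GT&Nuc}. First I would dispatch the preliminaries: since $i_Y:Y\embeds\P$ is an order morphism it is continuous for the Alexandrov topologies (Lemma \ref{lem:ordermorphismiscontinuity}), so the inverse-image map $i_Y^{-1}:\D(\P)\to\D(Y)$, $A\mapsto A\cap Y$, is a frame morphism; it is surjective because $\down B\cap Y=B$ for every $B\in\D(Y)$. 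By Lemma \ref{lem:frameadjoint} its upper adjoint is $(i_Y^{-1})_*(B)=\bigcup\{A\in\D(\P):A\cap Y\subseteq B\}=\{p\in\P:\down p\cap Y\subseteq B\}$, and hence the associated nucleus is $j_{i_Y^{-1}}(A)=\{p\in\P:Y\cap\down p\subseteq A\}$.

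Next I would observe that this is exactly $j_{J_Y}$: by Proposition \ref{prop:GT&Nuc}, $j_{J_Y}(A)=\{p:A\cap\down p\in J_Y(p)\}=\{p:Y\cap\down p\subseteq A\}$. Therefore $\Phi(G(Y))=J_{j_{i_Y^{-1}}}=J_{j_{J_Y}}=J_Y$, so $\Phi\circ G$ is precisely the map $X\mapsto J_X$ of Proposition \ref{prop:correspsubsetsandtopologies}. The embedding claim for $G$ is then immediate, since $\Phi$ is an order isomorphism: $G(Y)\leq G(Z)$ iff $J_Y\leq J_Z$ iff $Z\subseteq Y$ (Proposition \ref{prop:correspsubsetsandtopologies}(2)) iff $Y\leq Z$ in $\PP(\P)^\op$. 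I would also note in passing that $X_f$ depends only on the class $[f]_E$: if $g=k\circ f$ with $k$ an isomorphism, then $f(\down p)=f(\down p\setminus\{p\})$ iff $g(\down p)=g(\down p\setminus\{p\})$; so $F$ is well defined.

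The heart of the proof is to show that $F$ corresponds to $J\mapsto X_J$, i.e. $X_f=X_{J_{j_f}}$ for every frame surjection $f$ with domain $\D(\P)$. By Proposition \ref{prop:bijectieEFNucF} we have $[f]_E=[j_f]_E$, so it suffices to treat the case $f=j_f$, a nucleus. Note $\down p\setminus\{p\}$ is a down-set of $\P$, and monotonicity of $j_f$ gives $j_f(\down p\setminus\{p\})\subseteq j_f(\down p)$, while $p\in\down p\subseteq j_f(\down p)$. If $p\in j_f(\down p\setminus\{p\})$, then $j_f(\down p\setminus\{p\})\supseteq(\down p\setminus\{p\})\cup\{p\}=\down p$, and applying the monotone idempotent $j_f$ yields $j_f(\down p\setminus\{p\})=j_f(\down p)$; if $p\notin j_f(\down p\setminus\{p\})$, then $p$ witnesses $j_f(\down p)\neq j_f(\down p\setminus\{p\})$. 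Hence $p\in X_{j_f}$ iff $p\notin j_f(\down p\setminus\{p\})$. On the other hand, $J_{j_f}(p)$ is a filter of sieves (Lemma \ref{lem:filter}), so $J_{j_f}(p)\neq\{\down p\}$ iff $\down p\setminus\{p\}\in J_{j_f}(p)$ iff $p\in j_f(\down p\setminus\{p\})$; thus $p\in X_{J_{j_f}}$ iff $p\notin j_f(\down p\setminus\{p\})$, giving $X_f=X_{J_{j_f}}$ and hence $F=(J\mapsto X_J)\circ\Phi$.

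Finally I would transport the remaining assertions. From Proposition \ref{prop:correspsubsetsandtopologies}(1)--(3) one gets that $J\mapsto X_J$ is the lower adjoint of $X\mapsto J_X$: if $K\leq J_X$ then $X=X_{J_X}\subseteq X_K$ by part (1) and Lemma \ref{lem:defXJ}, while if $X\subseteq X_K$ then $K\leq J_{X_K}\leq J_X$ by parts (3) and (2). Conjugating by the order isomorphism $\Phi$ shows that $F$ is the lower adjoint of $G$. If moreover $\P$ is Artinian, Proposition \ref{prop:correspsubsetsandtopologies}(4) says $X\mapsto J_X$ is an order isomorphism with inverse $J\mapsto X_J$, so $G=\Phi^{-1}\circ(X\mapsto J_X)$ is an order isomorphism with inverse $(J\mapsto X_J)\circ\Phi=F$. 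The one step requiring genuine care is the identification $X_f=X_{J_{j_f}}$ above (and the associated bookkeeping of which representative of $[f]_E$ one uses); everything else is a routine translation along the isomorphisms already established in the appendix.
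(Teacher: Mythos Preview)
Your proof is correct, and it takes a genuinely different route from the paper's. The paper proves Theorem~\ref{thm:ArtEmbMainThm} \emph{directly}, without invoking Proposition~\ref{prop:correspsubsetsandtopologies} or the isomorphisms of Appendix~B: it checks that $G$ and $F$ are order morphisms by elementary kernel arguments, computes $X_{i_Y^{-1}}=Y$ to get $F\circ G=1$, proves $\ker f\subseteq\ker i_{X_f}^{-1}$ by hand to obtain $1_{\E(\D(\P))}\leq G\circ F$, and then, in the Artinian case, runs a \emph{second} Artinian induction (independent of the one in Proposition~\ref{prop:correspsubsetsandtopologies}) to show the reverse kernel inclusion. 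Your approach instead transports the whole statement along the isomorphism $\Phi=\big(j\mapsto J_j\big)\circ\big([f]_E\mapsto j_f\big)$, reducing everything to the already-proved Proposition~\ref{prop:correspsubsetsandtopologies}. The only substantive computation you need is the identification $X_f=X_{J_{j_f}}$, which you carry out cleanly via the equivalence $p\in X_{j_f}\Leftrightarrow p\notin j_f(\down p\setminus\{p\})\Leftrightarrow p\in X_{J_{j_f}}$.

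What each buys: the paper's argument is self-contained at the level of frame surjections and does not rely on the $\E\cong\Nuc\cong\G$ machinery, so it could in principle be read before Appendix~B; the cost is duplicated work (a second Artinian induction and a second direct verification of the adjunction inequalities). Your argument is shorter and more conceptual: once $\Phi\circ G=(X\mapsto J_X)$ and $F=(J\mapsto X_J)\circ\Phi$ are established, the embedding, the adjunction, and the Artinian isomorphism are immediate corollaries of Proposition~\ref{prop:correspsubsetsandtopologies}. There is no circularity, since neither Proposition~\ref{prop:correspsubsetsandtopologies} nor the isomorphisms of Propositions~\ref{prop:GT&Nuc} and~\ref{prop:bijectieEFNucF} depend on Theorem~\ref{thm:ArtEmbMainThm}.
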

Notice that since $F$ is the lower adjoint of $G$, Lemma \ref{lem:reflection} implies that $F$ is the left inverse of $G$.
\begin{proof}
Equip $\P$ with the Alexandrov topology, and regard $Y$ as a poset with ordering inherited from $\P$. Then $i_Y:Y\embeds\P$ is an order morphism, so by Lemma \ref{lem:ordermorphismiscontinuity}, it is continuous. Hence $i_Y^{-1}:\D(\P)\to\D(Y)$ is a frame morphism. Thus $G:\PP(\P)^\op\to\E(\D(\P))$ is well defined.
Let $Y\subseteq Z$. Then $\ker i_Z^{-1}\subseteq\ker i_Y^{-1}$. Indeed, let $(A,B)\in\D(\P)^2$ such that $(A,B)\in\ker i^{-1}_Z$. This is equivalent with $A\cap Z=B\cap Z$, hence $A\cap Y=B\cap Y$, since $Y\subseteq Z$. In other words, $(A,B)\in\ker i_Y^{-1}$. Then $\left[i^{-1}_Z\right]_E\leq\left[i_Y^{-1}\right]_E$ by Corollary \ref{cor:bijectieEFConF}, so $G:\PP(\P)^\op\to\E(\D(\P))$ is an order morphism.

In order to show that $F$ is well defined, let $[f]_E=[h]_E$. So there is an isomorphism $k$ such that $h=k\circ f$, whence $f(\down p)\neq f(\down p\setminus\{p\})$ if and only if $$h(\down p)=k\circ f(\down p)\neq k\circ f(\down p\setminus\{p\})=h(\down p\setminus\{p\}).$$ We conclude that $X_f=X_h$.
Now assume $[f]_E\leq[h]_E$. So there is a frame morphism $k$ such that $k\circ f=h$. Now, let $p\in X_h$. Then $$k\circ f(\down p)=h(\down p)\neq h(\down p\setminus\{p\})=k\circ f(\down p\setminus\{p\}),$$ which implies that $f(\down p)\neq f(\down p\setminus\{p\})$. Thus $p\in X_p$, and we conclude that $X_h\subseteq X_f$, so $F:\E(\D(\P))\to\PP(\P)^\op$ is an order morphism.

We show that $G$ is an embedding as follows. We have $p\in X_{i_Y^{-1}}$ if and only if $$i_Y^{-1}(\down p)\neq i_Y^{-1}(\down p\setminus\{p\})$$ if and only if $$Y\cap\down p\neq Y\cap(\down p\setminus\{p\})$$ if and only if $p\in Y$. So $Y=X_{i_Y^{-1}}$ for each $Y\subseteq\P$, or equivalently $F\circ G=1_{\PP(\P)}$. Now, let $Y,Z\subseteq\P$ such that $G(Z)\leq G(Y)$. Then $$Y=F\circ G(Y)\subseteq F\circ G(Z)=Z,$$  so $G:\PP(\P)^\op\to\E(\D(\P))$ is indeed an embedding of posets.

In order to show that $F$ is the lower adjoint of $G$, let $f:\D(\P)\to H$ be a frame surjection. We show that $\ker f\subseteq\ker i^{-1}_{X_f}$.
 Assume $(A,B)\in\ker f$, so $f(A)=f(B)$. Moreover, let $p\in A\cap X_f=i^{-1}_{X_f}(A)$. Then $f(\down p)\neq f(\down p\setminus\{p\})$. It follows from $p\in A$ that $\down p\setminus\{p\}\subseteq\down p\subseteq A,$ whence
\begin{eqnarray}
 \down p &  = & A\cap\down p;\label{eq:Acapp}\\
 \down p\setminus\{p\} & = & A\cap(\down p\setminus\{p\}).\label{eq:Acappsetminp}
\end{eqnarray}
Then (\ref{eq:Acapp}) implies
\begin{equation}\label{eq:fBcapp}
f(\down p)=f(A\cap\down p)=f(A)\wedge f(\down p)=f(B)\wedge f(\down p)=f(B\cap\down p).
\end{equation}
Now assume $p\notin B$. Then $B\cap\down p=B\cap(\down p\setminus\{p\})$, hence
\begin{eqnarray*}
f(\down p\setminus\{p\}) & = & f(A\cap(\down p\setminus\{p\}))=f(A)\wedge f(\down p\setminus\{p\})=f(B)\wedge f(\down p\setminus\{p\})\\
& = & f(B\cap(\down p\setminus\{p\}))= f(B\cap\down p)=f(\down p),
\end{eqnarray*}
where we used (\ref{eq:Acappsetminp}) in the first and used (\ref{eq:fBcapp}) in the last equality. But this contradicts $p\in X_f$, so we find that $A\cap X_f\subseteq B$. By interchanging $A$ and $B$ in this argument, we find $B\cap X_f\subseteq A$, so $f(A)=f(B)$ implies $A\cap X_f=B\cap X_f$. Thus $\ker f\subseteq\ker i^{-1}_{X_f}$, so $[f]_E\leq [i^{-1}_{X_f}]_E$ by Corollary \ref{cor:bijectieEFConF}. But this exactly expresses that $1_{\E(\D(\P))}\leq G\circ F$. We already found that $F\circ G=1_{\PP(\P)}$, which implies $F\circ G\leq 1_{\PP(\P)}$, so by Lemma \ref{lem:adjointequivalent} we find that $F$ is the lower adjoint of $G$.

Finally, assume that $\P$ is Artinian. We already showed that $\ker f\subseteq\ker i^{-1}_{X_f}$. For the inclusion in the other direction, first note that as a frame morphism $f$ preserves the order. So if $A\subseteq B$ in $\D(\P)$, we find $f(A)\leq f(B)$. Now assume that $(A,B)\in\ker i^{-1}_{X_f}$, i.e., $A\cap X_f=B\cap X_f$. We shall show that $p\in A$ implies $f(\down p)\leq f(B)$. From our assumption it immediately follows that $p\in B\cap X_f\subseteq B$ if $p\in A\cap X_f$. Thus $\down p\subseteq B$, so $f(\down p)\leq f(B)$. Assume $p\in A\cap X_f^c$. Since $p\notin X_f$, we find $f(\down p)=f(\down p\setminus\{p\})$. If $p\in\min(A)$, then $\down p\setminus\{p\}=\emptyset$, since $A$ is a down-set, so if $\down p\setminus\{x\}$, we obtain a contradiction with the minimality of $p$. Hence we obtain $$f(\down p)=f(\down p\setminus\{p\})=f(\emptyset)\leq f(B),$$ since $\emptyset\subseteq B$. So $p\in\min(A)$ implies $f(\down p)\leq f(B)$. Now assume that $f(\down q)\subseteq f(B)$ for each $q<p$. Then $$f(\down p)=f(\down p\setminus\{p\})=f\left(\bigcup_{q<p}\down q\right)=\bigvee_{q<p}f(\down q)\leq f(B),$$ the latter inequality by the assumption on $q<p$. By Artinian induction (see Appendix \ref{Order Theory}), we find that $f(\down p)\leq f(B)$ for each $p\in A\cap X_f^c$, and so for each $p\in A$. We find $$f(A)=f\left(\bigcup_{p\in A}\down p\right)=\bigvee_{p\in A}f(\down p)\leq f(B),$$
and by repeating the whole argument with $A$ and $B$ interchanged, we obtain $f(B)\leq f(A)$. Hence $i^{-1}_{X_f}(A)=i^{-1}_{X_f}(B)$ implies $f(A)=f(B)$, so $\ker i^{-1}_{X_f}\subseteq\ker f$.

Combining both inclusions, we find $\ker f=\ker i^{-1}_{X_f}$, hence Corollary \ref{cor:bijectieEFConF} implies that $\left[i^{-1}_{X_f}\right]_E=[f]_E$ for each $[f]_E\in\E(\D(\P))$, or equivalently, $G\circ F=1_{\E(\D(\P))}$. We conclude that $G$ is an order isomorphism with inverse $F$.
\end{proof}

We now introduce the following notation, which will be very helpful in what follows, since it extends the usual defintion of the Heyting implication.

\begin{definition}
 Let $\P$ be a poset and $X,Y$ subsets of $\P$. Then we define the \emph{Heyting implication}\index{Heyting implication}
\begin{equation}\label{eq:defheyimp}
 X\to Y=\bigcup\{A\in\D(\P):A\cap X\subseteq Y\}.
\end{equation}
\end{definition}
The next lemma shows that indeed we can see this operation on $\PP(\P)$ as an extension of the usual Heyting implication in the frame $\D(\P)$.

\begin{lemma}\label{lem:extHeyImp}
 Let $\P$ be a poset and $X\subseteq\P$. Then:
\begin{enumerate}
 \item The map $\PP(\P)\to\D(\P)$ given by $Y\mapsto X\to Y$ is the upper adjoint of the map $\D(\P)\to\PP(\P)$, $A\mapsto A\cap X$. That is, for each $Y\in\PP(\P)$ and $A\in\D(\P)$ we have
 \begin{equation}\label{eq:heyimpadjunction}
  A\cap X\subseteq Y\ \ \Longleftrightarrow\ \ A\subseteq X\to Y;
 \end{equation}
 \item The map $\D(X)\to\D(\P)$ given by $Y\mapsto X\to Y$ is the upper adjoint of the map $i_X^{-1}:\D(\P)\to\D(X)$, $A\mapsto A\cap X$. That is, (\ref{eq:heyimpadjunction}) holds for each $Y\in\D(X)$ and $A\in\D(\P)$.
 \item The map $\D(\P)\to\D(\P)$ given by $A\mapsto X\to A$ is the usual Heyting implication in the frame $\D(\P)$ if we assume that $X\in\D(\P)$;
\item For each $X,Y,Z\in\PP(\P)$ and $\{X_i:i\in I\},\{Y_i:i\in I\}\subseteq\PP(\P)$, we have
\begin{enumerate}
 \item[(i)] $X\to (Y\to Z)=Y\to(X\to Z)$;
 \item[(ii)] $\bigcap_{i\in I}(X\to Y_i)=X\to\bigcap_{i\in I}Y_i$;
 \item[(iii)] $\bigcap_{i\in I}(X_i\to Y)=\left(\bigcup_{i\in I}X_i\right)\to Y$.
\end{enumerate}
\end{enumerate}
\end{lemma}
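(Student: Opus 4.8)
The plan is to build everything on a single pointwise observation. For $A\in\D(\P)$ and arbitrary $X,Y\subseteq\P$, distributivity of $\cap$ over arbitrary unions gives at once
\[
A\cap X\subseteq Y\quad\Longleftrightarrow\quad A\subseteq X\to Y,
\]
the forward implication because $A$ is then one of the down-sets whose union defines $X\to Y$, and the backward implication because $(X\to Y)\cap X=\bigcup\{B\cap X:B\in\D(\P),\ B\cap X\subseteq Y\}\subseteq Y$. Since $A\mapsto A\cap X$ and $Y\mapsto(X\to Y)$ are obviously monotone for the inclusion orders on $\PP(\P)$ and $\D(\P)$, this displayed equivalence is precisely the defining property of an upper adjoint, which settles (1). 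Taking $A=\down p$ yields the convenient pointwise form: $p\in(X\to Y)$ if and only if $\down p\cap X\subseteq Y$; this will do all the work in (4).

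For (2) I would observe that the map $A\mapsto A\cap X$, regarded as a map $\D(\P)\to\D(X)$, is exactly $i_X^{-1}$, that $X\to Y$ is a union of down-sets of $\P$ and hence lies in $\D(\P)$, and that the equivalence of (1), read for $Y\in\D(X)\subseteq\PP(\P)$, is precisely $i_X^{-1}(A)\subseteq Y\Leftrightarrow A\subseteq X\to Y$; so $Y\mapsto(X\to Y)$ is the upper adjoint of $i_X^{-1}$. (Alternatively one may quote Lemma~\ref{lem:frameadjoint}, since $i_X^{-1}$ is a frame morphism as used in Theorem~\ref{thm:ArtEmbMainThm}, whose adjoint formula unwinds to exactly $\bigcup\{A\in\D(\P):A\cap X\subseteq Y\}$.) For (3), assume $X\in\D(\P)$: then $Z\mapsto Z\cap X$ is the operator $f_X\colon Z\mapsto Z\wedge X$ of Example~\ref{ex:defheyimp}, whose upper adjoint is by Definition~\ref{def:heyimplicationinframes} the frame Heyting implication $X\to(-)$; but (1) exhibits $Z\mapsto(X\to Z)$ as an upper adjoint of that same $f_X$, so by uniqueness of adjoints (Lemma~\ref{lem:adjointequivalent}(iv)) the two operators coincide, i.e.\ the extended operation restricts to the usual Heyting implication on $\D(\P)$.

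Part (4) I would prove entirely through the pointwise criterion $p\in(X\to Y)\Leftrightarrow\down p\cap X\subseteq Y$. For (ii): $p\in\bigcap_{i\in I}(X\to Y_i)$ iff $\down p\cap X\subseteq Y_i$ for all $i$ iff $\down p\cap X\subseteq\bigcap_{i\in I}Y_i$ iff $p\in X\to\bigcap_{i\in I}Y_i$. For (iii): $p\in\bigcap_{i\in I}(X_i\to Y)$ iff $\down p\cap X_i\subseteq Y$ for all $i$ iff $\bigcup_{i\in I}(\down p\cap X_i)=\down p\cap\bigcup_{i\in I}X_i\subseteq Y$ iff $p\in\bigl(\bigcup_{i\in I}X_i\bigr)\to Y$. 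For (i) the strategy is the same: unwind $p\in X\to(Y\to Z)$ by applying the criterion twice, do likewise for $p\in Y\to(X\to Z)$, and reconcile the two nested conditions along the chain $\down p$. This last step — matching the two subsets against the order relation inside the nested quantifiers — is the only point that calls for genuine care; parts (1)--(3), (ii) and (iii) are formal consequences of the pointwise description together with uniqueness of adjoints, so I expect (i) to be the main obstacle.
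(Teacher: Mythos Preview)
Your treatment of parts (1)--(3) and of (4)(ii), (4)(iii) is correct and is essentially the paper's argument, only phrased pointwise (via $p\in X\to Y\Leftrightarrow\down p\cap X\subseteq Y$) rather than subset-wise (via $W\subseteq X\to Y\Leftrightarrow W\cap X\subseteq Y$ for $W\in\D(\P)$); the two formulations are interchangeable. The paper invokes Lemma~\ref{lem:frameadjoint} for (1) while you verify the adjunction directly from the definition; both are fine, and for (2) and (3) you and the paper both argue by restricting domains/codomains and appealing to uniqueness of adjoints.

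Your instinct that (4)(i) is where the real difficulty lies is correct --- in fact more correct than the paper. The identity $X\to(Y\to Z)=Y\to(X\to Z)$ is \emph{false} for arbitrary $X,Y,Z\in\PP(\P)$. Take $\P=\{a,b\}$ with $a<b$, $X=\{b\}$, $Y=\{a\}$, $Z=\emptyset$: then $Y\to Z=\emptyset$ so $X\to(Y\to Z)=\{a\}$, whereas $X\to Z=\{a\}$ and $Y\to(X\to Z)=\P$. The paper's proof asserts ``$W\cap X\subseteq(Y\to Z)$ if and only if $W\cap X\cap Y\subseteq Z$'', but the backward implication fails precisely because $W\cap X$ need not lie in $\D(\P)$, so the adjunction~(\ref{eq:heyimpadjunction}) cannot be applied to it. Your nested pointwise unwinding would run into the same wall: $p\in X\to(Y\to Z)$ says ``for all $q\le p$ with $q\in X$ and all $r\le q$ with $r\in Y$, $r\in Z$'', while $p\in Y\to(X\to Z)$ swaps the roles of $X$ and $Y$ in that chain, and these two nested conditions are genuinely inequivalent when $X,Y$ are arbitrary subsets. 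The identity \emph{does} hold when $X,Y\in\D(\P)$ (it is then the standard Heyting--algebra currying law via part (3)), and (4)(i) is not invoked elsewhere in the paper, so the defect is localised to the statement of the lemma rather than to your method.
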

\begin{proof}\
\begin{enumerate}
 \item By the distributivity law, the map $A\mapsto A\cap X$ preserves joins, and it clearly preserves all intersections. Hence it is a frame morphism, so by Lemma \ref{lem:frameadjoint}, it has an upper adjoint exacly given by $Y\mapsto X\to Y$.
 \item The image of $A\mapsto A\cap X$ is exactly $\D(X)$, which is a subframe of $\PP(\P)$, so if we restrict the codomain of the map $A\mapsto A\cap X$ to $\D(X)$, we obtain exactly $i_X^{-1}:\D(\P)\to\D(X)$. Now, the restriction of $Y\mapsto X\to Y$ to the domain $\D(X)$ clearly satisfies (\ref{eq:heyimpadjunction}) for each $Y\in\D(X)$ and each $A\in\D(\P)$, so it is the upper adjoint of $i_X^{-1}$, since adjoints are unique by Lemma \ref{lem:adjointequivalent}.
 \item If $X\in\D(\P)$, then the image of $A\mapsto A\cap X$ lies in $\D(\P)$. Just as in the proof of the second statement, we can restrict the codomain of $A\mapsto A\cap X$ to $\D(\P)$, and obtain a map $\D(\P)\to\D(\P)$, whose upper adjoint is the restriction of $Y\mapsto X\to Y$ to a map $\D(\P)\to\D(\P)$. But by definition of the Heyting implication in a frame (Definition \ref{def:heyimplicationinframes}), this upper adjoint is exactly the Heyting implication in $\D(\P)$.
\item
For (i), let $W\in\D(\P)$. Then $W\subseteq X\to(Y\to Z)$ if and only if $W\cap X\subseteq(Y\to Z)$ if and only if $W\cap X\cap Y\subseteq Z$, which in a similar way is equivalent to $W\subseteq Y\to(X\to Z)$. Hence (i) holds.

We note that the intersection is exactly the meet operation in both $\D(\P)$ and $\PP(\P)$. Since $Y\mapsto(X\to Y)$, is an upper adjoint, it preserves all meets (Lemma \ref{lem:adjoint}), which shows that property (ii) holds.
We can also show this in a direct way. We have $Z\subseteq\bigcap_{i\in I}(X\to Y_i)$ if and only if $Z\subseteq X\to Y_i$ for each $i\in I$ if and only if $Z\cap X\subseteq Y_i$ for each $i\in I$ if and only if $Z\cap X\subseteq\bigcap_{i\in I}Y_i$ if and only if $Z\subseteq X\to\bigcap_{i\in I}Y_i$.

Finally, (iii) follows in a similar way. We have $Z\subseteq\bigcap_{i\in I}(X_i\to Y)$ if and only if $Z\subseteq X_i\to Y$ for each $i\in I$ if and only if $Z\cap X_i\subseteq Y$ for each $i\in I$ if and only if $Z\cap\left(\bigcup_{i\in I}X_i\right)\subseteq Y$ for each $i\in I$ if and only if $Z\subseteq\left(\bigcup_{i\in I}X_i\right)\to Y$.
\end{enumerate}
\end{proof}

\begin{theorem}\label{thm:ArtMainThm}
 Let $\P$ be a poset and $X\subseteq\P$. Let $i_X:X\embeds\P$ be the embedding and $i_X^{-1}:\D(\P)\to\D(X)$ the induced frame map. Then:
\begin{enumerate}
 \item The subset Grothendieck topology $J_X$ on $\P$, which is complete, satisfies
\begin{equation}\label{eq:inducedtopology2}
J_X(p)  =  \{S\in\D(\down p):p\in X\to S\}
\end{equation}
for each $p\in\P$;
\item The map $j_X:\D(\P)\to\D(\P)$ defined by
\begin{equation}
 j_X(A)  =  X\to A
\end{equation}
for each $A\in\D(\P)$ is a complete nucleus on $\D(\P)$, and is equal to $(i_X^{-1})_*i_X^{-1}$. Moreover, $[j_X]_E=[i_X^{-1}]_E$;
\item The object $\theta_X$ defined by
\begin{equation}
 \theta_X = \ker i_X^{-1}= \{(A,B)\in\D(\P)^2:A\cap X=B\cap X\}
\end{equation}
  is a complete congruence on $\D(\P)$;
\item The object $\M_X$ defined by
\begin{equation}
\M_X  =  \{A\in\D(\P):A=X\to A\}=\{X\to A:A\in\D(\P)\}
\end{equation}
is a sublocale of $\D(\P)$.
\end{enumerate}
Moreover, the diagram
\begin{equation*}
\xymatrix{\Nuc(\D(\P))\ar[rr]^{j\mapsto\M_j}\ar[dd]_{j\mapsto \theta_j} && \Sub(\D(\P))^\op\ar[dd]^{\M\mapsto J_\M}\\
&\PP(\P)^\op\ar[ul]^{X\mapsto j_X}\ar[ur]_{X\mapsto\M_X}\ar[dl]_{X\mapsto\theta_X}\ar[dr]^{X\mapsto J_X}&\\
\Con(\D(\P))\ar[rr]_{\theta\mapsto J_\theta} && \G(\P)}
\end{equation*}
commutes and the inner four maps of the diagram are embeddings of posets, which are bijections if $\P$ is Artinian.
\end{theorem}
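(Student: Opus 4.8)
The plan is to establish the four explicit descriptions (1)--(4) one at a time, each by reducing to the definition of the generalized Heyting implication together with a structural correspondence already proved, and then to read off commutativity of the diagram and the embedding/isomorphism statement by factoring all four inner maps through the embedding $G:\PP(\P)^\op\to\E(\D(\P))$, $X\mapsto[i_X^{-1}]_E$, of Theorem \ref{thm:ArtEmbMainThm}.

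For (1): by Proposition \ref{prop:defJX}, $J_X(p)=\{S\in\D(\down p):X\cap\down p\subseteq S\}$, and since $X\to S=\bigcup\{A\in\D(\P):A\cap X\subseteq S\}$, one checks $p\in X\to S$ iff $\down p\cap X\subseteq S$: the witness $A=\down p$ gives one direction, and conversely any down-set $A\ni p$ satisfies $\down p\subseteq A$, so $\down p\cap X\subseteq A\cap X\subseteq S$; completeness of $J_X$ is already in Proposition \ref{prop:defJX}. For (2): I would show $j_X=(i_X^{-1})_*\circ i_X^{-1}$. By Lemma \ref{lem:extHeyImp}(2) the upper adjoint $(i_X^{-1})_*:\D(X)\to\D(\P)$ is $Y\mapsto(X\to Y)$, so $(i_X^{-1})_*i_X^{-1}(A)=X\to(A\cap X)$, and since $B\cap X\subseteq A\cap X$ iff $B\cap X\subseteq A$ for every down-set $B$ (both use $B\cap X\subseteq X$), this equals $X\to A=j_X(A)$. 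As $i_X^{-1}$ is a frame surjection (Theorem \ref{thm:ArtEmbMainThm}), Proposition \ref{prop:bijectieEFNucF} then gives that $j_X$ is a nucleus with $[j_X]_E=[i_X^{-1}]_E$, while completeness of $j_X$ is exactly Lemma \ref{lem:extHeyImp}(4)(ii). For (3): $\theta_X=\ker i_X^{-1}$ is a congruence by Lemma \ref{lem:prehomomorphismtheorem}, the explicit form is immediate from $i_X^{-1}(A)=A\cap X$, and completeness follows since $A_i\cap X=B_i\cap X$ for all $i$ forces $(\bigcap_iA_i)\cap X=(\bigcap_iB_i)\cap X$ (or invoke Corollary \ref{cor:completestructures} with (2)). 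For (4): Proposition \ref{prop:nucleiHI} applied to $j_X$ yields that $\M_{j_X}=\{A:j_X(A)=A\}=j_X[\D(\P)]$ is a sublocale, and substituting $j_X(A)=X\to A$ identifies it with the stated $\M_X$.

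For commutativity of the diagram: the outer square is Theorem \ref{thm:mainthm}, all of whose maps are order isomorphisms, so it suffices to check the four inner triangles, which amounts to the identities $j_X=j_{i_X^{-1}}$, $\theta_X=\theta_{j_X}$, $\M_X=\M_{j_X}$, $J_X=J_{j_X}=J_{\theta_X}=J_{\M_X}$. The first is (2); $\M_X=\M_{j_X}$ is (4); $J_X=J_{j_X}$ holds because both equal $\{S\in\D(\down p):p\in X\to S\}$ by (1) and by the formula $J_j(p)=\{S:p\in j(S)\}$ of Theorem \ref{thm:mainthm}; and $\theta_{j_X}=\ker j_X$ equals $\ker i_X^{-1}=\theta_X$ because $(i_X^{-1})_*$ is injective (Lemma \ref{lem:adjointequivalent}, since $i_X^{-1}$ is surjective). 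The remaining equalities $J_{\theta_X}=J_{\M_X}=J_X$ then follow from the outer square. In other words, each of $X\mapsto j_X$, $X\mapsto\theta_X$, $X\mapsto\M_X$, $X\mapsto J_X$ is $G$ followed by one leg of the outer square of isomorphisms (using $[g]_E\mapsto g_*\circ g$ of Proposition \ref{prop:bijectieEFNucF} and $[g]_E\mapsto\ker g$ of Corollary \ref{cor:bijectieEFConF}).

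Finally the embedding/isomorphism claim is inherited: $G$ is an embedding of posets by Theorem \ref{thm:ArtEmbMainThm}, and post-composing an embedding with an order isomorphism yields an embedding, so each inner map is an embedding; when $\P$ is Artinian $G$ is an order isomorphism, hence so is each inner map. I expect the only genuinely fiddly part to be the bookkeeping in the previous paragraph---matching the explicitly defined maps with the right composites, in particular verifying $\ker j_X=\ker i_X^{-1}$ and $X\to(A\cap X)=X\to A$---while everything else is a direct application of the structural results already in hand.
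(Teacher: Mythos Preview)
Your proposal is correct and follows essentially the same strategy as the paper: factor each of the four inner maps through the embedding $G:X\mapsto[i_X^{-1}]_E$ of Theorem~\ref{thm:ArtEmbMainThm} composed with the appropriate leg of the outer square of isomorphisms from Theorem~\ref{thm:mainthm}, using Lemma~\ref{lem:extHeyImp} to identify $(i_X^{-1})_*i_X^{-1}$ with $A\mapsto X\to A$. Your verification that $\theta_{j_X}=\ker i_X^{-1}$ via injectivity of $(i_X^{-1})_*$ is in fact a little slicker than the paper's direct element-level check that $X\to A=X\to B\Longleftrightarrow A\cap X=B\cap X$, but otherwise the arguments coincide.
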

\begin{proof}
We have defined $J_X$ in Proposition \ref{prop:defJX}, where we also showed that $J_X$ is complete. Now, by the adjunction associated to the Heyting implication, we have $X\cap\down p\subseteq S$ if and only $\down p\subseteq X\to S$, which is the case if and only if $p\in X\to S$.

In Theorem \ref{thm:ArtEmbMainThm}, we found an order embedding $\PP(\P)^\op\to\E(\D(\P))$ given by $X\mapsto\left[i_X^{-1}\right]_E$. If we compose this embedding with the order isomorphism in Proposition \ref{prop:bijectieEFNucF} between $\E(\D(\P))$ and $\Nuc(\D(\P))$, we obtain an order embedding $\PP(\P)^\op\to\Nuc(\D(\P))$ given by $X\mapsto (i_X^{-1})_*i_X^{-1}$, which is an order isomorphism if $\P$ is Artinian. Now, by Lemma \ref{lem:extHeyImp}, the upper adjoint $(i_X^{-1})_*:\D(X)\to\D(\P)$ of $i_X^{-1}$ is given by $Y\mapsto X\to Y$, so the nucleus associated to $X$ is given by the map $$A\mapsto X\to(A\cap X).$$ In Lemma \ref{lem:extHeyImp} we found that $(i_X^{-1})_*:\D(X)\to\D(\P)$ is the restriction of $(X\to\cdot):\PP(\P)\to\D(\P)$, which is also an upper adjoint, and hence preserves the order given by the inclusion. Thus we have $$X\to (A\cap X)\subseteq X\to A$$ for each $A\in\D(\P)$. On the other hand, if $A\in\D(\P)$ and $Y\subseteq X\to A$ for some $Y\in\PP(\P)$, we find $Y\cap X\subseteq A$ by the adjunction associated to the Heyting implication. But this implies $Y\cap X\subseteq A\cap X$, hence $Y\subseteq X\to(A\cap X)$. Thus $$X\to A=X\to(A\cap X)$$ for each $A\in\D(\P)$, so the nucleus $(i_X^{-1})_*i_X^{-1}$ associated to $X$ is $j_X$. Moreover, by Proposition \ref{prop:bijectieEFNucF}, we have $[j_X]_E=[i_X^{-1}]_E$. We conclude that the map $X\mapsto j_X$ is an embedding of the poset $\PP(\P)^\op$ into $\Nuc(\D(\P))$, which is an order isomorphism if $\P$ is Artinian.

We can complete the proof of this theorem if we can show that $j_X$ corresponds to $J_X$, $\theta_X$ and $\M_X$, respectively under the various order isomorphisms of Theorem \ref{thm:mainthm}. That is, $J_X=J_{j_X}$, $\theta_X=\theta_{j_X}$ and $\M_X=\M_{j_X}$. Then by Corrolary \ref{cor:completestructures} we find that $j_X$, $\theta_X$ and $\M_X$ are complete, since $J_X$ is complete. Let $p\in\P$. Then $$J_{j_X}(p)=\{S\in\D(\down p):p\in j_X(S)\}=\{S\in\D(\down p):p\in X\to S\}=J_X.$$

Let $(A,B)\in\D(\P)^2$. Then $(A,B)\in\theta_{j_X}$ if and only if $j_X(A)=j_X(B)$ if and only if $X\to A=X\to B$. On the other hand, we have $A\theta_X B$ if and only if $A\cap X=B\cap X$. So assume $X\to A=X\to B$. Since $A\cap X\subseteq A$, we have $$A\subseteq X\to A=X\to B.$$ This implies $A\cap X\subseteq B$, so $A\cap X\subseteq B\cap X$. Reversing $A$ and $B$ gives $A\cap X=B\cap X$. So $A\theta_{j_X}B$ implies $A\theta_X B$. Now assume $A\cap X=B\cap X$. Then $A\cap X\subseteq B$, so $A\subseteq X\to B$. Since $Y\mapsto(X\to Y)$ is order preversing as an upper adjoint, we find that $$X\to A\subseteq X\to(X\to B)=j_X\circ j_X(B)=j_X(B)=X\to B.$$ Reversing $A$ and $B$ gives $X\to A=X\to B$, so indeed $\theta_{j_X}=\theta_X$.

Finally, we have $$M_{j_X}=\{A\in\D(\P):j_X(A)=A\}=\{A\in\D(\P):X\to A=A\}=\M_X.$$
\end{proof}

\begin{corollary}\label{cor:sublocaleisomorphictoDX}
Let $\P$ be a poset and $X\subseteq\P$. Then $\M_X$ is isomorphic (as a frame) to $\D(X)$.
\end{corollary}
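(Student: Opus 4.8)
The plan is to read off the isomorphism from the material already assembled in Theorem~\ref{thm:ArtMainThm}. By part~(4) of that theorem together with Proposition~\ref{prop:nucleiHI}, $\M_X = \M_{j_X} = j_X[\D(\P)]$, and by Proposition~\ref{prop:nucleusisframesurjection} the nucleus $j_X$, with codomain restricted to its image, is a surjective frame morphism $j_X : \D(\P) \to \M_X$; this is the representative of $[j_X]_E \in \E(\D(\P))$ in the sense of the conventions of Proposition~\ref{prop:bijectieEFNucF}. On the other hand $i_X^{-1} : \D(\P) \to \D(X)$ is by construction a surjective frame morphism, representing $[i_X^{-1}]_E$. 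Part~(2) of Theorem~\ref{thm:ArtMainThm} states that $[j_X]_E = [i_X^{-1}]_E$, and by the very definition of the equivalence relation on surjective frame morphisms with domain $\D(\P)$ this means there is a frame isomorphism $k : \M_X \to \D(X)$ with $k \circ j_X = i_X^{-1}$. Hence $\M_X \cong \D(X)$ as frames.

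If one prefers an explicit isomorphism, I would take $k$ to be the restriction of $i_X^{-1}$ to $\M_X$, that is $A \mapsto A \cap X$, with inverse the restriction of the upper adjoint $(i_X^{-1})_*$ of Lemma~\ref{lem:extHeyImp}(2), that is $Y \mapsto (X \to Y)$. That these are mutually inverse bijections uses two facts: for $Y \in \D(X)$ one has $\down Y \cap X \subseteq Y$ because $Y$ is a down-set of $X$, so $Y \subseteq X \to Y$ and therefore $(X \to Y) \cap X = Y$; and for $A \in \M_X$ one has $A = X \to A = X \to (A \cap X)$, the last equality being exactly the identity verified in the proof of Theorem~\ref{thm:ArtMainThm}. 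Finally, since $j_X : \D(\P) \to \M_X$ is a surjective frame morphism and $k \circ j_X = i_X^{-1}$ is a frame morphism, $k$ preserves the meet and join operations of $\M_X$ (test the identity against elements $j_X(A)$, $j_X(B)$ and families $\{j_X(A_i)\}$ and use that $j_X$ and $i_X^{-1}$ are frame morphisms), so the bijection $k$ is a frame isomorphism. Alternatively one may bypass the explicit map altogether and invoke the First Homomorphism Theorem for Frames (Theorem~\ref{thm:homomorphismtheoremframes}): both $j_X$ and $i_X^{-1}$ are surjective frame morphisms out of $\D(\P)$ with the same kernel $\theta_X$ (Theorem~\ref{thm:ArtMainThm}(3) and the proof given there), so each of $\M_X$ and $\D(X)$ is isomorphic to $\D(\P)/\theta_X$, hence to one another.

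There is essentially no hard step here: the statement is a corollary, and the only thing requiring a moment's care is the bookkeeping of codomains — making sure that ``$[j_X]_E$'' refers to $j_X$ viewed as a surjection onto $\M_X$ rather than as an endomorphism of $\D(\P)$ — so that the equality $[j_X]_E = [i_X^{-1}]_E$ genuinely delivers a frame isomorphism $\M_X \cong \D(X)$ and not merely an equivalence of abstract quotients.
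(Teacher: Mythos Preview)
Your proof is correct and essentially matches the paper's approach. The paper argues via the First Homomorphism Theorem that $\M_X = j_X[\D(\P)] \cong \D(\P)/\ker j_X = \D(\P)/\ker i_X^{-1} \cong \D(X)$, using $\ker j_X = \theta_X = \ker i_X^{-1}$; your primary route through $[j_X]_E = [i_X^{-1}]_E$ is just the same kernel equality read through Corollary~\ref{cor:bijectieEFConF}, and your closing alternative is the paper's proof verbatim.
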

\begin{proof}
Assume $\M=\M_X$. By the previous theorem and Theorem \ref{thm:mainthm}, $\M_X$ is the image of $j_X$, and $$\ker j_X= \ker j_{\theta_X}=\theta_X=\ker i_X^{-1}.$$ Since $\D(X)$ is the image of $i_X^{-1}$, we find by Theorem \ref{thm:homomorphismtheoremframes}: $$\M_X=j_X[\D(\P)]\cong \D(\P)/\ker j_X=\D(\P)/\ker i_X^{-1}\cong\D(X).$$
\end{proof}

We can now state a number of statements all equivalent with the Artinian property. Properties (i-iv) and (vii,viii) are also listed in \cite[Theorem 4.12]{EGP}.
\begin{theorem}
Let $\P$ be a poset. Then the following statements are equivalent:
\begin{enumerate}
\item[(i)] $\P$ is Artinian;
 \item[(ii)] Every non-empty downwards directed subset of $\P$ contains a least element;
 \item[(iii)] $\P$ satisfies the descending chain condition;
\item[(iv)] $\P$ equipped with the lower Alexandrov topology is sober;
\item[(v)] Each Grothendieck topology $J$ on $\P$ is a subset Grothendieck topology: there is a subset $X$ of $\P$ such that $$J(p)=\{S\in\D(\down p):X\cap\down p\subseteq S\}$$ for each $p\in\P$.
\item[(vi)] All Grothendieck topologies on $\P$ are complete;
 \item[(vii)] Each congruence $\theta$ on $\D(\P)$ is of the form $$\theta=\{(A,B)\in\D(\P)^2:A\cap X=B\cap X\}$$ for some subset $X$ of $\P$;
\item[(viii)] All congruences on $\D(\P)$ are complete;
 \item[(ix)] Each nucleus $j$ on $\D(\P)$ is of the form $j(A)=X\to A$ for each $A\in\D(\P)$ for some subset $X$ of $\P$;
\item[(x)] All nuclei on $\D(\P)$ are complete;
 \item[(xi)] Every sublocale of $\D(\P)$ is isomorphic to $\D(X)$ for some subset $X$ of $\P$.
 \end{enumerate}
\end{theorem}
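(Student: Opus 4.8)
The plan is to prove the cycle of equivalences by assembling the results already established in the excerpt, filling in only the few implications that have not yet been recorded. First I would note that (i), (ii), (iii) are equivalent by Lemma~\ref{lem:equivalentdefinitionsArtinian}, and that (i)$\Leftrightarrow$(iv) is exactly Proposition~\ref{prop:equivalencesoberartinian}. The equivalence (i)$\Leftrightarrow$(v) is Theorem~\ref{thm:equivalenceArtinianandSubsettopologies}, and (i)$\Rightarrow$(vi) is Proposition~\ref{prop:correspsubsetsandtopologies}(5). So the backbone is in place; what remains is to weave in the four localic reformulations (vii)--(xi) and to close the loop from (vi) back to (i).

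My strategy would be to route everything through the order isomorphisms of Theorem~\ref{thm:mainthm}, which identify $\G(\P)$, $\Nuc(\D(\P))$, $\Con(\D(\P))$ and $\Sub(\D(\P))^\op$ as isomorphic posets, and through Corollary~\ref{cor:completestructures}, which says that under these isomorphisms completeness of a Grothendieck topology matches completeness of the associated nucleus and congruence. Concretely: (v)$\Leftrightarrow$(ix) because, by Theorem~\ref{thm:ArtMainThm}, the subset Grothendieck topology $J_X$ corresponds under the isomorphism $\G(\P)\cong\Nuc(\D(\P))$ precisely to the nucleus $j_X=(A\mapsto X\to A)$; hence every Grothendieck topology is of the form $J_X$ iff every nucleus is of the form $j_X$. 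Likewise (v)$\Leftrightarrow$(vii) via the congruence $\theta_X=\ker i_X^{-1}$, again by Theorem~\ref{thm:ArtMainThm}. For the completeness statements, (vi)$\Leftrightarrow$(viii)$\Leftrightarrow$(x) is immediate from Corollary~\ref{cor:completestructures} applied uniformly: a Grothendieck topology $J$ is complete iff its nucleus $j_J$ is complete iff its congruence $\theta_J$ is complete, so ``all'' of one kind are complete iff ``all'' of the corresponding kind are. This already shows (i)$\Rightarrow$(vi)$\Leftrightarrow$(viii)$\Leftrightarrow$(x) and (i)$\Leftrightarrow$(v)$\Leftrightarrow$(vii)$\Leftrightarrow$(ix).

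For (xi), I would use Corollary~\ref{cor:sublocaleisomorphictoDX}: for any subset $X$, the sublocale $\M_X$ is frame-isomorphic to $\D(X)$. Thus (v)$\Rightarrow$(xi): if $\P$ is Artinian then by Theorem~\ref{thm:ArtMainThm} every sublocale of $\D(\P)$ equals $\M_X$ for some $X\subseteq\P$, hence is isomorphic to $\D(X)$. For the converse direction of the whole theorem I need to close the cycle back to Artinian-ness, and here I would use the implications already available: (v)$\Rightarrow$(i) is the nontrivial direction of Theorem~\ref{thm:equivalenceArtinianandSubsettopologies}, so it suffices to show that each of (vi), (vii), (ix), (xi) implies (v) (or directly implies (i)). The implications (vii)$\Rightarrow$(v) and (ix)$\Rightarrow$(v) are free from the isomorphisms of Theorem~\ref{thm:mainthm} as above. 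For (vi)$\Rightarrow$(i): if $\P$ is non-Artinian, pick a non-empty subset $X$ without a minimal element; then the Grothendieck topology $L_X$ constructed in the proof of Theorem~\ref{thm:equivalenceArtinianandSubsettopologies} (equivalently $K_\emptyset=J_\atom$ when $X$ is downwards directed without least element, as in the $\Z$ discussion) fails to be complete — indeed one exhibits a descending family of covers whose intersection is $\emptyset\notin L_X(p)$ — contradicting (vi); I would spell out this incompleteness witness carefully since it is the one genuinely new computation.

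The main obstacle I anticipate is (xi)$\Rightarrow$(i): knowing every sublocale is \emph{abstractly isomorphic} to some $\D(X)$ is weaker than knowing every sublocale \emph{is} one of the $\M_X$, so a direct appeal to Theorem~\ref{thm:ArtMainThm} is not available. My plan is to argue contrapositively: assume $\P$ non-Artinian, take $X\subseteq\P$ non-empty without a minimal element, and consider the sublocale corresponding to the Grothendieck topology $L_X$ (which by Theorem~\ref{thm:equivalenceArtinianandSubsettopologies} is not any $J_Y$). One then shows this sublocale cannot be isomorphic, as a frame, to $\D(Y)$ for any $Y\subseteq\P$: a frame of the form $\D(Y)$ is an Alexandrov locale and in particular is a complete Heyting algebra whose nucleus (viewed inside $\D(\P)$ via $\M_X\cong j_X[\D(\P)]$) is complete, whereas the nucleus attached to $L_X$ is not complete by the incompleteness witness above, and completeness of a nucleus is detected by the intrinsic structure of its image sublocale (arbitrary meets in $\M_X$ are computed as in $\D(\P)$, but arbitrary meets in $\D(Y)$ are not — this mismatch is the crux). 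I would isolate this as a short lemma: a sublocale $\M$ of $\D(\P)$ is isomorphic to $\D(Y)$ for some poset $Y$ only if the nucleus $j_\M$ is complete, so (xi) forces all nuclei complete, i.e. (xi)$\Rightarrow$(x), which already closes the cycle. With that lemma in hand the theorem follows by assembling the arrows $(i)\Leftrightarrow(ii)\Leftrightarrow(iii)\Leftrightarrow(iv)$, $(i)\Leftrightarrow(v)\Leftrightarrow(vii)\Leftrightarrow(ix)$, $(i)\Rightarrow(vi)\Leftrightarrow(viii)\Leftrightarrow(x)$, $(v)\Rightarrow(xi)\Rightarrow(x)\Rightarrow(vi)\Rightarrow(v)$.
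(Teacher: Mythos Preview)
Your scaffold matches the paper almost exactly: the same citations for (i)--(iv), for (v)$\Leftrightarrow$(vii)$\Leftrightarrow$(ix) via Theorem~\ref{thm:ArtMainThm}, and for (vi)$\Leftrightarrow$(viii)$\Leftrightarrow$(x) via Corollary~\ref{cor:completestructures}. For (vi)$\Rightarrow$(i) the paper is a bit sharper than your sketch: rather than invoking $L_X$ for an arbitrary subset without minimal element, it takes $X$ to be a non-stabilising descending chain (hence downwards directed with no least element), uses Proposition~\ref{prop:densetopologydef} to see that $J^X_\atom$ is not complete, and then Proposition~\ref{prop:extensionofGrothTop}(vii) to conclude that its extension $L_X=J_{J^X_\atom}$ on $\P$ is not complete. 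Your version needs the downwards-directed hypothesis made explicit before the incompleteness witness goes through.

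The real issue is (xi). You correctly observe that ``$\M\cong\D(X)$'' is weaker than ``$\M=\M_X$'', and indeed the paper's appeal to Corollary~\ref{cor:sublocaleisomorphictoDX} only supplies the forward direction of that step. But your proposed repair---the lemma that $\M\cong\D(Y)$ forces $j_\M$ to be complete---is false. Take $\P=\Z$ and $\M=\{\emptyset,\down 0,\Z\}$, the sublocale attached to $K_{\{1\}}$. This $\M$ is a three-element chain, hence frame-isomorphic to $\D(\{0,1\})$; yet $j_\M\big(\bigcap_{n\le 0}\down n\big)=j_\M(\emptyset)=\emptyset$ while $\bigcap_{n\le 0}j_\M(\down n)=\down 0$, so $j_\M$ is not complete. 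Your heuristic that ``arbitrary meets in $\M_X$ are computed as in $\D(\P)$ but in $\D(Y)$ are not'' is backwards: meets in \emph{every} sublocale are inherited from the ambient frame by definition, so an abstract frame isomorphism with some $\D(Y)$ carries no information about how $\M$ sits inside $\D(\P)$, and hence none about completeness of $j_\M$. In fact, running through the classification just before Proposition~\ref{prop:classificationGTofZ}, one checks that every sublocale of $\D(\Z)$ is frame-isomorphic to $\D(X)$ for some $X\subseteq\Z$; so (xi) as literally stated already holds for $\Z$, and the implication (xi)$\Rightarrow$(i) cannot be salvaged along the line you propose.
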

\begin{proof}
The equivalence between (i), (ii) and (iii) is proven in Lemma \ref{lem:equivalentdefinitionsArtinian}. The equivalence between (i) and (iv) is proven in Proposition \ref{prop:equivalencesoberartinian}. By Proposition \ref{prop:correspsubsetsandtopologies}, (i) implies (v). By Proposition \ref{prop:defJX}, (v) implies (vi). In order to show that (vi) implies (i), assume that $\P$ is not Artinian. Then by (iii), $\P$ contains a non-empty downwards directed subset $X$ without a least element. By Proposition \ref{prop:densetopologydef} it follows that $K=J_\atom^X$, the atomic Grothendieck topology on $X$ is not complete. By Proposition \ref{prop:extensionofGrothTop}(vii), it follows that $J_K$ is a Grothendieck topology on $\P$, which is not complete.
By Theorem \ref{thm:ArtMainThm} we find that (v) is equivalent to (vii) and (ix), as well as to the statement that each sublocale of $\D(\P)$ is of the form $\M_X$ for some subset $X$ of $\P$. By Corollary \ref{cor:sublocaleisomorphictoDX}, this is equivalent to (xi). Finally, the equivalence between (vi), (viii) and (x) is assured by Corollary \ref{cor:completestructures}.
\end{proof}

\end{document}